\documentclass[12pt,letterpaper]{article}
\usepackage[left=1in,right=1in,top=1in,bottom=1in]{geometry}
\usepackage[backend=biber,citestyle=authoryear,style=apa,natbib=true,maxcitenames=2]{biblatex}
\usepackage{times}
\usepackage{authblk} 
\usepackage{lineno}
\addbibresource{ref.bib}
\usepackage{xr-hyper}
\usepackage{amsfonts}
\usepackage{amsthm}
\usepackage{authblk}
\usepackage{amsmath}
\usepackage{amssymb}
\usepackage{array}
\usepackage{bm} 
\usepackage{booktabs}
\usepackage{caption}
\usepackage{color}
\usepackage{enumerate}
\usepackage{fancyhdr}
\usepackage{float} 
\usepackage{graphicx}
\usepackage[usenames,dvipsnames]{xcolor}
\usepackage[colorlinks,linkcolor=blue,citecolor=blue,urlcolor=blue]{hyperref}
\usepackage{placeins}
\usepackage{siunitx} 
\graphicspath{ {graphics/} }
\usepackage{verbatim}
\sisetup{
  round-mode          = places, 
  round-precision     = 2, 
}
\usepackage{multirow}
\usepackage{enumitem}
\usepackage{setspace} 
\usepackage{titlesec}
\DeclareUnicodeCharacter{FB01}{fi}
\usepackage[nottoc,numbib]{tocbibind}
\usepackage[ruled,vlined]{algorithm2e}
\usepackage{algpseudocode}
\usepackage{csquotes}
\usepackage{subcaption}
\usepackage{accents}
\usepackage{lscape}

\newcommand\numberthis{\addtocounter{equation}{1}\tag{\theequation}}

\doublespacing
\allowdisplaybreaks 

\makeatletter
\g@addto@macro\normalsize{%
  \setlength\abovedisplayskip{10pt}
  \setlength\belowdisplayskip{10pt}
  \setlength\abovedisplayshortskip{10pt}
  \setlength\belowdisplayshortskip{10pt}
}
\makeatother

\makeatletter
\newcommand*{\addFileDependency}[1]{
  \typeout{(#1)}
  \@addtofilelist{#1}
  \IfFileExists{#1}{}{\typeout{No file #1.}}
}
\makeatother

\newcommand*\linenomathpatch[1]{%
  \cspreto{#1}{\linenomath}%
  \cspreto{#1*}{\linenomath}%
  \csappto{end#1}{\endlinenomath}%
  \csappto{end#1*}{\endlinenomath}%
}

\linenomathpatch{equation}
\linenomathpatch{gather}
\linenomathpatch{multline}
\linenomathpatch{align}
\linenomathpatch{alignat}
\linenomathpatch{flalign}

\newtheorem{theorem}{Theorem} \newtheorem{lemma}{Lemma}   
\newtheorem{corollary}{Corollary} 
\def\bg{\begin{figure}[tbph]\begin{center}}
\def\eg{\end{center}\end{figure}}

\def\diag{\mbox{diag}}
\newcommand{\condition}[2]{#1#2}

\newcommand{\bmbeta}{\bm{\beta}}

\newcommand{\bmtheta}{\bm{\theta}}

\AtEveryBibitem{
\clearfield{number}
}

\newenvironment{customthm}[1]
  {\innercustomthm}
  {\endinnercustomthm}


\begin{document}

\title{Asymptotic Results for Penalized Quasi-Likelihood Estimation in Generalized Linear Mixed Models}
\author[1]{Xu Ning\thanks{Corresponding author: Email: Xu.Ning@unsw.edu.au. Address: Research School of Finance, Actuarial Studies and Statistics, The Australian National University, Canberra, ACT, 2601, Australia}}
\author[1]{Francis K.C. Hui}
\author[1]{A. H. Welsh}
\affil[1]{Research School of Finance, Actuarial Studies and Statistics,
The Australian National University}
\date{}
\maketitle
\thispagestyle{empty}

\begin{abstract}
Generalized Linear Mixed Models (GLMMs) are widely used for analysing clustered data. 
One well-established method of overcoming the integral in the marginal likelihood function for GLMMs is penalized quasi-likelihood (PQL) estimation, although to date there are few asymptotic distribution results relating to PQL estimation for GLMMs in the literature. In this paper, we establish large sample results for PQL estimators of the parameters and random effects in independent-cluster GLMMs, when both the number of clusters and the cluster sizes go to infinity. This is done under two distinct regimes: conditional on the random effects (essentially treating them as fixed effects) and unconditionally (treating the random effects as random). Under the conditional regime, we show the PQL estimators are asymptotically normal around the true fixed and random effects. Unconditionally, we prove that while the estimator of the fixed effects is asymptotically normally distributed,
the correct asymptotic distribution of the so-called prediction gap of the random effects may in fact be a normal scale-mixture distribution under certain relative rates of growth. A simulation study is used to verify the finite sample performance of our theoretical results. 
\end{abstract}
\providecommand{\keywords}[1]{\textbf{Keywords\ \ \ } #1}

\keywords{Asymptotic independence, Clustered data, Large sample distribution, Longitudinal data, Prediction}

\section{Introduction} \label{sec:intro}

Generalized linear mixed models (GLMMs) are widely used in statistics to model relationships in clustered and correlated data
\citep{mcculloch2004generalized}.
As the marginal likelihood function of GLMMs, except for normally distributed responses with the identity link, contains an intractable integral, many methods have been developed to estimate and perform inference for the parameters in a computationally efficient manner. 
These include the Laplace approximation, Gauss-Hermite quadrature, and variational approximations, among others \citep{mcculloch2004generalized,ormerod2012gaussian,glmmtmb}. 
A connected and well-established approach is penalized Quasi-Likelihood (PQL) estimation \citep{breslow1993approximate}.
As one of the first methods to circumvent the intractable integral, PQL estimation has seen a resurgence in modern statistics as a very computationally efficient method for high-dimensional multivariate GLMMs \citep[e.g.,][]{hui2021,kidzinski2022generalized}. 
However, despite its long history, large sample distributional results for PQL estimation in mixed models are scarce. 

The most often studied asymptotic results for maximum likelihood estimators of GLMMs are based on increasing the number of clusters while keeping the size of each cluster fixed or bounded \citep{mcculloch2004generalized,NIE20071787}. 
Asymptotic results when both the cluster size and number of clusters grow are less developed, although some results for the maximum likelihood estimator as well as the empirical best linear unbiased predictor (EBLUP) for the linear mixed model (LMM) have been developed; see \citet{lyu2021asymptotics, lyu2021increasing} and references therein. 
Recently, \citet{jiang2021usable} proved an asymptotic normality result for a maximum quasi-likelihood estimator of the fixed parameters, which is different from the PQL estimator, for independent-cluster GLMMs. 

This work is distinct from the above results: compared to \citet{lyu2021asymptotics, lyu2021increasing}
we consider a more general random effects structure that permits random slopes in GLMMs. 
Meanwhile, \citet{jiang2021usable} considered GLMMs but not the case when cluster sizes grow faster than the number of clusters; nor did they present results for predictors of random effects, both of which are considered in this article. Furthermore, we establish results for the prediction gap in GLMMs, which are new to the literature and allow unconditional inference to be performed for the random effects \citep[noting unconditional inference for random effects in LMMs has been considered previously in a very different way through the unconditional mean squared error of prediction,][]{kackar1984approximations,prasad1990estimation}.
Note for the PQL estimator specifically, \citet{vonesh2002conditional}, \citet{hui2017joint} and \citet{hui2021} demonstrated estimation consistency under increasing cluster size and number of clusters, but did not develop any large sample distributional results. 



It is important to remark that when cluster sizes do not increase, PQL is known to be asymptotically biased \citep[e.g.,][]{breslow1995bias}. 
As such, increasing both the number of clusters and cluster size is a necessary condition for the PQL estimator to be consistent. Indeed, increasing number of clusters and cluster size is necessary for the consistency of other estimators such as the Laplace estimator \citep{ogden2017asymptotic,hui2021,ogden2021error}.
With this in mind, we develop our large sample distributional results under this setting, with the precise rates of  growth to be formalised later. We note this asymptotic framework is relevant for many applications with large cluster sizes e.g., educational studies with large numbers of students (units) grouped within schools (clusters), and
medical studies with large groups (clusters) of patients (units) treated at different hospitals. 

We derive our asymptotic results for the PQL estimator under two distinct sampling regimes. In the first, we condition on the random effects, i.e. treat them as fixed effects, although we will still refer to them as random effects for consistency. In the second, unconditional regime, we allow the random effects to be random. Conditional inference is appropriate when hypothetical resampling occurs within the same observed clusters, while unconditional inference may be more appropriate when (new) clusters are sampled from some population.  
Importantly, we demonstrate the asymptotic distributional results for the two regimes differ markedly. 
Conditional on the random effects we show the PQL estimator is asymptotically normally distributed around the true parameter values, with a convergence rate of ${N}^{1/2}$ (square root of the total number of observations) for the fixed effects and $n_i^{1/2}$ (square root of the cluster size of the $i$th cluster) for the random effects (which are now also fixed parameters). 
We find that 
when a variable is included as both a fixed and random effect covariate, the PQL estimator is asymptotically normally distributed around a sum-to-zero reparametrized version of the estimand. 
Unconditionally, we demonstrate the asymptotic normality of the PQL estimator for the fixed effects around the true values, but with a slower convergence rate of ${m}^{1/2}$ (square root of the number of clusters). Furthermore, 
we demonstrate that the ``prediction gap" i.e., the difference between the PQL estimator of and the true random effect, is not in general asymptotically normally distributed; 
instead, it follows a normal scale-mixture when $m$ grows faster than $n_i$.


Our results have important implications for inference in GLMMs. There is a choice of whether conditional or unconditional inference is desired, with different asymptotic distributions needing to be applied in each case. 
Also, the potential asymptotic non-normality of the prediction gap has consequences in practice, since normality is often assumed when constructing prediction intervals for the random effects in GLMMs \citep{bates2015fitting,glmmtmb}. 
The theoretical results in this paper offer an important step towards more formal, rigorous asymptotic inference using PQL estimation (and perhaps other similar estimators) for GLMMs.

The structure of the article is as follows. In Section 2, we introduce GLMMs and PQL estimation. In Sections 3 and 4, we present and develop our asymptotic framework and results for the conditional and unconditional regimes. In Section 5, we present results from a simulation study which empirically verify our large sample developments. Finally, in Section 6 we discuss the implications of our results.

\section{Generalized Linear Mixed Models} \label{sec:glmms}

We study the independent-cluster generalized linear mixed model defined as follows. Let $y_{ij}$ denote the $j$th measurement of cluster $i$, $\bm{x}_{ij}$ denote a vector of $p_f$ fixed effect covariates, and $\bm{z}_{ij}$ denote a vector of $p_r$ random effect covariates, for $j = 1,\ldots, n_i$, and $i = 1,\ldots, m$. Let $N = \sum_{i=1}^m n_i$, $n = m^{-1} N$, $n_L = \min_{1 \leq i \leq m} n_i$, and $n_U = \max_{1 \leq i \leq m} n_i$. The $m$ clusters are independent of each other. Conditional on a $p_r$-vector of random effects $\dot{\bm{b}}_i$, where the dot above any quantity is used to denote its true value (or that it is evaluated at the true parameter values), the responses $y_{ij}$ from cluster $i$ are assumed to be independent observations from the exponential family with mean $\dot{\mu}_{ij}$ and dispersion parameter $\dot{\phi}$. That is, $f(y_{ij}|\dot{\bm{\beta}}, \dot{\bm{b}}_{i},\dot{\phi}) = \exp \, [ \dot{\phi}^{-1} \{ y_{ij}  \dot{\vartheta}_{ij} - a(\dot{\vartheta}_{ij}) \} + c(y_{ij},\dot{\phi}) ]$, for known functions $a(\cdot)$,  $c(\cdot)$, and $g(\cdot)$ satisfying $g(\dot{\mu}_{ij}) = g\{a'(\dot{\vartheta}_{ij})\} = \dot{\eta}_{ij} = \bm{x}^\top_{ij} \dot{\bm{\beta}} + \bm{z}^\top_{ij} \dot{\bm{b}}_{i}$, where $\dot{\bm{\beta}}$ denotes a $p_f$-vector of true fixed effect coefficients, and $\dot{\eta}_{ij}$ the corresponding true linear predictor. For ease of development, we assume that the canonical link is used, so that $\dot{\vartheta} = \dot{\eta}$.
 Commonly used distributions within the exponential family include the normal, Poisson, binomial and gamma distributions. The true realised random effects $\dot{\bm{b}}_i$ are independently and identically distributed (i.i.d.), drawn from a multivariate normal distribution with zero mean vector and unstructured $p_r \times p_r$ random effects covariance matrix $\dot{\bm{G}}$. That is, $\dot{\bm{b}}_i \overset{i.i.d.}{\sim} N (\bm{0},\dot{\bm{G}})$. 

Write $\bm{X}_{i} = [\bm{x}_{i1},\ldots,\bm{x}_{in_i}]^\top$, and $\bm{Z}_{i} = [\bm{z}_{i1},\ldots,\bm{z}_{in_i}]^\top$, so we can concatenate the means across the measurements for each cluster to obtain $g(\dot{\bm{\mu}}_{i}) = \bm{X}_{i} \dot{\bm{\beta}} + \bm{Z}_{i} \dot{\bm{b}}_i$ for $\dot{\bm{\mu}}_{i} = (\dot{\mu}_{i1},\ldots, \dot{\mu}_{in_i})^\top$, where $g(\dot{\bm{\mu}}_{i})$ denotes applying the link function $g(\cdot)$ to $\dot{\bm{\mu}}_{i}$ component-wise. We can further concatenate 
across clusters and write $g(\dot{\bm{\mu}}) = \bm{X} \dot{\bm{\beta}} + \bm{Z} \dot{\bm{b}}$, with $\dot{\bm{\mu}} = (\dot{\bm{\mu}}_{1}^\top,\ldots,\dot{\bm{\mu}}_{m}^\top)^\top$, $\bm{X} = [\bm{X}^\top_1,\ldots , \bm{X}^\top_m]^\top$, $\bm{Z} = \text{bdiag}(\bm{Z}_1,\ldots \bm{Z}_m)$, and $\dot{\bm{b}} = (\dot{\bm{b}}_1^\top, \ldots , \dot{\bm{b}}_m^\top)^\top$. Here, $\text{bdiag}()$ is the block-diagonal matrix operator, $\bm{X}$ is of dimension $N \times p_f$, and $\bm{Z}$ is an $N \times m p_r$ sparse block-diagonal matrix, with at most $p_r$ non-zero components per row, and at most $n_U$ non-zero components per column. 

Let $\bm{y}_i = (y_{11}, \ldots, y_{1 n_i})^\top$ and $\bm{y} = (\bm{y}_1^\top,\ldots,\bm{y}_m^\top)^\top$. Then the marginal log-likelihood function for the independent-cluster GLMM is given by
\begin{align}
\ln f(\bm{y} | \bm{\beta}, \phi, \bm{G}) &= \sum_{i=1}^{m} \ln \int \left( \prod_{j=1}^{n_i} f(y_{i j }|\bm{\beta}, \bm{b}_i, \phi) \right) f(\bm{b}_i | \bm{G})  \, d \bm{b}_i.
\end{align}
The above integral is not available analytically except in the special case of a normal response with an identity link function. Let $\bm{\theta} = (\bm{\beta}^\top, \bm{b}^\top)^\top$ denote the full vector of fixed and random effects. Then for a given $\bm{G}$ and $\phi$, the PQL objective function for an independent-cluster GLMM is defined as 
\begin{align}
\label{PQL}
Q(\bm{\theta}) = \sum_{i=1}^{m} \sum_{j=1}^{n_i} \ln f(y_{ij}|\bm{\beta}, \bm{b}_i, \phi) - \frac{1}{2} \sum_{i=1}^{m} \bm{b}_i^\top \bm{G}^{-1} \bm{b}_i,
\end{align}
and the PQL estimator is defined as $\hat{\bm{\theta}} = \underset{\bm{\theta}}{\arg\max} \,  Q(\bm{\theta})$. 
As there are no integrals in \eqref{PQL}, the computational cost of PQL estimation is low relative to standard maximum likelihood estimation \citep{breslow1993approximate}. 
Note for normal linear mixed models, the integral in the likelihood already possesses an analytical solution when an identity link is used, and PQL estimation is equivalent to the mixed model equations of \citet{henderson1973sire} assuming the error variance is known.

The PQL procedure provides explicit estimators of both the fixed and random effects. The latter is practically useful since 
the random effects play an important implicit role in fitting and using the GLMM. For instance, the realised values of the random effects (or functions thereof) are often important in prediction problems such as small-area estimation \citep{jiang2003empirical,pfeffermann2013new}, 
while the empirical distribution of the random effects estimators is often examined in model diagnostics \citep{hui2021random}.
On the other hand, \eqref{PQL} alone does not incorporate estimation of the random effects covariance matrix. From a theoretical standpoint, existing papers on large sample theory for PQL and related objective functions have assumed $\dot{\bm{G}}$ is known for the purposes of asymptotic development \citep[e.g.,][]{vonesh2002conditional,NIE20071787}. 
Practically speaking, several approaches have been suggested to estimate $\dot{\bm{G}}$ when applying PQL, for example by using a working LMM \citep{breslow1993approximate}, the Laplace objective function \citep{hui2017joint}, or simply the sample covariance matrix of the estimated random effects \citep{jiang2001maximum}. Indeed, \citet{jiang2001maximum} and \citet{hui2017joint} demonstrated that the sample covariance of the estimated random effects is a consistent estimator of $\dot{\bm{G}}$ under suitable regularity conditions. 

In this article, we set $\bm{G} = \hat{\bm{G}}$ in \eqref{PQL}, where $\hat{\bm{G}}$ is a symmetric positive definite matrix that is either non-stochastic or its inverse $\hat{\bm{G}}^{-1}$ is stochastically bounded. Importantly, our large sample developments do not require $\hat{\bm{G}}$ to necessarily be a consistent estimator of the true random effects covariance matrix $\dot{\bm{G}}$. For example, while we can use the estimators of $\dot{\bm{G}}$ mentioned above, our theory also permits setting $\hat{\bm{G}}$ to some fixed matrix e.g., the identity matrix, say. 
Intuitively, 
this is because we develop our large sample results for PQL estimation in such a way so as to do not depend on the value of $\hat{\bm{G}}$ itself \citep[in a spirit similar to that of][]{jiang2001maximum,fan2012variable}; 
only the true random effects covariance matrix $\dot{\bm{G}}$ appears in our theorems. 


We also adopt the above approach for the dispersion parameter in the GLMM. That is, we set $\phi = \hat{\phi}$ in \eqref{PQL}, where $\hat{\phi}$ is a known constant or a stochastically bounded term. In the Poisson and binomial distributions, $\hat{\phi}$ is set to its known true value $\dot{\phi} = 1$. In cases where the true dispersion parameter is unknown, we can use either a constant
or one of the suggested estimators of the dispersion parameter in the literature (e.g., a scaled sum of squared conditional Pearson residuals).
For the remainder of this article, and as discussed in Section \ref{sec:intro}, we focus on the fixed and random effects in GLMMs. We do not discuss inferential properties of $\dot{\phi}$ and $\dot{\bm{G}}$.

\section{Conditional on Random Effects} \label{sec:conditionaltheory}

In many applications of independent-cluster GLMMs e.g., for longitudinal data, covariates included as random effects are also included as fixed effects \citep{cheng2010real}. 
With this in mind, we develop our results under the setting where all covariates are partnered i.e. included as both fixed and random effects such that $ \bm{x}_{ij} = \bm{z}_{ij} $ for all $(i,j)$ and $p_f=p_r=:p$. 
Next, let $\bm{A}$ be a $q \times (m+1)p$ matrix with the finite selection property. That is, for any row of $\bm{A}$, there exists an $m_0 \in \mathbb{N}$ such that the $\{(m_0+1)p+1\}$th to $\{(m+1)p\}$th components of the row are zero for all $m>m_0$. All components of $\bm{A}$ must have a component-wise limit, with at least one of these limits being non-zero. We partition $\bm{A}$ into $[\bm{A}_f,\bm{A}_r]$, where $\bm{A}_f$ is of dimension $q \times p$ and $\bm{A}_r$ is of dimension $q \times mp$. Also, for an arbitrary matrix $\bm{C}$, let $\bm{C}_{[i:j,k:l]}$ denote the sub-matrix comprising the $i$th to $j$th row and $k$th to $l$th column of $\bm{C}$ and $\bm{C}_{[i,]}$ and $\bm{C}_{[,j]}$ denote the $i$th row and $j$th column respectively. Similarly, for a vector $\bm{c}$ we let $\bm{c}_{[i:j]}$ denote the sub-vector formed by taking the $i$th to $j$th components; the quantity $\bm{c}_{[i]}$ simply denotes the $i$th component of $\bm{c}$.

Let $\bm{\mu}_i (\bm{\theta}) = \{ a'(\eta_{i1}), \ldots, a'(\eta_{in_i}) \}^\top$, $\bm{\mu} (\bm{\theta}) = \{a'(\eta_{11}), \ldots, a'(\eta_{mn_m}) \}^\top$, \\ $\dot{\bm{W}}_i = \hat{\phi}^{-1} \text{diag}\{a''(\dot{\eta}_{i1}), \ldots, a''(\dot{\eta}_{in_i}) \}$ and $\dot{\bm{W}} = \hat{\phi}^{-1} \text{diag} \{ a''(\dot{\eta}_{11}), \ldots, a''(\dot{\eta}_{mn_m})\}$. Furthermore, write $\dot{\mu}_{ij} = a''(\dot{\eta}_{ij})$, $\dot{\bm{\mu}}_i = \bm{\mu}_i (\dot{\bm{\theta}})$ and $\dot{\bm{\mu}} = \bm{\mu} (\dot{\bm{\theta}})$, and let $\otimes$ denote the Kronecker product operator, $\bm{I}_m$ denote the $m \times m$ identity matrix, and $\bm{1}_m$ denote a matrix or vector of ones, with dimension indicated by the relevant subscripts. Furthermore, let $\bm{D}_r = \text{diag} ({n}_1^{1/2} \bm{1}_{p}, \ldots, {n}_m^{1/2} \bm{1}_{p})$, $\bm{D} = \text{bdiag} ({N}^{1/2} \bm{I}_{p}, \bm{D}_r) $, $\bm{D}^* = \text{bdiag} ({m}^{1/2} \bm{I}_{p}, \bm{D}_r)$, $\bm{D}^+ = {m}^{1/2} \bm{I}_{(m+1)p}$, and define the two limiting quantities
\begin{align*}
\bm{\Omega} &=  \underset{m,n_L \rightarrow \infty}{\lim} \frac{\dot{\phi}}{\hat{\phi}} \bm{A} \, \text{bdiag} \Bigg\{ \frac{1}{m} \sum_{i=1}^m \frac{n}{n_i} \left(\frac{\bm{X}^\top_i \dot{\bm{W}}_i \bm{X}_i}{n_i} \right)^{-1} , \left(\frac{\bm{X}^\top_1 \dot{\bm{W}}_1 \bm{X}_1}{n_1} \right)^{-1}, \ldots , \left(\frac{\bm{X}^\top_m \dot{\bm{W}}_m \bm{X}_m}{n_m} \right)^{-1} \Bigg\} \bm{A}^\top, \\
\bm{\Omega}_r &= \lim_{m,n_L \rightarrow \infty} \frac{\dot{\phi}}{\hat{\phi}} \bm{A}_r \bm{D}_r \left(\bm{Z}^\top \dot{\bm{W}} \bm{Z} \right)^{-1} \bm{D}_r^{\top} \bm{A}_r^{\top}.
\end{align*}
Note $\bm{\Omega}$ and $\bm{\Omega}_r$ are not actually functions of $\hat{\phi}$, since $\hat{\phi} \dot{\phi}^{-1} \dot{\bm{W}}_i = \dot{\phi}^{-1} \text{diag}\{a''(\dot{\eta}_{i1}), \ldots, a''(\dot{\eta}_{in_i}) \}$ and similarly for $\dot{\bm{W}}$. 

We consider the setting where both the minimum cluster size $n_L$ and the number of clusters $m$ grow to infinity, such that $n_i = O(n_L)$ uniformly for $i= 1, \ldots, m$. This implies for any $i = 1 , \ldots , m$, we have $n_i = O(n)$, $n = O(n_i)$, $N = O(m n_i)$, and $m n_i = O(N)$. This restriction on the growth rates of the cluster sizes 
is commonly employed in asymptotic analysis of PQL estimation \citep[e.g.,][]{vonesh2002conditional}. Additionally, we require the following regularity conditions.

\begin{enumerate}[label=(\condition{C}{\arabic*})]
\item The function $a(\eta)$ is at least three times continuously differentiable in its domain, with $0 < c_0 \leq a''(\eta) \leq c_0^{-1} < \infty$ and $| a'''(\eta) | \leq c_0^{-1} < \infty$ for some sufficiently small constant $c_0$. 

\item For every $i=1,\ldots,m$ and $j=1,\ldots,n_i$, there exists a sufficiently large constant $C_1$ such that $\| \bm{x}_{ij} \|_\infty < C_1 $ where $\| \cdot \|_\infty$ is the maximum norm. Furthermore, denote $\dot{\bm{H}}_i = ( n_i^{-1} \hat{\phi} \dot{\phi}^{-1} \bm{X}_i^\top \dot{\bm{W}}_i \bm{X}_i )^{-1}$. Then for all $i = 1,\ldots,m$, the matrices $\lim_{n_i \rightarrow \infty} \dot{\bm{H}}_i = \dot{\bm{K}}_i $ and $\lim_{m,n_L \rightarrow \infty} m^{-1} \sum_{i=1}^m n n_i^{-1} \dot{\bm{H}}_i = \dot{\bm{K}}$ are positive definite with minimum and maximum eigenvalues bounded from above and below by $c_1^{-1}$ and $c_1$ respectively, for a sufficiently small constant $c_1$.

\item 
The vector of true parameters $\dot{\bm{\theta}} = (\dot{\bm{\beta}}^\top, \dot{\bm{b}}^\top)^\top$, where $\dot{\bm{b}} = (\dot{\bm{b}}_1^\top, \ldots, \dot{\bm{b}}_m^\top)^\top$, is an interior point in some compact set $\Theta \subset \mathbb{R}^{(m+1)p}$.

\item The working matrix $\hat{\bm{G}}$ is positive definite, and its inverse is $O_p(1)$. Also, the working quantity $\hat{\phi}$ is strictly positive and $O_p(1)$.

\item For all $i=1,\ldots,m$ and $n_i \in \mathbb{N}$, it holds that $E([n_i^{1/2} (\bm{X}_i^\top \dot{\bm{W}}_i \bm{X}_i + \hat{\bm{G}}^{-1})^{-1} \{ \hat{\phi}^{-1} \bm{X}_i^\top (\bm{y}_{i}-\dot{\bm{\mu}}_i) - \hat{\bm{G}}^{-1} \dot{\bm{b}}_i \}]^4) < \infty$, where the power and expectation are applied component-wise.

\end{enumerate}

Conditions (C1) - (C3) are needed to guarantee the existence and regular behavior of the asymptotic variance for the PQL estimating function, and to establish a Lindeberg condition needed to obtain a central limit theorem.  
Condition (C4) is required to ensure that the shrinkage of the random effects is asymptotically negligible, and formalises our discussion of $\hat{\bm{G}}$ and $\hat{\phi}$ at the end of Section \ref{sec:glmms}. Condition (C5) is needed to bound the order of $\| \hat{\bmtheta} - \dot{\bmtheta} \|_{\infty}$, and is satisfied by many distributions e.g. Poisson and binomial, when the random effects are normally distributed \citep[see also][]{van2012quasi}. 

For the remainder of this section, we consider the regime where we condition on the random effects, so that $\dot{\bm{\theta}}$ is a $(m+1)p$-vector of constants. 
The assumptions and conditions outlined above however will be applied to both the conditional and unconditional regime. 

\subsection{Main Result for the Conditional Regime}

Let $\bm{1}^*_m = (-1, \bm{1}_m^\top)^\top$. Then we have the following:
\begin{theorem} \label{thm:conditionalnormality}
Assume Conditions (C1) - (C5) are satisfied and $m n_L^{-1} \rightarrow 0$. Then as $m,n_L \rightarrow \infty$, and conditional on the true vector of random effects $\dot{\bm{b}}$, it holds that 
\begin{enumerate} 
    \item[(a)] $\| \hat{\bm{\theta}} - ( \dot{\bm{\theta}} - \bm{1}^*_m \otimes m^{-1} \sum_{i=1}^m \dot{\bm{b}}_i ) \|_{\infty} = o_p(1)$.
    \item[(b)] $\bm{A} \bm{D} \{\hat{\bm{\theta}} - (\dot{\bm{\theta}} - \bm{1}^*_m \otimes m^{-1} \sum_{i=1}^m \dot{\bm{b}}_i)\} \overset{D}{\rightarrow} N(\bm{0},\bm{\Omega})$.
\end{enumerate}
\end{theorem}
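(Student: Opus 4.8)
The plan is to exploit the non-identifiability created by partnering all covariates: reparametrise the per-cluster effects so that the PQL problem almost decouples across clusters, and then combine per-cluster central limit theorems through the selection matrix.

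\textbf{Reparametrisation.} Since $\bm{x}_{ij}=\bm{z}_{ij}$, cluster $i$'s linear predictor depends on $(\bm{\beta},\bm{b}_i)$ only through $\bm{\gamma}_i:=\bm{\beta}+\bm{b}_i$. First I would rewrite $Q$ in terms of $(\bm{\beta},\bm{\gamma}_1,\ldots,\bm{\gamma}_m)$: the quasi-likelihood separates as $\sum_i\sum_j\ln f(y_{ij}\mid\bm{\gamma}_i)$, while the penalty becomes $-\tfrac12\sum_i(\bm{\gamma}_i-\bm{\beta})^\top\hat{\bm{G}}^{-1}(\bm{\gamma}_i-\bm{\beta})$. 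The stationarity condition in $\bm{\beta}$ then reads $\hat{\bm{G}}^{-1}\sum_i(\hat{\bm{\gamma}}_i-\hat{\bm{\beta}})=\bm{0}$, and since $\hat{\bm{G}}^{-1}$ is positive definite by (C4) this forces the \emph{exact} identity $\sum_i\hat{\bm{b}}_i=\bm{0}$, i.e.\ $\hat{\bm{\beta}}=m^{-1}\sum_i\hat{\bm{\gamma}}_i$. Writing $\bar{\bm{b}}=m^{-1}\sum_i\dot{\bm{b}}_i$, the estimand $\dot{\bm{\theta}}-\bm{1}^{*}_{m}\otimes\bar{\bm{b}}$ has fixed-effect block $\dot{\bm{\beta}}+\bar{\bm{b}}=m^{-1}\sum_i\dot{\bm{\gamma}}_i$ and $i$th random-effect block $\dot{\bm{\gamma}}_i-m^{-1}\sum_k\dot{\bm{\gamma}}_k$; in $\bm{\gamma}$-coordinates it is simply the true per-cluster effect together with its average. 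Because $\bm{X}_i=\bm{Z}_i$, evaluating at this estimand reproduces the true linear predictor, so $\bm{\mu}(\cdot)=\dot{\bm{\mu}}$ and the weights equal $\dot{\bm{W}}$ there.

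\textbf{Consistency, part (a).} For each $i$, $\hat{\bm{\gamma}}_i$ solves $\hat{\phi}^{-1}\bm{X}_i^\top\{\bm{y}_i-\bm{\mu}_i(\bm{\gamma}_i)\}=\hat{\bm{G}}^{-1}(\bm{\gamma}_i-\hat{\bm{\beta}})$. The right-hand side is $O_p(1)$ by (C3)--(C4), whereas the left-hand side is a normalised sum of $n_i\to\infty$ conditionally independent terms, so standard M-estimation/concavity arguments (using (C1) for concavity and (C2) for a bounded design) give $\hat{\bm{\gamma}}_i-\dot{\bm{\gamma}}_i=O_p(n_i^{-1/2})$, the penalty entering only at order $n_i^{-1/2}$. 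The remaining work is uniformity: I would bound $E\|\hat{\bm{\gamma}}_i-\dot{\bm{\gamma}}_i\|^4=O(n_i^{-2})$ via the fourth-moment Condition (C5), then apply Markov's inequality and a union bound over the $m$ clusters, so that $\max_i\|\hat{\bm{\gamma}}_i-\dot{\bm{\gamma}}_i\|=o_p(1)$ whenever $m/n_L^2\to0$, which follows from $mn_L^{-1}\to0$. Combined with $\hat{\bm{\beta}}-m^{-1}\sum_i\dot{\bm{\gamma}}_i=m^{-1}\sum_i(\hat{\bm{\gamma}}_i-\dot{\bm{\gamma}}_i)=o_p(1)$, this gives the max-norm statement.

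\textbf{Asymptotic normality, part (b).} Taylor-expanding the per-cluster equation about $\dot{\bm{\gamma}}_i$, with $\bm{\mu}_i(\bm{\gamma}_i)-\dot{\bm{\mu}}_i=\hat{\phi}\dot{\bm{W}}_i\bm{X}_i(\bm{\gamma}_i-\dot{\bm{\gamma}}_i)+\bm{r}_i$, yields
\begin{equation*}
n_i^{1/2}(\hat{\bm{\gamma}}_i-\dot{\bm{\gamma}}_i)=\left(\frac{\bm{X}_i^\top\dot{\bm{W}}_i\bm{X}_i}{n_i}\right)^{-1}\frac{\hat{\phi}^{-1}}{n_i^{1/2}}\bm{X}_i^\top(\bm{y}_i-\dot{\bm{\mu}}_i)+o_p(1),
\end{equation*}
where the penalty term is $O_p(n_i^{-1/2})$ and the quadratic remainder $\bm{r}_i$ is $O_p(n_i^{-1/2})$ by the $a'''$ bound in (C1). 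Conditional on $\dot{\bm{b}}$, the score is a normalised sum of $n_i$ independent mean-zero vectors with covariance $\dot{\phi}\hat{\phi}^{-1}n_i^{-1}\bm{X}_i^\top\dot{\bm{W}}_i\bm{X}_i$, so a Lindeberg--Feller central limit theorem (whose Lindeberg condition follows from (C1)--(C2)) gives $n_i^{1/2}(\hat{\bm{\gamma}}_i-\dot{\bm{\gamma}}_i)\overset{D}{\to}N(\bm{0},\dot{\bm{K}}_i)$. For the fixed-effect block the sum-to-zero identity produces the exact expansion $N^{1/2}(\hat{\bm{\beta}}-m^{-1}\sum_i\dot{\bm{\gamma}}_i)=\sum_{i=1}^m(n/(mn_i))^{1/2}\,n_i^{1/2}(\hat{\bm{\gamma}}_i-\dot{\bm{\gamma}}_i)$, a weighted sum of $m$ independent terms whose variance converges to $\dot{\bm{K}}$, the fixed-effect block of $\bm{\Omega}$; a second Lindeberg--Feller argument delivers its asymptotic normality.

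\textbf{Assembling the limit and the main obstacle.} Finally I would push everything through $\bm{A}\bm{D}$. By the finite selection property, each coordinate of $\bm{A}\bm{D}(\hat{\bm{\theta}}-\tilde{\bm{\theta}})$ is a fixed linear combination of $N^{1/2}(\hat{\bm{\beta}}-m^{-1}\sum_i\dot{\bm{\gamma}}_i)$ and finitely many $n_i^{1/2}(\hat{\bm{b}}_i-\tilde{\bm{b}}_i)$; since $n_i^{1/2}(\hat{\bm{\beta}}-m^{-1}\sum_k\dot{\bm{\gamma}}_k)=O_p(m^{-1/2})$, one has $n_i^{1/2}(\hat{\bm{b}}_i-\tilde{\bm{b}}_i)=n_i^{1/2}(\hat{\bm{\gamma}}_i-\dot{\bm{\gamma}}_i)+o_p(1)$, reducing these to the per-cluster limits above. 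The step I expect to be most delicate is establishing the block-diagonal structure of $\bm{\Omega}$, i.e.\ the asymptotic independence between the fixed-effect estimator (which aggregates all $m$ clusters) and any fixed finite collection of random-effect predictors. I would handle this by noting that the finitely many selected clusters enter the fixed-effect average only with weights of order $m^{-1/2}$, so their contribution vanishes and the fixed-effect limit is driven by a tail independent of the selected random effects; a Cram\'er--Wold device then yields the joint normal limit with covariance $\bm{\Omega}$. The accompanying bookkeeping --- verifying the two Lindeberg conditions and controlling all remainders uniformly under $mn_L^{-1}\to0$ --- is routine but must be carried out with care.
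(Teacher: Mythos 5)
Your route is genuinely different from the paper's, and in substance it works. The paper never reparametrises: it keeps $\bm{\theta}=(\bm{\beta}^\top,\bm{b}^\top)^\top$, Taylor-expands $\nabla Q(\hat{\bm{\theta}})$ around $\dot{\bm{\theta}}$ jointly in all $(m+1)p$ coordinates, inverts the Hessian by blockwise inversion through the Schur complement $\bm{C}=\bm{B}_1-\bm{B}_2(\bm{B}_3+\bm{B}_4)^{-1}\bm{B}_2^\top$, extracts the leading terms $m^{-1}\sum_i(\bm{X}_i^\top\dot{\bm{W}}_i\bm{X}_i)^{-1}\dot{\phi}^{-1}\bm{X}_i^\top(\bm{y}_i-\dot{\bm{\mu}}_i)$ and $\bm{B}_3^{-1}\dot{\phi}^{-1}\bm{Z}^\top(\bm{y}-\dot{\bm{\mu}})$ via the key identity $\bm{S}_1=\sum_i\bm{S}_{2i}$ (your sum-to-zero identity in disguise), controls the cubic remainder through a Neumann-series bound $\hat{\bm{\theta}}-\dot{\bm{\theta}}=(\bm{I}-\bm{\Lambda})^{-1}\bm{B}^{-1}\nabla Q(\dot{\bm{\theta}})$ with $\|\bm{\Lambda}\|_\infty=o_p(1)$, and then verifies one joint Lindeberg condition for $S=\sum_{k=1}^N\bm{\xi}_k$, obtaining the block-diagonal $\bm{\Omega}$ because the off-diagonal blocks $m^{-1/2}(n/n_i)^{1/2}(n_i^{-1}\bm{X}_i^\top\dot{\bm{W}}_i\bm{X}_i)^{-1}$ vanish. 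Your profiling to $\bm{\gamma}_i=\bm{\beta}+\bm{b}_i$ makes the near-decoupling structural rather than computational, replaces the blockwise inversion with $m$ penalized per-cluster GLM problems plus the exact averaging identity $\hat{\bm{\beta}}=m^{-1}\sum_i\hat{\bm{\gamma}}_i$, and reaches the same leading terms and the same vanishing cross-covariance mechanism. What your version buys is transparency in the conditional regime; what the paper's version buys is reusability, since expressions (S2.5)--(S2.6) and the $\bm{\Lambda}$ remainder lemma are recycled verbatim for Theorems 2--5 in the unconditional regime, where your exact decoupling would not help with the competing $m^{-1}\sum_i\dot{\bm{b}}_i$ term.

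Two steps in your sketch need repair. First, the claim $E\|\hat{\bm{\gamma}}_i-\dot{\bm{\gamma}}_i\|^4=O(n_i^{-2})$ does not follow from (C5) as stated: (C5) bounds fourth moments of the \emph{linearized} score quantity $n_i^{1/2}(\bm{X}_i^\top\dot{\bm{W}}_i\bm{X}_i+\hat{\bm{G}}^{-1})^{-1}\{\hat{\phi}^{-1}\bm{X}_i^\top(\bm{y}_i-\dot{\bm{\mu}}_i)-\hat{\bm{G}}^{-1}\dot{\bm{b}}_i\}$, not of the implicitly defined estimator. You can transfer the bound using global strict concavity of each per-cluster penalized objective (from $a''\geq c_0$ in (C1) and the bounded design plus eigenvalue bounds in (C2), giving a deterministic inequality $\|\hat{\bm{\gamma}}_i-\dot{\bm{\gamma}}_i\|\leq Cn_i^{-1}\|\text{score at }\dot{\bm{\gamma}}_i\|$), but the score at $\dot{\bm{\gamma}}_i$ contains $\hat{\bm{G}}^{-1}(\dot{\bm{\gamma}}_i-\hat{\bm{\beta}})$, so the $m$ bounds are coupled through $\hat{\bm{\beta}}$ and must be closed self-consistently; this is exactly the role the paper's order-statistic argument (via the finite-moment maximum bound) plays. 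Second, and more importantly, your "weighted sum of $m$ independent terms" for the fixed-effect CLT is not literally available: the exact quantities $n_i^{1/2}(\hat{\bm{\gamma}}_i-\dot{\bm{\gamma}}_i)$ are dependent across clusters through $\hat{\bm{\beta}}$ in the penalty. You must substitute each by its linearization (which \emph{is} conditionally independent across clusters) and show the aggregated substitution error is $o_p(1)$; since each per-cluster error is $O_p(n_i^{-1/2})$ and the errors need not be mean-zero, the exact identity $N^{1/2}(\hat{\bm{\beta}}-m^{-1}\sum_i\dot{\bm{\gamma}}_i)=\sum_i(n/(mn_i))^{1/2}n_i^{1/2}(\hat{\bm{\gamma}}_i-\dot{\bm{\gamma}}_i)$ accumulates a worst-case $O_p(m^{1/2}n_L^{-1/2})$ term. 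This is precisely where $mn_L^{-1}\rightarrow 0$ (and not merely the $mn_L^{-2}\rightarrow 0$ your consistency step uses) is genuinely required, and it is the content of the paper's remainder analysis in the conditional regime --- so it should not be filed under "routine bookkeeping": it is the crux of part (b).
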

The first part of the theorem establishes consistency for the PQL estimator around a sum-to-zero reparametrized version of the true parameters (see below for more discussion on the latter aspect). The block diagonal structure of $\bm{\Omega}$ in the second part of the theorem shows that conditional on true random effects vector, the corresponding estimators are asymptotically independent between clusters, and also asymptotically independent of the fixed effects estimators.

We illustrate a few special cases of Theorem \ref{thm:conditionalnormality} using specific selection matrices. First, suppose $\bm{A} = [\bm{I}_{p}, \bm{0}_{p \times mp}]$.
If $\sum_{i=1}^m \dot{\bm{b}}_i = \bm{0}_p$, then we obtain $\bm{A} \bm{D} (\hat{\bm{\theta}} - \dot{\bm{\theta}}) = {N}^{1/2} (\hat{\bm{\beta}} - \dot{\bm{\beta}})  \overset{D}{\rightarrow} N (\bm{0}, \dot{\bm{K}} )$ conditional on the random effects, where $\dot{\bm{K}}$ is the limiting matrix defined in Condition (C2). Also, suppose $\bm{A} = [\bm{0}_{p}, \bm{I}_{p} , \bm{0}_{p \times (m-1) p}]$.
Then from Theorem \ref{thm:conditionalnormality}, we have $\bm{A} \bm{D} (\hat{\bm{\theta}} - \dot{\bm{\theta}}) = {n}_1^{1/2} (\hat{\bm{b}}_1 - \dot{\bm{b}}_1)  \overset{D}{\rightarrow} N (\bm{0}, \dot{\bm{K}}_1 )$, conditional on the random effects. The analogous result holds for choosing any particular cluster. Finally, since the random effects exhibit a slower convergence rate than the fixed effects, and noting the asymptotic independence, then for an arbitrary $p$-dimensional constant $\bm{a}$ we obtain
${n}_i^{1/2} \bm{a}^\top (\hat{\bm{\beta}} + \hat{\bm{b}}_i - \dot{\bm{\beta}} - \dot{\bm{b}}_i)  \overset{D}{\rightarrow} N \left(\bm{0}, \bm{a}^\top \dot{\bm{K}}_i \bm{a} \right); \quad i = 1,\ldots,m$,
conditional on the random effects. As an example, if the linear predictor involves a fixed and random intercept and a fixed and random slope for a single covariate, then we set $\bm{a} = (1,x_{ij})^\top$ and obtain $n_i^{1/2} (\hat{\eta}_{ij} - \dot{\eta}_{ij}) = n_i^{1/2} ( \hat{\beta}_0 + \hat{b}_{i0} + \hat{\beta}_1 x_{ij} + \hat{b}_{i1} x_{ij} - \dot{\beta}_0 - \dot{b}_{i0} - \dot{\beta}_1 x_{ij} - \dot{b}_{i1} x_{ij} ) \overset{d}{\rightarrow} N(0, \bm{a}^\top \dot{\bm{K}}_i \bm{a})$.

For statistical inference, we can appeal to Slutsky's Theorem and replace $\dot{\bm{K}}_i$ with $\hat{\bm{H}}_i$, and $\dot{\bm{K}}$ with $m^{-1} \sum_{i=1}^m n n_i^{-1} \hat{\bm{H}}_i$. Here $\hat{\bm{H}}_i$ is defined as $( n_i^{-1} \bm{X}_i^\top \hat{\bm{W}}_i \bm{X}_i )^{-1}$ where $\hat{\bm{W}}_i = \tilde{\phi}^{-1} \text{diag}\{a''(\hat{\eta}_{i1}), \ldots, a''(\hat{\eta}_{in_i}) \}$ for some consistent estimator of the dispersion parameter $\tilde{\phi}$ e.g., based on the inverse scaled sum of squared conditional Pearson residuals. Theorem \ref{thm:conditionalnormality} then provides a straightforward way to construct confidence intervals, say, for all the parameters and combinations thereof.
In fact, the forms of these intervals are similar to standard results in (penalized) GLMs\citep{mcculloch2004generalized}: 
this is not surprising given we are working conditional on the true vector of random effects. 
say.


Finally, note the PQL estimator is consistent for a sum-to-zero reparametrized version of the true parameters. This occurs because the PQL estimators of the random effects must satisfy a sum-to-zero constraint regardless of the underlying true parameter values, and under a conditional regime, this induces an asymptotic bias $\bm{1}^*_m \otimes (m^{-1} \sum_{i=1}^m \dot{\bm{b}}_i)$ in Theorem \ref{thm:conditionalnormality}, which can be interpreted as shifting the mean of the random effects into the corresponding fixed effects. We offer more discussion around this asymptotic bias in the supplementary material.

\section{Unconditional Regime} \label{sec:unconditionaltheory}

We now turn to establishing results under an unconditional regime i.e., treating $\dot{\bm{b}}_i$'s as random instead of conditioning on them. This has two main implications. First, in the unconditional setting the quantity $m^{-1} \sum_{i=1}^m \dot{\bm{b}}_i$ is no longer deterministic and should not be treated as a bias term. Instead, it is of order $O_p(m^{-1/2})$, and so competes with other leading terms in the relevant Taylor expansion to be the dominating term. This results in a reduction of the rate of convergence for the fixed effects estimator, from $N^{1/2}$ in the conditional regime to $m^{1/2}$ in the unconditional regime.
Second, in contrast to the conditional regime, the observations within the same cluster are no longer independent. This has ramifications when applying the central limit theorem to establish asymptotic multivariate normality. In Section \ref{subsec:poissonpredictiongap}, we provide a simple but insightful example based on a Poisson random intercept model, which demonstrates that the prediction gap is not always asymptotically normally distributed. 

The two approximations below, derived from the Taylor expansion of the PQL objective function, will be central to understanding the large sample developments we make on a more intuitive level. For a given $\hat{\phi}$, we have
\begin{subequations}
\begin{align} 
\hat{\bm{\beta}} - \dot{\bm{\beta}} &= m^{-1} \sum_{i=1}^m \dot{\bm{b}}_i + o_p(1) \label{eq:Mfixed_basic.uncond} \\
\hat{\bm{b}} - \dot{\bm{b}} &= - \bm{1}_m \otimes m^{-1} \sum_{i=1}^m \dot{\bm{b}}_i + (\bm{Z}^\top \dot{\bm{W}} \bm{Z})^{-1} \{ {\hat{\phi}}^{-1} \bm{Z}^\top (\bm{y} - \dot{\bm{\mu}}) \} + o_p(1). \label{eq:Mrand_basic.uncond}
\end{align}
\end{subequations}
We will refer to both equations in the discussion of the theorems to be presented later on.

\subsection{Prediction Gap - Counterexample} \label{subsec:poissonpredictiongap}

We offer a motivating and insightful example to illustrate that the prediction gap is not, in general, asymptotically normally distributed. This example also offers a simple case where $\bm{X}_i \neq \bm{Z}_i$, and offers an interesting comparison to
the theory established under the assumption of $\bm{X}_i = \bm{Z}_i$.

Consider a Poisson random intercept model with canonical log link. That is, the true model is given by $f(y_{ij}|\dot{b}_i) = \exp(y_{ij} \dot{\eta}_{ij} - \dot{\mu}_{ij})/(y_{ij}!)$ with $\ln(\dot{\mu}_{ij}) = \dot{\eta}_{ij} = \dot{b}_i$, and $\dot{b}_i \overset{i.i.d.}{\sim} N(0, \dot{\sigma}_b^2)$. Assume a working $\hat{\sigma}_b^2$, and apply PQL estimation to estimate the random effects $b_i$ for $i = 1,\ldots,n$. For simplicity, we also assume a balanced design, such that $n_i = n$ for all $i=1,\ldots,m$. Then it is possible to show (see 
the supplementary material for the formal derivation) that when $m n^{-2} \rightarrow 0$, the prediction gap of the first cluster $\hat{b}_1 - \dot{b}_1$ satisfies
\begin{align} \label{eq:counterexample}
     {n}^{1/2} (\hat{b}_1 - \dot{b}_1) &= n^{-1/2} \sum_{j=1}^n \{ y_{1j} \exp (-\dot{b}_1) -1\} + o_p(1).
\end{align}
Therefore, we obtain $\hat{b}_1 = \dot{b}_1 + o_p(1)$, and similarly for each cluster $i = 1,\ldots,m$. When conditioned on $\dot{b}_1$, $n^{-1/2} \sum_{j=1}^n \{ y_{1j} \exp (-\dot{b}_1) -1\}$ is a normalised sum of independent random variables. Unconditionally however, the sum consists of an exchangeable collection of uncorrelated but dependent random variables with mean zero and finite non-zero variance. Using the central limit theorem for exchangeable random variables \citep[][]{blum1958central}, it can be subsequently be shown that the quantity $n^{-1/2} \sum_{j=1}^n \{ y_{1j} \exp (-\dot{b}_1) -1\}$, and hence ${n}^{1/2} (\hat{b}_1 - \dot{b}_1)$, is not asymptotically normally distributed. 

With the above example in mind, we now state the main results for the unconditional regime. 

\subsection{Fixed Effects}
We have the following result for the PQL estimator of the fixed effects under an unconditional regime. 

\begin{theorem} \label{thm:unconditional_fixef}
Assume Conditions (C1) - (C5) are satisfied, and $m n_L^{-2} \rightarrow 0 $. Then as $m,n_L \rightarrow \infty$ and unconditional on the random effects $\dot{\bm{b}}$, it holds that ${m}^{1/2} (\hat{\bm{\beta}} - \dot{\bm{\beta}}) \overset{D}{\rightarrow}  N(\bm{0},\dot{\bm{G}})$. 
\end{theorem}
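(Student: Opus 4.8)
The plan is to sharpen the heuristic expansion \eqref{eq:Mfixed_basic.uncond} into one whose remainder is $o_p(m^{-1/2})$ rather than merely $o_p(1)$, and then read off the limit from the leading term. The crucial structural observation is that, because covariates are partnered ($\bm{x}_{ij}=\bm{z}_{ij}$), the fixed-effect estimator can be written exactly in terms of the cluster-specific total effects $\hat{\bm{\gamma}}_i := \hat{\bm{\beta}}+\hat{\bm{b}}_i$. First I would record the PQL score equations: the $\bm{\beta}$-equation reads $\hat{\phi}^{-1}\bm{X}^\top(\bm{y}-\bm{\mu}(\hat{\bm{\theta}}))=\bm{0}$ and each $\bm{b}_i$-equation reads $\hat{\phi}^{-1}\bm{X}_i^\top(\bm{y}_i-\bm{\mu}_i(\hat{\bm{\theta}}))=\hat{\bm{G}}^{-1}\hat{\bm{b}}_i$. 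Summing the latter over $i$ and substituting the former gives $\hat{\bm{G}}^{-1}\sum_{i=1}^m\hat{\bm{b}}_i=\bm{0}$, so $\sum_{i=1}^m\hat{\bm{b}}_i=\bm{0}$ exactly (the sum-to-zero constraint noted after Theorem \ref{thm:conditionalnormality}). Consequently $\hat{\bm{\beta}}=m^{-1}\sum_{i=1}^m\hat{\bm{\gamma}}_i$, and writing $\dot{\bm{\gamma}}_i=\dot{\bm{\beta}}+\dot{\bm{b}}_i$ yields the exact decomposition $\hat{\bm{\beta}}-\dot{\bm{\beta}}=m^{-1}\sum_{i=1}^m\dot{\bm{b}}_i+m^{-1}\sum_{i=1}^m(\hat{\bm{\gamma}}_i-\dot{\bm{\gamma}}_i)$.

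Next I would expand each $\hat{\bm{\gamma}}_i-\dot{\bm{\gamma}}_i$. Since the likelihood part of $Q$ depends on cluster $i$ only through $\bm{\gamma}_i$, a Taylor expansion of the cluster-$i$ estimating function about $\dot{\bm{\gamma}}_i$, using (C1) to bound $a''$ and $a'''$ and (C2) to bound the design and limiting information, gives $\hat{\bm{\gamma}}_i-\dot{\bm{\gamma}}_i=\bm{U}_i+\bm{R}_i$, where $\bm{U}_i=(\bm{X}_i^\top\dot{\bm{W}}_i\bm{X}_i)^{-1}\hat{\phi}^{-1}\bm{X}_i^\top(\bm{y}_i-\dot{\bm{\mu}}_i)$ is the linear score term and $\bm{R}_i$ collects the shrinkage term $(\bm{X}_i^\top\dot{\bm{W}}_i\bm{X}_i)^{-1}\hat{\bm{G}}^{-1}\hat{\bm{b}}_i$, negligible by (C4), together with the quadratic Taylor remainder driven by $a'''$. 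Using that $\hat{\bm{\gamma}}_i-\dot{\bm{\gamma}}_i=O_p(n_L^{-1/2})$ (the total effects being estimated at the fast per-cluster rate under (C1)--(C2) and (C4)--(C5), independent of the relative growth of $m$ and $n_L$), one obtains $\bm{U}_i=O_p(n_L^{-1/2})$ and $\bm{R}_i=O_p(n_L^{-1})$. Scaling the decomposition by $m^{1/2}$ leaves $m^{1/2}(\hat{\bm{\beta}}-\dot{\bm{\beta}})=m^{-1/2}\sum_{i=1}^m\dot{\bm{b}}_i+m^{-1/2}\sum_{i=1}^m\bm{U}_i+m^{-1/2}\sum_{i=1}^m\bm{R}_i$.

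The two remainder sums are then dispatched by different arguments, and it is here that the rate condition enters. The clusters are independent, so the $\bm{U}_i$ are independent with $E[\bm{U}_i]=\bm{0}$ (iterating the conditional-mean-zero of $\bm{y}_i-\dot{\bm{\mu}}_i$); by the law of total variance $\mathrm{Var}(\bm{U}_i)=E\{(\dot{\phi}/\hat{\phi})(\bm{X}_i^\top\dot{\bm{W}}_i\bm{X}_i)^{-1}\}=O(n_L^{-1})$, hence $\mathrm{Var}(m^{-1/2}\sum_i\bm{U}_i)=O(n_L^{-1})\to0$ and this term is $o_p(1)$ by Chebyshev, needing only $n_L\to\infty$. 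The $\bm{R}_i$, however, carry a nonzero $O(n_L^{-1})$ bias (the quadratic term does not average to zero), so no cancellation is available and I would bound crudely via Markov: $E\|m^{-1/2}\sum_i\bm{R}_i\|\le m^{1/2}\max_i E\|\bm{R}_i\|=O(m^{1/2}n_L^{-1})$, which tends to $0$ precisely under $m n_L^{-2}\to0$. Establishing $E\|\bm{R}_i\|\lesssim E\|\hat{\bm{\gamma}}_i-\dot{\bm{\gamma}}_i\|^2=O(n_L^{-1})$ rigorously, and uniformly enough across the growing number of clusters, is the step I expect to be the main obstacle; this is exactly what the fourth-moment condition (C5) together with (C1)--(C2) is there to supply, and it is the reason the unconditional fixed-effect result needs $m n_L^{-2}\to0$.

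Finally, collecting the pieces gives $m^{1/2}(\hat{\bm{\beta}}-\dot{\bm{\beta}})=m^{-1/2}\sum_{i=1}^m\dot{\bm{b}}_i+o_p(1)$. Because $\dot{\bm{b}}_i\overset{i.i.d.}{\sim}N(\bm{0},\dot{\bm{G}})$, the leading term is \emph{exactly} distributed as $N(\bm{0},\dot{\bm{G}})$ for every $m$, so strictly speaking no central limit theorem is required; an application of Slutsky's theorem to absorb the $o_p(1)$ remainder then yields $m^{1/2}(\hat{\bm{\beta}}-\dot{\bm{\beta}})\overset{D}{\rightarrow}N(\bm{0},\dot{\bm{G}})$, as claimed.
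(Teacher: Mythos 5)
Your structural decomposition is sound and, pleasingly, isolates exactly the mechanism the paper exploits: the exact constraint $\sum_{i=1}^m \hat{\bm{b}}_i = \bm{0}_p$ you derive from the score equations is the same cancellation the paper encodes in its key identity \eqref{eq:key_identity}, and your exact identity $\hat{\bm{\beta}} - \dot{\bm{\beta}} = m^{-1}\sum_{i=1}^m \dot{\bm{b}}_i + m^{-1}\sum_{i=1}^m(\hat{\bm{\gamma}}_i - \dot{\bm{\gamma}}_i)$ is a cleaner, per-cluster reorganization of the paper's expansion \eqref{eq:fixed}, obtained there by blockwise inversion of the full $(m+1)p$-dimensional Hessian with the Woodbury identity. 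Your treatment of the linear terms is also correct: the pairs $(\dot{\bm{b}}_i, \bm{y}_i)$ are i.i.d.\ across clusters, so the $\bm{U}_i$ are independent with mean zero, the total-variance computation gives $m^{-1/2}\sum_i \bm{U}_i = O_p(n_L^{-1/2})$, and the observation that the leading term $m^{-1/2}\sum_i \dot{\bm{b}}_i$ is exactly $N(\bm{0},\dot{\bm{G}})$ for every $m$ (so no CLT is needed, only Slutsky) matches the paper's own conclusion to its proof of this theorem.

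The genuine gap is the step you yourself flag: the Markov bound $E\|m^{-1/2}\sum_i \bm{R}_i\| \leq m^{1/2}\max_i E\|\bm{R}_i\| = O(m^{1/2} n_L^{-1})$ requires $E\|\hat{\bm{\gamma}}_i - \dot{\bm{\gamma}}_i\|^2 = O(n_L^{-1})$, and Conditions (C1)--(C5) do not supply this. Condition (C5) bounds fourth moments of the \emph{linearized} quantity $n_i^{1/2}(\bm{X}_i^\top \dot{\bm{W}}_i \bm{X}_i + \hat{\bm{G}}^{-1})^{-1}\{\hat{\phi}^{-1}\bm{X}_i^\top(\bm{y}_i - \dot{\bm{\mu}}_i) - \hat{\bm{G}}^{-1}\dot{\bm{b}}_i\}$, not of the implicitly defined estimator $\hat{\bm{\gamma}}_i$; moments of an implicit M-estimator need not exist at all, and the bound is circular since $\bm{R}_i$ is quadratic in $\hat{\bm{\gamma}}_i - \dot{\bm{\gamma}}_i = \bm{U}_i + \bm{R}_i$. (A related soft spot: the asserted uniform $O_p(n_L^{-1/2})$ rate is false as stated for the maximum over $m$ growing clusters --- the paper only obtains $o_p(m^{1/4}n_L^{-1/2})$ for that maximum, via (C5) and Downey's order-statistic result; and your cluster-$i$ equation is not fully self-contained, since the shrinkage term involves $\hat{\bm{b}}_i = \hat{\bm{\gamma}}_i - \hat{\bm{\beta}}$, coupling clusters through $\hat{\bm{\beta}}$.) The paper avoids expectations entirely: it first proves the in-probability consistency result $\|\hat{\bm{\theta}} - \dot{\bm{\theta}}\|_\infty = o_p(1)$ (Lemma \ref{lem:infinitynormconsistency}, where (C5) enters through the order-statistic bound), then recasts the Taylor expansion as a fixed point $\hat{\bm{\theta}} - \dot{\bm{\theta}} = (\bm{I}_{(m+1)p} - \bm{\Lambda})^{-1}\bm{B}^{-1}\nabla Q(\dot{\bm{\theta}})$ with $\|\bm{\Lambda}\|_\infty = o_p(1)$ (Lemma \ref{lem:invertibility}), and controls the remainder through the geometric series in probability (Section \ref{sec:uncond_remainder}), obtaining $O_p(m^{1/2}n_L^{-1}) = o_p(1)$ after the $m^{1/2}$ scaling precisely under $m n_L^{-2} \rightarrow 0$. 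To repair your argument you would need either a bona fide moment bound for $\hat{\bm{\gamma}}_i - \dot{\bm{\gamma}}_i$ (e.g.\ via truncation) or to convert your Markov step into an in-probability bootstrapping argument of this fixed-point type --- at which point you have rebuilt the machinery your proposal omits.
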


This result should not be too surprising given the form of \eqref{eq:Mfixed_basic.uncond}. Furthermore, the rate of convergence and asymptotic distribution coincides with the result obtained by \citet{jiang2021usable} for the partnered fixed effects for the quasi-maximum likelihood estimator. More importantly, Theorem \ref{thm:unconditional_fixef} allows practitioners to straightforwardly perform statistical inference for the fixed effects, so long as $m n_L^{-2} \rightarrow 0 $. Although $\dot{\bm{G}}$ is not known, we can appeal to Slutsky's theorem and replace it with a consistent estimator (e.g., the sample covariance matrix of the estimated random effects).
Theorem \ref{thm:unconditional_fixef} contrasts with Theorem \ref{thm:conditionalnormality} derived under the conditional regime, where $m n_L^{-1} \rightarrow 0$ is required but the convergence rate is ${N}^{1/2}$. This reduction in the rate of convergence arises because the leading term in the Taylor expansion is different: in the unconditional regime, it is simply the normalised sum of random effects over all the clusters, and thus its variability is dominated by the term $m^{-1/2} \sum_{i=1}^m \dot{\bm{b}}_i$. However, this term is deterministic in the conditional regime, and serves to enforce a sum-to-zero constraint instead as discussed in Section \ref{sec:conditionaltheory}. Generally speaking, the Taylor expansion can be interpreted as comprising terms which either capture the stochasticity in the random effects vector $\dot{\bm{b}}$, or the stochasticity in responses $y_{ij}$ given the random effects. These terms compete with each other, and which one dominates depends on the relative rates of $m$ and $n_i$. This intricacy in the nature of the results will be made apparent in our results for the prediction gap in Section \ref{subsec:predictiongap}.


\subsection{Estimators of the Random Effects} \label{subec:randomeffects_unconditional}

Next, we state a convergence result for the PQL estimators of the random effects under the unconditional regime.

\begin{theorem} \label{thm:unconditional_predictors}
Assume Conditions (C1) - (C5) are satisfied and $m n_L^{-2} \rightarrow 0 $. Then as $m,n_L \rightarrow \infty$ and unconditional on the random effects $\dot{\bm{b}}$, it holds that $\bm{A}_r (\hat{\bm{b}} - \dot{\bm{b}} )\overset{P}{\rightarrow} \bm{0}_q $ . 
\end{theorem}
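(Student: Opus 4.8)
The plan is to reduce the claim to a finite-dimensional consistency statement by exploiting the finite selection property of $\bm{A}_r$, and then to control the resulting individual prediction gaps using the expansion \eqref{eq:Mrand_basic.uncond}. Partition $\bm{A}_r = [\bm{A}_{r,1}, \ldots, \bm{A}_{r,m}]$ into $q \times p$ blocks. The finite selection property, applied across the finitely many rows of $\bm{A}$, yields a single $m_0$ (uniform in $m$) with $\bm{A}_{r,i} = \bm{0}$ for all $i > m_0$, so that
\[
\bm{A}_r(\hat{\bm{b}} - \dot{\bm{b}}) = \sum_{i=1}^{m_0} \bm{A}_{r,i}(\hat{\bm{b}}_i - \dot{\bm{b}}_i),
\]
a finite sum whose coefficient matrices $\bm{A}_{r,i}$ are bounded, being componentwise convergent by the assumptions on $\bm{A}$. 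It therefore suffices to show that each individual gap satisfies $\hat{\bm{b}}_i - \dot{\bm{b}}_i = o_p(1)$ unconditionally, and then invoke the continuous mapping / Slutsky argument for the finite linear combination.

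For a single cluster I would read off the block form of \eqref{eq:Mrand_basic.uncond}. Since $\bm{Z} = \text{bdiag}(\bm{Z}_1, \ldots, \bm{Z}_m)$ and $\dot{\bm{W}}$ is diagonal, $(\bm{Z}^\top \dot{\bm{W}} \bm{Z})^{-1}$ is block diagonal, and the $i$th block of the expansion is
\[
\hat{\bm{b}}_i - \dot{\bm{b}}_i = - m^{-1}\sum_{k=1}^m \dot{\bm{b}}_k + (\bm{Z}_i^\top \dot{\bm{W}}_i \bm{Z}_i)^{-1}\{\hat{\phi}^{-1}\bm{Z}_i^\top(\bm{y}_i - \dot{\bm{\mu}}_i)\} + o_p(1).
\]
The first term is the empirical mean of the i.i.d.\ $N(\bm{0},\dot{\bm{G}})$ random effects, hence $O_p(m^{-1/2})$ and so $o_p(1)$ as $m \to \infty$. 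This is exactly the quantity that acts as a deterministic, non-vanishing reparametrization bias in the conditional regime of Theorem \ref{thm:conditionalnormality}, but which now vanishes because it is genuinely random; this is the conceptual heart of the contrast between the two regimes.

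For the second term I would show it is $O_p(n_i^{-1/2})$. Under the partnered setting $\bm{Z}_i = \bm{X}_i$, Condition (C2) gives $(\bm{Z}_i^\top \dot{\bm{W}}_i \bm{Z}_i)^{-1} = O(n_i^{-1})$ in operator norm, uniformly in $i$. Conditional on $\dot{\bm{b}}_i$, the score-like vector $\hat{\phi}^{-1}\bm{Z}_i^\top(\bm{y}_i - \dot{\bm{\mu}}_i)$ is a sum of $n_i$ independent mean-zero terms with conditional variance of order $n_i$, using the boundedness of $a''$ from (C1), of $\bm{x}_{ij}$ from (C2), and $\hat{\phi}^{-1} = O_p(1)$ from (C4); hence it is $O_p(n_i^{1/2})$ conditionally, with a constant uniform in $\dot{\bm{b}}_i$. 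A Chebyshev/Markov argument then transfers this to the unconditional $O_p(n_i^{1/2})$, so the product is $O_p(n_i^{-1/2}) = o_p(1)$ since $n_i \geq n_L \to \infty$. Combining the two terms gives $\hat{\bm{b}}_i - \dot{\bm{b}}_i = O_p(m^{-1/2}) + O_p(n_i^{-1/2}) + o_p(1) = o_p(1)$, and substitution into the finite sum completes the argument.

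I expect the main obstacles to be two, both of a bookkeeping rather than conceptual nature. First, justifying that the remainder in \eqref{eq:Mrand_basic.uncond} is genuinely $o_p(1)$ blockwise is where the rate condition $m n_L^{-2} \to 0$ enters, controlling the neglected second- and third-order Taylor terms of the PQL objective; I would invoke the expansion as already established under the theorem's hypotheses. Second, the conditional-to-unconditional transfer for the score term needs care, since within a cluster the $y_{ij}$ are only \emph{conditionally} independent given $\dot{\bm{b}}_i$; the uniform-in-$\dot{\bm{b}}_i$ variance bound furnished by (C1)--(C2) is precisely what allows the Markov argument to pass through the expectation over $\dot{\bm{b}}_i$. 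The finite selection property of $\bm{A}_r$ is what keeps the entire argument finite-dimensional, sidestepping any difficulty in pushing an $o_p(1)$ vector through an operator of growing dimension.
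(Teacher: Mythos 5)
Your proposal is correct and follows essentially the same route as the paper's proof: both rest on the expansion \eqref{eq:Mrand_basic.uncond} (with the remainder controlled under $m n_L^{-2} \rightarrow 0$), identify the two leading terms $-\bm{1}_m \otimes m^{-1}\sum_{i=1}^m \dot{\bm{b}}_i = O_p(m^{-1/2})$ and the block-diagonal score term of order $O_p(n_L^{-1/2})$ via iterated expectation/variance plus Chebyshev, and conclude through the finite selection property of $\bm{A}_r$. The only cosmetic difference is that the paper invokes its remainder-order results separately in the cases $m n_L^{-1} \rightarrow \infty$ and $m n_L^{-1} \rightarrow 0$ with different normalizations, whereas you observe directly that both leading terms vanish unnormalized regardless of the relative rates --- a harmless streamlining of the same argument.
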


Practically, Theorem \ref{thm:unconditional_predictors} confirms the asymptotic distribution of a finite subset of the PQL estimators is the distribution of the random effects themselves. This can play a useful role for helping to validate the examination of the empirical distribution of PQL estimators $\hat{\bm{b}}$ as a model diagnostic tool.
For instance, if the random effects are normally distributed and $\bm{A}_r$ only selects the first cluster, then we would expect $\hat{\bm{b}}_1$ to have an approximate $N(\bm{0},\dot{\bm{G}})$ distribution.
On the other hand, Theorem \ref{thm:unconditional_predictors} does not help us when it comes to performing likelihood-based inference for the true random effects $\dot{\bm{b}}$, as this does not appear in the approximation $\hat{\bm{b}}_1 \sim N(\bm{0},\dot{\bm{G}})$ itself. 

As an aside, note the above means we can apply the continuous mapping theorem and show that $q^{-1} \sum_{i=1}^q \hat{\bm{b}}_i \hat{\bm{b}}_i^\top - q^{-1} \sum_{i=1}^q \dot{\bm{b}}_i \dot{\bm{b}}_i^\top \overset{P}{\rightarrow}  0$ for any $q \in \mathbb{N}$. Since $q^{-1} \sum_{i=1}^q \dot{\bm{b}}_i \dot{\bm{b}}_i^\top \overset{P}{\rightarrow} \dot{\bm{G}}$ as $q \rightarrow \infty$, this further reiterates the use of a sample covariance matrix of the estimated random effects as an estimator of $\dot{\bm{G}}$ \citep[consistent with][]{jiang2001maximum,hui2017joint}. 


\subsection{Prediction Gap} \label{subsec:predictiongap}

In this section, we present a result for the large sample distribution of a finite subset of the prediction gap, $\hat{\bm{b}} - \dot{\bm{b}}$, in the unconditional regime. As mentioned above, the asymptotic distribution as well as the convergence rate of the prediction gap depends on the relative rates of growth of $m$ and $n_i$. This contrasts with the conditional regime, where there is no dependence on the relative rate and the PQL estimator of the random effects is always normally distributed with the convergence rate $n_i^{1/2}$.

We first introduce some terminology. Suppose we have two arbitrary continuous cumulative distribution functions (cdfs) $F_1$ and $F_2$ with supports in $\mathbb{R}^p$. 
Then we define the convolution of $F_1$ and $F_2$, denoted $F_1 * F_2$, as $(F_1 * F_2) (\bm{z}) = \int_{\mathbb{R}^p} F_1(\bm{z} - \bm{\tau}) dF_2(\bm{\tau})$. Next, for a random variable $\bm{P}$, we say $\bm{P} \sim \text{mixN}\{\bm{\mu}(\bm{b}), \bm{\Sigma}(\bm{b}), F_{\bm{b}}\}$ if $\bm{P}|\bm{b} \sim N\{\bm{\mu}(\bm{b}), \bm{\Sigma}(\bm{b})\}$ and $F_{\bm{b}}$ is the cdf of $\bm{b}$, where the conditional mean vector $\bm{\mu}(\bm{b})$ and covariance matrix $\bm{\Sigma}(\bm{b})$ may depend on $\bm{b}$. In other words, $\bm{P}$ has cdf $F_{\bm{P}} (\bm{p}) = \int \Psi_{\bm{P} | \bm{b}} (\bm{p}) dF_{\bm{b}}(\bm{b})$, where $\Psi_{\bm{P} | \bm{b}}$ is the cdf of $N\{\bm{\mu}(\bm{b}), \bm{\Sigma}(\bm{b}) \}$. A special case of this normal scale-mixture distribution is when $\bm{\mu}(\bm{b})$ and $\bm{\Sigma}(\bm{b})$ do not depend on $\bm{b}$, so that $F_{\bm{P}} (\bm{p}) = \int \Psi_{\bm{P} | \bm{b}} (\bm{p}) dF_{\bm{b}}(\bm{b})  = \Psi_{\bm{P} | \bm{b}} (\bm{p}) \int dF_{\bm{b}}(\bm{b}) = \Psi_{\bm{P} | \bm{b}} (\bm{p})$; in other words, the normal scale-mixture distribution reduces to a normal distribution. Note estimators with asymptotic normal mixture distributions have arisen in previous literature, for instance, on results relating to local asymptotic normality and non-ergodic models \citep{lecam,basawa2012asymptotic}. 

Using the above definition, we obtain the following results. 

\begin{theorem} \label{thm:unconditonal_ranef}
Assume Conditions (C1)-(C5) are satisfied and $m n_L^{-2} \rightarrow 0$. Then as $m,n_L \rightarrow \infty$ and unconditional on the random effects $\dot{\bm{b}}$, for each $i = 1,\ldots,m$ we have the following:
\begin{enumerate}
    \item[(a)] If $m n_i^{-1} \rightarrow \infty$, then $ {n}_i^{1/2} ( \hat{\bm{b}}_i - \dot{\bm{b}}_i) \overset{D}{\rightarrow} \text{mixN}(\bm{0}, \dot{\bm{K}}_i , F_{\dot{\bm{b}}_i}) $.
    \item[(b)] If $m n_i^{-1} \rightarrow \gamma_i \in (0,\infty)$, then $ {n}_i^{1/2} ( \hat{\bm{b}}_i - \dot{\bm{b}}_i) \overset{D}{\rightarrow} \text{mixN}(\bm{0}, \dot{\bm{K}}_i , F_{\dot{\bm{b}}_i}) * N(\bm{0}, \gamma_i^{-1} \dot{\bm{G}}) $. 
    \item[(c)] If $m n_i^{-1} \rightarrow 0$, then $ {m}^{1/2} ( \hat{\bm{b}}_i - \dot{\bm{b}}_i) \overset{D}{\rightarrow} N(\bm{0}, \dot{\bm{G}})$.
\end{enumerate}
\end{theorem}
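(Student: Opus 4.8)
The plan is to start from the stochastic expansion \eqref{eq:Mrand_basic.uncond} and read off the block corresponding to cluster $i$. Because $\bm{Z}$ is block-diagonal, both $(\bm{Z}^\top\dot{\bm{W}}\bm{Z})^{-1}$ and $\bm{Z}^\top(\bm{y}-\dot{\bm{\mu}})$ decouple across clusters, so the $i$th $p$-block reads
\begin{equation*}
\hat{\bm{b}}_i - \dot{\bm{b}}_i = \underbrace{-\,m^{-1}\sum_{k=1}^m \dot{\bm{b}}_k}_{=:\,\bm{T}_1} + \underbrace{(\bm{Z}_i^\top \dot{\bm{W}}_i \bm{Z}_i)^{-1}\hat{\phi}^{-1}\bm{Z}_i^\top(\bm{y}_i - \dot{\bm{\mu}}_i)}_{=:\,\bm{T}_2} + \bm{R}_i .
\end{equation*}
Before anything else I would sharpen the remainder: the $o_p(1)$ in \eqref{eq:Mrand_basic.uncond} is too crude once we multiply by $n_i^{1/2}$ or $m^{1/2}$, so I would re-derive the linearization while retaining the second-order term and show $\bm{R}_i = o_p(\min(m,n_i)^{-1/2})$. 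This is precisely where the growth condition $m n_L^{-2}\to 0$ enters: together with the consistency rate for $\hat{\bm{\theta}}$ supplied by (C5) and the uniform bounds on $a''$ and $a'''$ from (C1)--(C2), it forces the quadratic Taylor term to be negligible relative to the relevant normalizing rate in every regime.

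Next I would analyse the two leading terms separately. For $\bm{T}_1$, the classical multivariate central limit theorem for the i.i.d.\ sequence $\dot{\bm{b}}_k\sim N(\bm{0},\dot{\bm{G}})$ gives $m^{1/2}\bm{T}_1 = -m^{-1/2}\sum_{k=1}^m\dot{\bm{b}}_k \overset{D}{\rightarrow} N(\bm{0},\dot{\bm{G}})$, so $\bm{T}_1 = O_p(m^{-1/2})$. For $\bm{T}_2$ I would condition on $\dot{\bm{b}}_i$: given $\dot{\bm{b}}_i$ the summands $\hat{\phi}^{-1}\bm{z}_{ij}(y_{ij}-\dot{\mu}_{ij})$ are independent, mean-zero, with conditional variance governed by $a''(\dot{\eta}_{ij})$, so a Lindeberg argument --- valid because (C1)--(C2) bound the covariates and $a''$ while (C5) supplies fourth moments --- yields $n_i^{1/2}\bm{T}_2 \mid \dot{\bm{b}}_i \overset{D}{\rightarrow} N(\bm{0},\dot{\bm{K}}_i)$, with the $\phi$ factors cancelling exactly as noted after the definition of $\bm{\Omega}$. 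The covariance $\dot{\bm{K}}_i = \dot{\bm{K}}_i(\dot{\bm{b}}_i)$ depends on the realised $\dot{\bm{b}}_i$ through the linear predictor, which is the source of the mixing. To pass from this conditional statement to an unconditional one I would use characteristic functions: writing $E[e^{\mathrm{i}\bm{t}^\top n_i^{1/2}\bm{T}_2}] = E\big(E[e^{\mathrm{i}\bm{t}^\top n_i^{1/2}\bm{T}_2}\mid\dot{\bm{b}}_i]\big)$, the inner expectation converges pointwise to $\exp(-\tfrac12\bm{t}^\top\dot{\bm{K}}_i(\dot{\bm{b}}_i)\bm{t})$ for $F_{\dot{\bm{b}}_i}$-almost every $\dot{\bm{b}}_i$, and since the integrand is bounded by $1$, dominated convergence delivers the characteristic function of $\text{mixN}(\bm{0},\dot{\bm{K}}_i,F_{\dot{\bm{b}}_i})$.

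With these two limits in hand the three regimes follow by comparing scales. In (c), where $m n_i^{-1}\to 0$, normalising by $m^{1/2}$ leaves $m^{1/2}\bm{T}_1\overset{D}{\rightarrow}N(\bm{0},\dot{\bm{G}})$ while $m^{1/2}\bm{T}_2 = (m/n_i)^{1/2}O_p(1)=o_p(1)$. In (a), where $m n_i^{-1}\to\infty$, normalising by $n_i^{1/2}$ leaves $n_i^{1/2}\bm{T}_2\overset{D}{\rightarrow}\text{mixN}(\bm{0},\dot{\bm{K}}_i,F_{\dot{\bm{b}}_i})$ while $n_i^{1/2}\bm{T}_1 = (n_i/m)^{1/2}O_p(1)=o_p(1)$. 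In (b), where $m n_i^{-1}\to\gamma_i$, both terms survive at rate $n_i^{1/2}$, so I need joint convergence. Here I would first replace $\bm{T}_1$ by $\tilde{\bm{T}}_1 := -m^{-1}\sum_{k\neq i}\dot{\bm{b}}_k$, which differs from $\bm{T}_1$ by $O_p(m^{-1})=o_p(n_i^{-1/2})$; by the independent-cluster assumption $\tilde{\bm{T}}_1$ is exactly independent of $(\dot{\bm{b}}_i,\bm{y}_i)$, hence of $\bm{T}_2$, for every finite $(m,n_i)$, so the joint characteristic function factorises. Each factor converges ($n_i^{1/2}\tilde{\bm{T}}_1\overset{D}{\rightarrow}N(\bm{0},\gamma_i^{-1}\dot{\bm{G}})$ since $(n_i/m)^{1/2}\to\gamma_i^{-1/2}$), and the product of the limiting characteristic functions is, by the convolution definition introduced above, that of $\text{mixN}(\bm{0},\dot{\bm{K}}_i,F_{\dot{\bm{b}}_i})*N(\bm{0},\gamma_i^{-1}\dot{\bm{G}})$. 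The main obstacle I anticipate is the conditional-to-unconditional passage for $\bm{T}_2$: making the conditional central limit theorem hold uniformly enough that the characteristic-function and dominated-convergence argument is rigorous, and checking that $\dot{\bm{K}}_i(\dot{\bm{b}}_i)$ is well defined $F_{\dot{\bm{b}}_i}$-almost surely. This is exactly the mechanism --- foreshadowed by the exchangeable-CLT counterexample in Section \ref{subsec:poissonpredictiongap} --- that produces a genuine scale mixture rather than a normal limit.
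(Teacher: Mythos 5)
Your proposal is correct and its skeleton --- the two competing leading terms in \eqref{eq:Mrand_basic.uncond}, a conditional CLT for the within-cluster score term, passage to the unconditional mixture by integrating over $\dot{\bm{b}}_i$ with dominated convergence, and a scale comparison across the three regimes --- is exactly the paper's. You diverge at three points, all benign. First, you run the conditional-to-unconditional step through characteristic functions, whereas the paper works directly with cdfs, writing $F_{\bm{P}_{n_1}}(\bm{x}) = \int F_{\bm{P}_{n_1}|\dot{\bm{b}}_1}(\bm{x}) f(\dot{\bm{b}}_1)\, d\dot{\bm{b}}_1$ and dominating the integrand by $f(\dot{\bm{b}}_1)$; the cf route is if anything smoother, since pointwise convergence holds everywhere rather than only at continuity points. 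Second, in part (b) the paper keeps $\bm{E}_2 = m^{-1/2}\sum_{k=1}^m \dot{\bm{b}}_k$ intact, conditions on $\dot{\bm{b}}_i$, uses conditional independence of the two leading terms given $\dot{\bm{b}}_i$, and exploits the fact that the conditional limit law of $\bm{E}_2$ does not depend on $\dot{\bm{b}}_i$; your leave-one-out replacement $\tilde{\bm{T}}_1$ buys exact finite-sample independence and exact factorization of the joint characteristic function, at the negligible cost of an $O_p(m^{-1}) = o_p(n_i^{-1/2})$ correction --- slightly cleaner than the paper's version. Third, in part (c) the paper simply notes $m^{-1/2}\sum_k \dot{\bm{b}}_k$ is exactly $N(\bm{0},\dot{\bm{G}})$ under the Gaussian assumption; your CLT argument also works and is marginally more general.

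The one place your plan understates the difficulty is the remainder. A per-cluster second-order Taylor bound does not suffice: the parameter dimension grows with $m$, and the inverse Hessian couples all clusters through the fixed-effects block, so your $\bm{R}_i$ does not decouple the way the leading term does. The paper first proves sup-norm consistency $\|\hat{\bm{\theta}} - \dot{\bm{\theta}}\|_\infty = o_p(1)$ (Lemma \ref{lem:infinitynormconsistency}, which is where (C5) and a maximal inequality over the $m$ clusters enter, and where $m n_L^{-2} \rightarrow 0$ first bites), and then controls the full remainder globally via a geometric-series bound on the matrix $\bm{\Lambda}$ (Section \ref{sec:uncond_remainder}), yielding $O_p(n_L^{-1})$ for the random-effect blocks, which is $o_p(m^{-1/2})$ precisely because $m n_L^{-2} \rightarrow 0$; the identity $\bm{T}_1 = \sum_{i=1}^m \bm{T}_{2i}$ from $\bm{X}_i = \bm{Z}_i$ is also essential there. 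Your target $\bm{R}_i = o_p\{\min(m,n_i)^{-1/2}\}$ is the right one and is what the paper delivers, but reaching it requires this global, growing-dimension argument; it constitutes the bulk of the actual proof.
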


\begin{corollary}
\label{corollary:predgap_joint}
Assume Conditions (C1)-(C5) are satisfied, and $m n_L^{-2} \rightarrow 0$. If $m n_L^{-1} \rightarrow \infty$, then as $m,n_L \rightarrow \infty$ and unconditional on the random effects $\dot{\bm{b}}$,
$\bm{A}_r \bm{D}_r ( \hat{\bm{b}} - \dot{\bm{b}}) \overset{D}{\rightarrow} \text{mixN}(\bm{0}, \bm{\Omega}_r , F_{\dot{\bm{b}}}) $.
\end{corollary}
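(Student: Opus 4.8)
The plan is to start from the first-order approximation \eqref{eq:Mrand_basic.uncond} for the joint prediction gap and reduce $\bm{A}_r \bm{D}_r(\hat{\bm{b}} - \dot{\bm{b}})$ to a finite sum of independent cluster-level contributions, to which a conditioning argument can then be applied. Premultiplying \eqref{eq:Mrand_basic.uncond} by $\bm{A}_r \bm{D}_r$ produces two leading terms: a cross-cluster coupling term $-\bm{A}_r \bm{D}_r(\bm{1}_m \otimes m^{-1}\sum_{i=1}^m \dot{\bm{b}}_i)$ and a term $\bm{A}_r \bm{D}_r(\bm{Z}^\top \dot{\bm{W}} \bm{Z})^{-1}\hat{\phi}^{-1}\bm{Z}^\top(\bm{y} - \dot{\bm{\mu}})$. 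My first step is to show the coupling term is $o_p(1)$: since $\bm{A}_r$ inherits the finite selection property it has nonzero entries in only finitely many cluster blocks, and for each such block the contribution has order $n_i^{1/2}\|m^{-1}\sum_{k=1}^m \dot{\bm{b}}_k\| = O_p\{(n_i/m)^{1/2}\}$, which vanishes precisely because $m n_L^{-1} \to \infty$. This is the one place the rate assumption of the corollary is used, and it is exactly what distinguishes this regime from cases (b) and (c) of Theorem \ref{thm:unconditonal_ranef}.

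Next I would exploit the block structure. Because $\bm{Z}$ is block diagonal and $\dot{\bm{W}}$ diagonal, $(\bm{Z}^\top \dot{\bm{W}} \bm{Z})^{-1}$ is block diagonal with $i$th block $(\bm{Z}_i^\top \dot{\bm{W}}_i \bm{Z}_i)^{-1}$, and $\bm{D}_r$ rescales the $i$th block by $n_i^{1/2}$. Writing $\bm{S}_i = n_i^{1/2}(\bm{Z}_i^\top \dot{\bm{W}}_i \bm{Z}_i)^{-1}\hat{\phi}^{-1}\bm{Z}_i^\top(\bm{y}_i - \dot{\bm{\mu}}_i)$ and letting $\bm{A}_{r,i}$ denote the cluster-$i$ column block of $\bm{A}_r$, the remaining term equals $\sum_i \bm{A}_{r,i}\bm{S}_i$, the sum running over the finitely many selected clusters. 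I would then record two facts. First, the clusters are independent, so conditional on $\dot{\bm{b}}$ the $\bm{S}_i$ are independent and each $\bm{S}_i$ depends on $\dot{\bm{b}}$ only through its own $\dot{\bm{b}}_i$. Second, the marginal analysis behind Theorem \ref{thm:unconditonal_ranef}(a) already shows $\bm{S}_i \mid \dot{\bm{b}}_i \overset{D}{\to} N(\bm{0}, \dot{\bm{K}}_i(\dot{\bm{b}}_i))$, with the conditional Lindeberg condition supplied by Condition (C5). A direct computation using $\mathrm{var}(\bm{y}_i \mid \dot{\bm{b}}_i) = \dot{\phi}\hat{\phi}\dot{\bm{W}}_i$ and $\bm{Z}_i = \bm{X}_i$ confirms that the conditional covariance of $\bm{S}_i$ converges to $\dot{\bm{K}}_i(\dot{\bm{b}}_i)$, and that $(\dot\phi/\hat\phi)\bm{A}_r \bm{D}_r(\bm{Z}^\top \dot{\bm{W}} \bm{Z})^{-1}\bm{D}_r^\top \bm{A}_r^\top$ decomposes in the limit as $\sum_i \bm{A}_{r,i}\dot{\bm{K}}_i(\dot{\bm{b}}_i) \bm{A}_{r,i}^\top = \bm{\Omega}_r$, matching the definition of $\bm{\Omega}_r$.

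The final step assembles the joint mixture limit via characteristic functions. Conditional on $\dot{\bm{b}}$, independence of the $\bm{S}_i$ factorises the conditional characteristic function of $\sum_i \bm{A}_{r,i}\bm{S}_i$ into a product whose limit is $\exp\{-\tfrac12 \bm{t}^\top \bm{\Omega}_r(\dot{\bm{b}})\bm{t}\}$ for almost every $\dot{\bm{b}}$. Since conditional characteristic functions are bounded by one, the bounded convergence theorem yields $E[\exp(i\bm{t}^\top \sum_i \bm{A}_{r,i}\bm{S}_i)] \to E_{\dot{\bm{b}}}[\exp\{-\tfrac12 \bm{t}^\top \bm{\Omega}_r(\dot{\bm{b}})\bm{t}\}]$, which is exactly the characteristic function of $\text{mixN}(\bm{0}, \bm{\Omega}_r, F_{\dot{\bm{b}}})$; Lévy's continuity theorem then delivers convergence in distribution, and Slutsky's theorem reinstates the $o_p(1)$ Taylor remainder and the negligible coupling term. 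The main obstacle I anticipate is making the cluster-level central limit theorem hold in the almost-sure (in $\dot{\bm{b}}$) sense required for the bounded convergence step, rather than merely in probability: one must verify that the Lindeberg condition from (C5) and the convergence of the conditional covariance hold for almost every realisation of $\dot{\bm{b}}_i$, and that $\dot{\bm{K}}_i(\dot{\bm{b}}_i)$ is finite and positive definite off a null set. Handling this almost-everywhere dependence on $\dot{\bm{b}}_i$, together with controlling the coupling term uniformly over the selected clusters under $m n_L^{-1}\to\infty$, is where the real work lies; the remaining combinatorics are routine given Theorem \ref{thm:unconditonal_ranef}(a).
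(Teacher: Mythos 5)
Your proposal is correct and follows essentially the same route as the paper's own proof (given under Theorem \ref{thm:unconditonal_ranef}(a), which the paper notes also establishes Corollary \ref{corollary:predgap_joint}): reduce to the leading term of \eqref{eq:Mrand_basic.uncond}, dispose of the $O_p\{(n_i/m)^{1/2}\}$ cross-cluster coupling term using $m n_L^{-1} \rightarrow \infty$, invoke the conditional Lindeberg--Feller central limit theorem per selected cluster, and integrate out $\dot{\bm{b}}$ by a bounded/dominated convergence argument. The only difference is cosmetic: you pass to the limit via conditional characteristic functions with bounded convergence and L\'evy continuity, whereas the paper applies the dominated convergence theorem directly to the conditional cdfs $F_{\bm{P}_{n_1}\mid \dot{\bm{b}}_1}$ and handles the joint case by noting the same argument applies to any finite subset of clusters.
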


Theorems \ref{thm:unconditional_predictors} and \ref{thm:unconditonal_ranef} bears some similarity to the results of \citet{lyu2021asymptotics}, who show for LMMs that the distribution of the EBLUP can asymptotically be written as the convolution between the distribution of the random effects and the distribution of a smaller order stochastic term. However, the above is the first to establish such results for GLMMs. Theorem \ref{thm:unconditonal_ranef} states that the correct asymptotic distribution to use when performing inference using the PQL estimate of the random effects depends on the relative growth rates of $m$ and $n_i$. As hinted at previously, this is a consequence of there being two competing terms in the corresponding Taylor expansion \eqref{eq:Mrand_basic.uncond}: one term arising from the random effects, and the other term arising from the distribution of the responses given the random effects. 

When $m n_i^{-1} \rightarrow \infty$ i.e., the number of clusters grows faster than the cluster size, the appropriate asymptotic distribution is given by the scale-mixture distribution $ \text{mixN}\{\bm{0},  (\hat{\phi} \dot{\phi}^{-1} \bm{X}_i^\top \dot{\bm{W}}_i \bm{X}_i )^{-1} , F_{\dot{\bm{b}}_i}  \}$, noting again that $\hat{\phi} \dot{\phi}^{-1} \dot{\bm{W}}_i = \dot{\phi}^{-1} \text{diag}\{a''(\dot{\eta}_{i1}), \ldots, a''(\dot{\eta}_{in_i}) \}$. Corollary \ref{corollary:predgap_joint} offers a slightly more general result than that given in Theorem \ref{thm:unconditonal_ranef} for the $m n_L^{-1} \rightarrow \infty$ case. 
Note 
in the linear case, the GLM iterative weights $\dot{\bm{W}}$ do not depend on the random effects $\dot{\bm{b}}$, and so the corresponding normal scale-mixture distribution reduces to a normal distribution, consistent with the asymptotic normality result derived for the EBLUP in \citet{lyu2021asymptotics}.
Practically, numerical techniques or simulation are required to compute the quantiles of the normal scale-mixture distribution for constructing prediction intervals. We use this approach in our simulations in Section \ref{sec:sims}.  

When $m n_i^{-1} \rightarrow 0$ i.e., the cluster sizes grows faster than the number of clusters, 
Theorem \ref{thm:unconditonal_ranef} shows that the appropriate approximation to consider is the normal distribution $N(\bm{0}, n^{-1} \dot{\bm{G}})$. Note this is identical to the fixed effects result of Theorem \ref{thm:unconditional_fixef}, and yields relatively straightforward prediction intervals for $\dot{\bm{b}}_i$ as long as we have a consistent estimator for $\dot{\bm{G}}$. Intuitively, the asymptotic distribution here is identical to that derived in Theorem \ref{thm:unconditional_fixef} because the dominating terms in the Taylor expansions in both cases are effectively the same. 
Finally, when $m n_i^{-1} \rightarrow \gamma \in (0,\infty)$, 
Theorem \ref{thm:unconditonal_ranef}b states that the asymptotic distribution of the PQL estimates is given by the convolution of the two cases above, noting that these two leading terms in the Taylor expansion are asymptotically independent. 
Again, numerical techniques/simulations are needed to compute prediction intervals.


In summary, Theorem \ref{thm:unconditonal_ranef} offers an asymptotically valid way of computing prediction intervals for the realised random effects in the unconditional regime, when the random effects have a corresponding partnered fixed effect in the model. It implies that estimating the variance of the prediction gap, and then naively assuming normality in order to construct prediction intervals for the random effects, will fail to yield asymptotically correct inference under the unconditional regime for PQL estimation. 


\subsection{Linear Predictor} \label{subsec:linearpredictor_unconditional}

Neither Theorems \ref{thm:unconditional_fixef} nor \ref{thm:unconditonal_ranef} above derive the joint distribution of the fixed effects estimator and prediction gap, of which the linear predictor is a function. Below, to address this, we establish a separate result specifically for the sum of a random effect and its partnered fixed effect, given an arbitrary $p$-dimensional constant vector $\bm{a}$.


\begin{theorem} \label{thm:unconditional_linpred}
Assume Conditions (C1)-(C5) are satisfied, $m n_L^{-2} \rightarrow 0$, and $m n_U^{-1/2} \rightarrow \infty$. Then as $m,n_L \rightarrow \infty$ and unconditional on the random effects $\dot{\bm{b}}$, it holds for each $i = 1,\ldots,m$ that $ {n}_i^{1/2} \bm{a}^\top (\hat{\bm{\beta}} + \hat{\bm{b}}_i - \dot{\bm{\beta}} - \dot{\bm{b}}_i) \overset{D}{\rightarrow} \text{mixN}(\bm{0}, \bm{a}^\top \dot{\bm{K}}_i \bm{a} , F_{\dot{\bm{b}}_i}) $. 
\end{theorem}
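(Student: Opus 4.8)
The plan is to reduce the linear-predictor deviation to a single-cluster score statistic which, after a key cancellation, no longer carries the $m^{-1/2}$-order term responsible for the regime-dependence in Theorem \ref{thm:unconditonal_ranef}, and then to identify its unconditional limit as a normal scale-mixture via a conditional central limit theorem. First I would add the two Taylor expansions \eqref{eq:Mfixed_basic.uncond} and \eqref{eq:Mrand_basic.uncond}, restricting the second to the $i$th random-effect block. Since $\bm{Z}$ is block-diagonal with $\bm{Z}_i=\bm{X}_i$ in the partnered setting, the $i$th block of $(\bm{Z}^\top\dot{\bm{W}}\bm{Z})^{-1}\hat{\phi}^{-1}\bm{Z}^\top(\bm{y}-\dot{\bm{\mu}})$ is exactly $(\bm{X}_i^\top\dot{\bm{W}}_i\bm{X}_i)^{-1}\hat{\phi}^{-1}\bm{X}_i^\top(\bm{y}_i-\dot{\bm{\mu}}_i)$. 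The crucial observation is that $m^{-1}\sum_{k=1}^m\dot{\bm{b}}_k$ enters $\hat{\bm{\beta}}-\dot{\bm{\beta}}$ with a positive sign and $\hat{\bm{b}}_i-\dot{\bm{b}}_i$ with a negative sign, so it cancels in the sum:
\begin{equation*}
\hat{\bm{\beta}}+\hat{\bm{b}}_i-\dot{\bm{\beta}}-\dot{\bm{b}}_i = (\bm{X}_i^\top\dot{\bm{W}}_i\bm{X}_i)^{-1}\hat{\phi}^{-1}\bm{X}_i^\top(\bm{y}_i-\dot{\bm{\mu}}_i) + R_i,
\end{equation*}
where $R_i$ is a combined remainder. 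This cancellation is precisely what removes the $O_p(m^{-1/2})$ random-effects-mean contribution, so that, in contrast to Theorem \ref{thm:unconditonal_ranef}, the surviving leading term is always of order $n_i^{-1/2}$ and no regime-dependent competition of rates arises.

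Next I would rewrite the leading term using the definitions. Because $\dot{\bm{W}}_i=\hat{\phi}^{-1}\mathrm{diag}\{a''(\dot{\eta}_{ij})\}$ and $\dot{\bm{H}}_i=(n_i^{-1}\hat{\phi}\dot{\phi}^{-1}\bm{X}_i^\top\dot{\bm{W}}_i\bm{X}_i)^{-1}$, the scaled leading term simplifies to $\dot{\phi}^{-1}\bm{a}^\top\dot{\bm{H}}_i\,n_i^{-1/2}\bm{X}_i^\top(\bm{y}_i-\dot{\bm{\mu}}_i)$, which depends only on cluster $i$'s data and on $\dot{\bm{b}}_i$ through $\dot{\eta}_{ij}=\bm{x}_{ij}^\top(\dot{\bm{\beta}}+\dot{\bm{b}}_i)$. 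Conditioning on $\dot{\bm{b}}_i$, the responses $y_{ij}$ are independent with $\mathrm{Var}(y_{ij}\mid\dot{\bm{b}}_i)=\dot{\phi}\,a''(\dot{\eta}_{ij})$, so $n_i^{-1/2}\bm{X}_i^\top(\bm{y}_i-\dot{\bm{\mu}}_i)$ is a normalised sum of independent mean-zero vectors. Conditions (C1)-(C2) bound the covariates and $a''$ and guarantee $\dot{\bm{H}}_i\rightarrow\dot{\bm{K}}_i$, while (C5) supplies the moment control for a Lyapunov condition; a conditional central limit theorem then gives $\dot{\phi}^{-1}\bm{a}^\top\dot{\bm{H}}_i\,n_i^{-1/2}\bm{X}_i^\top(\bm{y}_i-\dot{\bm{\mu}}_i)\mid\dot{\bm{b}}_i\overset{D}{\rightarrow}N(0,\bm{a}^\top\dot{\bm{K}}_i\bm{a})$, the variance simplification following from $n_i^{-1}\bm{X}_i^\top\mathrm{diag}\{a''(\dot{\eta}_{ij})\}\bm{X}_i=\dot{\phi}\dot{\bm{H}}_i^{-1}$ together with $\dot{\bm{H}}_i\rightarrow\dot{\bm{K}}_i$. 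To pass from conditional to unconditional convergence I would argue at the level of characteristic functions: the conditional characteristic function converges pointwise, for almost every $\dot{\bm{b}}_i$, to $\exp\{-\tfrac12 t^2\bm{a}^\top\dot{\bm{K}}_i\bm{a}\}$, and since characteristic functions are bounded by one, dominated convergence yields convergence of the unconditional characteristic function to $\int\exp\{-\tfrac12 t^2\bm{a}^\top\dot{\bm{K}}_i\bm{a}\}\,dF_{\dot{\bm{b}}_i}$, which is exactly the characteristic function of $\mathrm{mixN}(\bm{0},\bm{a}^\top\dot{\bm{K}}_i\bm{a},F_{\dot{\bm{b}}_i})$; L\'evy's continuity theorem closes this step.

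The main obstacle is controlling $R_i$ at the sharpened scale $o_p(n_i^{-1/2})$, rather than merely the $o_p(1)$ recorded in \eqref{eq:Mfixed_basic.uncond}-\eqref{eq:Mrand_basic.uncond}, since we multiply through by $n_i^{1/2}$. I expect $R_i$ to be $O_p(m^{-1})$: it arises from the second-order terms in the Taylor expansion of the PQL score, which are quadratic in the estimation error, and under the unconditional regime the dominant part of that error is the fixed-effect deviation of order $m^{-1/2}$, whose square is of order $m^{-1}$ (the remaining cross- and random-effect contributions being no larger after the cancellation, using $mn_L^{-2}\rightarrow0$ and the uniform bound $n_i=O(n_L)$). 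Then $n_i^{1/2}R_i=O_p(n_i^{1/2}m^{-1})=O_p(n_U^{1/2}m^{-1})$, which is $o_p(1)$ precisely under the hypothesis $mn_U^{-1/2}\rightarrow\infty$, after which Slutsky's theorem transfers the limit from the leading term to $n_i^{1/2}\bm{a}^\top(\hat{\bm{\beta}}+\hat{\bm{b}}_i-\dot{\bm{\beta}}-\dot{\bm{b}}_i)$. Making the bound on $R_i$ rigorous, namely tracking the quadratic remainder of the coupled fixed/random-effect score and verifying the $O_p(m^{-1})$ order uniformly over clusters via (C1)-(C5), is the part demanding the most care.
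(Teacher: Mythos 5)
Your proposal matches the paper's proof essentially step for step: the paper likewise sums the two unconditional expansions so that the $m^{-1}\sum_{i=1}^m \dot{\bm{b}}_i$ terms cancel, leaving the single-cluster score $n_i (\bm{X}_i^\top \dot{\bm{W}}_i \bm{X}_i)^{-1} n_i^{-1/2} \dot{\phi}^{-1} \bm{X}_i^\top (\bm{y}_i - \dot{\bm{\mu}}_i)$ plus remainders of order $O_p(m^{-1/2}) + O_p(n_L^{-1/2}) + O_p(m^{-1} n_U^{1/2})$, the last of which vanishes exactly under $m n_U^{-1/2} \rightarrow \infty$, and then obtains the $\text{mixN}(\bm{0}, \bm{a}^\top \dot{\bm{K}}_i \bm{a}, F_{\dot{\bm{b}}_i})$ limit via the conditional CLT and dominated convergence (your characteristic-function route is an equivalent variant of the paper's cdf argument from Theorem 4(a)). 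One caveat on your remainder heuristic: the claim $R_i = O_p(m^{-1})$ is not accurate in the regime $m n_U^{-1} \rightarrow \infty$, where the paper's remainder analysis bounds the Taylor remainder by $O_p(n_L^{-1})$ (which can exceed $m^{-1}$ when $m$ grows faster than $n_L$), though this does not affect your conclusion since $n_i^{1/2} O_p(n_L^{-1}) = O_p(n_L^{-1/2}) = o_p(1)$ under $n_i = O(n_L)$.
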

As an example, consider again a linear predictor involving a fixed and random intercept and a fixed and random slope for a single covariate. Then we set $\bm{a} = (1,x_{ij})^\top$ and obtain $n_i^{1/2} (\hat{\eta}_{ij} - \dot{\eta}_{ij}) = n_i^{1/2} ( \hat{\beta}_0 + \hat{b}_{i0} + \hat{\beta}_1 x_{ij} + \hat{b}_{i1} x_{ij} - \dot{\beta}_0 - \dot{b}_{i0} - \dot{\beta}_1 x_{ij} - \dot{b}_{i1} x_{ij} ) \overset{d}{\rightarrow} mixN(0, \bm{a}^\top \dot{\bm{K}}_i \bm{a}, F_{\dot{\bm{b}}_i})$.
For performing inference on the linear predictor in a GLMM or functions thereof, Theorem \ref{thm:unconditional_linpred} states that we again need to employ the normal scale-mixture distribution. 
This result differs from the asymptotic normality of the linear predictor derived under the conditional regime in Section \ref{sec:conditionaltheory}.
Note we can also develop a similar result for the difference between the prediction gaps of two clusters, and we refer to the supplementary material for details of this result.

To conclude the section, we remark that Theorems \ref{thm:unconditional_fixef}-\ref{thm:unconditional_linpred} 
do not offer results on the joint distribution of the prediction gap and fixed effects. However, we know from the associated proof that the prediction gaps for each cluster are asymptotically independent from each other as well as from the fixed effects estimator when $m n_U^{-1} \rightarrow \infty$ and $m n_L^{-2} \rightarrow 0$, and so a joint distribution result can be derived from this. 

\section{Simulation Study} \label{sec:sims}

We performed a numerical study to assess the usefulness of our asymptotic results in finite samples. We simulated data from an independent-cluster GLMM with five fixed and random effect covariates, considering Poisson and Bernoulli responses, as follows. First, we set the first component of $\bm{x}_{ij} = \bm{z}_{ij}$ equal to one to represent a fixed/random intercept. The second and third components are simulated from a bivariate normal distribution with mean zero and standard deviation one, with correlation equal to 0.5. The fourth component is generated independently from a standard normal distribution, and the last component is simulated from a Bernoulli distribution with a probability of success equal to 0.5. Next, we set the 5-vector of true fixed effect coefficients to either $\dot{\bm{\beta}} = (2,0.1,-0.1,0.1,0.1)^\top$ for Poisson responses, or $\dot{\bm{\beta}} = (-0.1,0.1,-0.1,0.1,0.1)^\top$ for Bernoulli responses and the $5 \times 5$ random effects covariance matrix in both cases to $\dot{\bm{G}} = \bm{I}_5$. Based on these true parameter values, we simulated the random effect coefficients $\dot{\bm{b}}_i \overset{i.i.d.}{\sim} \mathcal{N}(0, \dot{\bm{G}})$. Finally, conditional on $\dot{\bm{b}}_i$ the responses $y_{ij}$ were generated from either a Poisson distribution with log link,
or a Bernoulli distribution with logit link. 
We varied the number of clusters as $m = \{25,50,100,200,400 \}$ and the cluster sizes $n_i = n = \{ 25,50,100,200,400 \}$, noting we assumed equal cluster sizes in the simulation design for simplicity For each combination of $(m,n)$, we simulated 1000 datasets. For the conditional regime, we simulated $\dot{\bm{b}}$ only once and conditioned on this for all simulated datasets; for unconditional regime, we simulated a new $\dot{\bm{b}}$ for each simulated dataset.

For each simulated dataset, we fitted the corresponding GLMM using PQL estimation, where we use the sample covariance matrix of the estimated random effects as our update for $\hat{\bm{G}}$. That is, we iteratively maximize equation \eqref{PQL} with respect to $\bm{\beta}$ and $\bm{b}$ for a given $\hat{\bm{G}}$ (noting $\hat{\phi} = 1$ is known for both these distributions), and update $\hat{\bm{G}}$ as $m^{-1} \sum_{i=1}^m \hat{\bm{b}}_i \hat{\bm{b}}^\top_i$, until convergence. 

We assessed performance separately under the conditional and unconditional regimes. In the former, we examined the empirical coverage probability of 95\% coverage intervals constructed for $\bm{\beta}$ and for $\bm{b}_1$ (the choice of the first cluster is arbitrary). The intervals were constructed based on Theorem \ref{thm:conditionalnormality}, with the asymptotic covariance matrix $\bm{\Omega}$ computed using the true parameter values. We refer to such intervals as coverage intervals as opposed to confidence intervals. 
We also performed Shapiro-Wilk tests on the components of the (1000) realised PQL estimates of $\bm{\beta}$ and $\bm{b}_1$, in order to assess the asymptotic normality of their respective sampling distributions. For the unconditional regime, we examined the empirical coverage probability of 95\% coverage intervals constructed from Theorems \ref{thm:unconditional_fixef} and \ref{thm:unconditonal_ranef} respectively. Again, this was done for the fixed effect coefficients $\bm{\beta}$ and the random effects for the the first cluster $\bm{b}_1$. To construct all intervals, we used the true parameter values to compute the relevant asymptotic variance (this was done solely to reduce the computational burden of the numerical study), 
and, when required, obtained quantiles of relevant normal scale-mixture distributions by directly simulating 10,000 samples from them. We also performed Shapiro-Wilk tests on the components of the (1000) realised values of $\hat{\bmbeta} - \dot{\bmbeta}$ and $\hat{\bm{b}}_1 - \dot{\bm{b}}_1$. Finally, we examined histograms for the third components of $\hat{\bmbeta} - \dot{\bmbeta}$ and $\hat{\bm{b}}_1 - \dot{\bm{b}}_1$, which are representative of the histograms of the other components, as an additional method of assessing asymptotic normality of the corresponding sampling distributions.


\subsection{Simulation Results} \label{subsec:simresults}
For reasons of brevity, below we focus on results for the unconditional regime. Results for the conditional regime are presented in the supplementary material and largely support the use of Theorem \ref{thm:conditionalnormality} for inference.

For the unconditional regime, Figures \ref{fig: uncond_coverage_pois} and \ref{fig: uncond_shapiro_pois} display the empirical coverage probabilities and results from applying the Shapiro-Wilk test, respectively. For the fixed effect coefficients, the coverage probabilities for the intervals obtained based on Theorem \ref{thm:unconditional_fixef} were relatively accurate across most combinations of $(m ,n)$, with the exception of when $(m,n)=(25,25)$. For the random effect coefficients, the coverage probabilities for intervals calculated based on Theorem \ref{thm:unconditonal_ranef} approached the nominal coverage rapidly as $(m,n)$ increased for the Poisson response case, while for the Bernoulli case convergence was slightly slower due to the reduced amount of information per response. 

\begin{figure}
\centering
\includegraphics[width=0.7\linewidth]{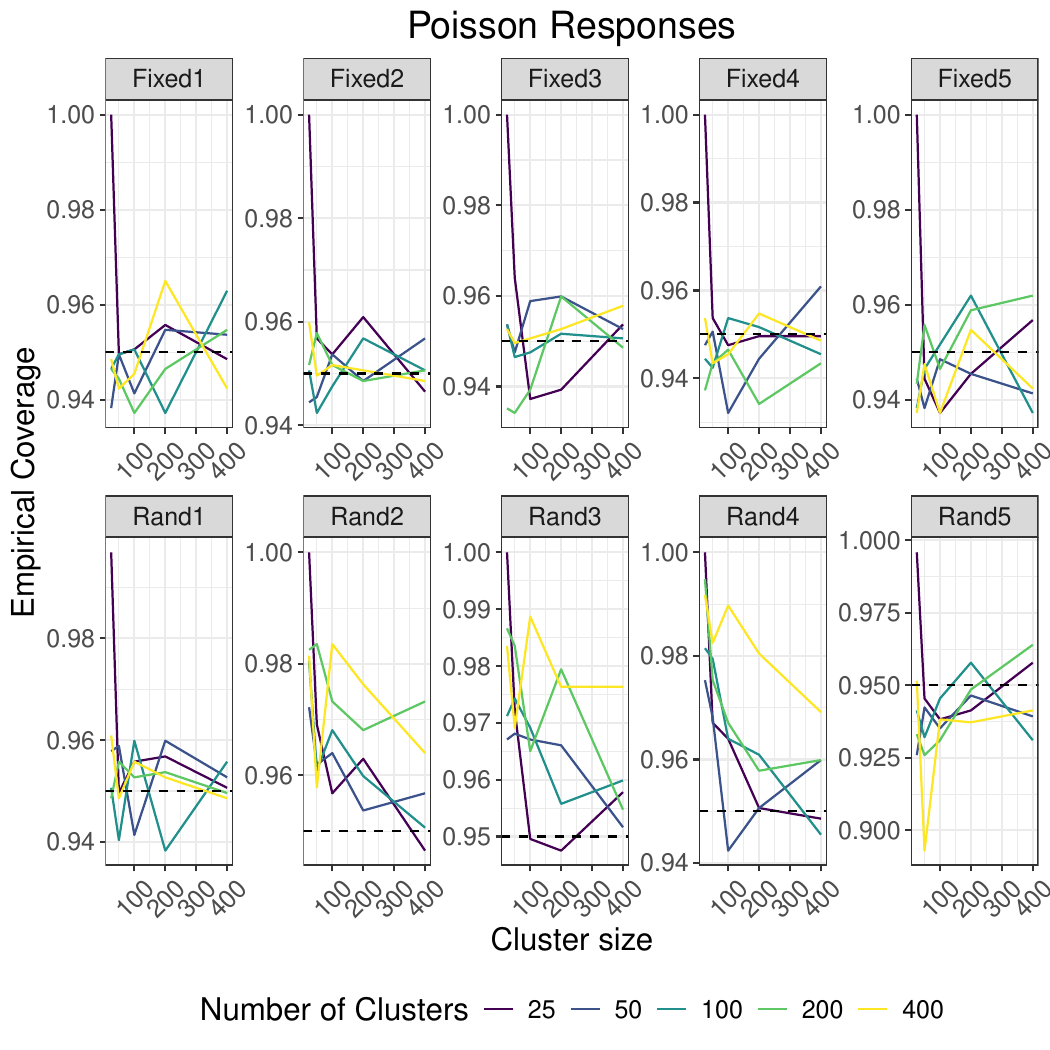}
\includegraphics[width=0.7\linewidth]{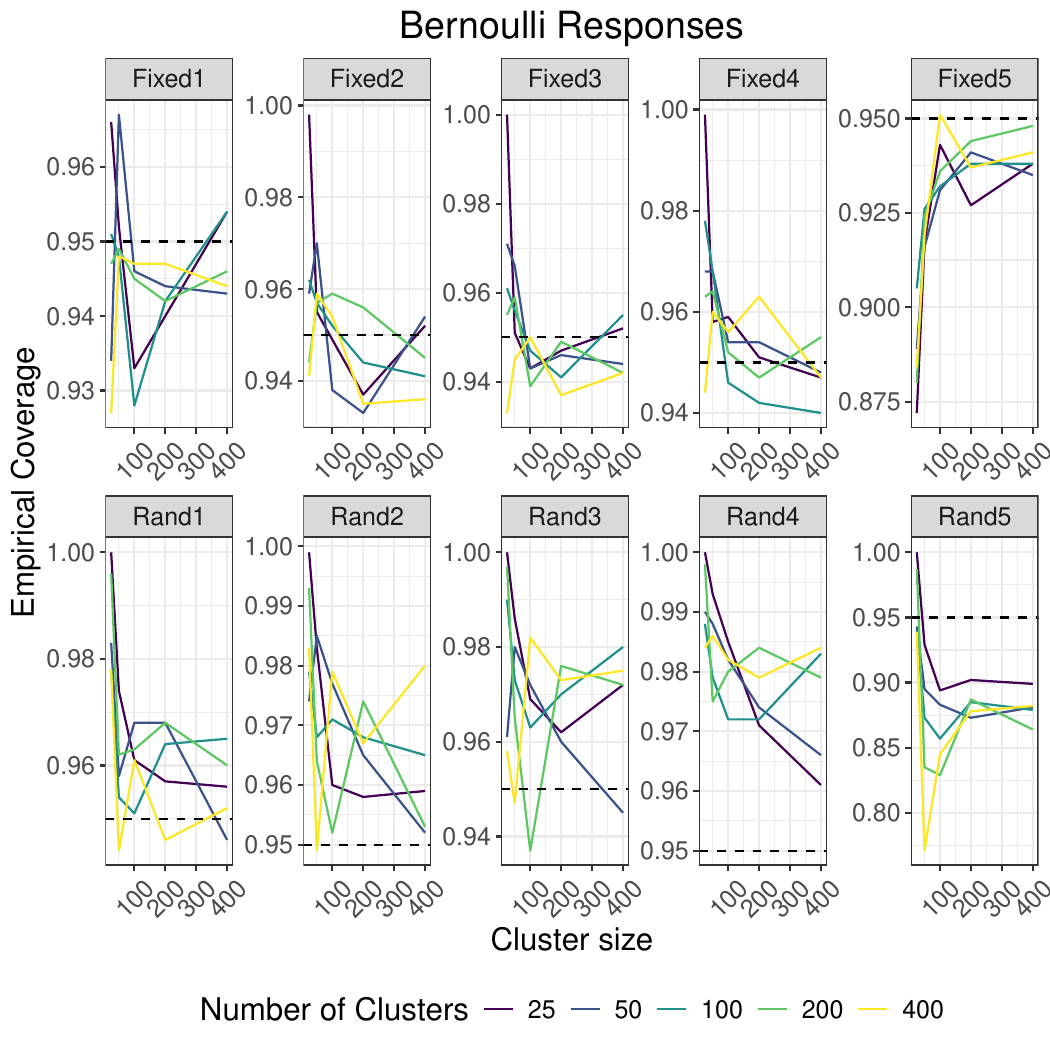}
\caption{Empirical coverage probability of 95\% coverage intervals for the fixed and random effects, obtained under the unconditional regime.} 
\label{fig: uncond_coverage_pois}
\end{figure}

\begin{figure}
\includegraphics[width=0.49\linewidth]{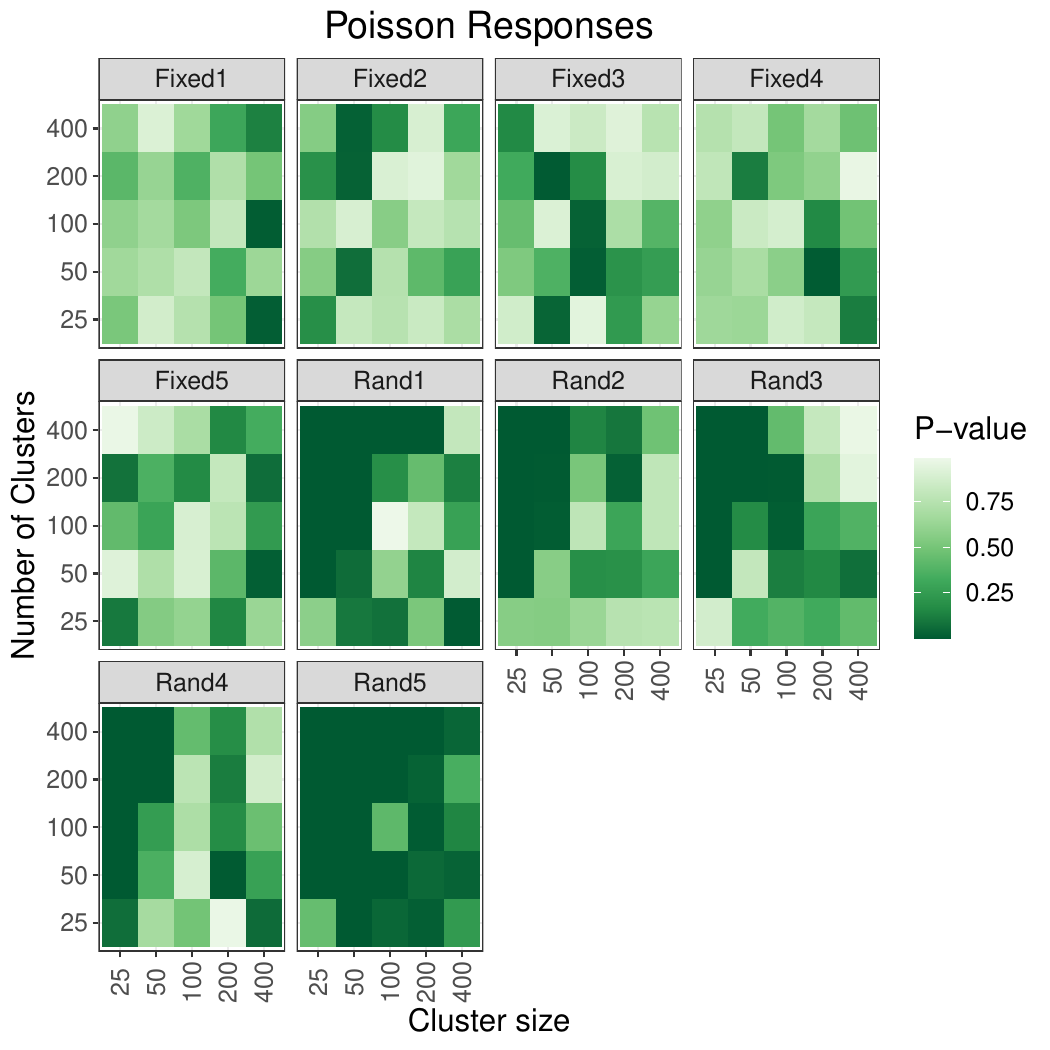}
\includegraphics[width=0.49\linewidth]{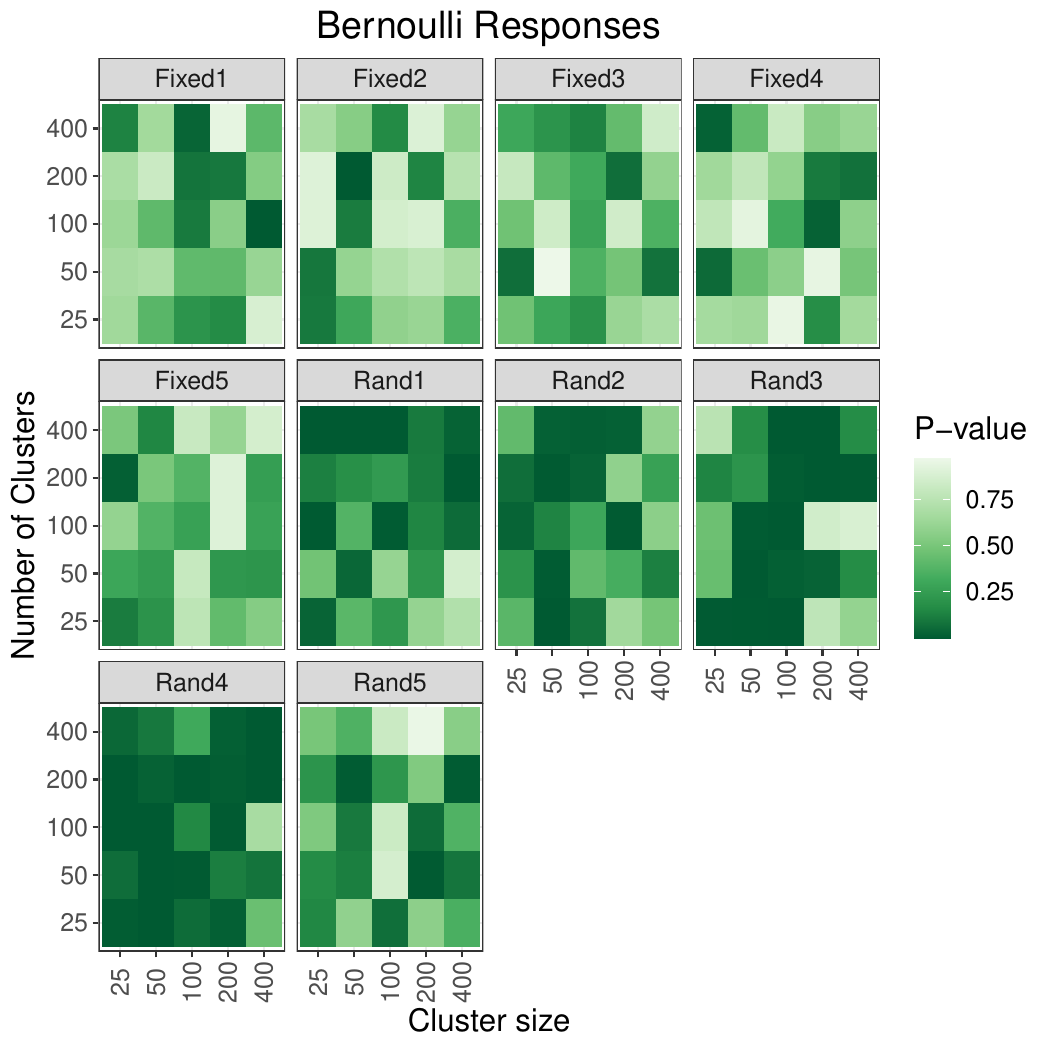}
\caption{$p$-values from Shapiro-Wilk tests applied to the fixed and random effects estimates obtained using maximum PQL estimation, under the unconditional regime.} 
\label{fig: uncond_shapiro_pois}
\end{figure}

\begin{figure}
\includegraphics[width=0.49\linewidth]{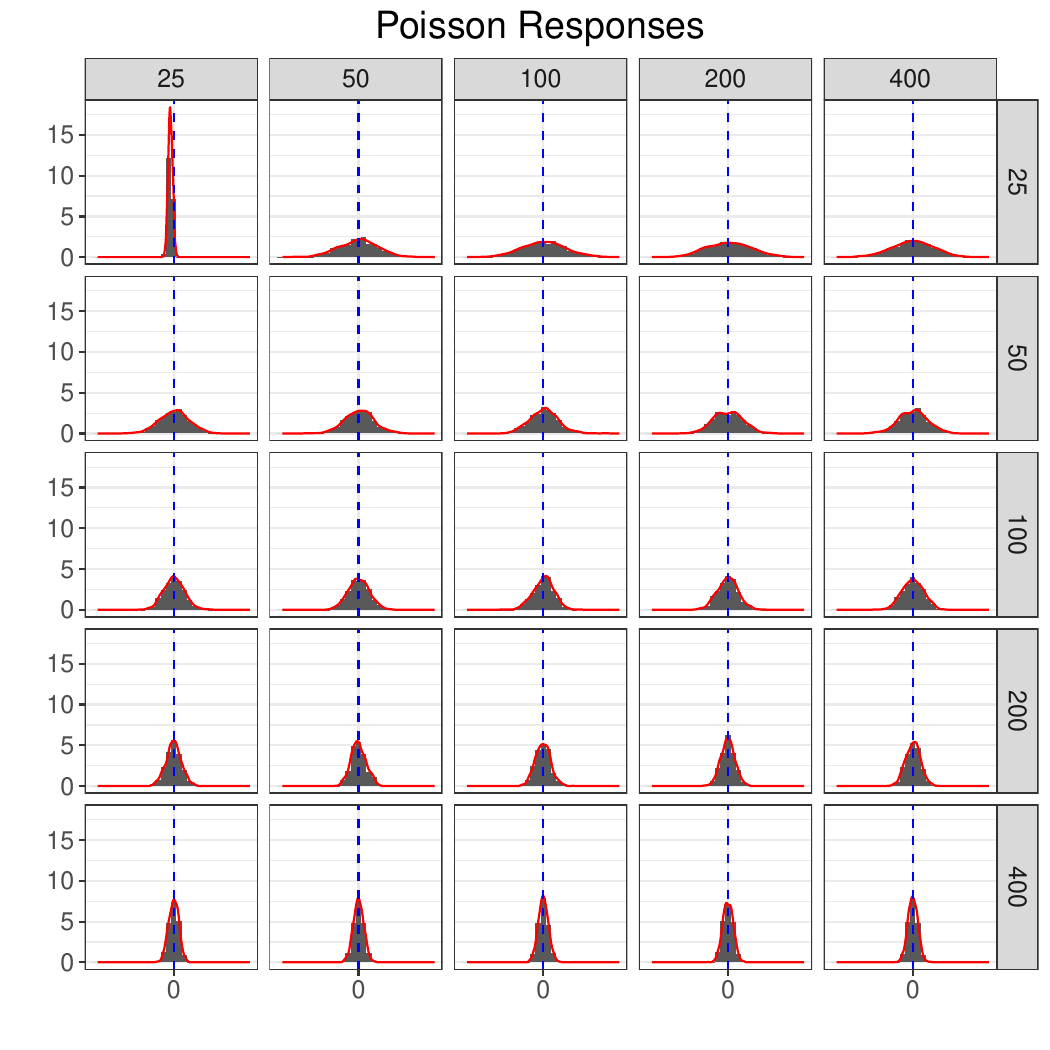}
\includegraphics[width=0.49\linewidth]{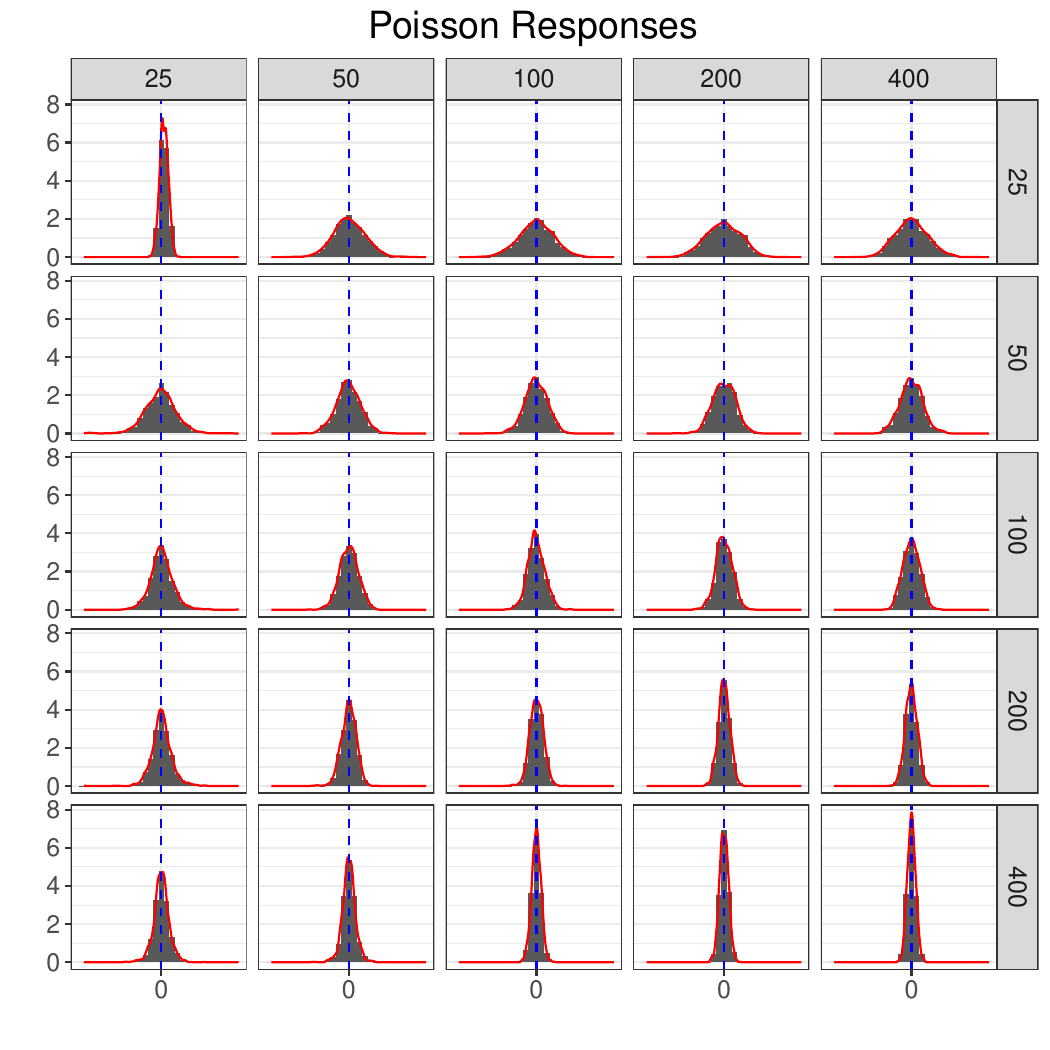}
\includegraphics[width=0.49\linewidth]{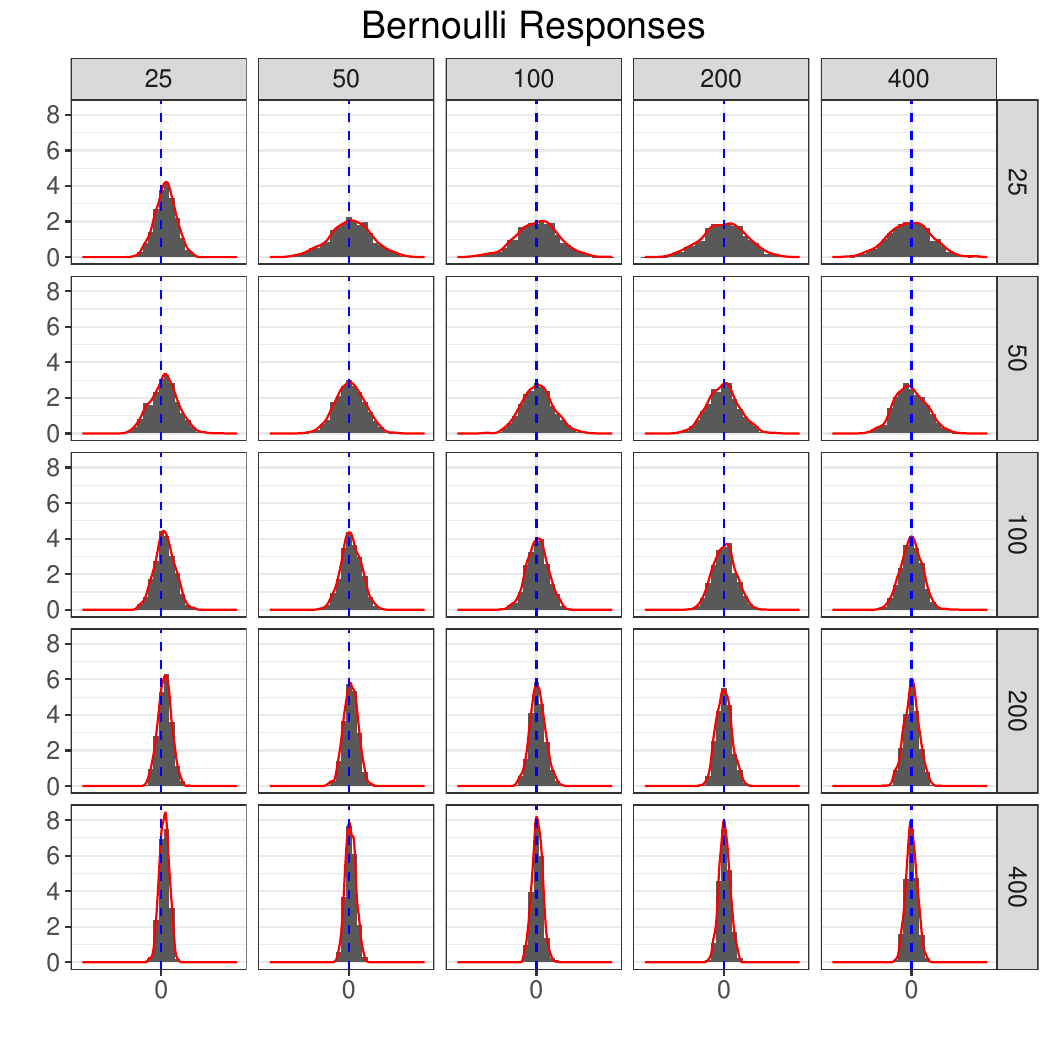}
\includegraphics[width=0.49\linewidth]{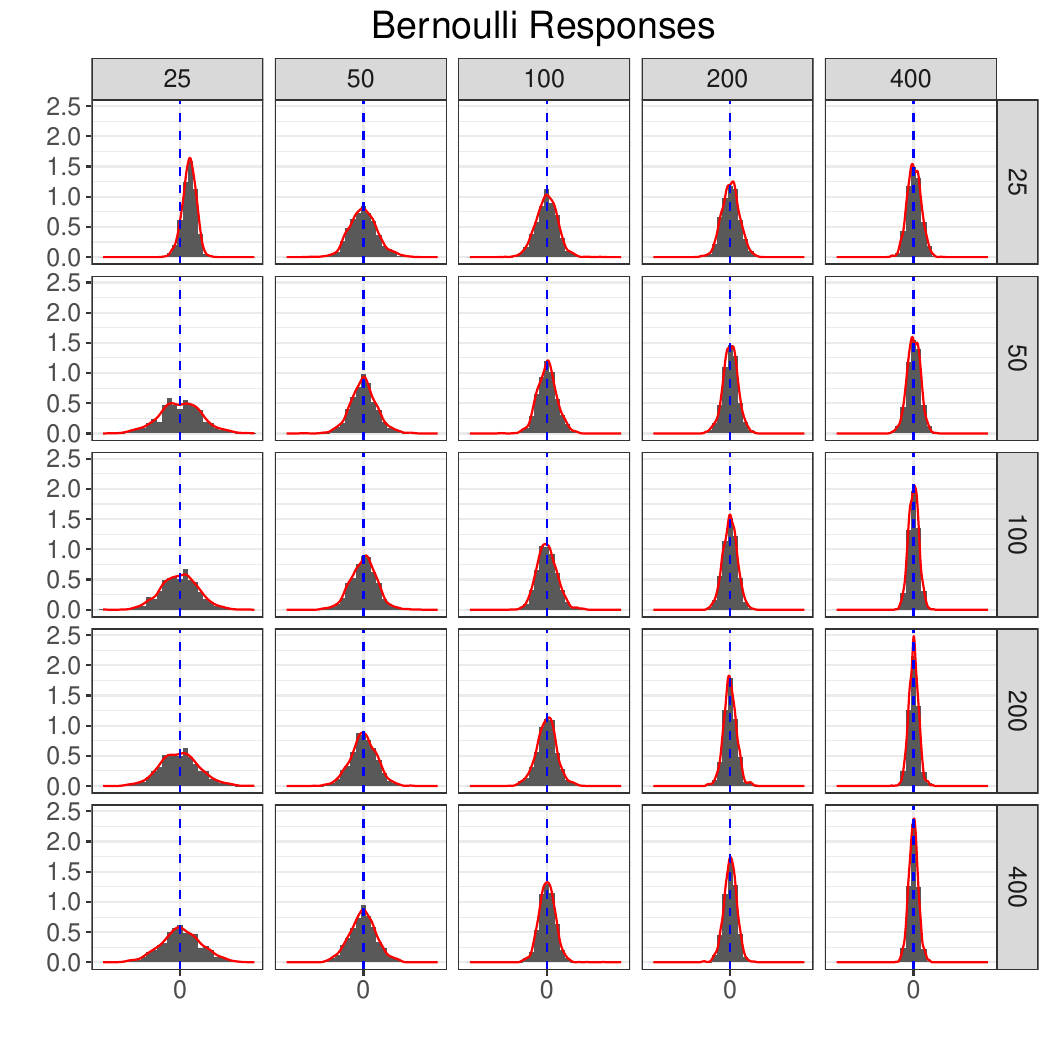}
\caption{Histograms for the third components of $\hat{\bmbeta} - \dot{\bmbeta}$ (left panels) and $\hat{\bm{b}}_1 - \dot{\bm{b}}_1$ (right panels), under the unconditional regime. Vertical facets represent the cluster sizes, while horizontal facets represent the number of clusters. The dotted blue line indicates zero, and the red curve is a kernel density smoother.} 
\label{fig: uncond_hist_pois}
\end{figure}

The Shapiro-Wilk tests run were consistent with the conclusions reached in Theorems \ref{thm:unconditional_fixef}--\ref{thm:unconditonal_ranef}. Specifically, PQL estimates of the fixed effect coefficients generally did not exhibit signs of non-normality, but the \emph{difference} between the estimators and true random effects displayed evidence of non-normality except when $n$ grew faster than $m$. This is also supported by the histograms in Figure \ref{fig: uncond_hist_pois} which show some evidence of higher kurtosis in the cases corresponding to small p-values in the Shapiro Wilk test. The histograms also suggest that both $m$ and $n$ need to grow for the estimators to be consistent for the true fixed and random effects, and in particular $n$ needs to grow for the estimators to be unbiased. This is true especially for the Bernoulli responses, for which convergence was much slower and very large cluster sizes were needed for the estimators to be relatively unbiased.


In the supplementary material, we present additional results which showed that the sample covariance matrix of the estimated random effects became a better estimator of the true random effects covariance matrix $\dot{\bm{G}}$ as both $m$ and $n$ grew. 
Also, recall from our discussion in Section \ref{sec:glmms} that our asymptotic developments only require a working $\hat{\bm{G}}$, which need not be a consistent estimator of the true random effects covariance matrix. 
As a demonstration of this, we performed several additional simulations where in the PQL estimation procedure, we simply fix $\hat{\bm{G}}$ to a constant matrix and considered choices e.g., some constant multiplied by the identity matrix. 
Results in the supplementary material demonstrate that coverage probabilities for our proposed intervals still tended to the nominal level as $(m,n)$ increased, 
while corresponding Shapiro-Wilk tests and histograms
were also consistent with our theory in large sample sizes and the empirical results presented above. 

\section{Discussion} \label{sec:discussion}
In this article, we established new asymptotic distributional results for fixed effects, random effects, and the prediction gap, for an independent-cluster GLMM fitted using penalized quasi-likelihood estimation. Our results 
have important implications when it comes to inference and prediction for mixed-effects models. For the conditional regime, we
establish asymptotic normality for any finite subset of the parameters. For random effects predictions and inference in the unconditional regime, we
validate examining the empirical distribution of the estimated random effects as a diagnostic tool for assessing deviations away from the assumed random effects distribution \citep[as is already commonly done in practice for GLMMs e.g.,][]{hui2021random}. On the other hand, while the random effects estimators obtained using PQL are asymptotically normally distributed when the true random effects are normally distributed, 
we demonstrate that the difference between these two i.e., the prediction gap, need not be normally distributed.
Our large sample results thus suggest the use of a normal approximation when performing unconditional inference for the random effects, as is commonly done in practice \citep{bates2015fitting,glmmtmb}, can be potentially misleading. 


An important avenue of future research is to establish rates of convergence, especially in the unconditional regime, when $\bm{x}_{ij}$ contains both $\bm{z}_{ij}$ plus additional components which are only included as purely fixed effects in the model. 
In the supplementary material, we develop some further results for such unpartnered fixed effects in the special cases of LMMs and GLMs. In both these cases, we see the convergence rate improves from $O_p(m^{1/2})$ to $O_p(N^{1/2})$, compared to the partnered fixed effects.
On the other hand, for random effects without a partnered fixed effect, it is likely that the correct asymptotic distribution for the prediction gap will be the normal scale-mixture irrespective of the relative rates of $m$ and $n_i$, as we saw in the motivating counterexample. Also, relaxing the canonical link assumption is an interesting and important extension to our work; we conjecture that non-canonical links could be accounted for by generalising the form of the GLM iterative weights, as is done in GLMs.

\FloatBarrier

\printbibliography

\newpage

\setcounter{section}{0}
\setcounter{equation}{0}
\def\theequation{S\arabic{section}.\arabic{equation}}
\def\thesection{S\arabic{section}}

{\Huge Supplementary Material} \\~\\

In the developments, we prove all results below assuming the working dispersion parameter $\hat{\phi}$ is equal to the true dispersion parameter $\dot{\phi}$. Then for the general result using any $O_p(1)$ working $\hat{\phi}$, we note that solving
\begin{align*}
    \nabla Q(\hat{\bm{\theta}}) &= \left[ \begin{matrix} {\hat{\phi}}^{-1} \bm{X}^\top \{ \bm{y} - \bm{\mu} (\hat{\bm{\theta}}) \} \\ {\hat{\phi}}^{-1} \bm{Z}^\top \{ \bm{y} - \bm{\mu} (\hat{\bm{\theta}}) \} - (\bm{I}_m \otimes \hat{\bm{G}}^{-1}) \hat{\bm{b}} \end{matrix} \right] = \bm{0}_{(m+1)p}
\end{align*}
for $\hat{\bm{\theta}}$ is equivalent to solving 
\begin{align*}
    \nabla Q(\hat{\bm{\theta}}) &= \left[ \begin{matrix} \dot{\phi}^{-1} \bm{X}^\top \{ \bm{y} - \bm{\mu} (\hat{\bm{\theta}}) \} \\ \dot{\phi}^{-1} \bm{Z}^\top \{ \bm{y} - \bm{\mu} (\hat{\bm{\theta}}) \} - (\bm{I}_m \otimes \hat{\bm{G}}_s^{-1}) \hat{\bm{b}} \end{matrix} \right] = \bm{0}_{(m+1)p},
\end{align*}
where $\hat{\bm{G}}_s =  \dot{\phi} \hat{\phi}^{-1} \hat{\bm{G}}$, whose inverse is still $O_p(1)$ and positive definite. This is equivalent to setting $\hat{\phi}$ to $\dot{\phi}$ and scaling $\hat{\bm{G}}$ by $\dot{\phi} \hat{\phi}^{-1}$. The general result then follows since the results proved under $\hat{\phi} = \dot{\phi}$ hold for any $\hat{\bm{G}}$ that has an $O_p(1)$, positive definite inverse. 

\subsection{Bias and Identifiability in the Conditional Regime} \label{subsec:biasidentifiability_conditional}

By differentiating \eqref{PQL}, we see that the PQL estimators satisfy $\sum_{i=1}^m {\hat{\phi}}^{-1} \bm{X}_i^\top \{\bm{y}_i - \bm{\mu}_i (\hat{\bm{\theta}})\} = \bm{0}$ and ${\hat{\phi}}^{-1} \bm{Z}_i^\top \{\bm{y}_i - \bm{\mu}_i (\hat{\bm{\theta}})\} - \bm{G}^{-1} \hat{\bm{b}}_i = \bm{0}$, $i=1,\ldots,m$. Summing both sides of the second equation across all $i$, since $\bm{X}_i = \bm{Z}_i$, it follows that $\sum_{i=1}^m \hat{\bm{b}}_{i} = \bm{0}_p$. That is, the PQL estimators of the random effects must satisfy a sum-to-zero constraint regardless of the underlying true parameter values. Under a conditional regime, this induces an asymptotic bias as captured by the term $\bm{1}^*_m \otimes (m^{-1} \sum_{i=1}^m \dot{\bm{b}}_i)$ in Theorem \ref{thm:conditionalnormality}, which can be interpreted as shifting the mean of the random effects into the corresponding fixed effects. We can deal with the bias by reparametrising the model \emph{a priori} to satisfy a sum-to-zero constraint. That is, we can define a reparametrized vector of true values $\dot{\bm{\theta}}^*$ which satisfy $\bm{1}^*_m \otimes (m^{-1} \sum_{i=1}^m \dot{\bm{b}}_i^*) = \bm{0}_{(m+1)p}$, and the PQL estimator will then be asymptotically normally distributed centered around $\dot{\bm{\theta}}^* $. 
Furthermore, Theorem \ref{thm:conditionalnormality} remains practically useful as, for any given sample size, we can always reparameterise the GLMM to satisfy this identifiability constraint. 


The asymptotic bias discussed above is analogous to that seen in a over-parametrized one-way analysis of variance (ANOVA) model. That is, in the ANOVA model one can always reparametrise to satisfy a sum-to-zero constraint, and the corresponding estimator is consistent for this vector of the reparametrized true values. 
Note however that when we work unconditionally (Section \ref{sec:unconditionaltheory}), reparametrising in this way will lead to a different model to the original, since the clusters are no longer independent.

\section{Proofs for Consistency} \label{sec:consistency}

To establish our large sample distributional results, we first require the following consistency result.
\begin{lemma} \label{lem:infinitynormconsistency}
Suppose Conditions (C1)-(C5) hold and $m n_L^{-2} \rightarrow 0$. Then, as $m,n_L \rightarrow \infty$ and unconditional on the random effects $\dot{\bm{b}}$, $\| \hat{\bm{\theta}} -  \dot{\bm{\theta}} \|_{\infty} = o_p(1)$.
\end{lemma}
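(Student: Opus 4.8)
The plan is to combine the concavity of the PQL objective with a Taylor expansion of the score around the truth. Under the canonical link, $Q(\bm{\theta})$ is the sum of a concave log-likelihood (each $-a(\eta_{ij})$ is concave in the linear predictor since $a''>0$ by (C1)) and the concave quadratic penalty $-\tfrac12\sum_i \bm{b}_i^\top \hat{\bm{G}}^{-1}\bm{b}_i$, so $Q$ is concave and, given the design conditions (C2)--(C3), strictly concave. Hence $\hat{\bm{\theta}}$ exists, is unique, and is characterised by $\nabla Q(\hat{\bm{\theta}})=\bm{0}$. Writing the remainder in integral form, I would express $\hat{\bm{\theta}}-\dot{\bm{\theta}}=\bar{\mathcal{H}}^{-1}\nabla Q(\dot{\bm{\theta}})$, where $\bar{\mathcal{H}}=\int_0^1\{-\nabla^2 Q(\dot{\bm{\theta}}+t(\hat{\bm{\theta}}-\dot{\bm{\theta}}))\}\,dt$ is the integrated negative Hessian; the integral form avoids needing a single mean-value point for the vector-valued score in growing dimension. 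The crucial simplification is that, because (C1) bounds $a''$ between $c_0$ and $c_0^{-1}$, the weight matrix $\bar{\bm{W}}$ appearing in $\bar{\mathcal{H}}$ is uniformly comparable (in the Loewner order) to $\dot{\bm{W}}$ regardless of the unknown integration path, so $\bar{\mathcal{H}}$ inherits the eigenvalue control of (C2) without my having to prove consistency first.

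The next step exploits the block structure. Partition $\bar{\mathcal{H}}$ and $\nabla Q(\dot{\bm{\theta}})$ into fixed- and random-effect parts. The random-effect block $\bar{\mathcal{H}}_{bb}$ is block diagonal across clusters, with $i$th block $\bm{X}_i^\top\bar{\bm{W}}_i\bm{X}_i+\hat{\bm{G}}^{-1}$ of order $n_i$, while $\bm{X}_i=\bm{Z}_i$ makes the cross-block $\bm{M}_i:=\bm{X}_i^\top\bar{\bm{W}}_i\bm{X}_i$. Profiling out $\bm{b}$ through its Schur complement and using the identity $\bm{M}_i-\bm{M}_i(\bm{M}_i+\hat{\bm{G}}^{-1})^{-1}\bm{M}_i=(\bm{M}_i^{-1}+\hat{\bm{G}})^{-1}$, the effective curvature for $\hat{\bm{\beta}}$ collapses from order $N$ to $\sum_i(\bm{M}_i^{-1}+\hat{\bm{G}})^{-1}\approx m\hat{\bm{G}}^{-1}$ (order $m$), since $\bm{M}_i$ grows like $n_i$. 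Solving the profiled equation then yields $\hat{\bm{\beta}}-\dot{\bm{\beta}}=m^{-1}\sum_{i=1}^m\dot{\bm{b}}_i + o_p(1)$, consistent with \eqref{eq:Mfixed_basic.uncond}; as $m^{-1}\sum_i\dot{\bm{b}}_i=O_p(m^{-1/2})$ this is $o_p(1)$. Back-substituting gives $\hat{\bm{b}}_i-\dot{\bm{b}}_i=\bar{\mathcal{H}}_{bb,i}^{-1}\bm{s}_{b,i}-(\hat{\bm{\beta}}-\dot{\bm{\beta}})+o_p(1)$, where $\bm{s}_{b,i}=\hat{\phi}^{-1}\bm{X}_i^\top(\bm{y}_i-\dot{\bm{\mu}}_i)-\hat{\bm{G}}^{-1}\dot{\bm{b}}_i$ is the $i$th random-effect score evaluated at the truth, unconditional on $\dot{\bm{b}}$.

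The main obstacle is obtaining consistency \emph{uniformly} over the growing number $m$ of clusters, i.e. controlling $\max_i\|\hat{\bm{b}}_i-\dot{\bm{b}}_i\|_\infty$; each $\bar{\mathcal{H}}_{bb,i}^{-1}\bm{s}_{b,i}$ is only $O_p(n_i^{-1/2})$ marginally, and marginal orders do not control a maximum over $m$ terms. Here I would invoke (C5): writing $\bar{\mathcal{H}}_{bb,i}^{-1}\bm{s}_{b,i}=n_i^{-1/2}\bm{u}_i$ with $\bm{u}_i=n_i^{1/2}(\bm{X}_i^\top\dot{\bm{W}}_i\bm{X}_i+\hat{\bm{G}}^{-1})^{-1}\bm{s}_{b,i}$ having uniformly bounded fourth moment $C$, a Markov plus union-bound argument gives $P(\max_i\|n_i^{-1/2}\bm{u}_i\|_\infty>\epsilon)\le\sum_i \epsilon^{-4}n_i^{-2}E\|\bm{u}_i\|_\infty^4\le C\epsilon^{-4}\,m\,n_L^{-2}\to 0$, which vanishes precisely under the hypothesis $m n_L^{-2}\to 0$. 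This is exactly where that rate condition is needed. Combining the uniform $o_p(1)$ bound on the random-effect gaps with the $o_p(1)$ bound on $\hat{\bm{\beta}}-\dot{\bm{\beta}}$ yields $\|\hat{\bm{\theta}}-\dot{\bm{\theta}}\|_\infty=o_p(1)$.

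Secondary technical points I would need to verify are that the neglected remainders are uniformly negligible: the Woodbury correction satisfies $\bar{\mathcal{H}}_{bb,i}^{-1}\hat{\bm{G}}^{-1}=O_p(n_i^{-1})$ (so that $\bar{\mathcal{H}}_{bb,i}^{-1}\bar{\mathcal{H}}_{b\beta,i}\approx\bm{I}$), the difference between the integrated weights $\bar{\bm{W}}_i$ and the reference weights $\dot{\bm{W}}_i$ is controlled by the third-derivative bound in (C1), and the profiled fixed-effect system is invertible with curvature of the stated order, for which (C2) and (C4) suffice. Since (C4) guarantees $\hat{\bm{G}}^{-1}=O_p(1)$ and positive definite, all the $O_p$ statements above hold uniformly, completing the argument.
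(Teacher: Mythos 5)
Your proposal is essentially correct, but it takes a genuinely different route from the paper's. The paper argues at the level of the objective function, via a Fan--Li-type ball argument: it shows $Q(\dot{\bm{\theta}} + \delta_{m,n_L}^{-1}\bm{u}) < Q(\dot{\bm{\theta}})$ uniformly over $\|\bm{u}\|_\infty = C$ with probability tending to one, by decomposing $\nabla Q(\dot{\bm{\theta}}) = \bm{\lambda}_1 + \bm{\lambda}_2$ and $-\nabla^2 Q(\bar{\bm{\theta}}) = \bm{\Gamma}_1 + \bm{\Gamma}_2$, comparing the linear and quadratic terms through the iterated-variance identity $\text{Var}\{\nabla Q(\dot{\bm{\theta}})\} = \text{Var}(\bm{\lambda}_1) + \text{Var}(\bm{\lambda}_2)$, treating separately the degenerate directions $\bm{u}_1 = -\bm{u}_{2i}$ (where the curvature from $\bm{\Gamma}_1$ vanishes and the order-$m$ penalty block must dominate the $O_p(m^{1/2})$ term $\bm{u}^\top\bm{\lambda}_2$), and controlling the maximum over the $m$ cluster directions by citing Downey's order-statistic result, giving $o_p(m^{1/4} n_L^{-1/2})$ under (C5). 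You instead work at the level of the score equation: strict concavity under the canonical link, the exact integral-form mean-value identity $\hat{\bm{\theta}} - \dot{\bm{\theta}} = \bar{\mathcal{H}}^{-1}\nabla Q(\dot{\bm{\theta}})$ (which eliminates the remainder term the paper must separately control), and a Schur-complement profiling step that reproduces the paper's blockwise inversion --- your effective curvature $\sum_{i}(\bm{M}_i^{-1} + \hat{\bm{G}})^{-1} \approx m \hat{\bm{G}}^{-1}$ is exactly the paper's matrix $\bm{C}$, and your profiled score computation implicitly uses the paper's key cancellation $\bm{S}_1 = \sum_{i=1}^m \bm{S}_{2i}$ coming from $\bm{X}_i = \bm{Z}_i$. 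Your profiling also disposes of the degenerate directions automatically rather than by case analysis. Finally, your Markov-plus-union bound $\sum_{i=1}^m \epsilon^{-4} n_i^{-2} E\|\bm{u}_i\|_\infty^4 \leq C \epsilon^{-4} m n_L^{-2} \rightarrow 0$ is more elementary than the Downey citation and does not require identical distributions across clusters (which the cited lemma formally assumes, while the cluster scores here are independent but not identically distributed); both arguments spend (C5) and the rate condition $m n_L^{-2} \rightarrow 0$ at exactly the same place, namely uniform control of the maximum standardized cluster score.

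Two points need patching. First, existence: strict concavity yields uniqueness and the stationarity characterization $\nabla Q(\hat{\bm{\theta}}) = \bm{0}$ only \emph{if} a maximizer exists, and since $\bm{\beta}$ is unpenalized, existence can fail in finite samples (e.g., complete separation with Bernoulli responses), so your opening claim ``exists, is unique'' requires showing the event of existence has probability tending to one --- the paper's ball argument supplies precisely this, and your quadratic-domination considerations essentially contain it, but it should be stated. Second, two uniformity points should be made explicit: (C5) must be read as a fourth-moment bound uniform in $i$ and $n_i$ (you assume a single constant $C$; the paper uses the condition the same way), and the Loewner comparability of $\bar{\bm{W}}_i$ with $\dot{\bm{W}}_i$ from (C1) should be upgraded to a uniform operator-norm bound on $\bar{\mathcal{H}}_{bb,i}^{-1}(\bm{X}_i^\top \dot{\bm{W}}_i \bm{X}_i + \hat{\bm{G}}^{-1})$, since (C5) is stated with the reference weights $\dot{\bm{W}}_i$ while your union bound must apply to the integrated weights appearing in $\bar{\mathcal{H}}_{bb,i}^{-1}\bm{s}_{b,i}$; with $p$ fixed this is routine, but it is where your ``uniformly comparable regardless of the integration path'' remark is actually cashed in.
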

These results are required to control the remainder term in the Taylor expansions we use to derive the distributional results in Section \ref{sec:distributional}. To prove the result, we wish to show that for any given $\epsilon>0$, there exists a large enough constant $C > 0$ such that, for large $m,n_L$, we have 
\begin{align*}
P \left\{ \underset{\|\bm{u} \|_{\infty} = C}{\sup} Q(\dot{\bm{\theta}} + \delta_{m,n_L}^{-1}  \bm{u}) < Q(\dot{\bm{\theta}}) \right\} \geq 1-\epsilon,
\end{align*}
for some positive, unbounded, monotonically increasing sequence $\delta_{m,n_L} $. The above result implies that with probability tending to one, there exists a local maximum $\hat{\bm{\theta}}$ in the ball $\{ \dot{\bm{\theta}} + \delta_{m,n_L}^{-1}  \bm{u} : \|\bm{u}\|_{\infty} \leq C \}$ so that $\| \delta_{m,n_L}  (\hat{\bm{\theta}} -  \dot{\bm{\theta}}) \|_{\infty} = O_p(1)$, and thus $\|  \hat{\bm{\theta}} -  \dot{\bm{\theta}} \|_{\infty} = o_p(1)$. 

Consider the difference $Q(\dot{\bm{\theta}} + \bm{u}) - Q(\dot{\bm{\theta}})$. By a Taylor expansion, we obtain
\begin{align} \label{consistency_diff}
Q(\dot{\bm{\theta}} +  \bm{u}) - Q(\dot{\bm{\theta}}) &= \bm{u}^\top  \{ \nabla Q(\dot{\bm{\theta}}) \} - 0.5 \bm{u}^\top  \{- \nabla^2 Q(\bar{\bm{\theta}}) \}  \bm{u}.
\end{align}
where $\bar{\bm{\theta}}$ lies on the line segment joining $\dot{\bm{\theta}}$ and $\dot{\bm{\theta}} + \bm{u}$. If we can prove that \eqref{consistency_diff} is negative as $m,n_L \rightarrow \infty$ for any choice of $C$, then there must exist some $\delta_{m,n_L}$ such that $Q(\dot{\bm{\theta}} + \delta_{m,n_L}^{-1} \bm{u}) - Q(\dot{\bm{\theta}})$ is negative for large enough $C$, and the required result follows. We have
\begin{align*}
\nabla Q(\dot{\bm{\theta}}) &= \left[ \begin{matrix} \dot{\phi}^{-1} \bm{X}^\top (\bm{y} - \dot{\bm{\mu}}) \\ \dot{\phi}^{-1} \bm{Z}^\top (\bm{y} - \dot{\bm{\mu}}) - (\bm{I}_m \otimes \hat{\bm{G}}^{-1})\dot{\bm{b}} \end{matrix} \right] = \left[ \begin{matrix} \dot{\phi}^{-1} \bm{X}^\top (\bm{y} - \dot{\bm{\mu}}) \\ \dot{\phi}^{-1} \bm{Z}^\top (\bm{y} - \dot{\bm{\mu}}) \end{matrix} \right] + \left[ \begin{matrix} \bm{0}_p \\ - (\bm{I}_m \otimes \hat{\bm{G}}^{-1})\dot{\bm{b}} \end{matrix} \right] \\
&\triangleq \bm{\lambda}_1 + \bm{\lambda}_2,
\end{align*}
and
\begin{align*} 
- \nabla^2 Q(\bar{\bm{\theta}}) &= \left[ \begin{matrix} 
\bm{X}^\top  \bar{\bm{W}} \bm{X} & \bm{X}_1^\top  \bar{\bm{W}}_1 \bm{X}_1 & \cdots & \bm{X}_m^\top  \bar{\bm{W}}_m \bm{X}_m \\
\bm{X}_1^\top  \bar{\bm{W}}_{1} \bm{X}_1 & \bm{X}_1^\top  \bar{\bm{W}}_{1} \bm{X}_1 + \hat{\bm{G}}^{-1} & & \bm{0} \\
\vdots & \vdots & \ddots & \vdots \\
\bm{X}_m^\top  \bar{\bm{W}}_{m} \bm{X}_m & \bm{0} & & \bm{X}_m^\top  \bar{\bm{W}}_{m} \bm{X}_m + \hat{\bm{G}}^{-1}
\end{matrix} \right] \\
&= \left[ \begin{matrix} 
\bm{X}^\top  \bar{\bm{W}} \bm{X} & \bm{X}_1^\top  \bar{\bm{W}}_1 \bm{X}_1 & \cdots & \bm{X}_m^\top  \bar{\bm{W}}_m \bm{X}_m \\
\bm{X}_1^\top  \bar{\bm{W}}_{1} \bm{X}_1 & \bm{X}_1^\top  \bar{\bm{W}}_{1} \bm{X}_1 & & \bm{0} \\
\vdots & \vdots & \ddots & \vdots \\
\bm{X}_m^\top  \bar{\bm{W}}_{m} \bm{X}_m & \bm{0} & & \bm{X}_m^\top  \bar{\bm{W}}_{m} \bm{X}_m
\end{matrix} \right] + \text{blockdiag}(\bm{0}_p, \bm{I}_m \otimes \hat{\bm{G}}^{-1}) \\
& \triangleq \bm{\Gamma}_1(\bar{\bmtheta}) + \bm{\Gamma}_2,
\end{align*}
where $\bar{\bm{W}}_{i} = \dot{\phi}^{-1} \diag{\{a''(\bar{\eta}_{i1}), \ldots, a''(\bar{\eta}_{in_i})\}}$ and $\bar{\bm{W}} = \dot{\phi}^{-1} \diag{\{a''(\bar{\eta}_{11}), \ldots, a''(\bar{\eta}_{mn_m})\}}$. Also, let $\bm{\Gamma}_1(\dot{\bmtheta}) + \bm{\Gamma}_2$ denote the analogous decomposition of $- \nabla^2 Q(\dot{\bm{\theta}})$. For both the conditional and unconditional regimes, we will prove that the second term is positive and dominates the first. However, the treatment of the terms differs between the two cases, and as such the proofs will need to be dealt with separately. In the following three sections, we will first treat the Poisson pure random intercept example, followed by the more general conditional and unconditional regimes.

Before proceeding, we demonstrate an inequality that is used in the proofs below. Write $\bm{u} = (\bm{u}_1^\top , \bm{u}_{2}^\top)^\top$, $\bm{u}_{2} = (\bm{u}_{21}^\top, \ldots, \bm{u}_{2m}^\top)^\top$. First, for any $\bmtheta$ we have 
\begin{align*}
    \bm{u}^\top \bm{\Gamma}_1(\bmtheta) \bm{u} = \bm{u}_1^\top \bm{X}^\top  \bm{W} \bm{X} \bm{u}_1 + 2 \bm{u}_1^\top \bm{X}^\top  \bm{W} \bm{Z} \bm{u}_2 + \bm{u}_2^\top \bm{Z}^\top  \bm{W} \bm{Z} \bm{u}_2^\top \geq 0.
\end{align*}
Next, we have 
\begin{align*}
    &\bm{u}^\top \bm{\Gamma}_1(\bar{\bmtheta}) \bm{u} - c_0^2 \bm{u}^\top \bm{\Gamma}_1(\bmtheta) \bm{u} \\
    &= \bm{u}_1^\top \bm{X}^\top  (\bar{\bm{W}} - c_0^2 \bm{W}) \bm{X} \bm{u}_1 + 2 \bm{u}_1^\top \bm{X}^\top  (\bar{\bm{W}} - c_0^2 \bm{W}) \bm{Z} \bm{u}_2 + \bm{u}_2^\top \bm{Z}^\top  (\bar{\bm{W}} - c_0^2 \bm{W}) \bm{Z} \bm{u}_2^\top.
\end{align*}
If we denote $\bm{W}^* = \bar{\bm{W}} - c_0^2 \bm{W}$, then by Condition (C1) $\bm{W}^*$ is a diagonal matrix with non-negative entries as the entries of $c_0^2 \bm{W}$ are upper bounded by the smallest component in $\bar{\bm{W}}$. Therefore 
\begin{align*}
    \bm{u}^\top \bm{\Gamma}_1(\bar{\bmtheta}) \bm{u} - c_0^2 \bm{u}^\top \bm{\Gamma}_1(\bmtheta) \bm{u} = \bm{u}_1^\top \bm{X}^\top  \bm{W}^* \bm{X} \bm{u}_1 + 2 \bm{u}_1^\top \bm{X}^\top  \bm{W}^* \bm{Z} \bm{u}_2 + \bm{u}_2^\top \bm{Z}^\top  \bm{W}^* \bm{Z} \bm{u}_2^\top \geq 0,
\end{align*}
so that $\bm{u}^\top \bm{\Gamma}_1(\bar{\bmtheta}) \bm{u} \geq c_0^2 \bm{u}^\top \bm{\Gamma}_1(\bmtheta) \bm{u}$. Finally, note that we can choose $\bmtheta = \dot{\bmtheta}$ or $\bmtheta = E(\dot{\bmtheta})$ without altering the above argument.

\subsection{Poisson pure random intercept example}

We begin with the Poisson pure random intercept example, which gives insight and covers a case where $\bm{X}_i \neq \bm{Z}_i$. The following result is unconditional on the random effects $\dot{\bm{b}}$.

\begin{lemma}
Assume Conditions (C1)-(C5) hold, and let $m n^{-2} \rightarrow 0$. Then for the Poisson pure random intercept model, as $m,n \rightarrow \infty$ and unconditional on the random effects $\dot{\bm{b}}$, it holds that $\| \hat{\bm{\theta}} -  \dot{\bm{\theta}} \|_{\infty} = o_p(1)$.
\end{lemma}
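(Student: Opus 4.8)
The key simplification is that in the Poisson pure random intercept model the PQL objective \emph{decouples} across clusters. Since $\eta_{ij} = b_i$ with no fixed effects and no parameters shared between clusters, we can write $Q(\bm{b}) = \sum_{i=1}^m Q_i(b_i)$ with
\begin{align*}
Q_i(b_i) = \sum_{j=1}^n \{ y_{ij} b_i - \exp(b_i) \} - \frac{b_i^2}{2\hat{\sigma}_b^2},
\end{align*}
so that $\nabla^2 Q$ is diagonal and $\hat{b}_i$ depends only on the data from cluster $i$. Consequently $\| \hat{\bm{\theta}} - \dot{\bm{\theta}} \|_\infty = \max_{1 \leq i \leq m} | \hat{b}_i - \dot{b}_i |$, and the plan is to (i) run the per-cluster local-maximum argument of the type set out above to obtain a \emph{quantitative} tail bound on $|\hat{b}_i - \dot{b}_i|$, and then (ii) upgrade this to uniform (maximum-norm) consistency by a union bound over the $m$ clusters.

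For step (i), I would apply the Taylor decomposition \eqref{consistency_diff} cluster by cluster, writing $Q_i(\dot{b}_i + u_i) - Q_i(\dot{b}_i) = u_i \nabla Q_i(\dot{b}_i) - \tfrac{1}{2} u_i^2 \{ -\nabla^2 Q_i(\bar{b}_i) \}$. Here $\nabla Q_i(\dot{b}_i) = \sum_{j=1}^n \{ y_{ij} - \exp(\dot{b}_i) \} - \dot{b}_i \hat{\sigma}_b^{-2}$; conditional on $\dot{b}_i$ the leading sum is a centred sum of $n$ i.i.d.\ Poisson variables with conditional variance $n \exp(\dot{b}_i)$, hence $O_p(n^{1/2})$, while the penalty term is $O_p(1)$ by Condition (C4). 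The negative curvature $-\nabla^2 Q_i(\bar{b}_i) = n \exp(\bar{b}_i) + \hat{\sigma}_b^{-2}$ is, by Condition (C1) and $\bar{b}_i \in [\dot{b}_i - |u_i|, \dot{b}_i + |u_i|]$, bounded below by $n \exp(\dot{b}_i - |u_i|)$. Balancing the two with $\delta_{m,n} = n^{1/2}$ shows the quadratic term dominates on the sphere $|u_i| = C$ for large $C$, giving $|\hat{b}_i - \dot{b}_i| = O_p(n^{-1/2})$ per cluster. Crucially, by strict concavity of $Q_i$, the event $\{ |\hat{b}_i - \dot{b}_i| > \epsilon \}$ forces $|\nabla Q_i(\dot{b}_i)|$ to exceed a threshold of order $n \exp(\dot{b}_i)$; Markov's inequality with the fourth-moment bound (the analogue of Condition (C5) with $\bm{Z}_i$ in place of $\bm{X}_i$), using $E[ \{ \sum_j (y_{ij} - \exp(\dot{b}_i)) \}^4 ] = O(n^2)$ and the finiteness of the log-normal moment $E[ \exp(-2 \dot{b}_i) ]$, yields the quantitative bound $P( |\hat{b}_i - \dot{b}_i| > \epsilon ) = O(n^{-2})$.

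For step (ii), by exchangeability of the clusters and a union bound,
\begin{align*}
P\left( \max_{1 \leq i \leq m} | \hat{b}_i - \dot{b}_i | > \epsilon \right) \leq \sum_{i=1}^m P( |\hat{b}_i - \dot{b}_i| > \epsilon ) = m\, P( |\hat{b}_1 - \dot{b}_1| > \epsilon ) = O(m n^{-2}),
\end{align*}
which tends to zero precisely under the stated rate $m n^{-2} \to 0$. This delivers $\| \hat{\bm{\theta}} - \dot{\bm{\theta}} \|_\infty = o_p(1)$ and simultaneously explains why the hypothesis $m n^{-2} \to 0$ appears in the statement.

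The main obstacle is the \emph{uniform} control over clusters in the unconditional regime. Although Conditions (C1)--(C3) supply the required boundedness pointwise, unconditionally the $\dot{b}_i$ are unbounded normal variables, so both the conditional score variance $n\exp(\dot{b}_i)$ and the curvature $n\exp(\bar{b}_i)$ are random and can be arbitrarily small or large across the growing collection of clusters. The delicate step is therefore to make the per-cluster ``local maximum lies inside the ball'' claim quantitative enough that its failure probability is exactly $O(n^{-2})$ \emph{after} integrating out $\dot{b}_i$; this is where the fourth-moment condition (ensuring the relevant log-normal moments are finite) does the essential work, and where matching the $n^{-2}$ rate against the number of clusters forces the assumption $m n^{-2} \to 0$.
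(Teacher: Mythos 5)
Your proposal is correct, but it reaches the result by a genuinely different route than the paper. The paper runs its general consistency template on the joint objective: it lower-bounds $\bm{u}^\top \{-\nabla^2 Q(\bar{\bm{\theta}})\} \bm{u}$ by the variance of the score (via Condition (C1) and the iterated expectation/variance identity), and then handles uniformity over the $m$ coordinate directions through the $m$th order statistic of the normalised ratios $n^{1/2} \{ \sum_{j=1}^n (y_{ij} - e^{\dot{b}_i}) - \hat{\sigma}_b^{-2} \dot{b}_i \} / ( n e^{\dot{b}_i} + \hat{\sigma}_b^{-2})$, invoking the result of \citet{downey1990distribution} that the maximum of $m$ i.i.d.\ variables with finite fourth moment (supplied by Condition (C5)) is $o_p(m^{1/4})$, so the maximum ratio is $o_p(m^{1/4} n^{-1/2}) = o_p(1)$ under $m n^{-2} \rightarrow 0$. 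You instead exploit the exact separability of the PQL objective in this model: each $\hat{b}_i$ is the unique global maximiser of a strictly concave scalar $Q_i$, strict concavity converts $\{ |\hat{b}_i - \dot{b}_i| > \epsilon \}$ into a tail event for the score at $\dot{b}_i$, a fourth-moment Markov bound (using the finite log-normal moments $E e^{-2\dot{b}_1}$, $E e^{-3\dot{b}_1}$) yields $P(|\hat{b}_i - \dot{b}_i| > \epsilon) = O(n^{-2})$, and a union bound over $m$ clusters finishes. The two arguments rest on the same probabilistic fact --- your union bound is essentially a direct reproof of the Downey order-statistic bound specialised to fourth moments --- but yours is more elementary and self-contained, bypasses the ``local maximum in a shrinking ball'' device (concavity hands you the global PQL maximiser directly, whereas the paper's argument formally establishes only a local maximiser), and makes transparent exactly where $m n^{-2} \rightarrow 0$ enters; the paper's version, in exchange, is structured to carry over to the general GLMM, where partnered fixed effects couple the clusters and no decoupling is available. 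Two small patches: your Markov step should absorb the penalty term $\hat{\sigma}_b^{-2} \dot{b}_i$ into the threshold (e.g., note that $P(|\dot{b}_i| \hat{\sigma}_b^{-2} \geq c\, n e^{\dot{b}_i})$ decays super-polynomially by normal tails, or simply apply Condition (C5) as stated, which already includes this term), and --- a subtlety shared with the paper's use of the Downey bound --- the fourth-moment bound must hold uniformly in $n$, with the stochastic $\hat{\sigma}_b^2$ handled by restricting to a high-probability event on which $\hat{\sigma}_b^{-2}$ is bounded, as permitted by Condition (C4).
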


\begin{proof}
Let $\bm{u}= \bm{u}_2 = ({u}_{21}, \ldots, {u}_{2m})^\top$, $\bmtheta = \bm{b} = (b_1, \ldots, b_m)^\top$, $\hat{\bm{G}} = \hat{\sigma}_b^2$ (a scalar), \\ $- \nabla^2 Q(\bar{\bm{\theta}}) = \diag{(n e^{\bar{b}_1} + \hat{\sigma}_b^{-2}, \ldots, n e^{\bar{b}_m} + \hat{\sigma}_b^{-2})} \equiv \bm{\Gamma}_1(\bar{\bmtheta}) + \bm{\Gamma}_2$, and
\begin{align*}
    \nabla Q(\dot{\bm{\theta}}) = \left[ \begin{matrix} \sum_{j=1}^n ( y_{1j} - e^{\dot{b}_1}) - \hat{\sigma}_b^{-2} \dot{b}_1 \\ 
    \vdots \\ \sum_{j=1}^n ( y_{mj} - e^{\dot{b}_m}) - \hat{\sigma}_b^{-2} \dot{b}_m \end{matrix} \right] \equiv \bm{\lambda}_1 + \bm{\lambda}_2.
\end{align*}
Let $\bm{M} = E\{ \diag{(ne^{\dot{b}_1}, \ldots, ne^{\dot{b}_m})} \}$. Then $\bm{M} = \text{Var} \{ \dot{\phi}^{-1} \bm{Z}^\top (\bm{y} - \dot{\bm{\mu}}) \}$. By Condition (C1), $c_0^2 \bm{u}^\top \bm{M} \bm{u} \leq \bm{u}^\top \bm{\Gamma}_1(\bar{\bmtheta}) \bm{u}$. Next, let $\lambda = \dot{\hat{\sigma}}_b^2 \hat{\sigma}_b^{-2}$. Then $\text{Var}( \bm{\lambda}_2) = \dot{\hat{\sigma}}_b^2 \hat{\sigma}_b^{-4} \bm{I}_m$ and 
\begin{align*}
    \lambda^{-1} \bm{u}_2^\top ( \dot{\hat{\sigma}}_b^2 \hat{\sigma}_b^{-4} \bm{I}_m) \bm{u}_2 =  \bm{u}_2^\top ( \hat{\sigma}_b^{-2} \bm{I}_m) \bm{u}_2 = \bm{u}^\top \bm{\Gamma}_2 \bm{u}.
\end{align*}
Finally, by the laws of iterated expectation and variance we have 
\begin{align*}
    E\{ \nabla Q(\dot{\bmtheta}) \nabla Q(\dot{\bmtheta})^\top \} &= \text{Var} \{ \nabla Q(\dot{\bmtheta}) \} \\
    &= E [ \text{Var} \{\nabla Q(\dot{\bmtheta}) | \dot{\bm{b}}\} ] + \text{Var} [ E \{\nabla Q(\dot{\bmtheta}) | \dot{\bm{b}} \} ] \\
    &= E \{ \text{Var} (\bm{\lambda}_1 | \dot{\bm{b}} ) \} + \text{Var} (\bm{\lambda}_2) \\ 
    &= \text{Var} (\bm{\lambda}_1) + \text{Var} (\bm{\lambda}_2).
\end{align*}
Therefore, we have that 
\begin{align*}
    \bm{u}^\top \{ - \nabla^2 Q(\bar{\bm{\theta}}) \} \bm{u} &\geq \min(\lambda^{-1}, c_0^2) \{ \bm{u}^\top \bm{M} \bm{u} + \bm{u}_2^\top ( \dot{\hat{\sigma}}_b^2 \hat{\sigma}_b^{-4} \bm{I}_m) \bm{u}_2 \} \\
    &= \min(\lambda^{-1}, c_0^2) E \{ \nabla Q(\dot{\bmtheta}) \nabla Q(\dot{\bmtheta})^\top \},
\end{align*}
where the latter and hence former term grows at the same rate as $\{ \bm{u}^\top  \nabla Q(\dot{\bmtheta}) \}^2$. Since at least one component of $\bm{u}$ equals $\pm C$, for any given $\bm{u}$, $\bm{u}^\top \{ - \nabla^2 Q(\bar{\bm{\theta}}) \} \bm{u}$ is at least of order $O_p(m)$ in probability and hence always dominates.

Since the choice of which $|u_{2i}| = C$ is arbitrary however, we also need to make sure that the $m$th order statistic $\underset{i \in \{ 1,\ldots, m \}}{\max} [ \{ \sum_{j=1}^n ( y_{ij} - e^{\dot{b}_i}) - \hat{\sigma}_b^{-2} \dot{b}_i \}/( n e^{\dot{b}_i} + \hat{\sigma}_b^{-2})]$, which grows with the dimension, is of order $o_p(1)$. We know that the leading term in \eqref{eq:Mrand_basic.uncond} is $(\bm{Z}^\top \dot{\bm{W}} \bm{Z})^{-1} \{ \dot{\phi}^{-1} \bm{Z}^\top (\bm{y} - \dot{\bm{\mu}}) \}$ when $m n^{-1} \rightarrow \infty$; for this Poisson random intercept example, up to some smaller order terms, this simplifies to the ratio $\{ \sum_{j=1}^n ( y_{ij} - e^{\dot{b}_i}) - \hat{\sigma}_b^{-2} \dot{b}_i \}/( n e^{\dot{b}_i} + \hat{\sigma}_b^{-2})$. Intuitively then, proving a result for $\| \hat{\bmtheta} - \dot{\bmtheta} \|_{\infty}$ should involve studying $\underset{i \in \{ 1,\ldots, m \}}{\max} [ \{ \sum_{j=1}^n ( y_{ij} - e^{\dot{b}_i}) - \hat{\sigma}_b^{-2} \dot{b}_i \}/( n e^{\dot{b}_i} + \hat{\sigma}_b^{-2})]$.

Put another way, consider the set of $\bm{u}$ such that one component of $\bm{u}$ equals $\pm C$ and zero elsewhere. When $C$ is the $i$th component of $\bm{u}$, this corresponds to deviating away from $\dot{\bmtheta}$ in the $i$th direction. In this case, we need $C \{ \sum_{j=1}^n ( y_{ij} - e^{\dot{b}_i}) - \hat{\sigma}_b^{-2} \dot{b}_i \}$ to be dominated by $C^2 ne^{\dot{b}_i}$ for any $C$ and all $m,n$ large enough, i.e., $\{ \sum_{j=1}^n ( y_{ij} - e^{\dot{b}_i}) - \hat{\sigma}_b^{-2} \dot{b}_i \}/ ne^{\dot{b}_i} = o_p(1)$. This is indeed true as this ratio is $O_p(n^{-1/2})$, since $\sum_{j=1}^n ( y_{ij} - e^{\dot{b}_i}) - \hat{\sigma}_b^{-2} \dot{b}_i = O_p(n^{1/2})$ due to conditional independence and Chebyshev's inequality, and $e^{\dot{b}_i} = O_p(1)$. However, although the ratio is of order $O_p(n^{-1/2})$, for any given $m,n$ there is still a positive probability that the ratio (a random variable) is greater than one in magnitude. On the other hand, for the consistency argument to hold we need to make sure the ratio is smaller than one in magnitude for all $m$ directions with probability tending to one, as $m,n \rightarrow \infty$. In particular, it is sufficient for the maximum of $m$ of these ratios to be $o_p(1)$: this maximum grows with $m$, corresponding to the number of directions we need to bound. Intuitively, this should hold if $m$ does not grow too fast relative to $n$. 

Now, \citet{downey1990distribution} proves that the maximum over $m$ realisations of independently and identically distributed random variables with a finite $q$th moment is $o_p(m^{1/q})$. By Condition (C5), the ratio $n^{1/2}  \{ \sum_{j=1}^n ( y_{ij} - e^{\dot{b}_i}) - \hat{\sigma}_b^{-2} \dot{b}_i \}/(n e^{\dot{b}_i} + \hat{\sigma}_b^{-2})$ has finite fourth moments for all $i$ and $n$. Thus, the maximum of these (normalised) ratios over $m$ clusters is of order $o_p(m^{1/4})$. As a result, the maximum ratio of interest is $o_p(m^{1/4} n^{-1/2})$. Therefore, when $m n^{-2} \rightarrow 0$, there exists $\delta_{m,n}$ such that we can always choose a large enough $C$ for $\delta_{m,n}^{-1} \bm{u}^\top  \nabla Q(\dot{\bm{\theta}})$ to be dominated by $\delta_{m,n}^{-2} \bm{u}^\top \{ - \nabla^2 Q(\bar{\bm{\theta}}) \} \bm{u}$, and hence $\| \delta_{m,n} (\hat{\bm{\theta}} -  \dot{\bm{\theta}}) \|_{\infty} = O_p(1)$ as required.

\end{proof}
To conclude this section, we remark that although $m n^{-2} \rightarrow 0 $ is needed for the consistency and thus distributional result, this is a sufficient condition. Intuitively, in the Poisson pure random effects model there are no fixed parameters to estimate, and the estimate of the random effects for each cluster only depends on observations in that cluster. Thus, the relative rates of $m$ and $n$ should not matter for a distributional result concerning a finite subset of the random effects.

\subsection{Conditional on the Random Effects}

In this section, we prove the consistency result under the conditional regime. In the conditional regime, we assume without loss of generality that $\sum_{i=1}^m \dot{\bm{b}}_i = \bm{0}_p$, recalling that we can always reparametrise the random effect coefficients so this holds. 

Let $\bm{M} = \bm{\Gamma}_1(\dot{\bmtheta})$. Then $\bm{M} = \text{Var} ( \bm{\lambda}_1 | \dot{\bm{b}} ) = E( \bm{\lambda}_1 \bm{\lambda}_1^\top| \dot{\bm{b}} )$ since $E( \bm{\lambda}_1 | \dot{\bm{b}}) = \bm{0}_{(m+1)p}$. By Condition (C1), we have $c_0^2 \bm{u}^\top \bm{M} \bm{u} \leq \bm{u}^\top \bm{\Gamma}_1(\bar{\bmtheta}) \bm{u}$.

We now consider two cases: the special case when $\bm{u}_1 = - \bm{u}_{2i}$ for all $i$, and when this is not the case. For the former, we have $\bm{u}^\top \bm{\lambda}_1 = \bm{u}^\top \bm{M} \bm{u} = 0$. Then we must examine $\bm{u}^\top \bm{\lambda}_2$ and $\bm{u}^\top \bm{\Gamma}_2 \bm{u}$. In this case, we have $\bm{u}^\top \bm{\lambda}_2 = \sum_{i=1}^m \bm{u}_{2i}^\top \hat{\bm{G}}^{-1} \dot{\bm{b}}_i = - \bm{u}_{1}^\top \hat{\bm{G}}^{-1} \sum_{i=1}^m  \dot{\bm{b}}_i = 0$, and $\bm{u}^\top \bm{\Gamma}_2 \bm{u} = m \bm{u}_1^\top \hat{\bm{G}}^{-1} \bm{u}_1 > 0$ since $\hat{\bm{G}}$ is a positive definite matrix. Thus the difference \eqref{consistency_diff} is negative for large enough $m,n_L$ and any choice of constant $C$.

Next, consider the case when $\bm{u}_1 = - \bm{u}_{2i}$ for all $i$ does not hold. Under this setting, as $\bm{\Gamma}_2$ is a positive semi-definite matrix, we still have $\bm{u}^\top \{ - \nabla^2 Q(\bar{\bm{\theta}}) \} \bm{u} \geq \bm{u}^\top \bm{\Gamma}_1(\bar{\bmtheta}) \bm{u} \geq c_0^2 \bm{u}^\top \bm{M} \bm{u}$, where the last and hence former terms grow at the same rate as $( \bm{u}^\top  \bm{\lambda}_1 )^2$. Since at least one component of $\bm{u}$ equals $\pm C$, by Conditions (C1)-(C3) we have that $\bm{u}^\top \{ - \nabla^2 Q(\bar{\bm{\theta}}) \} \bm{u}$ is at least of order $O_p(n_L)$, and always dominates since $\bm{u}^\top \bm{\lambda}_2 = O_p(m)$ at most.

Since the choice of $\bm{u}$ is arbitrary, we must take into account the growth rate of the $m$th order statistic. That is, for any $1 \leq k \leq p$, we need $\underset{i \in \{ 1,\ldots, m \}}{\max} [ (\bm{X}_i^\top \dot{\bm{W}}_i \bm{X}_i + \hat{\bm{G}}^{-1})^{-1} \{ \dot{\phi}^{-1} \bm{X}_i^\top (\bm{y}_{i}-\dot{\bm{\mu}}_i) - \hat{\bm{G}}^{-1} \dot{\bm{b}}_i \} ]_{[k]} = o_p(1)$, as per the argument for the Poisson pure random intercept model. Since the responses $y_{ij}$ are from the exponential family and thus the moment generating function always exists, the maximum is of order $o_p( m^{1/r} n_L^{-1/2} )$ for any $r \in \mathbb{N}$ \citep[][]{downey1990distribution}, and hence $o_p(1)$ since $m n_L^{-1} \rightarrow 0$ by taking $r=2$, for example. Note that the first $p$ components of $\nabla Q(\dot{\bmtheta})$, which are associated with the fixed effects, do not need to be bounded in this way because the dimension is fixed.

\subsection{Unconditional on the Random Effects}

In this section, we prove the consistency result under the unconditional regime. The main differences to the derivation under the conditional regime arise from the treatment of $\bm{\lambda}_2$, and the distribution of $\bm{y}$. In the unconditional regime it holds that $\sum_{i=1}^m \dot{\bm{b}}_i = O_p(m^{1/2})$, while in the conditional regime we impose a sum to zero constraint. Furthermore, in the unconditional regime we bound $\bm{u}^\top \bm{\lambda}_2$ using its variance, while in the conditional regime this is not possible because $\bm{\lambda}_2$ is not a random variable. Finally, in the unconditional regime we cannot use the properties of the exponential family to bound the $m$th order statistic, instead requiring Condition (C5).

Let $\bm{M} = E \{ \bm{\Gamma}_1(\dot{\bmtheta}) \}$. Then $\bm{M} = \text{Var} ( \bm{\lambda}_1 ) = E( \bm{\lambda}_1 \bm{\lambda}_1^\top )$ since $E( \bm{\lambda}_1 ) = \bm{0}_{(m+1)p}$. By Condition (C1), $c_0^2 \bm{u}^\top \bm{M} \bm{u} \leq \bm{u}^\top \bm{\Gamma}_1(\bar{\bmtheta}) \bm{u}$.

We consider two cases: the special case when $\bm{u}_1 = - \bm{u}_{2i}$ for all $i$, and when this is not the case. In the former, we have $\bm{u}^\top \bm{\lambda}_1 = \bm{u}^\top \bm{M} \bm{u} = 0$. Thus we must examine $\bm{u}^\top \bm{\lambda}_2$ and $\bm{u}^\top \bm{\Gamma}_2 \bm{u}$. In this case, we have $\bm{u}^\top \bm{\lambda}_2 = \sum_{i=1}^m \bm{u}_{2i}^\top \hat{\bm{G}}^{-1} \dot{\bm{b}}_i = - \bm{u}_{1}^\top \hat{\bm{G}}^{-1} \sum_{i=1}^m  \dot{\bm{b}}_i = O_p(m^{1/2})$, and $\bm{u}^\top \bm{\Gamma}_2 \bm{u} = m \bm{u}_1^\top \hat{\bm{G}}^{-1} \bm{u}_1 > 0$ since $\hat{\bm{G}}$ is a positive definite matrix. Hence the difference \eqref{consistency_diff} is negative for large enough $m,n_L$, and any choice of constant $C$.

Next, consider the case when $\bm{u}_1 = - \bm{u}_{2i}$ for all $i$ does not hold. Then we still have $\bm{u}^\top \{ - \nabla^2 Q(\bar{\bm{\theta}}) \} \bm{u} \geq c_0^2 \bm{u}^\top \bm{M} \bm{u}$. Letting $\lambda = \lambda_{\max}(\hat{\bm{G}}^{-1} \dot{\bm{G}} \hat{\bm{G}}^{-1}) / \lambda_{\min} (\hat{\bm{G}}^{-1})$, we have
\begin{align*}
    \text{Var}( \bm{\lambda}_2) = \bm{I}_m \otimes \hat{\bm{G}}^{-1} \dot{\bm{G}} \hat{\bm{G}}^{-1}
\end{align*}
and
\begin{align*}
    \lambda^{-1} \bm{u}_2^\top ( \bm{I}_m \otimes \hat{\bm{G}}^{-1} \dot{\bm{G}} \hat{\bm{G}}^{-1}) \bm{u}_2 \leq  \bm{u}_2^\top ( \bm{I}_m \otimes \hat{\bm{G}}^{-1}) \bm{u}_2 = \bm{u}^\top \bm{\Gamma}_2 \bm{u}.
\end{align*}
Now, by the laws of iterated expectation and variance,
\begin{align*}
    E\{ \nabla Q(\dot{\bmtheta}) \nabla Q(\dot{\bmtheta})^\top \} &= \text{Var} \{ \nabla Q(\dot{\bmtheta}) \} \\
    &= E [ \text{Var} \{\nabla Q(\dot{\bmtheta}) | \dot{\bm{b}}\} ] + \text{Var} [ E \{\nabla Q(\dot{\bmtheta}) | \dot{\bm{b}} \} ] \\
    &= E \{ \text{Var} (\bm{\lambda}_1 | \dot{\bm{b}} ) \} + \text{Var} (\bm{\lambda}_2) \\
    &= \text{Var} (\bm{\lambda}_1) + \text{Var} (\bm{\lambda}_2).
\end{align*}
Thus we have that 
\begin{align*}
    \bm{u}^\top \{ - \nabla^2 Q(\bar{\bm{\theta}}) \} \bm{u} &\geq \min(\lambda^{-1}, c_0^2) \{ \bm{u}^\top \bm{M} \bm{u} + \bm{u}_2^\top ( \bm{I}_m \otimes \hat{\bm{G}}^{-1} \dot{\bm{G}} \hat{\bm{G}}^{-1}) \bm{u}_2 \} \\
    &= \min(\lambda^{-1}, c_0^2) \bm{u}^\top E \{ \nabla Q(\dot{\bmtheta}) \nabla Q(\dot{\bmtheta})^\top  \} \bm{u},
\end{align*}
where the latter and hence former term grows at the same rate as $ \{ \bm{u}^\top  \nabla Q(\dot{\bmtheta}) \}^2$. Since at least one component of $\bm{u}$ equals $\pm C$, for any given $\bm{u}$ we have that $\bm{u}^\top \{ - \nabla^2 Q(\bar{\bm{\theta}}) \} \bm{u}$ is at least of order $O_p(n_L)$ and always dominates.

Since the choice of $\bm{u}$ is arbitrary, we must take into account the growth rate of the $n$th order statistic. That is, for any $1 \leq k \leq p$, we require $\underset{i \in \{ 1,\ldots, m \}}{\max} [ (\bm{X}_i^\top \dot{\bm{W}}_i \bm{X}_i + \hat{\bm{G}}^{-1})^{-1} \{ \dot{\phi}^{-1} \bm{X}_i^\top (\bm{y}_{i}-\dot{\bm{\mu}}_i) - \hat{\bm{G}}^{-1} \dot{\bm{b}}_i \} ]_{[k]} = o_p(1)$, as per the argument for the Poisson pure random intercept model. By Condition (C5), this term is of order $o_p(m^{1/4} n_L^{-1/2})$, and hence the result follows. Note that the first $p$ components of $\nabla Q(\dot{\bmtheta})$, which are associated with the fixed effects, do not need to be bounded in this way because the dimension is fixed.

\section{Proofs of Distributional Results} \label{sec:distributional}
\setcounter{equation}{0}

For both the conditional and unconditional regimes, our proof relies on examining the behaviour of the leading term in the Taylor expansion of the estimating function. Under Conditions (C1) and (C3), we take the Taylor expansion of $\nabla Q(\hat{\bm{\theta}})$ around $\dot{\bm{\theta}}$ and obtain, as $m,n_L \rightarrow \infty$,
\begin{align} \label{eqn:taylor1}
\nabla Q(\hat{\bm{\theta}}) = \bm{0}_{(m+1)p} =\nabla Q(\dot{\bm{\theta}}) + \nabla^2 Q(\dot{\bm{\theta}}) (\hat{\bm{\theta}} - \dot{\bm{\theta}}) + \frac{1}{2} \bm{R}(\tilde{\bm{\theta}}),
\end{align} 
where $\tilde{\bm{\theta}}$ is a $(m+1)p \times (m+1)p$ matrix with each row lying on the line segment between $\dot{\bm{\theta}}$ and $\hat{\bm{\theta}}$ and $\bm{R}(\tilde{\bm{\theta}})$ is the remainder term. Rearranging, we have
\begin{align}
\label{taylor_fundamental}
\hat{\bm{\theta}} - \dot{\bm{\theta}} &= - \{\nabla^2 Q(\dot{\bm{\theta}})\}^{-1} \nabla Q(\dot{\bm{\theta}}) - \frac{1}{2} \{\nabla^2 Q(\dot{\bm{\theta}})\}^{-1} \bm{R}(\tilde{\bm{\theta}}).
\end{align}
We show in Section \ref{sec:remainder} that the remainder term is of smaller order than the leading term and thus negligible in the limit, in both the conditional and unconditional regimes.

From \eqref{taylor_fundamental}, to study the asymptotic behaviour of the PQL estimator we will first apply the blockwise matrix inversion formula to obtain an expression for $- \{\nabla^2 Q(\dot{\bm{\theta}})\}^{-1}$. Using this result, we will then obtain an expression for $- \{\nabla^2 Q(\dot{\bm{\theta}})\}^{-1} \nabla Q(\dot{\bm{\theta}})$, and subsequently study the asymptotic behaviour of each constituent term. Note that since $\nabla Q(\dot{\bm{\theta}})$ is a $(m+1)p$-vector and $- \{\nabla^2 Q(\dot{\bm{\theta}})\}^{-1}$ is a $(m+1)p \times (m+1)p$ matrix, we cannot simply take their limits as per standard fixed dimension asymptotics. Instead, we must evaluate $ - \{\nabla^2 Q(\dot{\bm{\theta}})\}^{-1} \nabla Q(\dot{\bm{\theta}})$ as a whole.

We can write
\begin{align*}
\nabla Q(\dot{\bm{\theta}}) &= \left[ \begin{matrix} \dot{\phi}^{-1} \bm{X}^\top (\bm{y} - \dot{\bm{\mu}}) \\ \dot{\phi}^{-1} \bm{Z}^\top (\bm{y} - \dot{\bm{\mu}}) - (\bm{I}_m \otimes \hat{\bm{G}}^{-1})\dot{\bm{b}} \end{matrix} \right] = \left[ \begin{matrix} \dot{\phi}^{-1} \sum_{i=1}^m \sum_{j=1}^{n_i} \bm{x}_{ij} (y_{ij} - \dot{\mu}_{ij}) \\ \dot{\phi}^{-1} \sum_{j=1}^{n_1} \bm{x}_{1j} (y_{1j} - \dot{\mu}_{1j}) -  \hat{\bm{G}}^{-1} \dot{\bm{b}}_1 \\ \vdots \\ \dot{\phi}^{-1} \sum_{j=1}^{n_m} \bm{x}_{mj} (y_{mj} - \dot{\mu}_{mj}) -  \hat{\bm{G}}^{-1} \dot{\bm{b}}_m \end{matrix} \right] \\
&\triangleq \left[ \begin{matrix} \bm{S}_1 \\ \bm{S}_{21} + \bm{S}_{31} \\ \vdots \\ \bm{S}_{2m} + \bm{S}_{3m} \end{matrix} \right]  \triangleq \left[ \begin{matrix} \bm{S}_1 \\ \bm{S}_4 + \bm{S}_5 \end{matrix} \right] \triangleq \left[ \begin{matrix} \bm{S}_1 \\ \bm{S}_6 \end{matrix} \right] , \\
\bm{B}(\dot{\bm{\theta}}) &= - \nabla^2 Q(\dot{\bm{\theta}}) = \left[ \begin{matrix}
\bm{X}^\top \dot{\bm{W}} \bm{X} & \bm{X}^\top \dot{\bm{W}} \bm{Z}\\
\bm{Z}^\top \dot{\bm{W}} \bm{X} & \bm{Z}^\top \dot{\bm{W}} \bm{Z} + \bm{I}_m \otimes \hat{\bm{G}}^{-1}
\end{matrix} \right] \triangleq \left[ \begin{matrix}
\underset{p \times p}{\bm{B}_1}  & \underset{p \times mp}{\bm{B}_2}\\
\underset{mp \times p}{\bm{B}_2^\top} & \underset{mp \times mp}{\bm{B}_3 + \bm{B}_4} 
\end{matrix} \right].
\end{align*}
Letting $\bm{C} = \bm{B}_1 - \bm{B}_2 (\bm{B}_3+\bm{B}_4)^{-1} \bm{B}_2^\top$, by the matrix block inversion formula we have
\begin{align}
\label{eq:B_inv}
\bm{B}^{-1} = \left[ \begin{matrix}
\bm{C}^{-1} & -\bm{C}^{-1} \bm{B}_2 (\bm{B}_3+\bm{B}_4)^{-1}\\
-(\bm{B}_3+\bm{B}_4)^{-1} \bm{B}_2^\top \bm{C}^{-1} & (\bm{B}_3+\bm{B}_4)^{-1} + (\bm{B}_3+\bm{B}_4)^{-1} \bm{B}_2^\top \bm{C}^{-1} \bm{B}_2 (\bm{B}_3+\bm{B}_4)^{-1}
\end{matrix} \right].
\end{align}
Next, based on the forms of $\bm{B}_2$ and $(\bm{B}_3 + \bm{B}_4)$, we obtain
\begin{align*}
\bm{B}_2 (\bm{B}_3+\bm{B}_4)^{-1} = [\bm{I}_p - \hat{\bm{G}}^{-1} (\bm{X}^\top_1 \dot{\bm{W}}_1 \bm{X}_1 + \hat{\bm{G}}^{-1})^{-1},\ldots, \bm{I}_p - \hat{\bm{G}}^{-1} (\bm{X}^\top_m \dot{\bm{W}}_m \bm{X}_m + \hat{\bm{G}}^{-1})^{-1} ].
\end{align*}
Then since $\bm{Z}_i = \bm{X}_i$ for all $i$, we can show that
\begin{align*}
\bm{B}_2 (\bm{B}_3+\bm{B}_4)^{-1} \bm{B}_2^\top &= \sum_{i=1}^m \bm{X}^\top_i \dot{\bm{W}}_i \bm{X}_i (\bm{X}^\top_i \dot{\bm{W}}_i \bm{X}_i + \hat{\bm{G}}^{-1})^{-1} \bm{X}^\top_i \dot{\bm{W}}_i \bm{X}_i  \\
&= \sum_{i=1}^m (\bm{X}^\top_i \dot{\bm{W}}_i \bm{X}_i + \hat{\bm{G}}^{-1} - \hat{\bm{G}}^{-1} ) (\bm{X}^\top_i \dot{\bm{W}}_i \bm{X}_i + \hat{\bm{G}}^{-1})^{-1} \bm{X}^\top_i \dot{\bm{W}}_i \bm{X}_i  \\
&= \sum_{i=1}^m \bm{X}^\top_i \dot{\bm{W}}_i \bm{X}_i - \hat{\bm{G}}^{-1} (\bm{X}^\top_i \dot{\bm{W}}_i \bm{X}_i + \hat{\bm{G}}^{-1})^{-1} \bm{X}^\top_i \dot{\bm{W}}_i \bm{X}_i \\
&= \bm{B}_1 - \sum_{i=1}^m  \hat{\bm{G}}^{-1} (\bm{X}^\top_i \dot{\bm{W}}_i \bm{X}_i + \hat{\bm{G}}^{-1})^{-1} \bm{X}^\top_i \dot{\bm{W}}_i \bm{X}_i.
\end{align*}
It follows that
\begin{align} \label{eq:C}
\bm{C} &= \sum_{i=1}^m  \hat{\bm{G}}^{-1} (\bm{X}^\top_i \dot{\bm{W}}_i \bm{X}_i + \hat{\bm{G}}^{-1})^{-1} \bm{X}^\top_i \dot{\bm{W}}_i \bm{X}_i = \sum_{i=1}^m  \bm{X}^\top_i \dot{\bm{W}}_i \bm{X}_i (\bm{X}^\top_i \dot{\bm{W}}_i \bm{X}_i + \hat{\bm{G}}^{-1})^{-1} \hat{\bm{G}}^{-1},
\end{align}
where the second equality arises from the fact that as a covariance matrix, $\bm{C}$ must be symmetric. We may also write $\bm{C}$ as
\begin{align*}
\sum_{i=1}^m  \hat{\bm{G}}^{-1} (\bm{X}^\top_i \dot{\bm{W}}_i \bm{X}_i + \hat{\bm{G}}^{-1})^{-1} \bm{X}^\top_i \dot{\bm{W}}_i \bm{X}_i &= \sum_{i=1}^m \{ \bm{I}_p - \hat{\bm{G}}^{-1} (\bm{X}^\top_i \dot{\bm{W}}_i \bm{X}_i + \hat{\bm{G}}^{-1})^{-1} \} \hat{\bm{G}}^{-1} \\
&= \hat{\bm{G}}^{-1} \sum_{i=1}^m \{ \bm{I}_p - (\bm{X}^\top_i \dot{\bm{W}}_i \bm{X}_i + \hat{\bm{G}}^{-1})^{-1} \hat{\bm{G}}^{-1}\}. \numberthis \label{eq:C_alt}
\end{align*}
Note that $\bm{C}$ is of order $O_p(m)$ component-wise in probability in both the conditional and unconditional regimes. Using the fact that $\bm{C}^{-1}$ must also be symmetric, we obtain
\begin{align} \label{eq:Cinv}
\bm{C}^{-1} &= \left\{ \sum_{i=1}^m (\bm{X}^\top_i \dot{\bm{W}}_i \bm{X}_i + \hat{\bm{G}}^{-1})^{-1} \bm{X}^\top_i \dot{\bm{W}}_i \bm{X}_i \right\}^{-1}  \hat{\bm{G}} = \hat{\bm{G}} \left\{ \sum_{i=1}^m  \bm{X}^\top_i \dot{\bm{W}}_i \bm{X}_i (\bm{X}^\top_i \dot{\bm{W}}_i \bm{X}_i + \hat{\bm{G}}^{-1})^{-1} \right\}^{-1}
\end{align}
or equivalently
\begin{align*}
\bm{C}^{-1} &= \bm{C}^{-1 \top} = \left[ \sum_{i=1}^m \{ \bm{I}_p - (\bm{X}^\top_i \dot{\bm{W}}_i \bm{X}_i + \hat{\bm{G}}^{-1})^{-1} \hat{\bm{G}}^{-1}\} \right]^{-1} \hat{\bm{G}} \\
&= \left\{ m^{-1} \bm{I}_p  + m^{-1} \sum_{i=1}^m (\bm{X}^\top_i \dot{\bm{W}}_i \bm{X}_i + \hat{\bm{G}}^{-1})^{-1} \hat{\bm{G}}^{-1} \bm{C}^{-1} \right\} \hat{\bm{G}}, \numberthis \label{eq:Cinv_woodbury}
\end{align*}
where the last line is derived from (a special case of) the Woodbury identity, given by $(\bm{Q} - \bm{R})^{-1} = \bm{Q}^{-1} + \bm{Q}^{-1} \bm{R} (\bm{Q} - \bm{R})^{-1}$ for arbitrary matrices $\bm{Q}$ and $\bm{R}$ such that $\bm{Q}$ and $(\bm{Q} - \bm{R})$ are invertible. The first term in \eqref{eq:Cinv_woodbury} is the dominating term, being of order $O(m^{-1})$, while the second term is $O_p(m^{-1} n_L^{-1})$ in both the conditional and unconditional regimes. We will use all the above forms of $\bm{C}$ and $\bm{C}^{-1}$ in subsequent developments. Similarly, we can apply the Woodbury identity to $(\bm{B}_3+\bm{B}_4)^{-1}$ and $(\bm{X}^\top_i \dot{\bm{W}}_i \bm{X}_i + \hat{\bm{G}}^{-1})^{-1}$ to obtain $n_L (\bm{B}_3+ \bm{B}_4)^{-1} = n_L \bm{B}_3^{-1} - n_L \bm{B}_3^{-1} \bm{B}_4 (\bm{B}_3+ \bm{B}_4)^{-1} = O_p(1) + O_p(n_L^{-1})$ and $n_i (\bm{X}^\top_i \dot{\bm{W}}_i \bm{X}_i + \hat{\bm{G}}^{-1})^{-1} = n_i (\bm{X}^\top_i \dot{\bm{W}}_i \bm{X}_i)^{-1} - n_i (\bm{X}^\top_i \dot{\bm{W}}_i \bm{X}_i)^{-1} \hat{\bm{G}}^{-1} (\bm{X}^\top_i \dot{\bm{W}}_i \bm{X}_i + \hat{\bm{G}}^{-1})^{-1} = O_p(1) + O_p(n_i^{-1})$, where the order results hold component-wise. These hold irrespective of whether we are conditioning on the random effects. 

To further simplify expressions, for the rest of this article we will only use order results when representing quantities associated with these smaller order terms. Furthermore, as we want the derivations for the remainder of this section to be applicable to both the conditional and unconditional regime, we will not distinguish between $O(\cdot)$ and $O_p(\cdot)$ in the following developments, and simply use $O_p()$ to represent both as appropriate. The terms we use ``big-O notation" for will have the same order under both the conditional and unconditional regime. To simplify expressions, we will also drop the dependence on $\bmtheta$, unless stated otherwise.

Finally, it is worth emphasising that
\begin{align} \label{eq:key_identity}
    [-\bm{I}_p,\bm{I}_p,\ldots,\bm{I}_p] \left[ \begin{matrix} \dot{\phi}^{-1} \bm{X}^\top (\bm{y} - \dot{\bm{\mu}}) \\ \dot{\phi}^{-1} \bm{Z}^\top (\bm{y} - \dot{\bm{\mu}}) \end{matrix} \right] =  - \bm{S}_1 + \sum_{i=1}^m \bm{S}_{2i} = \bm{S}_1 - \sum_{i=1}^m \bm{S}_{2i} = \bm{0}_p,
\end{align}
due to the $\bm{X}_i = \bm{Z}_i$ assumption. This is a key identity that is critical to the proofs throughout this article.

We now use the expressions above to multiply out $- \{\nabla^2 Q(\dot{\bm{\theta}})\}^{-1} \nabla Q(\dot{\bm{\theta}})$ and obtain expressions for $\hat{\bmbeta} - \dot{\bmbeta}$ and $\hat{\bm{b}} - \dot{\bm{b}}$. From equation (\ref{taylor_fundamental}), the first $p$ components of $\hat{\bm{\theta}} - \dot{\bm{\theta}}$ are
\begin{align*}
\hat{\bm{\beta}} - \dot{\bm{\beta}} &= \left[ \begin{matrix}
\bm{C}^{-1} & -\bm{C}^{-1} \bm{B}_2 (\bm{B}_3+\bm{B}_4)^{-1} 
\end{matrix} \right] \nabla Q + \frac{1}{2} \{ \bm{B}^{-1} \bm{R}(\tilde{\bmtheta})\}_{[1:p]} \\
&= \bm{C}^{-1} \left[ \begin{matrix}
\bm{I}_{p} &  - [\bm{I}_p - \hat{\bm{G}}^{-1} (\bm{X}^\top_1 \dot{\bm{W}}_1 \bm{X}_1 + \hat{\bm{G}}^{-1})^{-1},\ldots, \bm{I}_p - \hat{\bm{G}}^{-1} (\bm{X}^\top_m \dot{\bm{W}}_m \bm{X}_m + \hat{\bm{G}}^{-1})^{-1} ]
\end{matrix} \right] \nabla Q \\ 
&+ \frac{1}{2} \{ \bm{B}^{-1} \bm{R}(\tilde{\bmtheta})\}_{[1:p]} \\
&= \bm{C}^{-1} \left(\bm{S}_1 - \sum_{i=1}^m \bm{S}_{2i} - \sum_{i=1}^m \bm{S}_{3i} \right) + \bm{C}^{-1} \hat{\bm{G}}^{-1} \Bigg\{ \sum_{i=1}^m (\bm{X}_i^\top \dot{\bm{W}}_i \bm{X}_i + \hat{\bm{G}}^{-1})^{-1}  \bm{S}_{2i}  \\
&  +  \sum_{i=1}^m (\bm{X}_i^\top \dot{\bm{W}}_i \bm{X}_i + \hat{\bm{G}}^{-1})^{-1} \bm{S}_{3i} \Bigg\}+ \frac{1}{2} \{ \bm{B}^{-1} \bm{R}(\tilde{\bmtheta})\}_{[1:p]} \\
&= \bm{C}^{-1} \hat{\bm{G}}^{-1} \Bigg\{ \sum_{i=1}^m (\bm{X}_i^\top \dot{\bm{W}}_i \bm{X}_i + \hat{\bm{G}}^{-1})^{-1}  \bm{S}_{2i} - \hat{\bm{G}} \sum_{i=1}^m \bm{S}_{3i}  \\
&  +  \sum_{i=1}^m (\bm{X}_i^\top \dot{\bm{W}}_i \bm{X}_i + \hat{\bm{G}}^{-1})^{-1} \bm{S}_{3i} \Bigg\}+ \frac{1}{2} \{ \bm{B}^{-1} \bm{R}(\tilde{\bmtheta})\}_{[1:p]},
\end{align*}
where the final equality uses equation \eqref{eq:key_identity}. Thus, letting $\bm{V}_1 = \sum_{i=1}^m (\bm{X}_i^\top \dot{\bm{W}}_i \bm{X}_i + \hat{\bm{G}}^{-1})^{-1}  \bm{S}_{2i} - \hat{\bm{G}} \sum_{i=1}^m \bm{S}_{3i} +  \sum_{i=1}^m (\bm{X}_i^\top \dot{\bm{W}}_i \bm{X}_i + \hat{\bm{G}}^{-1})^{-1} \bm{S}_{3i}$ and applying equation \eqref{eq:Cinv_woodbury}, we obtain
\begin{align*}
\hat{\bm{\beta}} - \dot{\bm{\beta}} &= m^{-1} \bm{V}_1 + \frac{1}{2} \{ \bm{B}^{-1} \bm{R}(\tilde{\bmtheta})\}_{[1:p]} + O_p(n_L^{-1}) \times m^{-1} \bm{V}_1.
\end{align*}
Finally, using the Woodbury identity for $(\bm{X}^\top_i \dot{\bm{W}}_i \bm{X}_i + \hat{\bm{G}}^{-1})^{-1}$, we have that $\sum_{i=1}^m (\bm{X}_i^\top \dot{\bm{W}}_i \bm{X}_i + \hat{\bm{G}}^{-1})^{-1}  \bm{S}_{2i} = \sum_{i=1}^m (\bm{X}_i^\top \dot{\bm{W}}_i \bm{X}_i )^{-1}  \bm{S}_{2i} + \sum_{i=1}^m O_p(n_L^{-2})  \bm{S}_{2i}$. Letting $\bm{V}_2 = \sum_{i=1}^m (\bm{X}_i^\top \dot{\bm{W}}_i \bm{X}_i)^{-1}  \bm{S}_{2i} - \hat{\bm{G}} \sum_{i=1}^m \bm{S}_{3i} +  \sum_{i=1}^m (\bm{X}_i^\top \dot{\bm{W}}_i \bm{X}_i + \hat{\bm{G}}^{-1})^{-1} \bm{S}_{3i}$, we obtain
\begin{align*}
\hat{\bm{\beta}} - \dot{\bm{\beta}} &= m^{-1} \bm{V}_2 + \frac{1}{2} \{ \bm{B}^{-1} \bm{R}(\tilde{\bmtheta})\}_{[1:p]} + O_p(n_L^{-1}) \times m^{-1} \bm{V}_1 + m^{-1} \sum_{i=1}^m O_p(n_L^{-2})  \bm{S}_{2i}.
\end{align*}
Next, the last $m p$ components of $\hat{\bm{\theta}} - \dot{\bm{\theta}}$ are
\begin{align*}
\hat{\bm{b}} - \dot{\bm{b}} &= [-(\bm{B}_3+\bm{B}_4)^{-1} \bm{B}_2^\top \bm{C}^{-1} \quad (\bm{B}_3+\bm{B}_4)^{-1} + (\bm{B}_3+\bm{B}_4)^{-1} \bm{B}_2^\top \bm{C}^{-1} \bm{B}_2 (\bm{B}_3+\bm{B}_4)^{-1} ] \nabla Q \\
&+ \frac{1}{2} \{ \bm{B}^{-1} \bm{R}(\tilde{\bmtheta})\}_{[p +1:(m+1)p]} \\
&= [-(\bm{B}_3+\bm{B}_4)^{-1} \bm{B}_2^\top \bm{C}^{-1} \quad (\bm{B}_3+\bm{B}_4)^{-1} \bm{B}_2^\top \bm{C}^{-1} \bm{B}_2 (\bm{B}_3+\bm{B}_4)^{-1} ] \nabla Q \\
&+ [ \bm{0}_{mp \times p} \quad (\bm{B}_3+\bm{B}_4)^{-1}]  \nabla Q + \frac{1}{2} \{ \bm{B}^{-1} \bm{R}(\tilde{\bmtheta})\}_{[p +1:(m+1)p]}\\
&= -(\bm{B}_3+\bm{B}_4)^{-1} \bm{B}_2^\top [ \bm{C}^{-1} \quad - \bm{C}^{-1} \bm{B}_2 (\bm{B}_3+\bm{B}_4)^{-1} ] \nabla Q \\
&+ [ \bm{0}_{mp \times p} \quad (\bm{B}_3+\bm{B}_4)^{-1}]  \nabla Q + \frac{1}{2} \{ \bm{B}^{-1} \bm{R}(\tilde{\bmtheta})\}_{[p +1:(m+1)p]}.
\end{align*}
Notice that we already have an expression for $[ \bm{C}^{-1} \quad - \bm{C}^{-1} \bm{B}_2 (\bm{B}_3+\bm{B}_4)^{-1} ] \nabla Q$ from the fixed effects above. Namely, it is $m^{-1} \bm{V}_1+ O_p(n_L^{-1}) \times m^{-1} \bm{V}_1$. Thus we have
\begin{align*}
\hat{\bm{b}} - \dot{\bm{b}} &= -(\bm{B}_3+\bm{B}_4)^{-1} \bm{B}_2^\top (m^{-1} \bm{V}_1+ O_p(n_L^{-1}) \times m^{-1} \bm{V}_1) \\
&+ (\bm{B}_3+\bm{B}_4)^{-1} \bm{S}_6 + \frac{1}{2} \{ \bm{B}^{-1} \bm{R}(\tilde{\bmtheta})\}_{[p +1:(m+1)p]}.
\end{align*}
Applying the Woodbury identity for $(\bm{B}_3+\bm{B}_4)^{-1}$, we obtain
\begin{align*}
\hat{\bm{b}} - \dot{\bm{b}} &= - \bm{1}_m \otimes (m^{-1} \bm{V}_1+ O_p(n_L^{-1}) \times m^{-1} \bm{V}_1) + O_p(n_L^{-1}) (m^{-1} \bm{V}_1+ O_p(n_L^{-1}) \times m^{-1} \bm{V}_1) \\
&+ \bm{B}_3^{-1} \bm{S}_6 + O_p(n_L^{-2}) \bm{S}_6 + \frac{1}{2} \{ \bm{B}^{-1} \bm{R}(\tilde{\bmtheta})\}_{[p +1:(m+1)p]} \\
&= - \bm{1}_m \otimes m^{-1} \bm{V}_1 + O_p(n_L^{-1}) \times m^{-1} \bm{V}_1+ O_p(n_L^{-2}) \times m^{-1} \bm{V}_1 \\
&+ \bm{B}_3^{-1} \bm{S}_4 + \bm{B}_3^{-1} \bm{S}_5 + O_p(n_L^{-2}) \bm{S}_6 + \frac{1}{2} \{ \bm{B}^{-1} \bm{R}(\tilde{\bmtheta})\}_{[p +1:(m+1)p]}.
\end{align*}
Replacing all the $\bm{V_{\cdot}}$ and $\bm{S_{\cdot}}$ terms in the above with their definitions, we finally obtain
\begin{align*}
\hat{\bm{\beta}} - \dot{\bm{\beta}} &= m^{-1} \sum_{i=1}^m (\bm{X}_i^\top \dot{\bm{W}}_i \bm{X}_i)^{-1}  \dot{\phi}^{-1} \bm{X}_i^\top (\bm{y}_i - \dot{\bm{\mu}}_i )  + m^{-1} \sum_{i=1}^m \dot{\bm{b}}_i \\
&  -  m^{-1} \sum_{i=1}^m (\bm{X}_i^\top \dot{\bm{W}}_i \bm{X}_i + \hat{\bm{G}}^{-1})^{-1} \hat{\bm{G}}^{-1} \dot{\bm{b}}_i + \frac{1}{2} \{ \bm{B}^{-1} \bm{R}(\tilde{\bmtheta})\}_{[1:p]}  \\
& + O_p(n_L^{-1}) \Bigg\{ m^{-1} \sum_{i=1}^m (\bm{X}_i^\top \dot{\bm{W}}_i \bm{X}_i + \hat{\bm{G}}^{-1})^{-1}  \dot{\phi}^{-1} \bm{X}_i^\top (\bm{y}_i - \dot{\bm{\mu}}_i )  + m^{-1} \sum_{i=1}^m \dot{\bm{b}}_i \\
&  -  m^{-1} \sum_{i=1}^m (\bm{X}_i^\top \dot{\bm{W}}_i \bm{X}_i + \hat{\bm{G}}^{-1})^{-1} \hat{\bm{G}}^{-1} \dot{\bm{b}}_i \Bigg\} + m^{-1} \sum_{i=1}^m O_p(n_L^{-2}) \dot{\phi}^{-1} \bm{X}_i^\top (\bm{y}_i - \dot{\bm{\mu}}_i ), \numberthis \label{eq:fixed}
\end{align*}
and
\begin{align*}
\hat{\bm{b}} - \dot{\bm{b}} &= - \bm{1}_m \otimes \Bigg\{  m^{-1} \sum_{i=1}^m (\bm{X}_i^\top \dot{\bm{W}}_i \bm{X}_i + \hat{\bm{G}}^{-1})^{-1}  \dot{\phi}^{-1} \bm{X}_i^\top (\bm{y}_i - \dot{\bm{\mu}}_i )   \\
&  + m^{-1} \sum_{i=1}^m \dot{\bm{b}}_i -  m^{-1} \sum_{i=1}^m (\bm{X}_i^\top \dot{\bm{W}}_i \bm{X}_i + \hat{\bm{G}}^{-1})^{-1} \hat{\bm{G}}^{-1} \dot{\bm{b}}_i \Bigg\} \\
& + \bm{B}_3^{-1} \{\dot{\phi}^{-1} \bm{Z}^\top (\bm{y} - \dot{\bm{\mu}}) \} - \bm{B}_3^{-1} \{ (\bm{I}_m \otimes \hat{\bm{G}}^{-1})\dot{\bm{b}}\} + \frac{1}{2} \{ \bm{B}^{-1} \bm{R}(\tilde{\bmtheta})\}_{[p +1:(m+1)p]} \\
& + O_p(n_L^{-1}) \Bigg\{ m^{-1} \sum_{i=1}^m (\bm{X}_i^\top \dot{\bm{W}}_i \bm{X}_i + \hat{\bm{G}}^{-1})^{-1}  \dot{\phi}^{-1} \bm{X}_i^\top (\bm{y}_i - \dot{\bm{\mu}}_i )   \\
& + m^{-1} \sum_{i=1}^m \dot{\bm{b}}_i -  m^{-1} \sum_{i=1}^m (\bm{X}_i^\top \dot{\bm{W}}_i \bm{X}_i + \hat{\bm{G}}^{-1})^{-1} \hat{\bm{G}}^{-1} \dot{\bm{b}}_i \Bigg\} \\
& + O_p(n_L^{-2}) \Bigg\{  m^{-1} \sum_{i=1}^m (\bm{X}_i^\top \dot{\bm{W}}_i \bm{X}_i + \hat{\bm{G}}^{-1})^{-1}  \dot{\phi}^{-1} \bm{X}_i^\top (\bm{y}_i - \dot{\bm{\mu}}_i )   \\
& +  m^{-1} \sum_{i=1}^m \dot{\bm{b}}_i -  m^{-1} \sum_{i=1}^m (\bm{X}_i^\top \dot{\bm{W}}_i \bm{X}_i + \hat{\bm{G}}^{-1})^{-1} \hat{\bm{G}}^{-1} \dot{\bm{b}}_i \Bigg\} \\
& + O_p(n_L^{-2}) \{\dot{\phi}^{-1} \bm{Z}^\top (\bm{y} - \dot{\bm{\mu}}) - (\bm{I}_m \otimes \hat{\bm{G}}^{-1})\dot{\bm{b}}\}. \numberthis \label{eq:random}
\end{align*}
The expressions for $\hat{\bm{\beta}} - \dot{\bm{\beta}}$ and $\hat{\bm{b}} - \dot{\bm{b}}$ above underlie our proofs. We use these same expressions in both the conditional and unconditional regimes, but the asymptotic behaviours of the terms on the right hand side, and the way we treat them, will differ greatly between the two cases.

As we will show later, the key leading terms for the fixed effects are \\ $m^{-1} \sum_{i=1}^m (\bm{X}_i^\top \dot{\bm{W}}_i \bm{X}_i)^{-1}  \dot{\phi}^{-1} \bm{X}_i^\top (\bm{y}_i - \dot{\bm{\mu}}_i )$ and $m^{-1} \sum_{i=1}^m \dot{\bm{b}}_i$. The key leading terms for the random effects are $- \bm{1}_m \otimes m^{-1} \sum_{i=1}^m \dot{\bm{b}}_i$ and $\bm{B}_3^{-1} \{\dot{\phi}^{-1} \bm{Z}^\top (\bm{y} - \dot{\bm{\mu}}) \}$. When conditioning on the random effects $\dot{\bm{b}}$, we have $m^{-1} \sum_{i=1}^m \dot{\bm{b}}_i = O(1)$, while in the unconditional regime the same quantity is of order $O_p(m^{-1/2})$ in probability. In both the conditional and unconditional regimes, we have that $m^{-1} \sum_{i=1}^m (\bm{X}_i^\top \dot{\bm{W}}_i \bm{X}_i)^{-1}  \dot{\phi}^{-1} \bm{X}_i^\top (\bm{y}_i - \dot{\bm{\mu}}_i )$ is of order $O_p(N^{-1/2})$ component-wise, while the quantity $\bm{B}_3^{-1} \{\dot{\phi}^{-1} \bm{Z}^\top (\bm{y} - \dot{\bm{\mu}}) \}$ is of order $O_p(n_L^{-1/2})$ component-wise.

\subsection{Proof of Theorem 1}
The dominating terms on the right hand sides of equations \eqref{eq:fixed} and \eqref{eq:random} are $m^{-1} \sum_{i=1}^m \dot{\bm{b}}_i$ and $\bm{1}_m \otimes m^{-1} \sum_{i=1}^m \dot{\bm{b}}_i$ for the fixed and random effects, respectively. Conditional on the random effects $\dot{\bm{b}}_i$, these dominating terms are deterministic and of order $O(1)$. Thus we treat them as bias terms and move them to the left hand side. Next, by Conditions (C1)-(C2), $\bm{B}_3^{-1}$ is a component-wise $O(n_L^{-1})$ block-diagonal matrix, while we also have $\bm{B}_2^\top = O(n_U)$, $\bm{X}_i^\top \dot{\bm{W}}_i \bm{X}_i^\top = O(n_i)$, and $\bm{C}^{-1} = O(m^{-1})$ component-wise. Since $ E \{ \bm{Z}^\top (\bm{y} - \dot{\bm{\mu}} ) |\dot{\bm{b}} \} = \bm{0}_{mp}$ and $\text{Var} \{ \bm{Z}^\top (\bm{y} - \dot{\bm{\mu}}) |\dot{\bm{b}} \} = \bm{Z}^\top \dot{\bm{W}} \bm{Z}$, we obtain $\dot{\phi}^{-1} \bm{D}_r^{-1} \bm{Z}^\top (\bm{y} - \dot{\bm{\mu}}) = O_p(1)$ using Chebyshev's inequality and the conditional independence.

Multiplying both sides of \eqref{eq:fixed} and \eqref{eq:random} by $N^{1/2}$ and $\bm{D}_r$ respectively, and applying the order results for the remainder term in Section \ref{sec:cond_remainder}, we obtain
\begin{align*}
N^{1/2} \left(\hat{\bm{\beta}} - \dot{\bm{\beta}} -  m^{-1} \sum_{i=1}^m \dot{\bm{b}}_i \right) &=  m^{-1/2} \sum_{i=1}^m n^{1/2} n_i^{-1/2} (n_i^{-1} \bm{X}_i^\top \dot{\bm{W}}_i \bm{X}_i)^{-1}  n_i^{-1/2} \dot{\phi}^{-1} \bm{X}_i^\top (\bm{y}_i - \dot{\bm{\mu}}_i ) \\
& + O_p(m^{1/2} n_L^{-1/2}),
\end{align*}
and
\begin{align*}
\bm{D}_r \left( \hat{\bm{b}} - \dot{\bm{b}} + \bm{1}_m \otimes m^{-1} \sum_{i=1}^m \dot{\bm{b}}_i \right) &= \bm{D}_r \bm{B}_3^{-1} \bm{D}_r \bm{D}_r^{-1} \{  \dot{\phi}^{-1} \bm{Z}^\top (\bm{y} - \dot{\bm{\mu}}) \} + O_p(n_L^{-1/2}).
\end{align*}

Recalling that $\bm{X}_i = \bm{Z}_i$, to prove Theorem \ref{thm:conditionalnormality} we will show a Lindeberg condition for 
\begin{align*}
   \bm{A} \left[ \begin{matrix}
m^{-1/2} \sum_{i=1}^m n^{1/2} n_i^{-1/2} (n_i^{-1} \bm{X}_i^\top \dot{\bm{W}}_i \bm{X}_i)^{-1}  n_i^{-1/2} \dot{\phi}^{-1} \bm{X}_i^\top (\bm{y}_i - \dot{\bm{\mu}}_i ) \\
    (n_1^{-1} \bm{X}_1^\top \dot{\bm{W}}_1 \bm{X}_1)^{-1} \{n_1^{-1/2} \dot{\phi}^{-1} \bm{X}_1^\top (\bm{y}_1 - \dot{\bm{\mu}}_1 ) \} \\
    \vdots \\
    (n_m^{-1} \bm{X}_m^\top \dot{\bm{W}}_m \bm{X}_m)^{-1} \{n_m^{-1/2} \dot{\phi}^{-1} \bm{X}_m^\top (\bm{y}_m - \dot{\bm{\mu}}_m ) \}
    \end{matrix} \right] =: S,
\end{align*}
and thus apply the Lindeberg-Feller central limit theorem, from which the result follows from Slutsky's theorem. 


To prove the condition, first define $\bm{U} = [\bm{Z} \bm{B}_3^{-1} (\bm{1}_m \otimes \bm{I}_p) , \bm{Z} \bm{B}_3^{-1} ]$, and $\bm{U}_k$ as the $k$th row of $\bm{U}$, noting it only has $2p$ non-zero components. Then we can write $S = \sum_{k=1}^N \bm{A} \bm{D} \bm{U}_k \dot{\phi}^{-1} \{ y_k - \mu_k(\dot{\bm{\theta}}) \} \overset{\Delta}{=} \sum_{k=1}^N \bm{\xi}_k$, where $y_k$ is the $k$th component in $(y_{11},y_{12},\ldots,y_{1n_1},y_{21},\ldots,y_{mn_m})^\top$, and similarly for $\mu_k(\dot{\bm{\theta}})$.

Conditional on $\dot{\bm{b}}$, the quantities $\{\bm{\xi}_k\}_{k=1}^N$ are independent $q$-vectors with expectation zero and covariance $\text{Var} (\bm{\xi}_k | \dot{\bm{b}} ) = \bm{A} \bm{D} \bm{U}_k W_k \bm{U}_k^\top \bm{D} \bm{A}^\top$, where $W_k$ is the $k$th diagonal component in $\dot{\bm{W}}$. Therefore, we have that
\begin{align*}
& \sum_{k=1}^N \text{Var} (\bm{\xi}_k | \dot{\bm{b}} ) = \sum_{k=1}^N \bm{A}  \bm{D} \bm{U}_k W_k \bm{U}_k^\top \bm{D} \bm{A}^\top \\
&= \bm{A} \left[ \begin{matrix}
\frac{1}{m} \sum_{i=1}^m \frac{n}{n_i} \left(\frac{\bm{X}^\top_i \dot{\bm{W}}_i \bm{X}_i}{n_i} \right)^{-1} & \frac{1}{\sqrt{m}} \sqrt{\frac{n}{n_1}} \left(\frac{\bm{X}^\top_1 \dot{\bm{W}}_1 \bm{X}_1}{n_1} \right)^{-1} & \cdots & \frac{1}{\sqrt{m}} \sqrt{\frac{n}{n_m}} \left(\frac{\bm{X}^\top_m \dot{\bm{W}}_m \bm{X}_m}{n_m} \right)^{-1} \\
\frac{1}{\sqrt{m}} \sqrt{\frac{n}{n_1}} \left(\frac{\bm{X}^\top_1 \dot{\bm{W}}_1 \bm{X}_1}{n_1} \right)^{-1} & \left(\frac{\bm{X}^\top_1 \dot{\bm{W}}_1 \bm{X}_1}{n_1} \right)^{-1} & \bm{0} & \bm{0} \\
\vdots & \bm{0} & \ddots & \bm{0} \\
\frac{1}{\sqrt{m}} \sqrt{\frac{n}{n_m}} \left(\frac{\bm{X}^\top_m \dot{\bm{W}}_m \bm{X}_m}{n_m} \right)^{-1} & \bm{0} & \bm{0} & \left(\frac{\bm{X}^\top_m \dot{\bm{W}}_m \bm{X}_m}{n_m} \right)^{-1}
\end{matrix} \right] \bm{A}^\top.
\end{align*}
Hence using the finite selection property of $\bm{A}$, and the fact that ${m}^{-1/2} n^{1/2} n_i^{-1/2} \left(n_i^{-1} \bm{X}^\top_i \dot{\bm{W}}_i \bm{X}_i \right)^{-1} = o(1)$ component-wise, we obtain
\begin{align*}
& \underset{m,n_L \rightarrow \infty}{\lim} \sum_{k=1}^N \text{Cov} (\bm{\xi}_k | \dot{\bm{b}} ) \\
&= \underset{m,n_L \rightarrow \infty}{\lim} \bm{A} \, \text{bdiag} \Bigg\{ \frac{1}{m}  \sum_{i=1}^m \frac{n}{n_i} \left(\frac{\bm{X}^\top_i \dot{\bm{W}}_i \bm{X}_i}{n_i} \right)^{-1} , \left(\frac{\bm{X}^\top_1 \dot{\bm{W}}_1 \bm{X}_1}{n_1} \right)^{-1}, \ldots , \left(\frac{\bm{X}^\top_m \dot{\bm{W}}_m \bm{X}_m}{n_m} \right)^{-1} \Bigg\} \bm{A}^\top \\
&= \bm{\Omega}.
\end{align*}

Next, by the Cauchy-Schwarz inequality, we have
\begin{align*}
E\{\| \bm{\xi}_k \|^2 I(\| \bm{\xi}_k \| > \epsilon) |\dot{\bm{b}}\} \leq E(\| \bm{\xi}_k \|^4 |\dot{\bm{b}})^{1/2} P(\| \bm{\xi}_k \| > \epsilon |\dot{\bm{b}})^{1/2}.
\end{align*}
Finally, we make a note about the form of Cov$[\bm{D} \bm{U}_k \{y_k - \mu_k(\dot{\bm{\theta}})\}]$. Without loss of generality, suppose $k = 1$. Then
\begin{align*}
    & \text{Cov}[\bm{D}\bm{U}_1 \{y_1 - \mu_1(\dot{\bm{\theta}})\}] = \\
    & \left[ \begin{matrix}
n (mn_1^2)^{-1} \bm{H}_1 \bm{x}_{11} W_1 \bm{x}_{11}^\top \bm{H}_1^\top & n_1^{-1} m^{-1/2} (n n_1^{-1})^{1/2} \bm{H}_1 \bm{x}_{11} W_1 \bm{x}_{11}^\top \bm{H}_1^\top & \bm{0} \\
n_1^{-1} m^{-1/2} (n n_1^{-1})^{1/2} \bm{H}_1 \bm{x}_{11} W_1 \bm{x}_{11}^\top \bm{H}_1^\top & n_1^{-1} \bm{H}_1 \bm{x}_{11} W_1 \bm{x}_{11}^\top \bm{H}_1^\top & \bm{0} \\
\bm{0} & \bm{0} & \bm{0} \\
\end{matrix} \right]. \numberthis \label{eq:lindeburg}
\end{align*}
Again without loss of generality, consider the case $\bm{A} = [\bm{I}_{2p} , \bm{0}_{(p+p) \times (m-1)p} ]$. Then by equation \eqref{eq:lindeburg} and Chebyshev's inequality, when $k \in \{ 1, 2, \ldots, n_1 \}$ we have that $P(\| \bm{\xi}_k \| > \epsilon | \dot{\bm{b}}) \leq \text{tr} \{\text{Cov} (\bm{\xi}_k | \dot{\bm{b}})\}/\epsilon^2 = O(n_1^{-1})$. Thus, given $\dot{\bm{b}}$, we obtain $\| \bm{\xi}_k \| = O_p(n_1^{-1/2})$ and $E(\| \bm{\xi}_k \|^4 | \dot{\bm{b}}) = O(n_1^{-2})$ by Conditions (C1)-(C3) and the properties of the exponential family. However when $k > n_1$, by equation \eqref{eq:lindeburg} and Chebyshev's inequality, we have that $P(\| \bm{\xi}_k \| > \epsilon | \dot{\bm{b}}) \leq \text{tr} \{\text{Cov} (\bm{\xi}_k | \dot{\bm{b}})\}/\epsilon^2 = O(N^{-1})$ since $n (m n_1^2)^{-1} = O(N^{-1})$. Thus given $\dot{\bm{b}}$, it holds that $\| \bm{\xi}_k \| = O_p(N^{-1/2})$ and $E(\| \bm{\xi}_k \|^4 | \dot{\bm{b}}) = O(N^{-2})$. Therefore
\begin{align*}
\sum_{k=1}^N E\{\| \bm{\xi}_k \|^2 I(\| \bm{\xi}_k \| > \epsilon ) | \dot{\bm{b}} \} &\leq \sum_{k=1}^N E(\| \bm{\xi}_k \|^4 |\dot{\bm{b}})^{1/2} P(\| \bm{\xi}_k \| > \epsilon |\dot{\bm{b}})^{1/2} \\
&= \sum_{k=1}^{n_1} E(\| \bm{\xi}_k \|^4 |\dot{\bm{b}})^{1/2} P(\| \bm{\xi}_k \| > \epsilon |\dot{\bm{b}})^{1/2} \\
&+ \sum_{k=n_1+1}^N E(\| \bm{\xi}_k \|^4 |\dot{\bm{b}})^{1/2} P(\| \bm{\xi}_k \| > \epsilon |\dot{\bm{b}})^{1/2} \\
&\leq n_1 \, \underset{1 \leq k \leq n_1}{\max} \{ E(\| \bm{\xi}_k \|^4 |\dot{\bm{b}})^{1/2} P(\| \bm{\xi}_k \| > \epsilon |\dot{\bm{b}})^{1/2} \} \\
&+ (N-n_1) \, \underset{k>n_1}{\sup} \{ E(\| \bm{\xi}_k \|^4 |\dot{\bm{b}})^{1/2} P(\| \bm{\xi}_k \| > \epsilon |\dot{\bm{b}})^{1/2} \} \\
&= n_1 \times O(n_1^{-3/2}) + (N-n_1) \times O(N^{-3/2}) \\
&= O(n_1^{-1/2}) + O(N^{-1/2}) \\
&= o(1).
\end{align*}

The required result follows by Conditions (C1)-(C2) and the Lindeberg-Feller Central Limit Theorem. Furthermore, the general result holds straightforwardly by replacing $n_1$ with $O(n_L)$ in the above argument, noting that any row of $\bm{A}$ can only select a fixed number of clusters.

\subsection{Proof of Equation (4)} \label{sec:poisson_pure}

For the Poisson pure random intercept model, we have $\bm{B} = \text{diag}(ne^{\dot{b}_1} + \hat{\sigma}_b^{-2},\ldots, ne^{\dot{b}_m} + \hat{\sigma}_b^{-2})$ and $\bm{R}(\tilde{\bm{\theta}}) = \{ne^{\tilde{b}_1} (\hat{b}_1 - \dot{b}_1)^2,\ldots, ne^{\tilde{b}_m} (\hat{b}_m - \dot{b}_m)^2\}^\top$. Next, suppose that $\bm{A}$ picks out the first random intercept, i.e., $\bm{A} = [1, \bm{0}_{m-1}^\top]$. Then we have
\begin{align*}
     {n}^{1/2} (\hat{b}_1 - \dot{b}_1) &= {n}^{1/2} \bm{A B}^{-1}  \nabla Q(\dot{\bm{\theta}}) + \frac{1}{2} {n}^{1/2}  \bm{A} \bm{B}^{-1}  \bm{R}(\tilde{\bm{\theta}})  \\
    &= n^{-1/2} \left\{ \left( \sum_{j=1}^n y_{1j}  - e^{\dot{b}_1} \right) - \dot{b}_1/\hat{\sigma}_b^2 \right\} \Bigg/ \Big\{ e^{\dot{b}_1} + 1/(\hat{\sigma}_b^2 n) \Big\} \\
    &- \frac{1}{2} \left\{n^{1/2} e^{\tilde{b}_1} (\hat{b}_1 - \dot{b}_1)^2 \right\} \Bigg/ \Big\{ e^{\dot{b}_1} + 1/(\hat{\sigma}_b^2 n) \Big\} \\
    &= \Bigg[ n^{-1/2} \left\{ \left( \sum_{j=1}^n y_{1j}  - e^{\dot{b}_1} \right) - \dot{b}_1/\hat{\sigma}_b^2 \right\} \Bigg/ \Big\{ e^{\dot{b}_1} + 1/(\hat{\sigma}_b^2 n) \Big\} \Bigg] \Bigg/\\
    & \left[ 1+ \left\{ \frac{1}{2} e^{\tilde{b}_1} (\hat{b}_1 - \dot{b}_1) \right\} \Bigg/ \Big\{ e^{\dot{b}_1} + 1/(\hat{\sigma}_b^2 n) \Big\} \right] \\
    &= n^{-1/2} \sum_{j=1}^n (y_{1j} e^{-\dot{b}_1} -1) + o_p(1),
\end{align*}
where $\tilde{b}_1$ lies between $\hat{b}_1$ and $\dot{b}_1$, and for the last line we have used the fact that $\hat{b}_1 - \dot{b}_1 = o_p(1)$.

Now, $\{ y_{1j} e^{-\dot{b}_1} -1 \}_{j=1}^m$ is an exchangeable collection of uncorrelated random variables with mean zero and finite non-zero variance. Furthermore, we have for $k \neq l$ 
\begin{align*}
\text{Cov}\{(y_{1k} e^{-\dot{b}_1} - 1)^2,(y_{1l} e^{-\dot{b}_1} -1)^2\} &= E [\text{Cov}\{(y_{1k} e^{-\dot{b}_1} - 1)^2,(y_{1l} e^{-\dot{b}_1} -1)^2 | \dot{b}_1\}] \\
&\quad + \text{Cov}[ E \{ (y_{1k} e^{-\dot{b}_1} - 1)^2 | \dot{b}_1\} , E \{ (y_{1l} e^{-\dot{b}_1} -1)^2 | \dot{b}_1\}]  \\
&= 0 + \text{Cov}(e^{-\dot{b}_1},e^{-\dot{b}_1}) \\
&= e^{\dot{\sigma}_b^2} (e^{\dot{\sigma}_b^2} -1) \neq 0.
\end{align*}
Thus by the Central Limit Theorem for exchangeable random variables \citep{blum1958central}, it holds that $n^{-1/2} \sum_{j=1}^n (y_{1j} e^{-\dot{b}_1} -1) \overset{D}{\not\rightarrow} N(0, e^{\dot{\sigma}_b^2}) $. Since we know $\text{Var}\{ n^{-1/2} \sum_{j=1}^n (y_{1j} e^{-\dot{b}_1} -1)\} = e^{\dot{\sigma}_b^2/2}$ and also that $n^{-1/2} \sum_{j=1}^n (y_{1j} e^{-\dot{b}_1} -1) = O_p(1) $ by Chebyshev's inequality, there is no other normalization possible for an asymptotic normality result to hold. 

Finally, we also have
\begin{align*}
     {n}^{1/2} (\hat{b}_1 - \dot{b}_1) &= n^{-1/2} \sum_{j=1}^n (y_{1j} e^{-\dot{b}_1} -1) + O_p(n^{-1/2}) \\
     \implies \hat{b}_1 &= \dot{b}_1 + n^{-1} \sum_{j=1}^n (y_{1j} e^{-\dot{b}_1} -1) + O_p(n^{-1}) \\
     &= \dot{b}_1 + o_p(1), \quad \text{by the Weak Law of Large Numbers}.
\end{align*}

\subsection{Proof of Theorem 2}

We begin by developing two key equations, \eqref{eq:fixed_basic.uncond} and \eqref{eq:rand_basic.uncond}, that will be used throughout the unconditional regime. These are derived from equations \eqref{eq:fixed} and \eqref{eq:random} and are used in the proofs of Theorems \ref{thm:unconditional_fixef}-\ref{thm:unconditional_linpred} as well as Corollary 1. Under Conditions (C1)-(C2), the following order results are used: $\bm{B}_3^{-1}$ is a component-wise $O_p(n_L^{-1})$ block-diagonal matrix, $\bm{B}_2 = O_p(n_U)$ component-wise, $\bm{X}_i^\top \dot{\bm{W}}_i \bm{X}_i^\top = O_p(n_i)$ component-wise, and $\bm{C}^{-1} = O_p(m^{-1})$ component-wise. Also, by the conditional independence, we have
\begin{align*}
E\{ \bm{Z}^\top (\bm{y} - \dot{\bm{\mu}}) \} = E[ E \{ \bm{Z}^\top (\bm{y} - \dot{\bm{\mu}}) |\dot{\bm{b}} \} ]= \bm{0}_{mp},
\end{align*}
\begin{align*}
\text{Var}\{\bm{Z}^\top (\bm{y} - \dot{\bm{\mu}})\} = E[ \text{Var}\{\bm{Z}^\top (\bm{y} - \dot{\bm{\mu}}) |\dot{\bm{b}} \} ]+ \text{Var}[ E\{\bm{Z}^\top (\bm{y} - \dot{\bm{\mu}}) |\dot{\bm{b}} \}] = E(\bm{Z}^\top \dot{\bm{W}} \bm{Z}),
\end{align*}
so that $\dot{\phi}^{-1} \bm{D}_r^{-1} \bm{Z}^\top (\bm{y} - \dot{\bm{\mu}}) = O_p(1)$ using Chebyshev's inequality. Therefore we have the key equations
\begin{align} \label{eq:fixed_basic.uncond}
\hat{\bm{\beta}} - \dot{\bm{\beta}} &= m^{-1} \sum_{i=1}^m \dot{\bm{b}}_i + O_p(N^{-1/2}) + O_p(n_L^{-1}) + \frac{1}{2} \{ \bm{B}^{-1} \bm{R}(\tilde{\bmtheta})\}_{[1:p]}
\end{align}
and
\begin{align*}
\hat{\bm{b}} - \dot{\bm{b}} &= - \bm{1}_m \otimes m^{-1} \sum_{i=1}^m \dot{\bm{b}}_i + \bm{B}_3^{-1} \{ \dot{\phi}^{-1} \bm{Z}^\top (\bm{y} - \dot{\bm{\mu}}) \} \\ 
&+ O_p(N^{-1/2}) + O_p(n_L^{-1}) + \frac{1}{2} \{ \bm{B}^{-1} \bm{R}(\tilde{\bmtheta})\}_{[p +1:(m+1)p]}. \numberthis \label{eq:rand_basic.uncond}
\end{align*}

By equation (\ref{eq:fixed_basic.uncond}), we have
\begin{align*}
m^{1/2} (\hat{\bm{\beta}} - \dot{\bm{\beta}}) &= m^{-1/2} \sum_{i=1}^m \dot{\bm{b}}_i + O_p(n_L^{-1/2}) + O_p(m^{1/2} n_L^{-1}) + \frac{1}{2} m^{1/2} \{ \bm{B}^{-1} \bm{R}(\tilde{\bmtheta})\}_{[1:p]}.
\end{align*}
Next, we consider two separate scenarios. First, suppose that $m n_U^{-1} \rightarrow \infty$. Then by the order results for the remainder term in Section \ref{sec:uncond_remainder}, the first $p$ components of $\bm{D}^* \bm{B}^{-1} \bm{R}(\tilde{\bm{\theta}})$ are of order $O_p(m^{1/2} n_L^{-1})$, and so the first $p$ components of $\bm{D}^* (\hat{\bm{\theta}} - \dot{\bm{\theta}})$ can be shown to be
\begin{align*}
{m}^{1/2} (\hat{\bm{\beta}} - \dot{\bm{\beta}}) &= m^{-1/2} \sum_{i=1}^m \dot{\bm{b}}_i + o_p(1).
\end{align*}
The required result then follows from the independence of the random effects and the normal assumption on the $\dot{\bm{b}}_i$; note the $m n_L^{-2} \rightarrow 0$ assumption is required for the remainder term to be smaller order than the linear term. 

On the other hand, when $m n_L^{-1} \rightarrow 0$, the only difference from the $m n_L^{-1} \rightarrow \infty$ case is that the first $p$ components of $\bm{D}^+ \bm{B}^{-1}  \bm{R}(\tilde{\bm{\theta}})$ are now of order $O_p(m^{-1/2})$ due to the different convergence rate of the prediction gap. The result however follows along similar lines as above.

\subsection{Proof of Theorem 3}

Again we consider two different scenarios. First, suppose $m n_L^{-1} \rightarrow \infty$. Then from equation (\ref{eq:rand_basic.uncond}) and the order results for the remainder term in Section \ref{sec:uncond_remainder}, we have that
\begin{align*}
\bm{D}_r (\hat{\bm{b}} - \dot{\bm{b}}) &= O_p(n_U^{1/2} m^{-1/2}) + O_p(1) + O_p(n_L^{-1/2}).
\end{align*}
Based on the above, we obtain 
$\bm{D}_r \hat{\bm{b}}= \bm{D}_r \dot{\bm{b}} + O_p(1) $, and thus $\hat{\bm{b}}= \dot{\bm{b}} + O_p(n_L^{-1/2})$. The required result follows by multiplying both sides by $\bm{A}_r$.

On the other hand, suppose now $m n_L^{-1} \rightarrow 0$. Then a normalization by ${m}^{1/2}$ is needed instead, and the third derivative term is consequently of order $O_p(m^{-1/2})$ in probability. We thus obtain
\begin{align*}
{m}^{1/2} (\hat{\bm{b}} - \dot{\bm{b}}) &= O_p(1) + O_p(m^{1/2} n_L^{-1/2}) + O_p(m^{-1/2}),
\end{align*}
and the result follows.

As an side remark, note from the above proof when $m n_L^{-1} \rightarrow 0$, it holds that $\| \hat{\bm{b}} - \dot{\bm{b}} \|_2 = O_p(1)$, where $\|\cdot\|_2$ denotes the $l_2$-norm. But if $m n_U^{-1} \rightarrow \infty$ then we instead obtain $\| \hat{\bm{b}} - \dot{\bm{b}} \|_2 = O_p(m^{1/2} n_U^{-1/2})$. This implies that, under the unconditional regime, a consistency result based on the $l_2$-norm cannot hold for the entire vector of random effects when there is a partnered fixed effect. If there is no partnered fixed effect though, consistency of the entire vector is sometimes possible. For example, in the Poisson counterexample, we demonstrate in 
the Appendix that $\| \hat{\bm{b}} - \dot{\bm{b}} \|_2 = O_p(m^{1/2} n^{-1/2}) = o_p(1)$ when $m n^{-1} \rightarrow 0$.

\subsection{Proof of Theorem 4 and Corollary 1}

We will prove each of the three parts of the theorem separately. The proof of part (a) also proves Corollary 1.

\underline{Part (a):} When $m n_U^{-1} \rightarrow \infty$, we have from equation (\ref{eq:rand_basic.uncond}) and the order results for the remainder term in Section \ref{sec:uncond_remainder} that
\begin{align*}
\bm{D}_r (\hat{\bm{b}} - \dot{\bm{b}}) &= \bm{D}_r \bm{B}_3^{-1} \bm{D}_r \bm{D}_r^{-1} \dot{\phi}^{-1} \bm{Z}^\top (\bm{y} - \dot{\bm{\mu}})  + o_p(1).
\end{align*}
This is identical to the proof of Theorem \ref{thm:unconditional_predictors}. Next, without loss of generality, suppose $\bm{A}_r$ selects the first cluster only. Then we have 
\begin{align*}
{n}_1^{1/2} (\hat{\bm{b}}_1 - \dot{\bm{b}}_1) &=   (n_1^{-1} \bm{X}_1^\top \dot{\bm{W}}_1 \bm{X}_1)^{-1} n_1^{-1/2} \{\dot{\phi}^{-1} \bm{X}_1^\top (\bm{y}_{1} - \dot{\bm{\mu}}_1 ) \} + o_p(1) \\
&\overset{\Delta}{=} \bm{P}_{n_1} + o_p(1).
\end{align*}

We wish to study the distribution of $\bm{P}_{n_1}$ as $m,n_L \rightarrow \infty$. By definition,
\begin{align*}
    \underset{{m,n_L \rightarrow \infty}}{\lim} F_{\bm{P}_{n_1}} (\bm{x}) = \underset{{m,n_L \rightarrow \infty}}{\lim} \int  F_{\bm{P}_{n_1} |\dot{\bm{b}}_1} (\bm{x}) f(\dot{\bm{b}}_1) d\dot{\bm{b}}_1.
\end{align*}
Since $F_{\bm{P}_{n_1} |\dot{\bm{b}}_1} (\bm{x})$ is a cdf, then $F_{\bm{P}_{n_1} |\dot{\bm{b}}_1} (\bm{x}) f(\dot{\bm{b}}_1)$ is bounded by $f(\dot{\bm{b}}_1)$. Hence applying $\int f(\dot{\bm{b}}_1) d\dot{\bm{b}}_1 = 1$ and the dominated convergence theorem, we obtain
\begin{align*}
    \underset{{m,n_L \rightarrow \infty}}{\lim} F_{\bm{P}_{n_1}} (\bm{x}) &= \int \underset{{m,n_L \rightarrow \infty}}{\lim} F_{\bm{P}_{n_1} |\dot{\bm{b}}_1} (\bm{x}) f(\dot{\bm{b}}_1) d\dot{\bm{b}}_1 = \int \Psi_{\bm{P}_{n_1} | \dot{\bm{b}}_1} (\bm{x}) f(\dot{\bm{b}}_1) d\dot{\bm{b}}_1,
\end{align*}
where $ \Psi_{\bm{P}_{n_1} | \dot{\bm{b}}_1}(\cdot) $ is the cdf associated with $N(\bm{0}, \bm{K}_1 )$, a result which follows from conditional independence and the Lindeberg-Feller Central Limit Theorem used in Theorem \ref{thm:conditionalnormality}. The general result follows by noting that the same argument can be applied to any finite subset of the random effects. Note also that the result holds regardless of the true distribution of $\dot{\bm{b}}_i$.

\underline{Part (b):} When $m n_i^{-1} \rightarrow \gamma_i \in (0,\infty)$, we have from \eqref{eq:rand_basic.uncond} and the order results for the remainder term in Section \ref{sec:uncond_remainder} that
\begin{align*}
{n}_i^{1/2} (\hat{\bm{b}}_i - \dot{\bm{b}}_i) &=  (n_i^{-1} \bm{X}_i^\top \dot{\bm{W}}_i \bm{X}_i)^{-1} n_i^{-1/2} \{\dot{\phi}^{-1} \bm{X}_i^\top (\bm{y}_{i} - \dot{\bm{\mu}}_i ) \} - (\gamma_i m)^{-1/2} \sum_{i=1}^m \dot{\bm{b}}_i + O_p(n_L^{-1/2}),
\end{align*}
from the same development as in the proof of Part (a). Letting \\ $\bm{E}_1 = (n_i^{-1} \bm{X}_i^\top \dot{\bm{W}}_i \bm{X}_i )^{-1} n_i^{-1/2} \dot{\phi}^{-1} \bm{X}_i^\top (\bm{y}_i - \dot{\bm{\mu}}_i)$ and $\bm{E}_2 = m^{-1/2} \sum_{i=1}^m \dot{\bm{b}}_i$, then since $\bm{E}_1$ and $\bm{E}_2$ are independent given $\dot{\bm{b}}_i$, we obtain for any $i$, 
\begin{align*}
    \lim_{m,n_L \rightarrow \infty} F_{\bm{E}_1,\bm{E}_2}(\bm{x},\bm{y}) &= \lim_{m,n_L \rightarrow \infty} \int F_{\bm{E}_1,\bm{E}_2|\dot{\bm{b}}_i}(\bm{x},\bm{y}) f(\dot{\bm{b}}_i) d\dot{\bm{b}}_i \\
    &= \lim_{m,n_L \rightarrow \infty} \int F_{\bm{E}_1|\dot{\bm{b}}_i}(\bm{x}) F_{\bm{E}_2|\dot{\bm{b}}_i}(\bm{y}) f(\dot{\bm{b}}_i) d\dot{\bm{b}}_i \\
    &= \int \lim_{m,n_L \rightarrow \infty} F_{\bm{E}_1|\dot{\bm{b}}_i}(\bm{x}) F_{\bm{E}_2|\dot{\bm{b}}_i}(\bm{y}) f(\dot{\bm{b}}_i) d\dot{\bm{b}}_i \\
    &= \int \lim_{n_L \rightarrow \infty} F_{\bm{E}_1|\dot{\bm{b}}_i}(\bm{x}) \lim_{m \rightarrow \infty} F_{\bm{E}_2|\dot{\bm{b}}_i}(\bm{y}) f(\dot{\bm{b}}_i) d\dot{\bm{b}}_i \\
    &= \Psi_{\bm{E}_2}(\bm{y}) \int \lim_{n_L \rightarrow \infty} F_{\bm{E}_1|\dot{\bm{b}}_i}(\bm{x}) f(\dot{\bm{b}}_i) d\dot{\bm{b}}_i,
\end{align*}
where $\Psi_{\bm{E}_2}(\cdot)$ is the cdf of $N(\bm{0},\dot{\bm{G}})$. The third line follows from the Dominated Convergence Theorem since $F_{\bm{E}_1|\dot{\bm{b}}_i}(\bm{x})$ and $F_{\bm{E}_2|\dot{\bm{b}}_i}(\bm{y})$ are cdfs and $\int f(\dot{\bm{b}}_i) d\dot{\bm{b}}_i = 1$. Thus $\bm{E}_1$ and $\bm{E}_2$ are asymptotically independent. The result follows from this asymptotic independence.

\underline{Part (c):} When $m n_L^{-1} \rightarrow 0$, we have from \eqref{eq:rand_basic.uncond} and the order results for the remainder term in Section \ref{sec:uncond_remainder} that
\begin{align*}
{m}^{1/2} (\hat{\bm{b}} - \dot{\bm{b}}) &= - \bm{1}_m \otimes \bm{I}_{p} m^{-1/2} \sum_{i=1}^m \dot{\bm{b}}_i + o_p(1).
\end{align*}
The result then follows immediately from the normality assumption on $\dot{\bm{b}}_i$. 


\subsection{Proof of Theorem 5}

Given $m n_L^{-2} \rightarrow 0$ and $m n_U^{-1/2} \rightarrow \infty$, by summing equations \eqref{eq:fixed_basic.uncond} and \eqref{eq:rand_basic.uncond} we see that the $ m^{-1} \sum_{i=1}^m \dot{\bm{b}}_i$ terms cancel. Therefore, we are left with 
\begin{align*}
{n}_i^{1/2} (\hat{\bm{\beta}} + \hat{\bm{b}}_i - \dot{\bm{\beta}} - \dot{\bm{b}}_i) &= n_i (\bm{X}_i^\top \dot{\bm{W}}_i \bm{X}_i)^{-1} n_i^{-1/2} \{\dot{\phi}^{-1} \bm{X}_i^\top (\bm{y}_i - \dot{\bm{\mu}}_i) \} \\
&+ O_p(m^{-1/2}) + O_p(n_L^{-1/2}) + O_p(m^{-1} n_U^{1/2} ) \\
&= n_i (\bm{X}_i^\top \dot{\bm{W}}_i \bm{X}_i)^{-1} n_i^{-1/2} \dot{\phi}^{-1} \bm{X}_i^\top (\bm{y}_i - \dot{\bm{\mu}}_i) + o_p(1).  
\end{align*}
The required result follows from the Dominated Convergence Theorem.

\subsection{Result for Difference Between the Prediction Gaps of Two Clusters}
\textit{Assume Conditions (C1)-(C5) are satisfied, $m n_L^{-2} \rightarrow 0$, $m n_U^{-1/2} \rightarrow \infty$, and $n_i n_{i'}^{-1} \rightarrow \gamma \in (0,\infty)$. Then as $m,n_L \rightarrow \infty$ and unconditional on the random effects $\dot{\bm{b}}$, for each $i \neq i'\in \{ 1,\ldots,m \}$ we have}
\begin{align*}
    {n}_i^{1/2} \{(\hat{\bm{b}}_i - \dot{\bm{b}}_i) - (\hat{\bm{b}}_{i'} - \dot{\bm{b}}_{i'})\} & \overset{D}{\rightarrow} \text{mixN}(\bm{0}, \dot{\bm{K}}_i , F_{\dot{\bm{b}}_i}) * \, \text{mixN}(\bm{0}, \gamma \dot{\bm{K}}_{i'} , F_{\dot{\bm{b}}_{i'}}).
\end{align*}

\underline{Proof:} Theorem \ref{thm:unconditonal_ranef} implies that, given $m n_L^{-2} \rightarrow 0$, $m n_U^{-1/2} \rightarrow \infty$, and $n_i n_{i'}^{-1} \rightarrow \gamma \in (0,\infty)$, we have
\begin{align*}
{n}_i^{1/2} (\hat{\bm{b}}_i - \dot{\bm{b}}_i - \hat{\bm{b}}_{i'} + \dot{\bm{b}}_{i'}) &=  (n_i^{-1} \bm{X}_i^\top \dot{\bm{W}}_i \bm{X}_i )^{-1} n_i^{-1/2} \bm{X}_i^\top (\bm{y}_i - \dot{\bm{\mu}}_i )  \\
&+ \gamma^{1/2} (n_{i'}^{-1} \bm{X}_{i'}^\top \dot{\bm{W}}_{i'} \bm{X}_{i'})^{-1} n_{i'}^{-1/2} \bm{X}_{i'}^\top (\bm{y}_{i'} - \dot{\bm{\mu}}_{i'} )  \\
&+ O_p(m^{-1/2}) + O_p(n_L^{-1/2}) + O_p(m^{-1} n_U^{1/2}),
\end{align*}
and the result follows by the independence of $\dot{\bm{b}}_i$ and $\dot{\bm{b}}_{i'}$.

\section{Remainder Term in the Taylor Expansion} \label{sec:remainder}
\setcounter{equation}{0}

In this section, we show that in the Taylor expansion \eqref{taylor_fundamental}, the remainder term $- \frac{1}{2} \{\nabla^2 Q(\dot{\bm{\theta}})\}^{-1} \bm{R}(\tilde{\bm{\theta}})$ is of smaller order component-wise than $- \{\nabla^2 Q(\dot{\bm{\theta}})\}^{-1} \nabla Q(\dot{\bm{\theta}})$. To deal with this remainder term, we have the following from equation \eqref{taylor_fundamental}
\begin{align*} 
&\hat{\bm{\theta}} - \dot{\bm{\theta}} = \bm{B}^{-1} \nabla Q(\dot{\bm{\theta}}) + \frac{1}{2} \bm{B}^{-1} \bm{R}(\tilde{\bm{\theta}}) \\
\Rightarrow \, &\hat{\bm{\theta}} - \dot{\bm{\theta}} - \frac{1}{2} \bm{B}^{-1} \bm{R}(\tilde{\bm{\theta}}) = \bm{B}^{-1} \nabla Q(\dot{\bm{\theta}}) \\
\Rightarrow \, &(\bm{I}_{(m+1)p } - \bm{\Lambda} ) ( \hat{\bm{\theta}} - \dot{\bm{\theta}} ) = \bm{B}^{-1} \nabla Q(\dot{\bm{\theta}}) \\
\Rightarrow \, &\hat{\bm{\theta}} - \dot{\bm{\theta}} = (\bm{I}_{(m+1)p } - \bm{\Lambda} )^{-1} \bm{B}^{-1} \nabla Q(\dot{\bm{\theta}}) \\
&= \bm{B}^{-1} \nabla Q(\dot{\bm{\theta}}) + \left( \sum_{s=1}^\infty \bm{\Lambda}^s \right) \bm{B}^{-1} \nabla Q(\dot{\bm{\theta}}),
\end{align*}
where the last line is derived from repeated application of the Woodbury identity, and $\bm{\Lambda}$ is the appropriate $(m+1)p \times (m+1)p$ matrix defined in detail later on. The convergence of the geometric sum and thus invertibility of $(\bm{I}_{(m+1)p } - \bm{\Lambda} )$ is shown in Lemma \ref{lem:invertibility}. We will show, using the consistency result $\| \hat{\bmtheta} - \dot{\bmtheta} \|_{\infty} = o_p(1)$, that $ \sum_{s=1}^\infty \bm{\Lambda}^s  \bm{B}^{-1} \nabla Q(\dot{\bm{\theta}})$ is of smaller order component-wise than $\bm{B}^{-1} \nabla Q(\dot{\bm{\theta}})$. This is equivalent to $0.5 \bm{B}^{-1} \bm{R}(\tilde{\bm{\theta}})$ being smaller order component-wise than $\bm{B}^{-1} \nabla Q(\dot{\bm{\theta}})$ in \eqref{taylor_fundamental}.

Let $\bm{T}_1$ denote the first $p$ components of $\bm{R}(\tilde{\bm{\theta}})$, $\bm{T}_2$ its remaining $m p$ components, and $\bm{T}_{2i}$ denote the $\{(i-1)p +1\}$-th to $(ip)$-th components of $\bm{T}_2$. We first prove a result needed for later developments.

\begin{lemma} \label{lem:remainderterms}
Assume Conditions (C1) and (C3) are satisfied. Then irrespective of whether $\dot{\bm{b}}$ is conditioned on, it holds that $ \bm{R}(\tilde{\bm{\theta}})_{[1:p]}  =  \sum_{i=1}^m \bm{R}(\tilde{\bm{\theta}})_{[i p + 1: (i+1) p]}. $
\end{lemma}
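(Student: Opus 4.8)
The plan is to exploit the partnered-effects structure $\bm{X}_i = \bm{Z}_i$ to relate the fixed-effects block of $\nabla Q$ to the sum of the random-effects blocks, and then to use the fact that the Taylor remainder is insensitive to affine perturbations. First I would record the exact form of the gradient: differentiating \eqref{PQL}, the first $p$ components are $\hat{\phi}^{-1}\sum_{i=1}^m \bm{X}_i^\top\{\bm{y}_i - \bm{\mu}_i(\bm{\theta})\}$, while the block indexed by $ip+1$ through $(i+1)p$ is $\hat{\phi}^{-1}\bm{X}_i^\top\{\bm{y}_i - \bm{\mu}_i(\bm{\theta})\} - \hat{\bm{G}}^{-1}\bm{b}_i$. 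Summing the latter over $i$ and comparing with the former shows that, for every $\bm{\theta}$,
\[
[\nabla Q(\bm{\theta})]_{[1:p]} = \sum_{i=1}^m [\nabla Q(\bm{\theta})]_{[ip+1:(i+1)p]} + \hat{\bm{G}}^{-1}\sum_{i=1}^m \bm{b}_i,
\]
the last term being linear in $\bm{\theta}$. This is the analogue at a general $\bm{\theta}$ of the key identity \eqref{eq:key_identity}, now carrying the penalty contribution.

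The second step is to observe that the quantity $\bm{R}(\tilde{\bm{\theta}})$ appearing in \eqref{taylor_fundamental} is, by its definition, exactly twice the second-order Taylor remainder of $\nabla Q$ at $\dot{\bm{\theta}}$; that is, its $k$-th component equals $2\{[\nabla Q(\hat{\bm{\theta}})]_k - [\nabla Q(\dot{\bm{\theta}})]_k - [\nabla^2 Q(\dot{\bm{\theta}})(\hat{\bm{\theta}}-\dot{\bm{\theta}})]_k\}$. Viewed as a functional of the scalar field $[\nabla Q]_k$, this remainder operator is linear and annihilates every affine function. Applying it to each side of the displayed identity and using linearity, the affine term $\hat{\bm{G}}^{-1}\sum_i \bm{b}_i$ drops out and I obtain $\bm{R}(\tilde{\bm{\theta}})_{[1:p]} = \sum_{i=1}^m \bm{R}(\tilde{\bm{\theta}})_{[ip+1:(i+1)p]}$ directly. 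Conditions (C1) and (C3) enter only to guarantee that $Q$ is thrice continuously differentiable on a neighbourhood of the segment joining $\dot{\bm{\theta}}$ and $\hat{\bm{\theta}}$, so that the remainder is well defined; no distributional assumptions on $\dot{\bm{b}}$ are needed, which is why the result holds in both the conditional and unconditional regimes.

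The step I expect to be the main obstacle is conceptual rather than computational: the componentwise Lagrange form of the remainder evaluates each coordinate at its own mean-value point (a distinct row of $\tilde{\bm{\theta}}$), so one cannot naively add the random-effects remainders and expect the evaluation points to line up with that of the fixed-effects remainder. The resolution is to avoid matching mean-value points altogether and instead treat $\bm{R}$ as the intrinsic remainder $\nabla Q(\hat{\bm{\theta}}) - \nabla Q(\dot{\bm{\theta}}) - \nabla^2 Q(\dot{\bm{\theta}})(\hat{\bm{\theta}}-\dot{\bm{\theta}})$, for which the identity is purely algebraic. Equivalently, one can note that the third-derivative tensors satisfy $\nabla^2[\nabla Q]_r = \sum_i \nabla^2[\nabla Q]_{ip+r}$ pointwise — because $\bm{X}_i=\bm{Z}_i$ forces $\partial\eta_{ij}/\partial\beta_r = \partial\eta_{ij}/\partial b_{ir}$ and the penalty contributes no third derivatives — so even the integral form of the remainder matches term by term along the whole segment.
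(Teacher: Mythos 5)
Your proof is correct, and it takes a genuinely different route from the paper's. The paper proves the lemma by evaluating the exact Taylor identity at the stationary point: since $\nabla Q(\hat{\bm{\theta}}) = \bm{0}_{(m+1)p}$, the fixed-effects block and the sum of the random-effects blocks of the expansion both vanish, and after substituting $\sum_{i=1}^m \nabla Q(\dot{\bm{\theta}})_{[ip+1:(i+1)p]} = \nabla Q(\dot{\bm{\theta}})_{[1:p]} - \sum_{i=1}^m \hat{\bm{G}}^{-1}\dot{\bm{b}}_i$ and the analogous identity for $\{\nabla^2 Q(\dot{\bm{\theta}})(\hat{\bm{\theta}}-\dot{\bm{\theta}})\}$, the leftover penalty terms cancel via the sum-to-zero constraint $\sum_{i=1}^m \hat{\bm{b}}_i = \bm{0}_p$, itself a consequence of the score equations and $\bm{X}_i = \bm{Z}_i$. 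You instead establish the gradient-block identity with the affine discrepancy $\hat{\bm{G}}^{-1}\sum_i \bm{b}_i$ at \emph{every} $\bm{\theta}$, identify $\bm{R}(\tilde{\bm{\theta}})$ with the intrinsic remainder $2\{\nabla Q(\hat{\bm{\theta}}) - \nabla Q(\dot{\bm{\theta}}) - \nabla^2 Q(\dot{\bm{\theta}})(\hat{\bm{\theta}}-\dot{\bm{\theta}})\}$ --- exact because the per-component Lagrange form is an equality --- and invoke linearity of the remainder operator together with its annihilation of affine maps; written out, the corrections $\hat{\bm{G}}^{-1}\sum_i\hat{\bm{b}}_i - \hat{\bm{G}}^{-1}\sum_i\dot{\bm{b}}_i - \hat{\bm{G}}^{-1}\sum_i(\hat{\bm{b}}_i - \dot{\bm{b}}_i)$ cancel identically, so you need neither the first-order condition nor the sum-to-zero constraint. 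What your route buys: the identity holds with $\hat{\bm{\theta}}$ replaced by an arbitrary point, and your explicit handling of the fact that each component of $\bm{R}(\tilde{\bm{\theta}})$ is evaluated at its own mean-value point (a distinct row of $\tilde{\bm{\theta}}$) makes rigorous a step the paper leaves implicit in its notation. What the paper's route buys: it is shorter given that stationarity and $\sum_i \hat{\bm{b}}_i = \bm{0}_p$ were already derived in the supplement, and it displays exactly which penalty contributions cancel. Your fallback argument is also sound: the pointwise tensor identity $\nabla^2 [\nabla Q]_r = \sum_{i=1}^m \nabla^2 [\nabla Q]_{ip+r}$ follows from the paper's explicit formulas for $\partial^2 \bm{S}_{[j]}(\tilde{\bm{\theta}})/\partial \bm{\theta} \partial \bm{\theta}^\top$, since $\bm{X}_i = \bm{Z}_i$ gives $\sum_{i=1}^m \mathrm{diag}(\bm{Z}_{[,(i-1)p+r]}) = \mathrm{diag}(\bm{X}_{[,r]})$ and the quadratic penalty contributes no third derivatives, so even the integral form of the remainder matches along the whole segment.
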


\begin{proof}
Recall the Taylor expansion $\nabla Q(\hat{\bm{\theta}}) = \bm{0} =\nabla Q(\dot{\bm{\theta}}) + \nabla^2 Q(\dot{\bm{\theta}}) (\hat{\bm{\theta}} - \dot{\bm{\theta}}) + \bm{R}(\tilde{\bm{\theta}})$. Then
\begin{align*}
\bm{0}_{p \times 1}  &= \nabla Q(\hat{\bm{\theta}})_{[1:p]} \\
&=\{ \nabla Q(\dot{\bm{\theta}}) + \nabla^2 Q(\dot{\bm{\theta}}) (\hat{\bm{\theta}} - \dot{\bm{\theta}}) + \bm{R}(\tilde{\bm{\theta}})\}_{[1:p]} \\
&= \sum_{i=1}^m \nabla Q(\hat{\bm{\theta}})_{[i p + 1: (i+1) p]} \\
&= \sum_{i=1}^m \{ \nabla Q(\dot{\bm{\theta}}) + \nabla^2 Q(\dot{\bm{\theta}}) (\hat{\bm{\theta}} - \dot{\bm{\theta}}) + \bm{R}(\tilde{\bm{\theta}})\}_{[i p + 1: (i+1) p]}. 
\end{align*}
Since $\bm{Z}_i = \bm{X}_i$ for all $i = 1,\ldots,m$ under our simplifying assumption, and $\sum_{i=1}^m \hat{\bm{b}}_i = \bm{0}$, then we obtain
\begin{align*}
 \{ \nabla Q(\dot{\bm{\theta}}) + \nabla^2 Q(\dot{\bm{\theta}}) (\hat{\bm{\theta}} - \dot{\bm{\theta}}) + \bm{R}(\tilde{\bm{\theta}})\}_{[1:p]} &= \sum_{i=1}^m \{ \nabla Q(\dot{\bm{\theta}}) + \nabla^2 Q(\dot{\bm{\theta}}) (\hat{\bm{\theta}} - \dot{\bm{\theta}}) + \bm{R}(\tilde{\bm{\theta}})\}_{[i p + 1: (i+1) p]}. 
\end{align*}
Therefore, we have $\bm{T}_1 = \bm{R}(\tilde{\bm{\theta}})_{[1:p]} =  \sum_{i=1}^m  \bm{R}(\tilde{\bm{\theta}})_{[i p + 1: (i+1) p]} = \sum_{i=1}^m \bm{T}_{2i}$, which follows from the fact that $\sum_{i=1}^m  \nabla Q(\dot{\bm{\theta}})_{[i p + 1: (i+1) p]} =  \nabla Q(\dot{\bm{\theta}})_{[1:p]} - \sum_{i=1}^m  \hat{\bm{G}}^{-1} \dot{\bm{b}}_i$ and $\sum_{i=1}^m \{\nabla^2 Q(\dot{\bm{\theta}}) (\hat{\bm{\theta}} - \dot{\bm{\theta}})\}_{[i p + 1: (i+1) p]}  = \{\nabla^2 Q(\dot{\bm{\theta}}) (\hat{\bm{\theta}} - \dot{\bm{\theta}})\}_{[1:p]} + \sum_{i=1}^m  \hat{\bm{G}}^{-1} \dot{\bm{b}}_i - \sum_{i=1}^m  \hat{\bm{G}}^{-1} \hat{\bm{b}}_i $.
\end{proof}

Next, let $\bm{S}(\bm{\theta}) = \nabla Q(\bmtheta)$, $\tilde{\bm{W}}' = \dot{\phi}^{-1} \text{diag} \{a'''(\tilde{\eta}_{11}), \ldots, a'''(\tilde{\eta}_{1n_1}), \ldots, a'''(\tilde{\eta}_{mn_m})\}$. Then the remainder term can be written as 
\begin{align*}
\bm{R}(\tilde{\bm{\theta}}) = \left[ \begin{matrix}
(\hat{\bm{\theta}} - \dot{\bm{\theta}})^\top  \frac{\partial^2 \bm{S}_{[1]} (\tilde{\bm{\theta}})}{\partial \bm{\theta} \partial \bm{\theta}^\top }  (\hat{\bm{\theta}} - \dot{\bm{\theta}}) \\
\vdots \\
(\hat{\bm{\theta}} - \dot{\bm{\theta}})^\top  \frac{\partial^2 \bm{S}_{[(m+1)p]} (\tilde{\bm{\theta}})}{\partial \bm{\theta} \partial \bm{\theta}^\top }  (\hat{\bm{\theta}} - \dot{\bm{\theta}})     \\
\end{matrix} \right].
\end{align*}
Now, for $1 \leq j \leq p$, we have $\bm{S}_{[j]}(\bmtheta) = \dot{\phi}^{-1} \bm{X}_{[,j]}^\top \{\bm{y} - \bm{\mu}(\bmtheta)\} = \dot{\phi}^{-1} \sum_{i=1}^m \sum_{l=1}^{n_i} x_{il[j]} \{ y_{il} - a'(\eta_{il}) \}$, noting this is a scalar. Thus
\begin{align*}
\frac{\partial}{\partial \bmtheta} \bm{S}_{[j]}(\bmtheta) = - \dot{\phi}^{-1} \sum_{i=1}^m \sum_{l=1}^{n_i} \left[ \begin{matrix} \bm{x}_{il} \\ \frac{\partial}{\partial \bm{b}} \eta_{il} \end{matrix} \right]  a''(\eta_{il}) x_{il[j]} = - \left[ \begin{matrix} \bm{X}^\top \bm{W} \bm{X}_{[,j]} \\ \bm{Z}^\top \bm{W} \bm{X}_{[,j]} \end{matrix} \right],
\end{align*}
which is an $(m+1)p$-vector. Hence the $(m+1)p \times (m+1)p$ matrix can be written as
\begin{align*}
\frac{\partial^2 \bm{S}_{[j]} (\tilde{\bm{\theta}})}{\partial \bm{\theta} \partial \bm{\theta}^\top }  &= - \dot{\phi}^{-1} \sum_{i=1}^m \sum_{l=1}^{n_i} \left[ \begin{matrix} \bm{x}_{il} \\ \frac{\partial}{\partial \bm{b}} \eta_{il} \end{matrix} \right]  a'''(\tilde{\eta}_{il}) x_{il[j]} \left[ \begin{matrix} \bm{x}_{il} \\ \frac{\partial}{\partial \bm{b}} \eta_{il} \end{matrix} \right]^\top \\
&= - \left[ \begin{matrix}
\bm{X}^\top \text{diag} (\bm{X}_{[,j]}) \tilde{\bm{W}}' \bm{X} & \bm{X}^\top \text{diag} (\bm{X}_{[,j]}) \tilde{\bm{W}}' \bm{Z} \\
\bm{Z}^\top \text{diag} (\bm{X}_{[,j]}) \tilde{\bm{W}}' \bm{X} & \bm{Z}^\top \text{diag} (\bm{X}_{[,j]}) \tilde{\bm{W}}' \bm{Z}
\end{matrix} \right], \quad  1 \leq j \leq p.
\end{align*}
Similarly, for $1 \leq k \leq m p$, $\bm{S}_{[p + k]} (\bmtheta) = \dot{\phi}^{-1} \bm{Z}_{[,k]}^\top \{\bm{y} - \bm{\mu}(\bmtheta)\} - \{(\bm{I}_{m} \otimes \hat{\bm{G}}) \bm{b}\}_{[k]}$, such that
\begin{align*}
\frac{\partial}{\partial \bmtheta} \bm{S}_{[p + k]}(\bmtheta) = - \left[ \begin{matrix} \bm{X}^\top \bm{W} \bm{Z}_{[,k]} \\ \bm{Z}^\top \bm{W} \bm{Z}_{[,k]} + \frac{\partial}{\partial \bm{b}} \{(\bm{I}_{m} \otimes \hat{\bm{G}}) \bm{b}\}_{[k]} \end{matrix} \right],
\end{align*}
where $\partial/\partial \bm{b} \{(\bm{I}_{m} \otimes \hat{\bm{G}}) \bm{b}\}_{[k]}$ is not a function of $\bmtheta$. Thus
\begin{align*}
\frac{\partial^2 \bm{S}_{[p + k]} (\tilde{\bm{\theta}})}{\partial \bm{\theta} \partial \bm{\theta}^\top }  = - \left[ \begin{matrix}
\bm{X}^\top \text{diag} (\bm{Z}_{[,k]}) \tilde{\bm{W}}' \bm{X} & \bm{X}^\top \text{diag} (\bm{Z}_{[,k]}) \tilde{\bm{W}}' \bm{Z} \\
\bm{Z}^\top \text{diag} (\bm{Z}_{[,k]}) \tilde{\bm{W}}' \bm{X} & \bm{Z}^\top \text{diag} (\bm{Z}_{[,k]}) \tilde{\bm{W}}' \bm{Z}
\end{matrix} \right], \quad  1 \leq k \leq m p.
\end{align*}
Next, recall that $\bm{B}_2 (\bm{B}_3+\bm{B}_4)^{-1} = [\bm{I}_p - \hat{\bm{G}}^{-1} (\bm{X}^\top_1 \dot{\bm{W}}_1 \bm{X}_1 + \hat{\bm{G}}^{-1})^{-1},\ldots, \bm{I}_p - \hat{\bm{G}}^{-1} (\bm{X}^\top_m \dot{\bm{W}}_m \bm{X}_m + \hat{\bm{G}}^{-1})^{-1} ]$. By Lemma 1 and the blockwise inversion formula for $\bm{B}^{-1}$, the first $p$ components of $\bm{B}^{-1}  \bm{R}(\tilde{\bm{\theta}})$ are given by
\begin{align*}
& \left[ \begin{matrix}
\bm{C}^{-1} & -\bm{C}^{-1} \bm{B}_2 (\bm{B}_3+\bm{B}_4)^{-1} 
\end{matrix} \right]  \bm{R}(\tilde{\bm{\theta}}) \\
&= \bm{C}^{-1} \left[ \begin{matrix}
\bm{I}_{p} &  - [\bm{I}_p - \hat{\bm{G}}^{-1} (\bm{X}^\top_1 \dot{\bm{W}}_1 \bm{X}_1 + \hat{\bm{G}}^{-1})^{-1},\ldots, \bm{I}_p - \hat{\bm{G}}^{-1} (\bm{X}^\top_m \dot{\bm{W}}_m \bm{X}_m + \hat{\bm{G}}^{-1})^{-1} ]
\end{matrix} \right] \bm{R}(\tilde{\bm{\theta}}) \\
&= \bm{C}^{-1} \left\{\bm{T}_1 - \sum_{i=1}^m \bm{T}_{2i} + \sum_{i=1}^m \hat{\bm{G}}^{-1} (\bm{X}^\top_i \dot{\bm{W}}_i \bm{X}_i + \hat{\bm{G}}^{-1})^{-1} \bm{T}_{2i} \right\}.  \numberthis \label{cond:fixed3rd_order}
\end{align*}
Similarly, the last $mp$ components of $ \bm{B}^{-1}  \bm{R}(\tilde{\bm{\theta}})$ are
\begin{align*}
& \left[ \begin{matrix}
-(\bm{B}_3+\bm{B}_4)^{-1} \bm{B}_2^\top \bm{C}^{-1} & (\bm{B}_3+\bm{B}_4)^{-1} + (\bm{B}_3+\bm{B}_4)^{-1} \bm{B}_2^\top \bm{C}^{-1} \bm{B}_2 (\bm{B}_3+\bm{B}_4)^{-1}
\end{matrix} \right] \bm{R}(\tilde{\bm{\theta}}) \\
&=  - (\bm{B}_3+\bm{B}_4)^{-1} \bm{B}_2^\top \left[ \begin{matrix}
 \bm{C}^{-1} & - \bm{C}^{-1} \bm{B}_2 (\bm{B}_3+\bm{B}_4)^{-1}
\end{matrix} \right] \bm{R}(\tilde{\bm{\theta}}) +  (\bm{B}_3+\bm{B}_4)^{-1} \bm{T}_2. \numberthis \label{lastnp_3rd} 
\end{align*}
Hence the first $p$ components of $\bm{B}^{-1} \bm{R}(\tilde{\bm{\theta}})$ are given by
\begin{align*}
    \bm{F}_1 = \bm{C}^{-1} \sum_{i=1}^m \hat{\bm{G}}^{-1} (\bm{X}^\top_i \dot{\bm{W}}_i \bm{X}_i + \hat{\bm{G}}^{-1})^{-1} \bm{T}_{2i},
\end{align*}
and the last $mp$ components of $\bm{B}^{-1} \bm{R}(\tilde{\bm{\theta}})$ are given by
\begin{align*}
    \bm{F}_2 = - (\bm{B}_3+\bm{B}_4)^{-1} \bm{B}_2^\top \bm{F}_1 + (\bm{B}_3+\bm{B}_4)^{-1} \bm{T}_2.
\end{align*}
Next, we have
\begin{align*}
\bm{T}_2 &= \left[ \begin{matrix}
(\hat{\bm{\theta}} - \dot{\bm{\theta}})^\top  \frac{\partial^2 \bm{S}_{[p+1]} (\tilde{\bm{\theta}})}{\partial \bm{\theta} \partial \bm{\theta}^\top }  (\hat{\bm{\theta}} - \dot{\bm{\theta}}) \\
\vdots \\
(\hat{\bm{\theta}} - \dot{\bm{\theta}})^\top  \frac{\partial^2 \bm{S}_{[(m+1)p]} (\tilde{\bm{\theta}})}{\partial \bm{\theta} \partial \bm{\theta}^\top }  (\hat{\bm{\theta}} - \dot{\bm{\theta}})
\end{matrix} \right] = \left[ \begin{matrix}
(\hat{\bm{\theta}} - \dot{\bm{\theta}})^\top  \frac{\partial^2 \bm{S}_{[p+1]} (\tilde{\bm{\theta}})}{\partial \bm{\theta} \partial \bm{\theta}^\top } \\
\vdots \\
(\hat{\bm{\theta}} - \dot{\bm{\theta}})^\top  \frac{\partial^2 \bm{S}_{[(m+1)p]} (\tilde{\bm{\theta}})}{\partial \bm{\theta} \partial \bm{\theta}^\top }
\end{matrix} \right] (\hat{\bm{\theta}} - \dot{\bm{\theta}}) \triangleq \bm{F}_3 (\hat{\bm{\theta}} - \dot{\bm{\theta}})
\end{align*}
and
\begin{align*}
\bm{T}_{2i} &= \left[ \begin{matrix}
(\hat{\bm{\theta}} - \dot{\bm{\theta}})^\top  \frac{\partial^2 \bm{S}_{[(i-1)p + 1]} (\tilde{\bm{\theta}})}{\partial \bm{\theta} \partial \bm{\theta}^\top }  (\hat{\bm{\theta}} - \dot{\bm{\theta}}) \\
\vdots \\
(\hat{\bm{\theta}} - \dot{\bm{\theta}})^\top  \frac{\partial^2 \bm{S}_{[ip]} (\tilde{\bm{\theta}})}{\partial \bm{\theta} \partial \bm{\theta}^\top }  (\hat{\bm{\theta}} - \dot{\bm{\theta}})
\end{matrix} \right] = \left[ \begin{matrix}
(\hat{\bm{\theta}} - \dot{\bm{\theta}})^\top  \frac{\partial^2 \bm{S}_{[(i-1)p + 1]} (\tilde{\bm{\theta}})}{\partial \bm{\theta} \partial \bm{\theta}^\top } \\
\vdots \\
(\hat{\bm{\theta}} - \dot{\bm{\theta}})^\top  \frac{\partial^2 \bm{S}_{[ip]} (\tilde{\bm{\theta}})}{\partial \bm{\theta} \partial \bm{\theta}^\top }
\end{matrix} \right] (\hat{\bm{\theta}} - \dot{\bm{\theta}}) \triangleq \bm{F}_{3i} (\hat{\bm{\theta}} - \dot{\bm{\theta}}).
\end{align*}
Here, $\bm{F}_3$ is a $mp \times (m+1)p$ matrix and $\bm{F}_{3i}$ is $p \times (m+1)p$. Notice that $\bm{F}_3 = [\bm{F}_{31}^\top,\ldots, \bm{F}_{3n}^\top]^\top$. Furthermore,
\begin{align*}
& \bm{B}^{-1} \bm{R}(\tilde{\bm{\theta}}) = \left[ \begin{matrix}
\bm{F}_1 \\
\bm{F}_2
\end{matrix} \right] = \left[ \begin{matrix}
\sum_{i=1}^m \bm{C}^{-1}  \hat{\bm{G}}^{-1} (\bm{X}^\top_i \dot{\bm{W}}_i \bm{X}_i + \hat{\bm{G}}^{-1})^{-1} \bm{T}_{2i} \\
- (\bm{B}_3+\bm{B}_4)^{-1} \bm{B}_2^\top \bm{F}_1 + (\bm{B}_3+\bm{B}_4)^{-1} \bm{T}_2
\end{matrix} \right] \\
&= \left[ \begin{matrix}
\sum_{i=1}^m \bm{C}^{-1}  \hat{\bm{G}}^{-1} (\bm{X}^\top_i \dot{\bm{W}}_i \bm{X}_i + \hat{\bm{G}}^{-1})^{-1} \bm{F}_{3i} (\hat{\bm{\theta}} - \dot{\bm{\theta}}) \\
- (\bm{B}_3+\bm{B}_4)^{-1} \bm{B}_2^\top \sum_{i=1}^m \bm{C}^{-1}  \hat{\bm{G}}^{-1} (\bm{X}^\top_i \dot{\bm{W}}_i \bm{X}_i + \hat{\bm{G}}^{-1})^{-1} \bm{F}_{3i} (\hat{\bm{\theta}} - \dot{\bm{\theta}}) + (\bm{B}_3+\bm{B}_4)^{-1} \bm{F}_{3} (\hat{\bm{\theta}} - \dot{\bm{\theta}})
\end{matrix} \right] \\
&= \left[ \begin{matrix}
\sum_{i=1}^m \bm{C}^{-1}  \hat{\bm{G}}^{-1} (\bm{X}^\top_i \dot{\bm{W}}_i \bm{X}_i + \hat{\bm{G}}^{-1})^{-1} \bm{F}_{3i}  \\
- (\bm{B}_3+\bm{B}_4)^{-1} \bm{B}_2^\top \sum_{i=1}^m \bm{C}^{-1}  \hat{\bm{G}}^{-1} (\bm{X}^\top_i \dot{\bm{W}}_i \bm{X}_i + \hat{\bm{G}}^{-1})^{-1} \bm{F}_{3i} + (\bm{B}_3+\bm{B}_4)^{-1} \bm{F}_{3}
\end{matrix} \right] (\hat{\bm{\theta}} - \dot{\bm{\theta}}) \\
&= 2 \bm{\Lambda} (\hat{\bm{\theta}} - \dot{\bm{\theta}}).
\end{align*}
The $k$th row of $\bm{F}_{3i}$ for $1 \leq k \leq p$ is given by
\begin{align*}
& - (\hat{\bm{\theta}} - \dot{\bm{\theta}})^\top \left[ \begin{matrix}
\bm{X}^\top \text{diag} (\bm{Z}_{[,(i-1)p + k]}) \tilde{\bm{W}}' \bm{X} & \bm{X}^\top \text{diag} (\bm{Z}_{[,(i-1)p + k]}) \tilde{\bm{W}}' \bm{Z} \\
\bm{Z}^\top \text{diag} (\bm{Z}_{[,(i-1)p + k]}) \tilde{\bm{W}}' \bm{X} & \bm{Z}^\top \text{diag} (\bm{Z}_{[,(i-1)p + k]}) \tilde{\bm{W}}' \bm{Z}
\end{matrix} \right] \\
&= - \delta_{m,n_L}^{-1} \big[ \delta_{m,n_L} (\hat{\bmbeta} - \dot{\bmbeta})^\top \bm{X}^\top \text{diag} (\bm{Z}_{[,(i-1)p + k]}) \tilde{\bm{W}}' \bm{X}  + \delta_{m,n_L} (\hat{\bm{b}} - \dot{\bm{b}})^\top \bm{Z}^\top \text{diag} (\bm{Z}_{[,(i-1)p + k]}) \tilde{\bm{W}}' \bm{X} , \\
& \delta_{m,n_L} (\hat{\bmbeta} - \dot{\bmbeta})^\top \bm{X}^\top \text{diag} (\bm{Z}_{[,(i-1)p + k]}) \tilde{\bm{W}}' \bm{Z}  + \delta_{m,n_L} (\hat{\bm{b}} - \dot{\bm{b}})^\top \bm{Z}^\top \text{diag} (\bm{Z}_{[,(i-1)p + k]}) \tilde{\bm{W}}' \bm{Z} \big], \numberthis \label{eq:F_3i}
\end{align*}
where $\delta_{m,n_L}$ is a positive unbounded monotonically increasing sequence such that $\delta_{m,n_L} \| \hat{\bmtheta} - \dot{\bmtheta} \|_{\infty} = O_p(1)$. The consistency results proved in Section \ref{sec:consistency} ensure that such a $\delta_{m,n_L}$ must exist; this is true for both the conditional and unconditional regimes. 

Observe that only the $\{ (\sum_{l = 0}^{i-1} n_l) + 1 \}$th to $(\sum_{l = 0}^{i} n_l)$th components of $\bm{Z}_{[,(i-1)p + k]}$ are non-zero, where we define $n_0 := 0$. This means that, for any $1 \leq k \leq p$, only the $\{(i-1)p+1\}$th to $(ip)$th columns of both \\ $\bm{X}^\top \text{diag} (\bm{Z}_{[,(i-1)p + k]}) \tilde{\bm{W}}' \bm{Z}$ and $\bm{Z}^\top \text{diag} (\bm{Z}_{[,(i-1)p + k]}) \tilde{\bm{W}}' \bm{Z}$ will be non-zero. In other words, other than the first $p$ columns, only the $(ip+1)$th to $\{(i+1)p\}$th columns of $\bm{F}_{3i}$ are non-zero. Thus $\bm{F}_3$, disregarding its first $p$ columns, is an $mp \times mp$ block-diagonal matrix.

The non-zero components of $\delta_{m,n_L} \bm{F}_{3}$ and $\delta_{m,n_L} \bm{F}_{3i}$ are all $O_p(n_U)$ component-wise, again because at most $n_i$ components of $\bm{Z}_{[,(i-1)p + k]}$ are non-zero. For ease of notation and understanding, we now represent all terms using their orders only. Since $\bm{C}^{-1} = O_p(m^{-1})$ and $\hat{\bm{G}}^{-1} (\bm{X}^\top_i \dot{\bm{W}}_i \bm{X}_i + \hat{\bm{G}}^{-1})^{-1} = O_p(n_i^{-1})$, from the above discussion we have that \\ $\sum_{i=1}^m \bm{C}^{-1}  \hat{\bm{G}}^{-1} (\bm{X}^\top_i \dot{\bm{W}}_i \bm{X}_i + \hat{\bm{G}}^{-1})^{-1} \bm{F}_{3i}$ is a $p \times (m+1)p$ matrix of the form \\ $\delta_{m,n_L}^{-1} [O_p(1), O_p(m^{-1}), \ldots, O_p(m^{-1}) ]$. Next, $ (\bm{B}_3+\bm{B}_4)^{-1} \bm{B}_2^\top = [\bm{I}_p + O_p(n_1^{-1}) , \ldots, \bm{I}_p + O_p(n_m^{-1})]^\top$ and $(\bm{B}_3+\bm{B}_4)^{-1}$ is a block-diagonal $O_p(n_L^{-1})$ matrix component-wise. Therefore, we find that $\bm{\Lambda}$ is of the form
\begin{align*}
& 0.5 \left[ \begin{matrix}
\sum_{i=1}^m \bm{C}^{-1}  \hat{\bm{G}}^{-1} (\bm{X}^\top_i \dot{\bm{W}}_i \bm{X}_i + \hat{\bm{G}}^{-1})^{-1} \bm{F}_{3i}  \\
- (\bm{B}_3+\bm{B}_4)^{-1} \bm{B}_2^\top \sum_{i=1}^m \bm{C}^{-1}  \hat{\bm{G}}^{-1} (\bm{X}^\top_i \dot{\bm{W}}_i \bm{X}_i + \hat{\bm{G}}^{-1})^{-1} \bm{F}_{3i} 
\end{matrix} \right] + 0.5 \left[ \begin{matrix} \bm{0}_{p \times (m+1)p} \\ (\bm{B}_3+\bm{B}_4)^{-1} \bm{F}_{3} \end{matrix} \right] \\
& \triangleq \bm{\Lambda}_1 + \bm{\Lambda}_2 \\
&=  \frac{1}{\delta_{m,n_L}}  \left[ \begin{matrix} 
    O_p(1) & O_p(m^{-1}) & \cdots & O_p(m^{-1}) \\
    \vdots & \vdots & \vdots & \vdots \\
    O_p(1) & O_p(m^{-1}) & \cdots & O_p(m^{-1})
    \end{matrix} \right] + 
    \frac{1}{\delta_{m,n_L}} \left[ \begin{matrix} 
    & & \bm{0}_{p \times (m+1)p} \\
    O_p(1) & O_p(1) & \bm{0} & \cdots & \bm{0}  \\
    O_p(1) & \bm{0} & O_p(1) & \cdots & \bm{0} \\
    \vdots & \vdots & \vdots & \ddots & \vdots\\
    O_p(1) & \bm{0} & \cdots & \bm{0} & O_p(1)
    \end{matrix} \right] \\
&=  \frac{1}{\delta_{m,n_L}} \left[ \begin{matrix} 
     O_p(1) & O_p(m^{-1}) & O_p(m^{-1}) & \cdots & O_p(m^{-1}) \\
    O_p(1) & O_p(1) & O_p(m^{-1}) & \cdots & O_p(m^{-1})  \\
    O_p(1) & O_p(m^{-1}) & O_p(1) & \cdots & O_p(m^{-1}) \\
    \vdots & \vdots & \vdots & \ddots & \vdots\\
    O_p(1) & O_p(m^{-1}) & \cdots & O_p(m^{-1}) & O_p(1)
    \end{matrix} \right]. \numberthis \label{eq:lambda}
\end{align*}
Writing $\bm{\Lambda} = \delta_{m,n_L}^{-1} \bm{\Lambda}_{\delta}$, we see that the component-wise order of $\bm{\Lambda}_{\delta}$ remains the same no matter how many times it is multiplied by itself. Furthermore, each row of $\bm{\Lambda}_\delta^s$ is $O_p(1)$ for only a finite number of components, and $O_p(m^{-1})$ for the others. We will use these facts to examine the behaviour of $ \sum_{s=1}^\infty \bm{\Lambda}^s  \bm{B}^{-1} \nabla Q(\dot{\bm{\theta}}) = \sum_{s=1}^\infty \delta_{m,n_L}^{-s} \bm{\Lambda}_\delta^s  \bm{B}^{-1} \nabla Q(\dot{\bm{\theta}})$, and we will do so separately for the conditional and unconditional regimes. Before proceeding, we first confirm the convergence of $\sum_{s=1}^\infty \bm{\Lambda}^s$.
\begin{lemma} \label{lem:invertibility}
Assume Conditions (C1)-(C5) are satisfied. Then with probability tending to one as $m,n_L \rightarrow \infty$, the geometric sum $\sum_{s=1}^\infty \bm{\Lambda}^s$ converges.
\end{lemma}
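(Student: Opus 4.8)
The plan is to realise $\sum_{s=1}^\infty \bm{\Lambda}^s$ as a Neumann series and to show that, with probability tending to one, $\bm{\Lambda}$ is a contraction in a carefully chosen submultiplicative matrix norm. The natural choice is the induced maximum-absolute-row-sum norm $\| \cdot \|_\infty$, dictated by the block structure of $\bm{\Lambda}_{\delta}$ in \eqref{eq:lambda}: the row sums are uniformly controlled, whereas the column sums are not, since the first block-column alone contains $m+1$ blocks of component-wise order $O_p(1)$ and hence has column sum $O_p(m)$. Recall from the construction preceding the lemma that $\bm{\Lambda} = \delta_{m,n_L}^{-1} \bm{\Lambda}_{\delta}$, where $\delta_{m,n_L}$ is a positive, unbounded, monotonically increasing sequence satisfying $\delta_{m,n_L} \| \hat{\bm{\theta}} - \dot{\bm{\theta}} \|_{\infty} = O_p(1)$.

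First I would bound $\| \bm{\Lambda}_{\delta} \|_\infty$. Reading off \eqref{eq:lambda}, each block-row of $\bm{\Lambda}_{\delta}$ comprises at most two blocks of component-wise order $O_p(1)$ — the first block-column and the diagonal block — together with $O(m)$ blocks each of component-wise order $O_p(m^{-1})$. Summing the absolute entries across any row therefore contributes $O_p(1)$ from the finitely many $O_p(1)$ blocks and $m \cdot O_p(m^{-1}) = O_p(1)$ from the remaining blocks, so that $\| \bm{\Lambda}_{\delta} \|_\infty = O_p(1)$ after taking the maximum over the $(m+1)p$ rows. Consequently $\| \bm{\Lambda} \|_\infty = \delta_{m,n_L}^{-1} \| \bm{\Lambda}_{\delta} \|_\infty = o_p(1)$, since $\delta_{m,n_L} \to \infty$.

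Because $\| \bm{\Lambda} \|_\infty = o_p(1)$, the event $\mathcal{E}_{m,n_L} = \{ \| \bm{\Lambda} \|_\infty < 1 \}$ has probability tending to one. On $\mathcal{E}_{m,n_L}$, submultiplicativity of $\| \cdot \|_\infty$ gives $\| \bm{\Lambda}^s \|_\infty \leq \| \bm{\Lambda} \|_\infty^s$, whence
\begin{align*}
\sum_{s=1}^\infty \| \bm{\Lambda}^s \|_\infty \leq \sum_{s=1}^\infty \| \bm{\Lambda} \|_\infty^s = \frac{\| \bm{\Lambda} \|_\infty}{1 - \| \bm{\Lambda} \|_\infty} < \infty.
\end{align*}
For each fixed $(m,n_L)$ the dimension is finite, so the partial sums of $\sum_{s=1}^\infty \bm{\Lambda}^s$ form a Cauchy sequence in $\| \cdot \|_\infty$ and therefore converge, with $(\bm{I}_{(m+1)p} - \bm{\Lambda})$ invertible. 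Since this holds on $\mathcal{E}_{m,n_L}$, the geometric sum converges with probability tending to one, as claimed.

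The main obstacle is the row-sum bound in the second step: one must verify that the $O_p(m^{-1})$ orders of the off-diagonal, non-first-column blocks hold \emph{uniformly} across the $O(m)$ such blocks in each row, so that their sum is genuinely $O_p(1)$ rather than a larger order. This uniformity can be established from the order results recorded just before \eqref{eq:lambda}: each such block arises as a single product $\bm{C}^{-1} \hat{\bm{G}}^{-1} (\bm{X}_i^\top \dot{\bm{W}}_i \bm{X}_i + \hat{\bm{G}}^{-1})^{-1} (\delta_{m,n_L}\bm{F}_{3i})$, which is $O_p(m^{-1}) \cdot O_p(n_i^{-1}) \cdot O_p(n_U) = O_p(m^{-1})$ uniformly in $i$, using $\bm{C}^{-1} = O_p(m^{-1})$, the uniform bounds on $(\bm{X}_i^\top \dot{\bm{W}}_i \bm{X}_i)^{-1}$ from Conditions (C1)--(C2), the consistency of $\hat{\bm{\theta}}$ underlying $\delta_{m,n_L} \bm{F}_{3i} = O_p(n_U)$ in \eqref{eq:F_3i}, and the fact that $n_U/n_i = O(1)$ uniformly under the assumed growth rates. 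Once this uniform control is in place, the contraction argument above closes the proof.
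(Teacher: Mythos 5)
Your proposal is correct and follows essentially the same route as the paper's proof: both establish that $\sum_{s=1}^\infty \bm{\Lambda}^s$ is a convergent Neumann series by showing $\| \bm{\Lambda} \|_{\infty} = o_p(1)$ in the induced maximum-absolute-row-sum norm, with uniformity over the $O(m)$ blocks per row secured by the single random quantity $\| \hat{\bm{\theta}} - \dot{\bm{\theta}} \|_{\infty} = o_p(1)$ together with the deterministic uniform bounds from Conditions (C1)--(C2) and $n_U = O(n_L)$. The only cosmetic difference is that you factor out $\delta_{m,n_L}$ and bound $\| \bm{\Lambda}_{\delta} \|_{\infty} = O_p(1)$, whereas the paper bounds $\| \bm{\Lambda} \|_{\infty}$ directly via the decomposition $\bm{\Lambda}_1 + \bm{\Lambda}_2$ and a H\"{o}lder-inequality argument on the rows of $\bm{F}_3$ --- the same ingredients arranged differently.
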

\begin{proof}
To prove the result we will show that, with probability tending to one as $m, n_L \rightarrow \infty$, $\|\bm{\Lambda}\| < 1$ for some sub-multiplicative matrix norm $\| \cdot \|$. In particular, we will consider the maximum absolute row sum of $\bm{\Lambda}$, denoted by $\| \cdot \|_{\infty}$ i.e., the operator norm induced by the vector infinity norm.

From \eqref{eq:lambda}, we have $\| \bm{\Lambda} \|_{\infty} \leq \| \bm{\Lambda}_1 \|_{\infty} + \| \bm{\Lambda}_2 \|_{\infty}$. We first examine $\| \bm{\Lambda}_1 \|_{\infty}$. We may break up $\bm{\Lambda}_1$ into 
\begin{align*}
& 0.5 \left[ \begin{matrix}
\sum_{i=1}^m \bm{C}^{-1}  \hat{\bm{G}}^{-1} (\bm{X}^\top_i \dot{\bm{W}}_i \bm{X}_i + \hat{\bm{G}}^{-1})^{-1} \bm{F}_{3i}  \\
- \sum_{i=1}^m \bm{C}^{-1}  \hat{\bm{G}}^{-1} (\bm{X}^\top_i \dot{\bm{W}}_i \bm{X}_i + \hat{\bm{G}}^{-1})^{-1} \bm{F}_{3i} \\
- \sum_{i=1}^m \bm{C}^{-1}  \hat{\bm{G}}^{-1} (\bm{X}^\top_i \dot{\bm{W}}_i \bm{X}_i + \hat{\bm{G}}^{-1})^{-1} \bm{F}_{3i} \\
\vdots
\end{matrix} \right] + \\ & 0.5 \left[ \begin{matrix}
\bm{0}_p  \\
 (\bm{X}^\top_1 \dot{\bm{W}}_1 \bm{X}_1 + \hat{\bm{G}}^{-1})^{-1} \hat{\bm{G}}^{-1} \sum_{i=1}^m \bm{C}^{-1}  \hat{\bm{G}}^{-1} (\bm{X}^\top_i \dot{\bm{W}}_i \bm{X}_i + \hat{\bm{G}}^{-1})^{-1} \bm{F}_{3i} \\
 (\bm{X}^\top_2 \dot{\bm{W}}_2 \bm{X}_2 + \hat{\bm{G}}^{-1})^{-1} \hat{\bm{G}}^{-1} \sum_{i=1}^m \bm{C}^{-1}  \hat{\bm{G}}^{-1} (\bm{X}^\top_i \dot{\bm{W}}_i \bm{X}_i + \hat{\bm{G}}^{-1})^{-1} \bm{F}_{3i} \\
\vdots \\
(\bm{X}^\top_m \dot{\bm{W}}_m \bm{X}_m + \hat{\bm{G}}^{-1})^{-1} \hat{\bm{G}}^{-1} \sum_{i=1}^m \bm{C}^{-1}  \hat{\bm{G}}^{-1} (\bm{X}^\top_i \dot{\bm{W}}_i \bm{X}_i + \hat{\bm{G}}^{-1})^{-1} \bm{F}_{3i} 
\end{matrix} \right] \\
&\triangleq \bm{\Lambda}_3 + \bm{\Lambda}_4 \bm{\Lambda}_5,
\end{align*}
where $\bm{\Lambda}_4 = \text{bdiag}(\bm{0}_{p \times p}, (\bm{X}^\top_1 \dot{\bm{W}}_1 \bm{X}_1 + \hat{\bm{G}}^{-1})^{-1} \hat{\bm{G}}^{-1}, \ldots, (\bm{X}^\top_m \dot{\bm{W}}_m \bm{X}_m + \hat{\bm{G}}^{-1})^{-1} \hat{\bm{G}}^{-1})$, and $\bm{\Lambda}_5 = (0 , \bm{1}_m^\top)^\top \otimes \sum_{i=1}^m \bm{C}^{-1}  \hat{\bm{G}}^{-1} (\bm{X}^\top_i \dot{\bm{W}}_i \bm{X}_i + \hat{\bm{G}}^{-1})^{-1} \bm{F}_{3i}$. We can also write \\
$\bm{\Lambda}_3 = - 0.5 ( \bm{1}_m^* \otimes \bm{I}_p ) \sum_{i=1}^m \bm{C}^{-1}  \hat{\bm{G}}^{-1} (\bm{X}^\top_i \dot{\bm{W}}_i \bm{X}_i + \hat{\bm{G}}^{-1})^{-1} \bm{F}_{3i} $ and use the (component-wise) order results as used in \eqref{eq:lambda} to see that $\| \bm{\Lambda}_3 \|_{\infty} \leq \| - 0.5 ( \bm{1}_m^* \otimes \bm{I}_p ) \|_{\infty} \| \sum_{i=1}^m \bm{C}^{-1}  \hat{\bm{G}}^{-1} (\bm{X}^\top_i \dot{\bm{W}}_i \bm{X}_i + \hat{\bm{G}}^{-1})^{-1} \bm{F}_{3i} \|_{\infty} = o_p(1)$. Next, we have $\| \bm{\Lambda}_4 \bm{\Lambda}_5 \|_{\infty} \leq \| \bm{\Lambda}_4 \|_{\infty} \| \bm{\Lambda}_5 \|_{\infty}$. We know $\| \bm{\Lambda}_5 \|_{\infty} = o_p(1)$, and under conditions (C1)-(C2), we have $\| \bm{\Lambda}_4 \|_{\infty} = O_p(1)$. Thus we obtain $\| \bm{\Lambda}_1 \|_{\infty} = o_p(1)$.

Turning to $\bm{\Lambda}_2$, we examine each row of $(\bm{B}_3+\bm{B}_4)^{-1} \bm{F}_{3}$. First, $\| (\bm{B}_3+\bm{B}_4)^{-1} \bm{F}_{3} \|_{\infty} \leq \| (\bm{B}_3+\bm{B}_4)^{-1} \|_{\infty} \| \bm{F}_{3} \|_{\infty}$ and by conditions (C1)-(C2), we have $\| (\bm{B}_3+\bm{B}_4)^{-1} \|_{\infty} = O_p(n_L^{-1})$. Now, without loss of generality consider the first row of $\bm{F}_{3}$. This is given by
\begin{align*}
& -  \Bigg[  (\hat{\bmbeta} - \dot{\bmbeta})^\top \bm{X}^\top \text{diag} (\bm{Z}_{[,1]}) \tilde{\bm{W}}' \bm{X}  + (\hat{\bm{b}} - \dot{\bm{b}})^\top \bm{Z}^\top \text{diag} (\bm{Z}_{[,1]}) \tilde{\bm{W}}' \bm{X} , \\ & (\hat{\bmbeta} - \dot{\bmbeta})^\top \bm{X}^\top \text{diag} (\bm{Z}_{[,1]}) \tilde{\bm{W}}' \bm{Z}  + (\hat{\bm{b}} - \dot{\bm{b}})^\top \bm{Z}^\top \text{diag} (\bm{Z}_{[,1]}) \tilde{\bm{W}}' \bm{Z} \Bigg] \\
&= -  \Bigg[  (\hat{\bmbeta} - \dot{\bmbeta})^\top \bm{X}^\top \text{diag} (\bm{Z}_{[,1]}) \tilde{\bm{W}}' \bm{X}  + (\hat{\bm{b}} - \dot{\bm{b}})^\top \bm{Z}^\top \text{diag} (\bm{Z}_{[,1]}) \tilde{\bm{W}}' \bm{X} , \\  & (\hat{\bmbeta} - \dot{\bmbeta})^\top \bm{X}^\top \text{diag} (\bm{Z}_{[,1]}) \tilde{\bm{W}}' \bm{X}  + (\hat{\bm{b}} - \dot{\bm{b}})^\top \bm{Z}^\top \text{diag} (\bm{Z}_{[,1]}) \tilde{\bm{W}}' \bm{X} , \bm{0}_{(m-1)p}^\top \Bigg],
\end{align*}
since $\text{diag} (\bm{Z}_{[,1]})$ selects for the first cluster. Let $\bar{\bm{1}}_p$ be a $p$-vector whose entries consist of the (component-wise) signs of $(\hat{\bmbeta} - \dot{\bmbeta})^\top \bm{X}^\top \text{diag} (\bm{Z}_{[,1]}) \tilde{\bm{W}}' \bm{X}  + (\hat{\bm{b}} - \dot{\bm{b}})^\top \bm{Z}^\top \text{diag} (\bm{Z}_{[,1]}) \tilde{\bm{W}}' \bm{X}$. Then the absolute row sum of the first row of $\bm{F}_3$ is given by
\begin{align*}
    & 2 | \{ (\hat{\bmbeta} - \dot{\bmbeta})^\top \bm{X}^\top \text{diag} (\bm{Z}_{[,1]}) \tilde{\bm{W}}' \bm{X}  + (\hat{\bm{b}} - \dot{\bm{b}})^\top \bm{Z}^\top \text{diag} (\bm{Z}_{[,1]}) \tilde{\bm{W}}' \bm{X} \} \bar{\bm{1}}_p| \\
    =& 2 | \{ (\hat{\bmbeta} - \dot{\bmbeta})^\top \bm{X}^\top \text{diag} (\bm{Z}_{[,1]}) \tilde{\bm{W}}' \bm{X}  + (\hat{\bm{b}}_1 - \dot{\bm{b}}_1)^\top \bm{X}^\top \text{diag} (\bm{Z}_{[,1]}) \tilde{\bm{W}}' \bm{X} \} \bar{\bm{1}}_p | \\
    =& 2 | \{ (\hat{\bmbeta} - \dot{\bmbeta} + \hat{\bm{b}}_1 - \dot{\bm{b}}_1)^\top \bm{X}^\top \text{diag} (\bm{Z}_{[,1]}) \tilde{\bm{W}}' \bm{X} \} \bar{\bm{1}}_p | \\
    \leq & 2 p \| \{ \bm{X}^\top \text{diag} (\bm{Z}_{[,1]}) \tilde{\bm{W}}' \bm{X} (\hat{\bmbeta} - \dot{\bmbeta} + \hat{\bm{b}}_1 - \dot{\bm{b}}_1) \} \|_{\infty} \\
    \leq & 2 p \| \{ \bm{X}^\top \text{diag} (\bm{Z}_{[,1]}) \tilde{\bm{W}}' \bm{X} \} \|_{\infty}  \| \hat{\bmbeta} - \dot{\bmbeta} + \hat{\bm{b}}_1 - \dot{\bm{b}}_1  \|_{\infty} \\
    \leq & 2 p \| \{ \bm{X}^\top \text{diag} (\bm{Z}_{[,1]}) \tilde{\bm{W}}' \bm{X} \} \|_{\infty} ( \| \hat{\bmbeta} - \dot{\bmbeta}  \|_{\infty} + \| \hat{\bm{b}}_1 - \dot{\bm{b}}_1  \|_{\infty} ) \\
    \leq & 2 p \underset{k \in \{ 1, \ldots, mp \} }{\max} \| \{ \bm{X}^\top \text{diag} (\bm{Z}_{[,k]}) \tilde{\bm{W}}' \bm{X} \} \|_{\infty} ( \| \hat{\bmbeta} - \dot{\bmbeta}  \|_{\infty} + \| \hat{\bm{b}}_1 - \dot{\bm{b}}_1  \|_{\infty} ) \\
    =& 2 p \underset{k \in \{ 1, \ldots, mp \} }{\max} \| \{ \bm{X}^\top \text{diag} (\bm{Z}_{[,k]}) \tilde{\bm{W}}' \bm{X} \} \|_{\infty} \| \hat{\bmbeta} - \dot{\bmbeta}  \|_{\infty} \\ 
    &+ 2 p \underset{k \in \{ 1, \ldots, mp \} }{\max} \| \{ \bm{X}^\top \text{diag} (\bm{Z}_{[,k]}) \tilde{\bm{W}}' \bm{X} \} \|_{\infty} \| \hat{\bm{b}}_1 - \dot{\bm{b}}_1  \|_{\infty} \\
    & \triangleq \alpha + \alpha_{1} \triangleq \omega_1,
\end{align*}
where the second equality follows from $\text{diag} (\bm{Z}_{[,1]})$ selecting for only the first cluster, and $\bm{X}_i = \bm{Z}_i$. The first inequality is due to H\"{o}lder's inequality. Again, using Conditions (C1)-(C2) we have $\underset{k \in \{ 1, \ldots, mp \} }{\max} \| \{ \bm{X}^\top \text{diag} (\bm{Z}_{[,k]}) \tilde{\bm{W}}' \bm{X} \} \|_{\infty} = O_p(n_U)$. 

Now, $p$ is a constant and the absolute row sum of any row of $\bm{F}_3$ can be bounded analogously in the above way, the only difference being that for the $k$th row, then the quantity $(\hat{\bm{b}}_1 - \dot{\bm{b}}_1)$ changes to the prediction gap for the cluster that $\text{diag} (\bm{Z}_{[,k]})$ selects for. This means that the absolute row sums for the first $p$ rows of $\bm{F}_3$ are bounded by $\omega_1$, the next $p$ rows by $\omega_2$, and so on. Hence, to ensure $\| \bm{\Lambda}_2 \|_{\infty} = o_p(1)$ it suffices to ensure that $\| \bm{\omega} \otimes \bm{1}_p \|_{\infty} = \| \bm{\omega} \|_{\infty} = o_p(n_L)$, where $\bm{\omega} = (\omega_1, \ldots, \omega_m)^\top$. 

To show this, define $\bm{\alpha} = (\alpha_1, \ldots, \alpha_m)^\top$. Then $\| \bm{\omega} \|_{\infty} \leq \| \alpha \bm{1}_m \|_{\infty} + \| \bm{\alpha} \|_{\infty}$. By Conditions (C1)-(C2), we have $\| \alpha \bm{1}_m \|_{\infty} = \alpha = O_p(n_U) \times o_p(1) = o_p(n_U) = o_p(n_L)$. We also have 
\begin{align*}
    \| \bm{\alpha} \|_{\infty} &= 2 p \underset{k \in \{ 1, \ldots, mp \} }{\max} \| \{ \bm{X}^\top \text{diag} (\bm{Z}_{[,k]}) \tilde{\bm{W}}' \bm{X} \} \|_{\infty} \underset{i \in \{ 1, \ldots, m \} }{\max} \| \hat{\bm{b}}_i - \dot{\bm{b}}_i  \|_{\infty} \\
    &= 2 p \underset{k \in \{ 1, \ldots, mp \} }{\max} \| \{ \bm{X}^\top \text{diag} (\bm{Z}_{[,k]}) \tilde{\bm{W}}' \bm{X} \} \|_{\infty}  \| \hat{\bm{b}} - \dot{\bm{b}}  \|_{\infty} \\
    &= O_p(n_U) \times o_p(1) = o_p(n_U) = o_p(n_L),
\end{align*}
where the last line follows from conditions (C1)-(C2), and the fact that $\| \hat{\bmtheta} - \dot{\bmtheta} \|_{\infty} = o_p(1)$. The result follows since $\| \bm{\Lambda} \|_{\infty}$ is therefore of order $o_p(1)$, and for any $\epsilon > 0$ we have $ \| \bm{\Lambda} \|_{\infty} < \epsilon$ with probability tending to one as $m,n_L \rightarrow \infty$. The argument above holds for both the conditional and unconditional regime, and the required result follows.
\end{proof}

\subsection{Conditional Regime} \label{sec:cond_remainder}
In the conditional regime, we assume without loss of generality that $\sum_{i=1}^m \dot{\bm{b}}_i = \bm{0}_p$, recalling that we can always reparametrise the random effects to satisfy this. From previous derivations, we know that when $m n_L^{-1} \rightarrow 0$, the quantity $\bm{B}^{-1} \nabla Q(\dot{\bm{\theta}})$ is of order $O_p(N^{-1/2})$ for the first $p$ components and $O_p(n_L^{-1/2})$ for the last $mp$ components. By the two properties of $\bm{\Lambda}_\delta^s$ noted above, we therefore know that $\bm{\Lambda}_\delta^s  \bm{B}^{-1} \nabla Q(\dot{\bm{\theta}})$ is at most $O_p(n_L^{-1/2})$ component-wise for any $s$. Hence $\sum_{s=1}^\infty \delta_{m,n_L}^{-s} \bm{\Lambda}_\delta^s  \bm{B}^{-1} \nabla Q(\dot{\bm{\theta}}) = \delta_{m,n_L}^{-1} O_p(n_L^{-1/2}) = o_p(n_L^{-1/2})$ for sufficiently large $m,n_L$ by the properties of a geometric sum. This is sufficient to show that the last $mp$ components of $ \sum_{s=1}^\infty \bm{\Lambda}^s  \bm{B}^{-1} \nabla Q(\dot{\bm{\theta}})$ are of smaller order component-wise than $\bm{B}^{-1} \nabla Q(\dot{\bm{\theta}})$, so that the result for the prediction gap holds. In particular, we thus know that $\hat{\bm{b}} - \dot{\bm{b}} = O_p(n_L^{-1/2})$. Furthermore, we also know that the convergence rate of $\hat{\bmbeta} - \dot{\bmbeta}$ is at least of order $O_p(n_L^{-1/2})$. As a result, we can choose $\delta_{m,n_L} = n_L^{1/2}$ without affecting the component-wise order properties of $\bm{\Lambda}_\delta$. Applying $\delta_{m,n_L} = n_L^{1/2}$, we thus have that $\sum_{s=1}^\infty \delta_{m,n_L}^{-s} \bm{\Lambda}_\delta^s  \bm{B}^{-1} \nabla Q(\dot{\bm{\theta}})$ is at most of order $O_p(n_L^{-1})$ component-wise. This is smaller than $O_p(N^{-1/2})$ when $m n_L^{-1} \rightarrow 0$, and the required result follows.

\subsection{Unconditional Regime} \label{sec:uncond_remainder}
For the unconditional regime, we consider two cases: when $m n_L^{-1} \rightarrow 0$, and when $m n_U^{-1} \rightarrow \infty$ but $m n_L^{-2} \rightarrow 0$. 

First, consider the case when $m n_L^{-1} \rightarrow 0$. From previous derivations, we know that when $m n_L^{-1} \rightarrow 0$, the quantity $\bm{B}^{-1} \nabla Q(\dot{\bm{\theta}})$ is of order $O_p(m^{-1/2})$ for the first $p$ components and $O_p(m^{-1/2})$ for the last $mp$ components. By the two properties of $\bm{\Lambda}_\delta^s$ noted above, we therefore know that $\bm{\Lambda}_\delta^s  \bm{B}^{-1} \nabla Q(\dot{\bm{\theta}})$ is at most $O_p(m^{-1/2})$ component-wise for any $s$. Hence \\ $\sum_{s=1}^\infty \delta_{m,n_L}^{-s} \bm{\Lambda}_\delta^s  \bm{B}^{-1} \nabla Q(\dot{\bm{\theta}}) = \delta_{m,n_L}^{-1} O_p(m^{-1/2}) = o_p(m^{-1/2})$ for sufficiently large $m,n_L$, by the properties of a geometric sum. The required result follows from this. Furthermore, this implies we may set $\delta_{m,n_L} = m^{1/2}$ without affecting the component-wise order properties of $\bm{\Lambda}_\delta$. Applying $\delta_{m,n_L} = m^{1/2}$, we thus have that $\sum_{s=1}^\infty \delta_{m,n_L}^{-s} \bm{\Lambda}_\delta^s  \bm{B}^{-1} \nabla Q(\dot{\bm{\theta}})$ is at most of order $O_p(m^{-1})$ component-wise.

Next, consider the case when $m n_U^{-1} \rightarrow \infty$ and $m n_L^{-2} \rightarrow 0$. From previous derivations, we know in this setting it holds that $\bm{B}^{-1} \nabla Q(\dot{\bm{\theta}})$ is of order $O_p(m^{-1/2})$ for the first $p$ components and $O_p(n_L^{-1/2})$ for the last $mp$ components. By the two properties of $\bm{\Lambda}_\delta^s$ noted above, we therefore obtain that $\bm{\Lambda}_\delta^s  \bm{B}^{-1} \nabla Q(\dot{\bm{\theta}})$ is at most $O_p(n_L^{-1/2})$ component-wise for any $s$. Hence $\sum_{s=1}^\infty \delta_{m,n_L}^{-s} \bm{\Lambda}_\delta^s  \bm{B}^{-1} \nabla Q(\dot{\bm{\theta}}) = \delta_{m,n_L}^{-1} O_p(n_L^{-1/2}) = o_p(n_L^{-1/2})$ for sufficiently large $m,n_L$, by the properties of a geometric sum. This is sufficient to show that the last $mp$ components of $ \sum_{s=1}^\infty \bm{\Lambda}^s  \bm{B}^{-1} \nabla Q(\dot{\bm{\theta}})$ are of smaller order component-wise than $\bm{B}^{-1} \nabla Q(\dot{\bm{\theta}})$, so that the result for the prediction gap holds. In particular, we thus know that $\hat{\bm{b}} - \dot{\bm{b}} = O_p(n_L^{-1/2})$. Furthermore, we also know that the convergence rate of $\hat{\bmbeta} - \dot{\bmbeta}$ is at least $O_p(n_L^{-1/2})$. As a result, we can set $\delta_{m,n_L} = n_L^{1/2}$ without affecting the component-wise order properties of $\bm{\Lambda}_\delta$. Applying $\delta_{m,n_L} = n_L^{1/2}$, we thus have that $\sum_{s=1}^\infty \delta_{m,n_L}^{-s} \bm{\Lambda}_\delta^s  \bm{B}^{-1} \nabla Q(\dot{\bm{\theta}})$ is at most of order $O_p(n_L^{-1})$ component-wise. This is smaller than $O_p(m^{-1/2})$ when $m n_U^{-1} \rightarrow \infty$, $m n_L^{-2} \rightarrow 0$ and the result follows.

\section{Unpartnered Fixed Effects}
\setcounter{equation}{0}

\subsection{Generalised Linear Models}

In the special case when $\dot{\bm{G}} = \bm{0}_{p \times p}$, i.e., all fixed effects are unpartnered in the true data generating process, the GLMM reduces to a GLM. We may then obtain a result based on a special case of our results in the conditional case, when all the true random effects are equal to zero. The result is as follows.

\begin{customthm}{A1}
    Assume Conditions (C1) - (C5) are satisfied and $m n_L^{-1} \rightarrow 0$. Then as $m,n_L \rightarrow \infty$ and when the true vector of random effects $\dot{\bm{b}} = \bm{0}_{mp}$, it holds that $\bm{A} \bm{D} (\hat{\bm{\theta}} - \dot{\bm{\theta}}) \overset{D}{\rightarrow} N(\bm{0},\bm{\Omega})$.
\end{customthm}

\subsection{Linear Mixed Models}

Suppose for $i=1,\ldots,m$ and $j=1,\ldots,n_i$ we observe data from the model $y_{ij} = \bm{x}_{ij}^\top \bm{\beta} + \bm{z}_{ij}^\top \bm{b}_i + \bm{x}_{ij}^{(O) \top} \bm{\beta}^{(O)} + \epsilon_{ij}$, where $\bm{x}_{ij} = \bm{z}_{ij}$ for all $(i,j)$, $\bm{b}_i \overset{i.i.d.}{\sim} N(\bm{0}, \hat{\bm{G}})$ and $\epsilon_{ij} \overset{i.i.d.}{\sim} N(0,\phi)$. Note that this is part of the exponential family. Partition $\bm{\beta} = (\bm{\beta}^{(P) \top}, \bm{\beta}^{(U) \top})^\top $, corresponding to the $p_P$ partnered and $p_U$ unpartnered fixed effects ($p_P + p_U = p$). That is, if we partition $\bm{b}_i = (\bm{b}_i^{(P) \top}, \bm{b}^{(U) \top}_i)^\top$, then $\bm{b}^{(U)}_i = \bm{0}_{p_U}$ for all $i$, and the corresponding elements in $\dot{\bm{G}}$ are zero. Let $\bmtheta^{\times} = (\bm{\beta}^\top, \bm{b}_1^{(P) \top}, \ldots, \bm{b}_m^{(P) \top}, \bm{b}^{(U) \top}_1, \ldots, \bm{b}^{(U) \top}_m , \bm{\beta}^{(O) \top})^\top$, $\bmtheta^{-} = (\bm{\beta}^\top, \bm{b}^\top, \bm{\beta}^{(O) \top})^\top$, \\ $\bm{D}^{\times} = \text{diag}(m^{1/2} \bm{1}_{p_P},N^{1/2} \bm{1}_{p_U},m^{1/2} \bm{1}_{m p_P}, n_1^{1/2} \bm{1}_{p_U}, \ldots, n_m^{1/2} \bm{1}_{p_U}, N^{1/2} \bm{1}_{p_O})$, and \\ $\bm{D}^{-} = \text{diag}(m^{1/2} \bm{1}_{p_P},N^{1/2} \bm{1}_{p_U},n_1^{1/2} \bm{1}_{p}, \ldots, n_m^{1/2} \bm{1}_{p}, N^{1/2} \bm{1}_{p_O})$. Also let $\bm{X}_i^{(O)} = [\bm{x}_{i1}^{(O)}, \ldots, \bm{x}_{in_i}^{(O)}]^\top$ and $\bm{X}^{(O)} = [\bm{X}_1^{(O) \top}, \ldots , \bm{X}_m^{(O) \top}]^\top$. The $p_O$ orthogonal fixed effects $\bm{x}_{ij}^{(O)}$ satisfy $\bm{X}^{(O) \top} \bm{Z} = \bm{0}_{p_O \times mp}$, for example orthogonal polynomials of $\bm{x}_{ij}$. This implies $\bm{X}_i^{(O) \top} \bm{X}_i = \bm{0}_{p_O \times p} $ for all $i$. For a $q \times \{(m+1)p + p_O\}$ matrix $\bm{A}^*$ with the finite selection property, we have the following. \\

\begin{customthm}{A2}
    Assume Conditions (C1) - (C4) are satisfied. Then as $m,n_L \rightarrow \infty$ and unconditional on the random effects $\dot{\bm{b}}$, it holds that
\begin{enumerate}
    \item  $\bm{A}^* \bm{D}^{\times} (\hat{\bm{\theta}}^{\times} - \dot{\bm{\theta}}^{\times}) \overset{D}{\rightarrow} N(\bm{0},\bm{\Omega}_a)$ if $mn_L^{-1} \rightarrow 0$, and
    \item  $\bm{A}^* \bm{D}^{-} (\hat{\bm{\theta}}^{-} - \dot{\bm{\theta}}^{-}) \overset{D}{\rightarrow} N(\bm{0},\bm{\Omega}_b)$ if $mn_U^{-1} \rightarrow \infty$,
\end{enumerate}
where
\begin{align*}
    \bm{\Omega}_a &= \underset{m,n_L \rightarrow \infty}{\lim} \bm{A}^* \left[ \begin{matrix}
        \dot{\bm{G}}_{[1:p_P,1:p_P]} & \bm{0}_{p_P \times p_U} & \bm{1}_{m}^\top \otimes \dot{\bm{G}}_{[1:p_P,1:p_P]} & \bm{0}_{p_P \times mp_U} & \bm{0}_{p_P \times p_O} \\
        \bm{0}_{p_U \times p_P} & \bm{\Omega}_1 & \bm{0}_{p_U \times mp_P} & \bm{0}_{p_U \times mp_U} & \bm{0}_{p_U \times p_O} \\
        \bm{1}_{m} \otimes \dot{\bm{G}}_{[1:p_P,1:p_P]} & \bm{0}_{mp_P \times p_U} & \bm{1}_{m \times m} \otimes \dot{\bm{G}}_{[1:p_P,1:p_P]} & \bm{0}_{mp_P \times mp_U} & \bm{0}_{mp_P \times p_O} \\
        \bm{0}_{mp_U \times p_P} & \bm{0}_{mp_U \times p_U} & \bm{0}_{mp_U \times mp_P} & \bm{\Omega}_2 & \bm{0}_{mp_U \times p_O} \\
        \bm{0}_{p_O \times p_P} & \bm{0}_{p_O \times p_U} & \bm{0}_{p_O \times mp_P} & \bm{0}_{p_O \times mp_U} & \bm{\Omega}_3 \\
    \end{matrix} \right] \bm{A}^{* \top} \\
    \bm{\Omega}_b &= \underset{m,n_L \rightarrow \infty}{\lim} \bm{A}^* \left[ \begin{matrix}
        \dot{\bm{G}}_{[1:p_P,1:p_P]} & \bm{0}_{p_P \times p_U} & \bm{0}_{p_P \times mp} & \bm{0}_{p_P \times p_O} \\
        \bm{0}_{p_U \times p_P} & \bm{\Omega}_1 & \bm{0}_{p_U \times mp} & \bm{0}_{p_U \times p_O} \\
        \bm{0}_{mp \times p_P} & \bm{0}_{mp \times p_U}  & \bm{\Omega}_4 & \bm{0}_{mp \times p_O} \\
        \bm{0}_{p_O \times p_P} & \bm{0}_{p_O \times p_U} & \bm{0}_{p_O \times mp} & \bm{\Omega}_3 \\
    \end{matrix} \right] \bm{A}^{* \top} \\
    \bm{\Omega}_1 &= \left\{ \frac{\dot{\phi}}{m} \sum_{i=1}^m \frac{n}{n_i} \left(\frac{\bm{X}^\top_i \bm{X}_i}{n_i} \right)^{-1} \right\}_{[(p-p_U+1):p,(p-p_U+1):p]} \\
    \bm{\Omega}_2 &= \text{bdiag} \left[ \left\{ \dot{\phi} \left(\frac{\bm{X}^\top_1 \bm{X}_1}{n_1} \right)^{-1} \right\}_{[(p-p_U+1):p,(p-p_U+1):p]} , \ldots, \left\{ \dot{\phi} \left(\frac{\bm{X}^\top_m \bm{X}_m}{n_m} \right)^{-1} \right\}_{[(p-p_U+1):p,(p-p_U+1):p]} \right] \\
    \bm{\Omega}_3 &= \dot{\phi} \left(\frac{\bm{X}^{(O) \top} \bm{X}^{(O)}}{N} \right)^{-1} \\
    \bm{\Omega}_4 &= \text{bdiag} \left[ \left\{ \dot{\phi} \left(\frac{\bm{X}^\top_1 \bm{X}_1}{n_1} \right)^{-1} \right\} , \ldots, \left\{ \dot{\phi} \left(\frac{\bm{X}^\top_m \bm{X}_m}{n_m} \right)^{-1} \right\} \right] .
\end{align*}
\end{customthm}


\begin{proof}

We use the same approach as previous proofs and examine the Taylor expansion \eqref{eqn:taylor1}. In this case, we have the expressions

\begin{align*}
\nabla Q(\dot{\bm{\theta}}) &= \left[ \begin{matrix} \dot{\phi}^{-1} \bm{X}^\top (\bm{y} - \dot{\bm{\mu}}) \\ \dot{\phi}^{-1} \bm{Z}^\top (\bm{y} - \dot{\bm{\mu}}) - (\bm{I}_m \otimes \hat{\bm{G}}^{-1})\dot{\bm{b}} \\ \dot{\phi}^{-1} \bm{X}^{(O) \top} (\bm{y} - \dot{\bm{\mu}}) \end{matrix} \right], \\
\bm{B}(\dot{\bm{\theta}}) &= - \nabla^2 Q(\dot{\bm{\theta}}) = \left[ \begin{matrix}
\bm{X}^\top \dot{\bm{W}} \bm{X} & \bm{X}^\top \dot{\bm{W}} \bm{Z} & \bm{X}^\top \dot{\bm{W}} \bm{X}^{(O)} \\
\bm{Z}^\top \dot{\bm{W}} \bm{X} & \bm{Z}^\top \dot{\bm{W}} \bm{Z} + \bm{I}_m \otimes \hat{\bm{G}}^{-1} & \bm{Z}^\top \dot{\bm{W}} \bm{X}^{(O)} \\
\bm{X}^{(O) \top} \dot{\bm{W}} \bm{X} & \bm{X}^{(O) \top} \dot{\bm{W}} \bm{Z} &  \bm{X}^{(O) \top} \dot{\bm{W}} \bm{X}^{(O)}
\end{matrix} \right] \\
&= \dot{\phi}^{-1} \left[ \begin{matrix}
\bm{X}^\top \bm{X} & \bm{X}^\top \bm{Z} & \bm{0}_{p \times p_O} \\
\bm{Z}^\top  \bm{X} & \bm{Z}^\top \bm{Z} + \bm{I}_m \otimes \hat{\bm{G}}^{-1} & \bm{0}_{mp \times p_O} \\
\bm{0}_{p_O \times p} & \bm{0}_{p_O \times mp} &  \bm{X}^{(O) \top} \bm{X}^{(O)}
\end{matrix} \right],
\end{align*}
where the last equality follows from the fact that $\dot{\bm{W}} = \dot{\phi}^{-1} \bm{I}_{N}$ and $\bm{X}^{(O) \top} \bm{Z} = \bm{0}_{p_O \times mp}$. Since $\bm{B}(\dot{\bm{\theta}})$ is block diagonal, we thus know that expressions \eqref{eq:fixed} and \eqref{eq:random} still hold. Recall
\begin{align*}
\hat{\bm{\beta}} - \dot{\bm{\beta}} &= m^{-1} \sum_{i=1}^m (\bm{X}_i^\top \dot{\bm{W}}_i \bm{X}_i)^{-1}  \dot{\phi}^{-1} \bm{X}_i^\top (\bm{y}_i - \dot{\bm{\mu}}_i )  + m^{-1} \sum_{i=1}^m \dot{\bm{b}}_i \\
&  -  m^{-1} \sum_{i=1}^m (\bm{X}_i^\top \dot{\bm{W}}_i \bm{X}_i + \hat{\bm{G}}^{-1})^{-1} \hat{\bm{G}}^{-1} \dot{\bm{b}}_i + \frac{1}{2} \{ \bm{B}^{-1} \bm{R}(\tilde{\bmtheta})\}_{[1:p]}  \\
& + O_p(n_L^{-1}) \Bigg\{ m^{-1} \sum_{i=1}^m (\bm{X}_i^\top \dot{\bm{W}}_i \bm{X}_i + \hat{\bm{G}}^{-1})^{-1}  \dot{\phi}^{-1} \bm{X}_i^\top (\bm{y}_i - \dot{\bm{\mu}}_i )  + m^{-1} \sum_{i=1}^m \dot{\bm{b}}_i \\
&  -  m^{-1} \sum_{i=1}^m (\bm{X}_i^\top \dot{\bm{W}}_i \bm{X}_i + \hat{\bm{G}}^{-1})^{-1} \hat{\bm{G}}^{-1} \dot{\bm{b}}_i \Bigg\} + m^{-1} \sum_{i=1}^m O_p(n_L^{-2}) \dot{\phi}^{-1} \bm{X}_i^\top (\bm{y}_i - \dot{\bm{\mu}}_i )
\end{align*}
and
\begin{align*}
\hat{\bm{b}} - \dot{\bm{b}} &= - \bm{1}_m \otimes m^{-1} \sum_{i=1}^m \dot{\bm{b}}_i + \bm{B}_3^{-1} \{ \dot{\phi}^{-1} \bm{Z}^\top (\bm{y} - \dot{\bm{\mu}}) \} \\ 
&+ O_p(N^{-1/2}) + O_p(n_L^{-1}) + \frac{1}{2} \{ \bm{B}^{-1} \bm{R}(\tilde{\bmtheta})\}_{[p +1:(m+1)p]}.
\end{align*}
In the LMM case, the remainder term in the Taylor expansion is zero. Thus the dominating term on the right hand side for $\hat{\bm{\beta}}^{(U)} - \dot{\bm{\beta}}^{(U)}$ are the last $p_U$ components of $m^{-1} \sum_{i=1}^m (\bm{X}_i^\top \dot{\bm{W}}_i \bm{X}_i)^{-1}  \dot{\phi}^{-1} \bm{X}_i^\top (\bm{y}_i - \dot{\bm{\mu}}_i )$, since the last $p_U$ components of $m^{-1} \sum_{i=1}^m \dot{\bm{b}}_i$ are zero. Noting that $\bm{y}_i - \dot{\bm{\mu}}_i = (\epsilon_{i1}, \ldots, \epsilon_{in_i})^\top =: \bm{\epsilon}_i$, the result for the unpartnered fixed effects follows after normalising by $N^{1/2}$.

Next, again from the Taylor expansion we have from the block-diagonal structure of $\bm{B}(\dot{\bm{\theta}})$ that $\hat{\bm{\beta}}^{(O)} - \dot{\bm{\beta}}^{(O)} = (\bm{X}^{(O) \top} \bm{X}^{(O)})^{-1} \bm{X}^{(O) \top} (\bm{y} - \dot{\bm{\mu}})$ and the result follows after normalising by $N^{1/2}$ since $\bm{y} - \dot{\bm{\mu}} = (\epsilon_{11}, \ldots, \epsilon_{m n_m})^\top =: \bm{\epsilon}$.

Finally, the result for the unpartnered random effects follows from the fact that the last $p_U$ components of $m^{-1} \sum_{i=1}^m \dot{\bm{b}}_i$ are zero so that the dominating term on the right hand side is $(\bm{X}_i^\top \bm{X}_i)^{-1} \bm{X}_i^\top (\bm{y}_i - \dot{\bm{\mu}}_i)$, and normalising by $n_i$.

The proofs for the partnered fixed and random effects are analogous to the proofs of Theorems 2 and 4, based on examining the leading term in the Taylor expansion. 

For the joint behaviour of the estimator, we examine the joint behaviour of the leading terms in the Taylor Expansion. Note that $\bm{\epsilon}$ is multivariate normal with covariance matrix $\dot{\phi} \bm{I}_N$, $\dot{\bm{b}}$ is multivariate normal with covariance matrix $\bm{I}_m \otimes \dot{\bm{G}}$, $\bm{\epsilon}$ and $\dot{\bm{b}}$ are independent, and all the leading terms in the Taylor expansion are linear functions of $\bm{\epsilon}$ and $\dot{\bm{b}}$. To determine the joint behaviour of the estimator it is thus sufficient to derive the limiting covariance between the normalised leading terms, as we see (from the leading terms) that the estimator itself is also (asymptotically) multivariate normal. For example,
\begin{align*}
    &\text{Cov} \left\{N^{1/2} m^{-1} \sum_{i=1}^m (\bm{X}_i^\top \bm{X}_i)^{-1} \bm{X}_i^\top (\bm{y}_i - \dot{\bm{\mu}}_i ), N^{1/2}(\bm{X}^{(O) \top} \bm{X}^{(O)})^{-1} \bm{X}^{(O) \top} (\bm{y} - \dot{\bm{\mu}}) \right\} \\
    &= n \dot{\phi} \sum_{i=1}^m (\bm{X}_i^\top \bm{X}_i)^{-1} \bm{X}_i^\top \bm{X}^{(O)}_i (\bm{X}^{(O) \top} \bm{X}^{(O)})^{-1}  \\
    &= \bm{0}_{p \times p_O}
\end{align*}
due to the mutual independence of the $\epsilon_{ij}$ and orthogonality condition of $\bm{X}^{(O)}$. The pairwise limiting covariances between the leading terms can all be derived in a similar way and the result follows. Notice here that quantities with different convergence rates are always asymptotically uncorrelated and independent in this case.

\end{proof}

Note that the results hold by the Lindeberg-Feller Central Limit Theorem even if the true distribution of $\epsilon_{ij}$ is not normal, as long as it is mean zero with finite variance. Also note that condition (C5) is no longer required, and that there is no restriction on the relative rates of $m$ and $n_L$, since there is no remainder term to deal with. Our result is consistent with the results derived in \citet{lyu2021asymptotics,lyu2021increasing} who also derive a $N^{1/2}$ convergence rate for unpartnered fixed effects that are time-varying.

In practice, we do not know if a fixed effect is truly partnered with a random effect or not, and therefore the correct asymptotic distribution and convergence rate is also unknown. In this case, an appropriate finite sample approximation, given consistent estimators $\tilde{\bm{G}}$ and $\tilde{\phi}$ of $\dot{\bm{G}}$ and $\dot{\phi}$ respectively, is 
\begin{align*}
    \hat{\bm{\beta}} - \dot{\bm{\beta}} \sim N \left\{ \bm{0}, m^{-1} \tilde{\bm{G}} + N^{-1} \frac{\tilde{\phi}}{m} \sum_{i=1}^m \frac{n}{n_i} \left(\frac{\bm{X}^\top_i \bm{X}_i}{n_i} \right)^{-1} \right\},
\end{align*}
which is based on the distribution of $m^{-1} \sum_{i=1}^m (\bm{X}_i^\top \dot{\bm{W}}_i \bm{X}_i)^{-1}  \dot{\phi}^{-1} \bm{X}_i^\top (\bm{y}_i - \dot{\bm{\mu}}_i )  + m^{-1} \sum_{i=1}^m \dot{\bm{b}}_i$, noting that the two terms are independent.


\newpage

\section{Additional Simulation Results}
\setcounter{equation}{0}

\subsection{Main Results for the Conditional Regime}

Figures \ref{fig: cond_coverage_pois}, \ref{fig: cond_coverage_bin} and \ref{fig: cond_shapiro} display the empirical coverage probabilities and results from applying the Shapiro-Wilk test, respectively, under the conditional regime and for the 25 combinations of $(m,n)$. Although our coverage intervals often undercovered or overcovered for small cluster sizes e.g., $n=25$, especially for the Bernoulli case, they all moved toward nominal coverage as $n$ becomes larger than $m$. This is consistent with Theorem \ref{thm:conditionalnormality}. The fact the empirical coverage probabilities were slow in tending towards the nominal 95\% level was also not overly surprising, as the third derivative term in the corresponding Taylor expansion is $O_p(m^{1/2} n_L^{-1/2})$. The Shapiro-Wilk tests overall did not indicate any evidence of deviations away from normality when $m<n$, although there were occasionally a few $p$-values less than 0.05. Overall, these results strongly support the use of Theorem \ref{thm:conditionalnormality} for inference under the conditional regime.

\begin{figure}[H]
\centering
\includegraphics[width=0.95\linewidth]{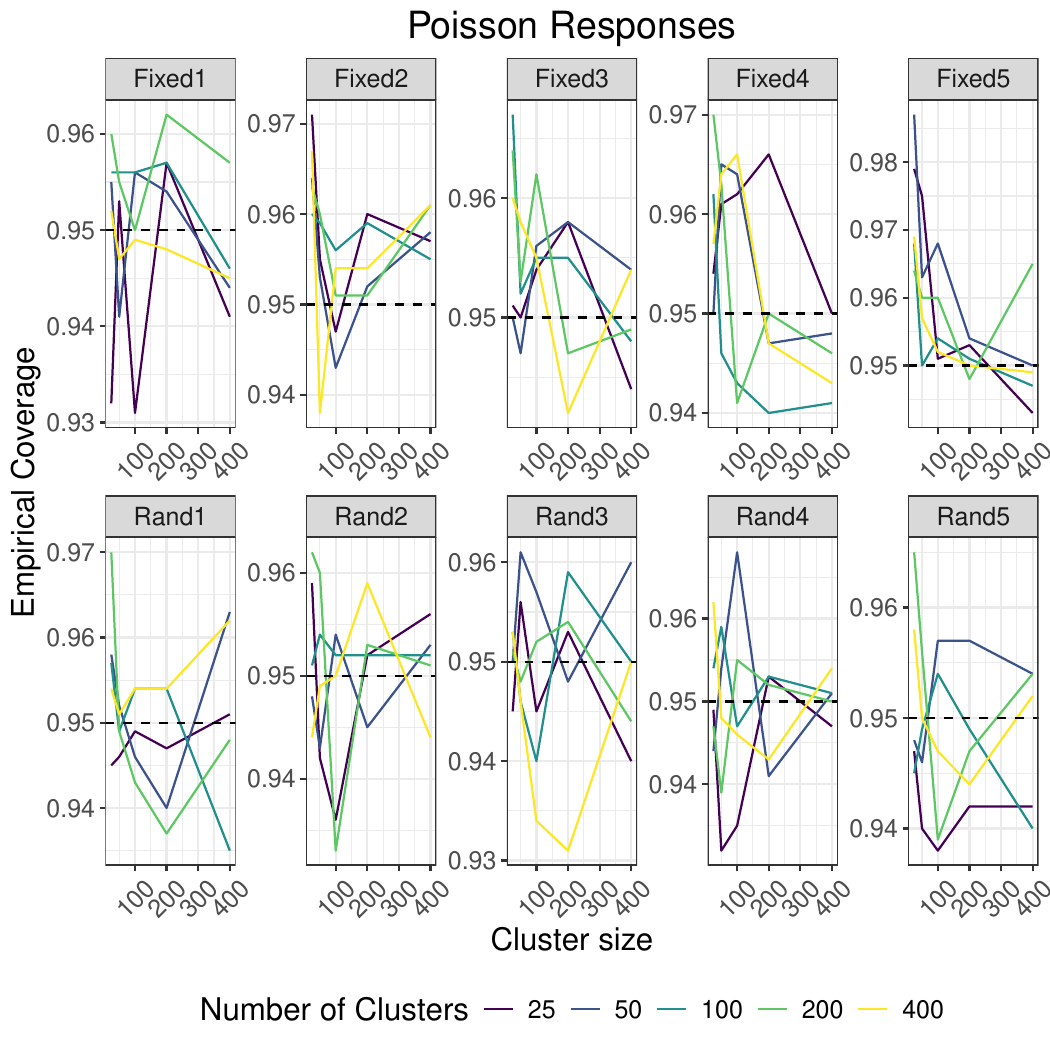}
\caption{Empirical coverage probability of 95\% coverage intervals for the five fixed and random effects estimates, obtained under the conditional regime with Poisson responses.} 
\label{fig: cond_coverage_pois}
\end{figure}

\begin{figure}[H]
\centering
\includegraphics[width=0.95\linewidth]{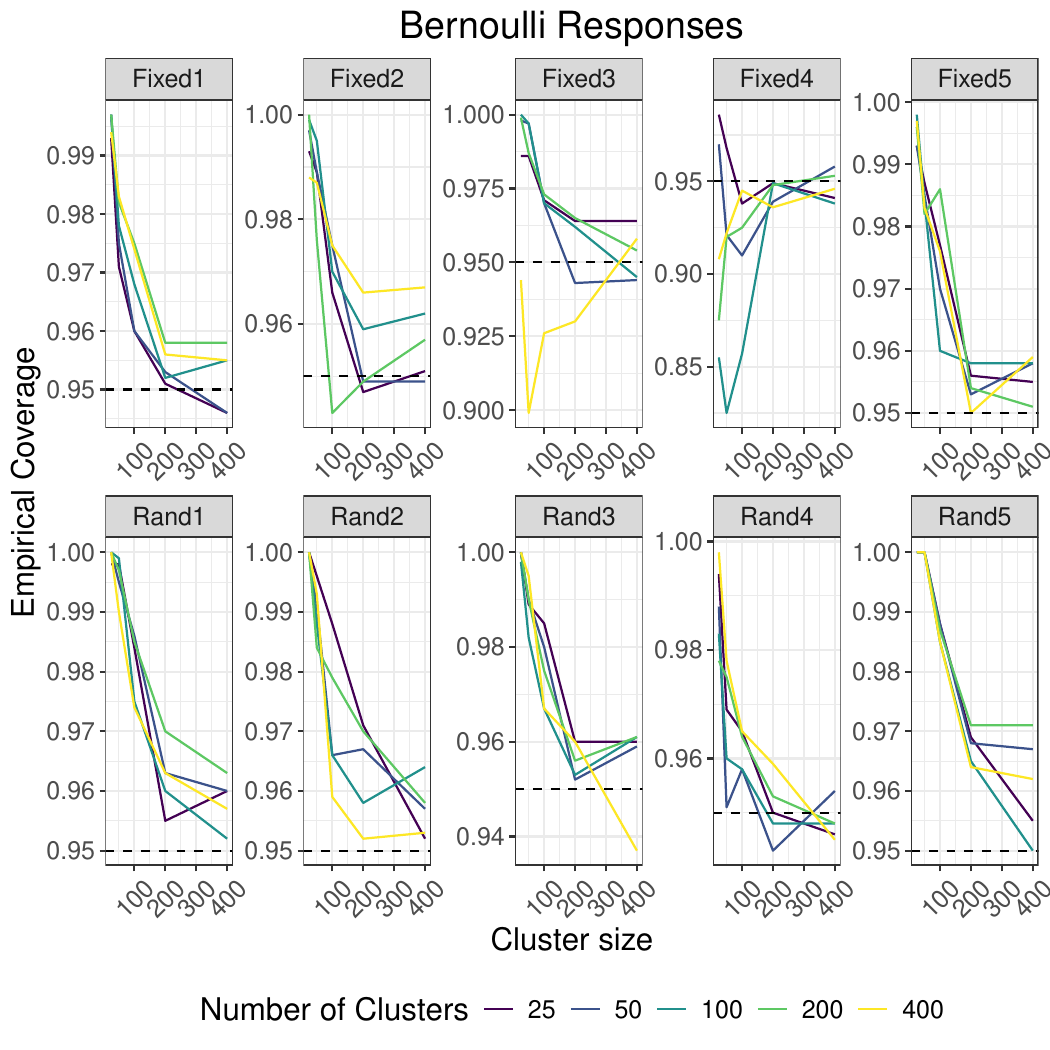}
\caption{Empirical coverage probability of 95\% coverage intervals for the five fixed and random effects estimates, obtained under the conditional regime with Bernoulli responses.} 
\label{fig: cond_coverage_bin}
\end{figure}

\begin{figure}[H]
\includegraphics[width=0.7\linewidth]{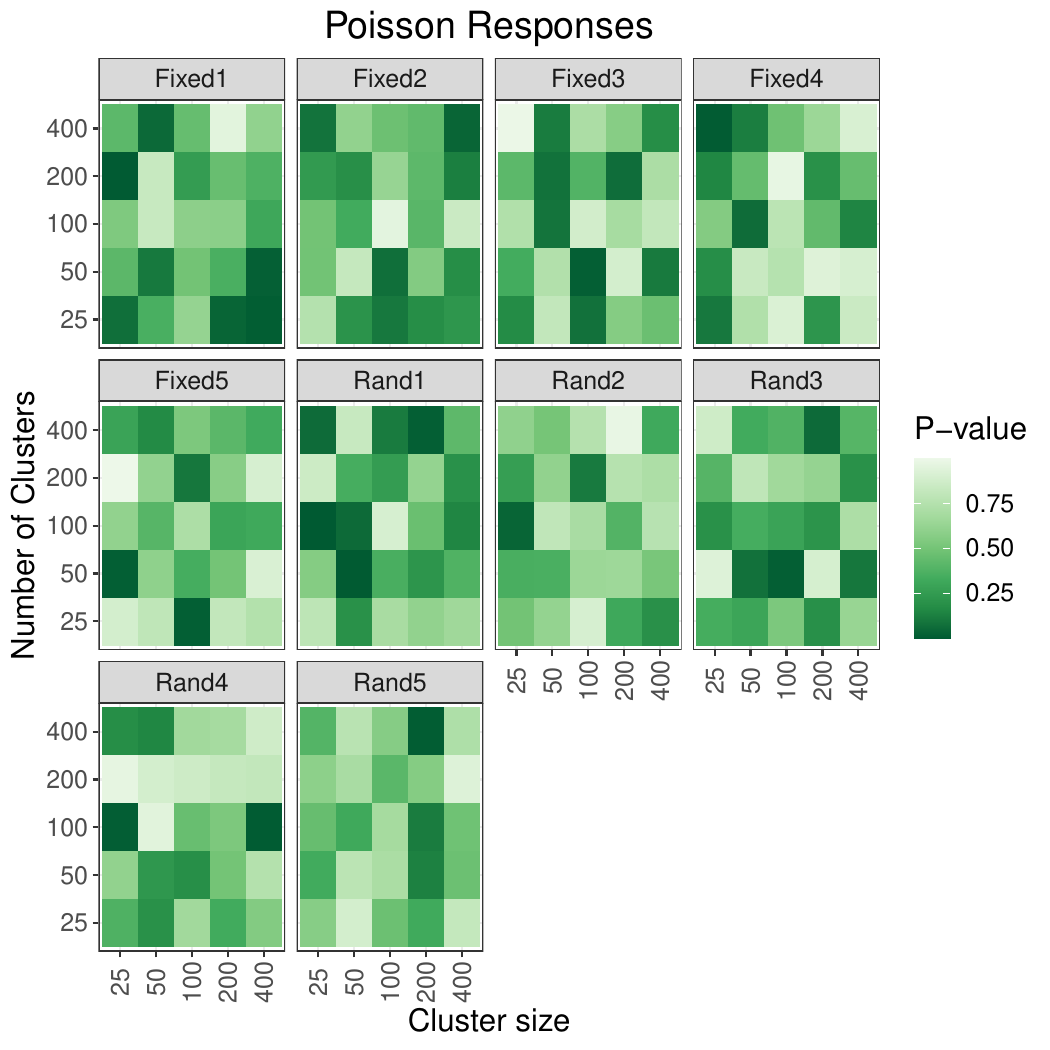}
\includegraphics[width=0.7\linewidth]{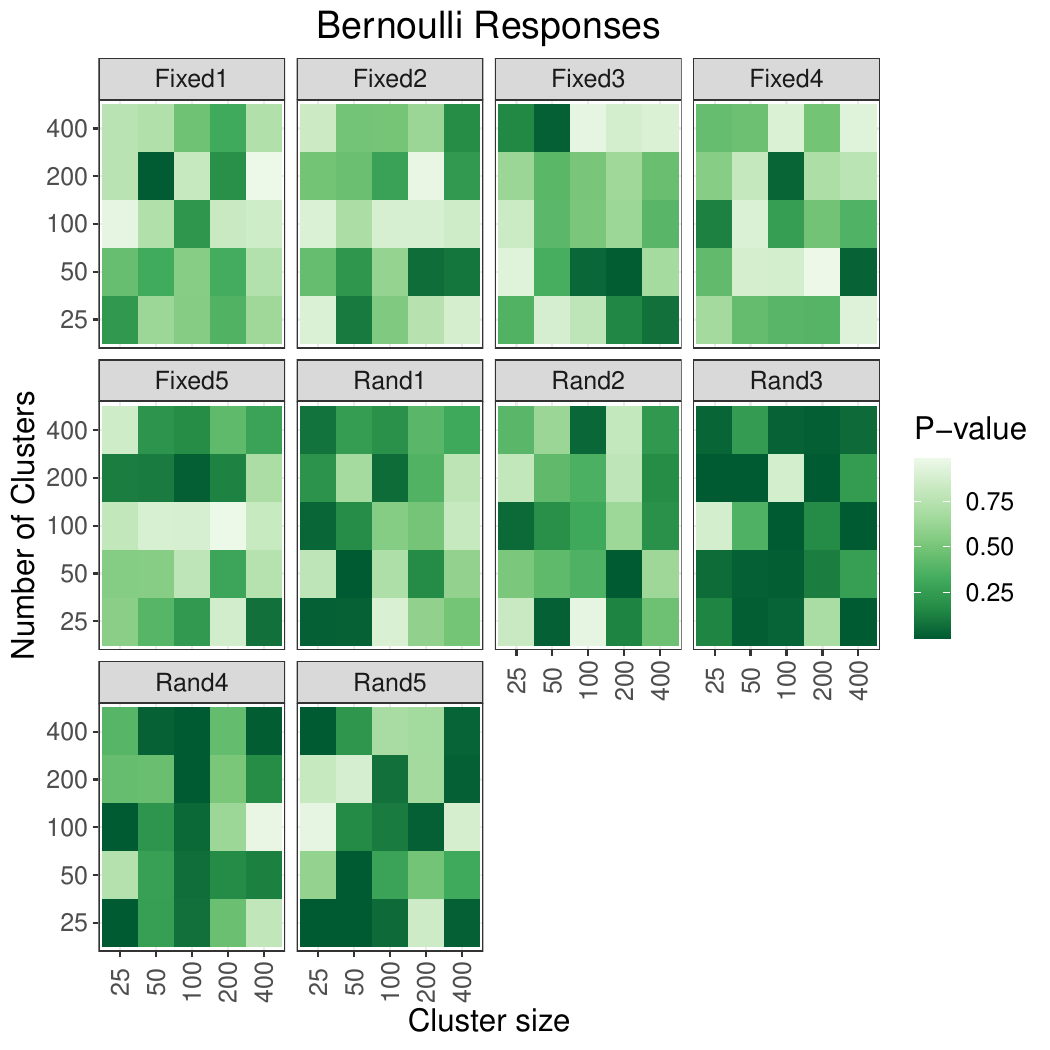}
\caption{$p$-values from Shapiro-Wilk tests applied to the fixed and random effects estimates obtained using maximum PQL estimation, under the conditional regime.} 
\label{fig: cond_shapiro}
\end{figure}

\begin{figure}[H]
\includegraphics[width=0.49\linewidth]{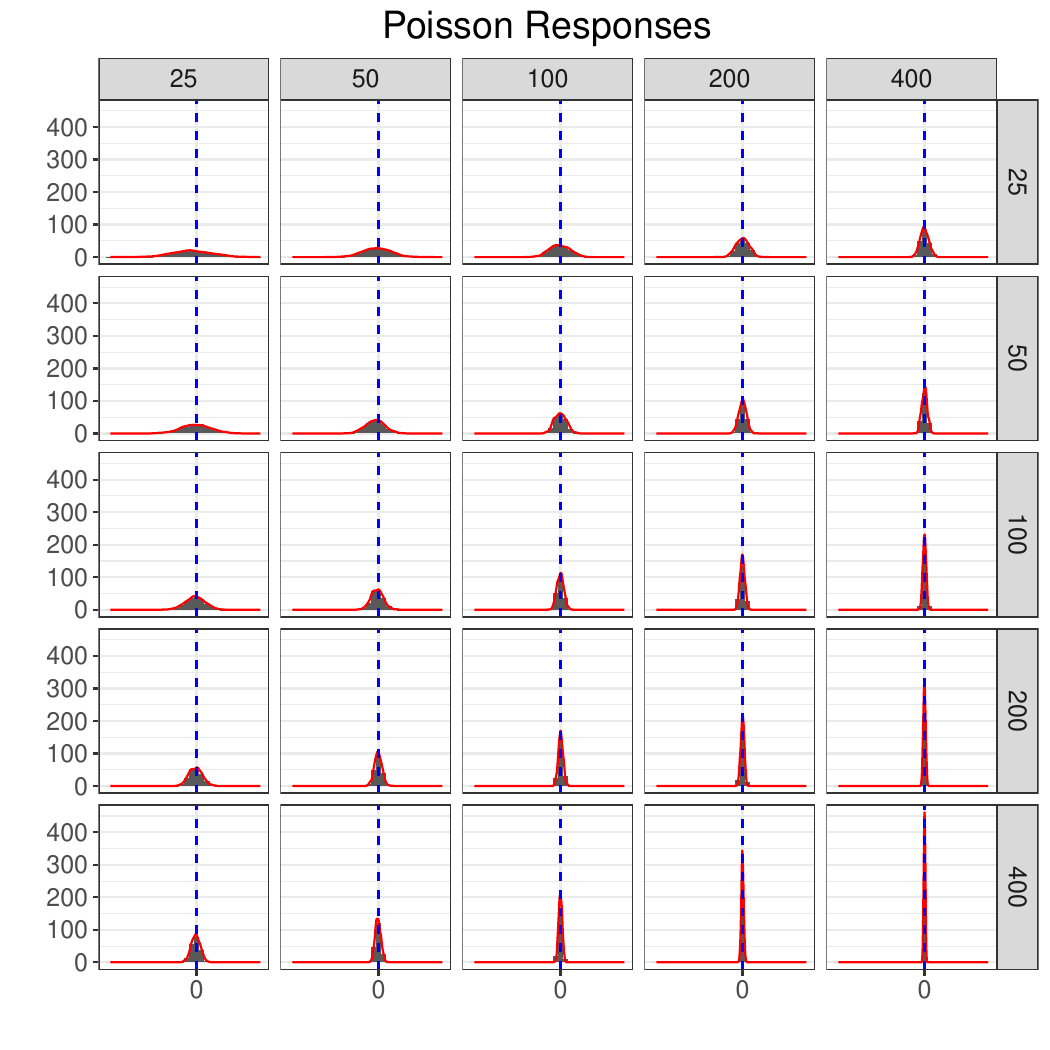}
\includegraphics[width=0.49\linewidth]{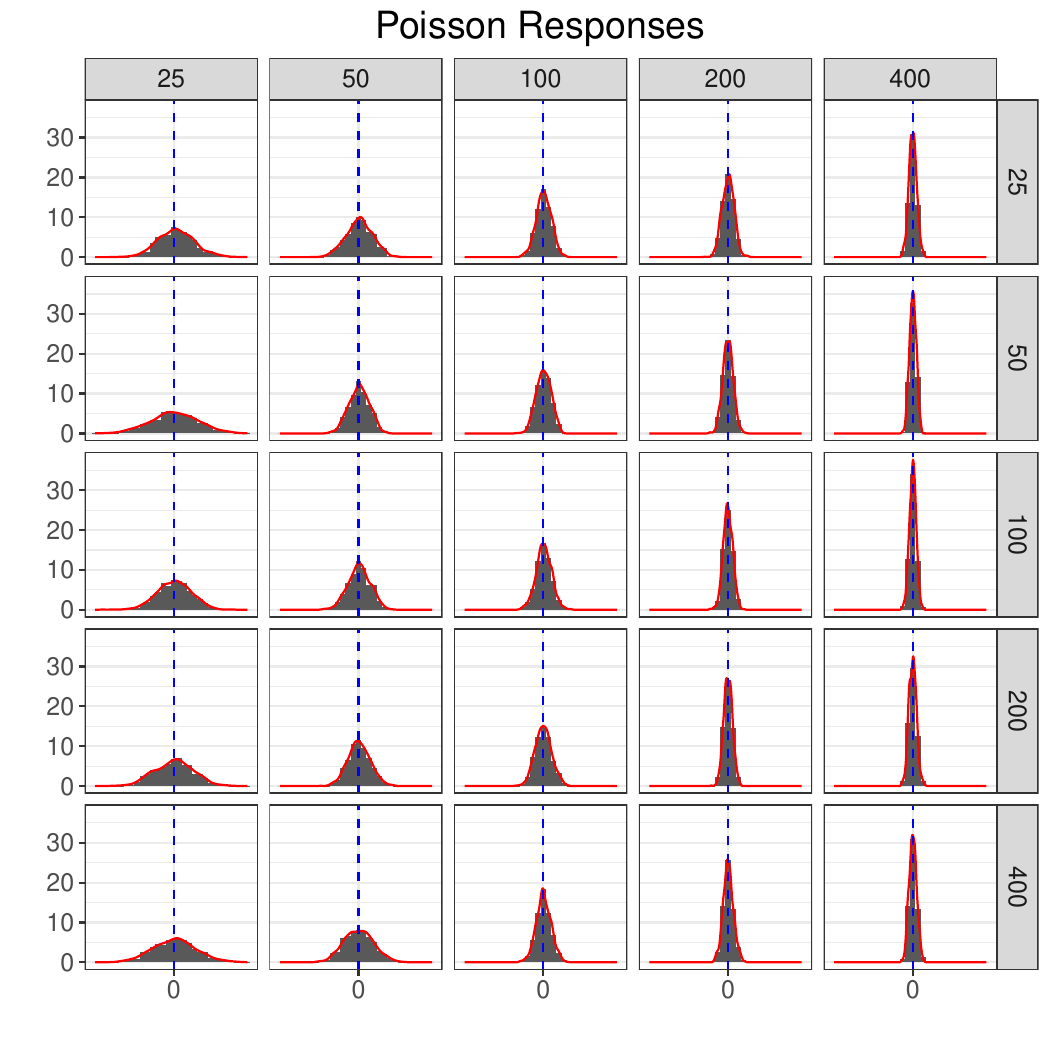}
\includegraphics[width=0.49\linewidth]{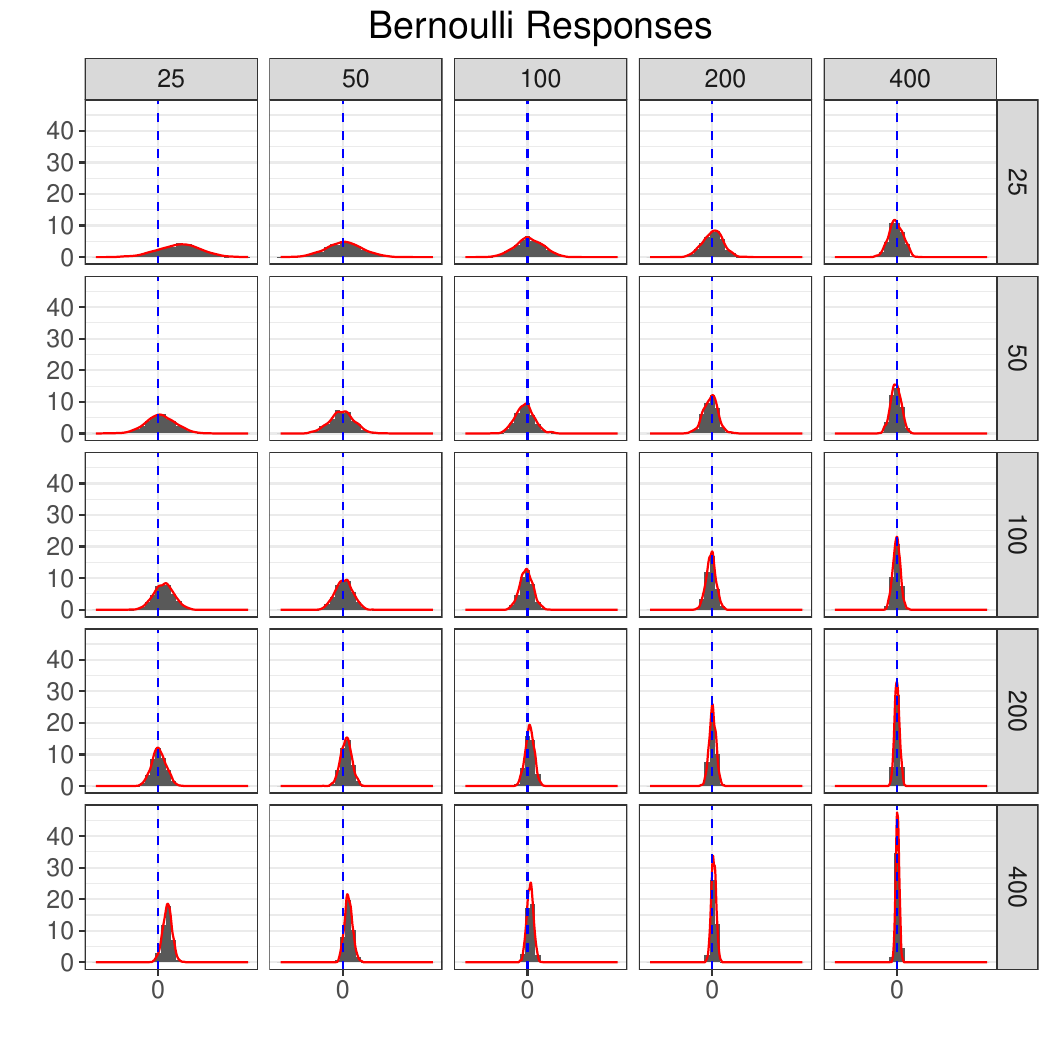}
\includegraphics[width=0.49\linewidth]{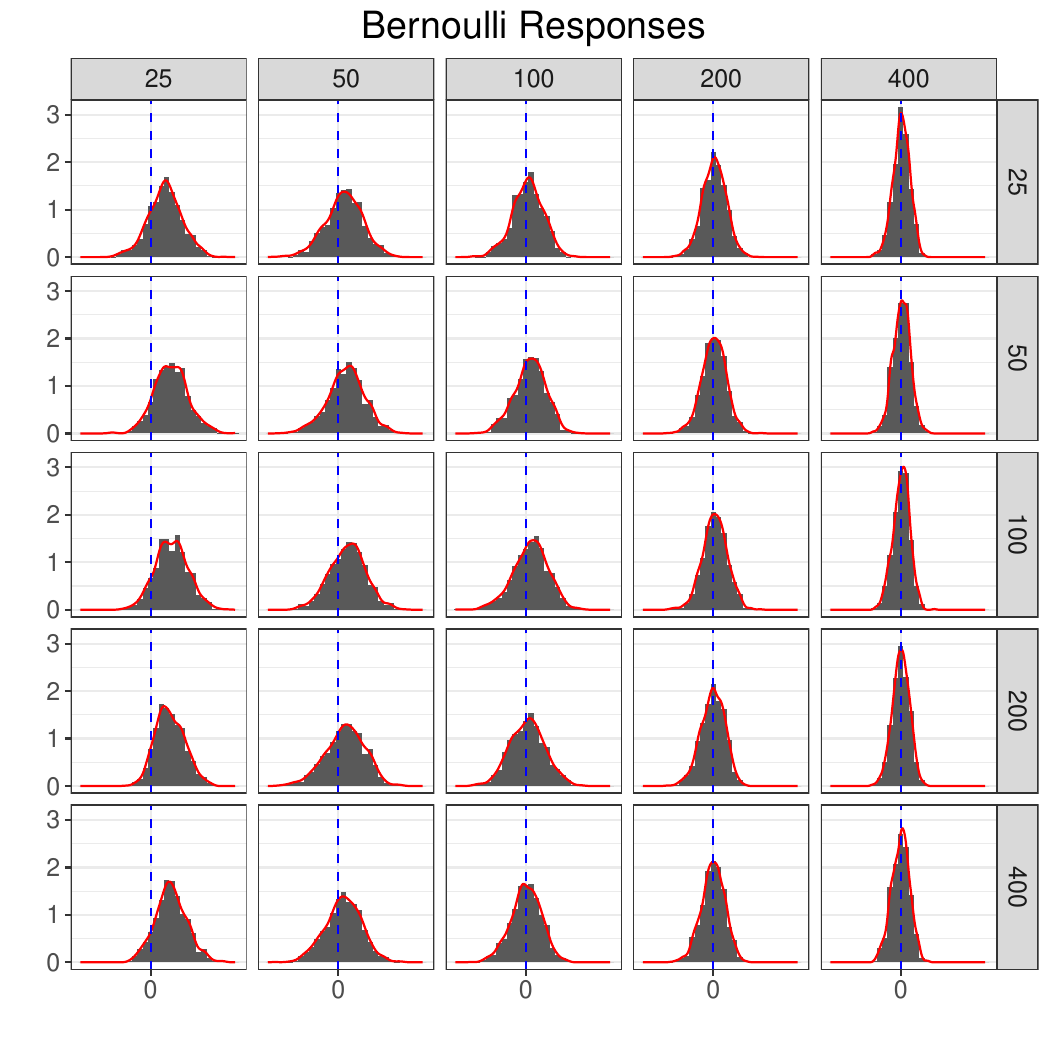}
\caption{Histograms for the third components of $\hat{\bmbeta} - \dot{\bmbeta}$ (left panels) and $\hat{\bm{b}}_1 - \dot{\bm{b}}_1$ (right panels), under the conditional regime. Vertical facets represent the cluster sizes, while horizontal facets represent the number of clusters. The dotted blue line indicates zero, and the red curve is a kernel density smoother.} 
\end{figure}

\newpage
\subsection{Frobenius Norm}

\begin{table}[H]
\caption{Empirical mean Frobenius norm of the difference between estimated and true random effects covariance matrix.}
\centering
\scalebox{0.75}{
\begin{tabular}{lr rrrrr c rrrrr}
\toprule[1.5pt]
\multicolumn{2}{r}{} & \multicolumn{5}{c}{Poisson}                         & \multicolumn{5}{c}{Bernoulli}                        \\ 
& $m$ & $ n = 25 $ & $ n = 50 $ & $ n = 100 $ & $ n = 200 $ & $n = 400$ & & $ n = 25 $ & $ n = 50 $ & $ n = 100 $ & $ n = 200 $ & $n = 400$ \\ 
\cmidrule{2-13}
\multirow{5}{*}{$ \hat{\bm{G}} = m^{-1} \sum_{i=1}^m \hat{\bm{b}}_i$} & $25$ & 1.06 & 1.06 & 1.06 & 1.05 & 1.06 & & 1.76 & 1.47 & 1.24 & 1.12 & 1.09       \\
& $50$  & 0.77 & 0.75 & 0.75 & 0.76 & 0.75 & & 1.79 & 1.40 & 1.03 & 0.83 & 0.77        \\
& $100$ & 0.54 & 0.54 & 0.54 & 0.54 & 0.54 & & 1.80 & 1.38 & 0.89 & 0.63 & 0.56        \\
& $200$ & 0.39 & 0.38 & 0.38 & 0.38 & 0.38 & & 1.80 & 1.35 & 0.82 & 0.51 & 0.41        \\
& $400$ & 0.27 & 0.27 & 0.27 & 0.27 & 0.27 & & 1.81 & 1.35 & 0.78 & 0.43 & 0.32        \\\\ 
\multirow{5}{*}{$ \hat{\bm{G}} = 0.25 \bm{I}_2 $} & $25$ & 1.02 & 1.01 & 1.03 & 1.05 & 1.04 & & 1.90 & 1.71 & 1.47 & 1.23 & 1.04       \\
& $50$  & 0.73 & 0.73 & 0.74 & 0.74 & 0.76 & & 1.90 & 1.70 & 1.44 & 1.15 & 0.91        \\
& $100$ & 0.56 & 0.53 & 0.52 & 0.53 & 0.54 & & 1.90 & 1.69 & 1.42 & 1.11 & 0.84        \\
& $200$ & 0.44 & 0.39 & 0.37 & 0.38 & 0.38 & & 1.90 & 1.68 & 1.41 & 1.09 & 0.79        \\
& $400$ & 0.38 & 0.29 & 0.27 & 0.27 & 0.27 & & 1.89 & 1.68 & 1.40 & 1.08 & 0.77        \\\\ 
\multirow{5}{*}{$ \hat{\bm{G}} = 0.5 \bm{I}_2 $} & $25$ & 1.02 & 1.03 & 1.04 & 1.05 & 1.05 & & 1.61 & 1.39 & 1.16 & 1.01 & 0.96       \\
& $50$  & 0.74 & 0.75 & 0.75 & 0.75 & 0.74 & & 1.61 & 1.34 & 1.07 & 0.86 & 0.75        \\
& $100$ & 0.53 & 0.52 & 0.54 & 0.54 & 0.54 & & 1.60 & 1.32 & 1.03 & 0.77 & 0.61        \\
& $200$ & 0.39 & 0.38 & 0.38 & 0.38 & 0.38 & & 1.59 & 1.31 & 1.01 & 0.73 & 0.52        \\
& $400$ & 0.30 & 0.27 & 0.27 & 0.27 & 0.27 & & 1.59 & 1.30 & 0.99 & 0.70 & 0.47        \\\\ 
\multirow{5}{*}{$ \hat{\bm{G}} = \bm{I}_2 $}  & $25$ & 1.06 & 1.05 & 1.04 & 1.04 & 1.06 & & 1.21 & 1.06 & 0.98 & 0.97 & 1.00       \\
& $50$  & 0.74 & 0.75 & 0.75 & 0.75 & 0.76 & & 1.17 & 0.93 & 0.78 & 0.73 & 0.71        \\
& $100$ & 0.53 & 0.53 & 0.54 & 0.53 & 0.54 & & 1.13 & 0.86 & 0.65 & 0.56 & 0.53        \\
& $200$ & 0.38 & 0.38 & 0.38 & 0.38 & 0.38 & & 1.12 & 0.82 & 0.58 & 0.44 & 0.39        \\
& $400$ & 0.27 & 0.27 & 0.27 & 0.27 & 0.27 & & 1.10 & 0.80 & 0.55 & 0.38 & 0.30        \\\\ 
\multirow{5}{*}{$ \hat{\bm{G}} = 2 \bm{I}_2 $}  & $25$ & 1.06 & 1.06 & 1.06 & 1.05 & 1.06 & & 0.84 & 0.98 & 1.03 & 1.04 & 1.05       \\
& $50$  & 0.75 & 0.74 & 0.75 & 0.76 & 0.75 & & 0.71 & 0.71 & 0.74 & 0.74 & 0.75        \\
& $100$ & 0.54 & 0.54 & 0.54 & 0.53 & 0.53 & & 0.56 & 0.51 & 0.53 & 0.53 & 0.54        \\
& $200$ & 0.38 & 0.38 & 0.38 & 0.38 & 0.38 & & 0.47 & 0.38 & 0.37 & 0.38 & 0.38        \\
& $400$ & 0.27 & 0.27 & 0.27 & 0.27 & 0.27 & & 0.42 & 0.29 & 0.27 & 0.27 & 0.27        \\\\ 
\multirow{5}{*}{$ \hat{\bm{G}} = 4 \bm{I}_2 $}  & $25$ & 1.06 & 1.06 & 1.06 & 1.06 & 1.06 & & 1.33 & 1.42 & 1.25 & 1.16 & 1.11       \\
& $50$  & 0.77 & 0.76 & 0.76 & 0.75 & 0.76 & & 1.18 & 1.07 & 0.93 & 0.83 & 0.80        \\
& $100$ & 0.55 & 0.54 & 0.54 & 0.54 & 0.54 & & 0.97 & 0.86 & 0.70 & 0.61 & 0.57        \\
& $200$ & 0.39 & 0.38 & 0.38 & 0.38 & 0.38 & & 0.86 & 0.72 & 0.55 & 0.45 & 0.41        \\
& $400$ & 0.27 & 0.27 & 0.27 & 0.27 & 0.27 & & 0.80 & 0.65 & 0.46 & 0.34 & 0.30        \\\\ 
\bottomrule[1.5pt]
\end{tabular}
}
\end{table}

\clearpage

\subsection{{\boldmath ${G}$} = 0.25  {\boldmath ${I}_2$}}

Using a large $\hat{\bm{G}}$ of $4 \bm{I}_2$ had the least impact on the results, while a small $\hat{\bm{G}}$, e.g., $0.25 \bm{I}_2$ had more of a noticeable impact at small sample sizes. This is not surprising since the latter corresponds to more shrinkage, such that larger sample sizes are needed before asymptotic results apply.

\begin{figure}[H]
\centering
\includegraphics[width=0.95\linewidth]{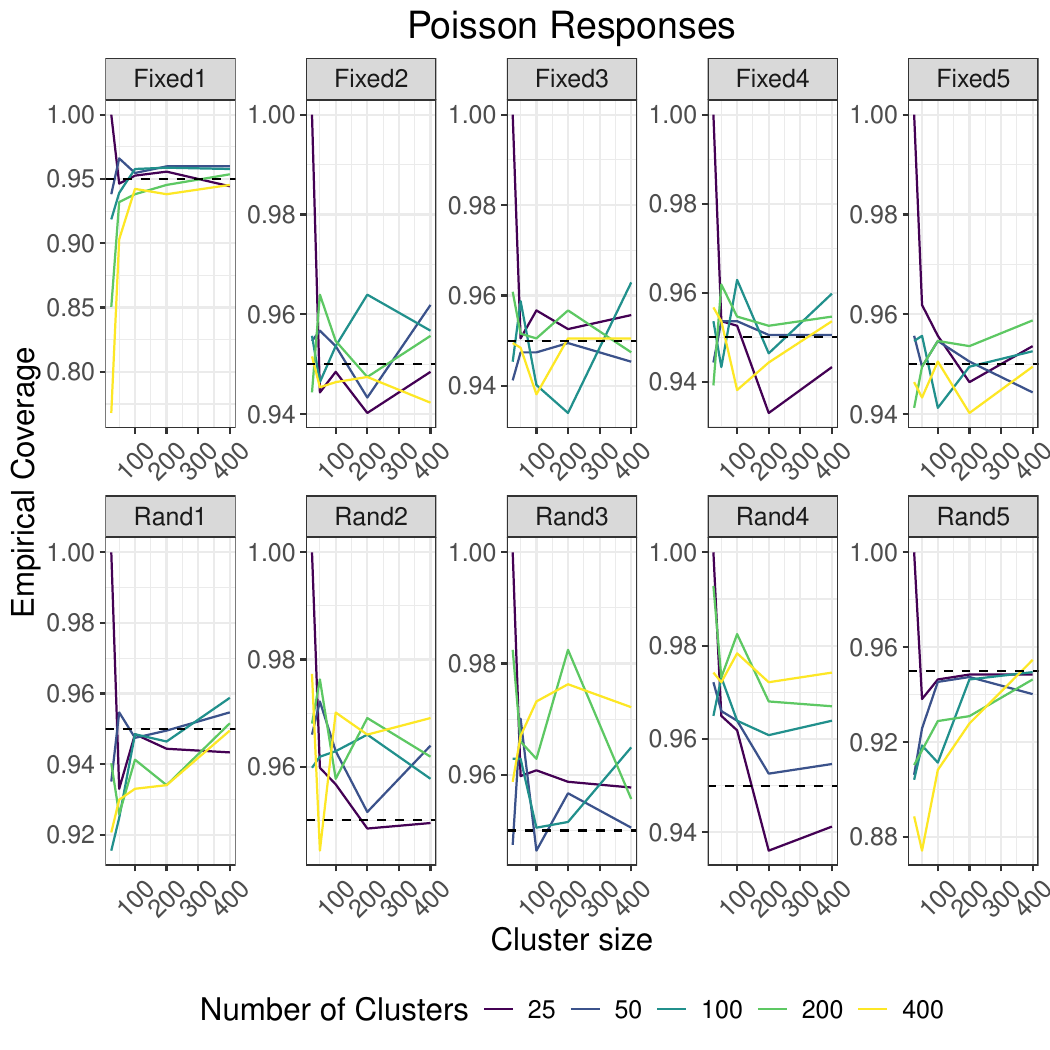}
\caption{Empirical coverage probability of 95\% coverage intervals for the five fixed and random effects estimates, obtained under the unconditional regime with Poisson responses.} 
\end{figure}

\begin{figure}[H]
\centering
\includegraphics[width=0.95\linewidth]{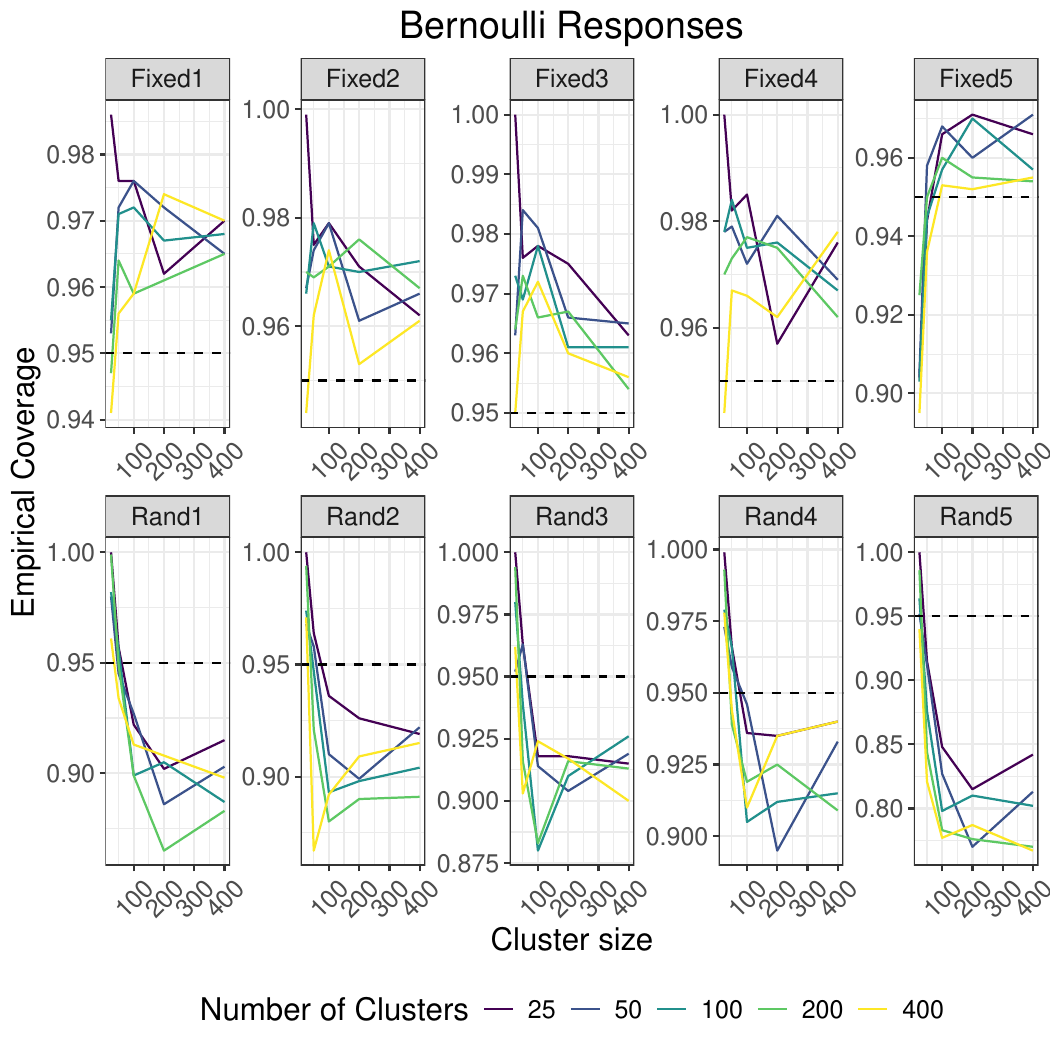}
\caption{Empirical coverage probability of 95\% coverage intervals for the five fixed and random effects estimates, obtained under the unconditional regime with Bernoulli responses.} 
\end{figure}

\begin{figure}[H]
\includegraphics[width=0.7\linewidth]{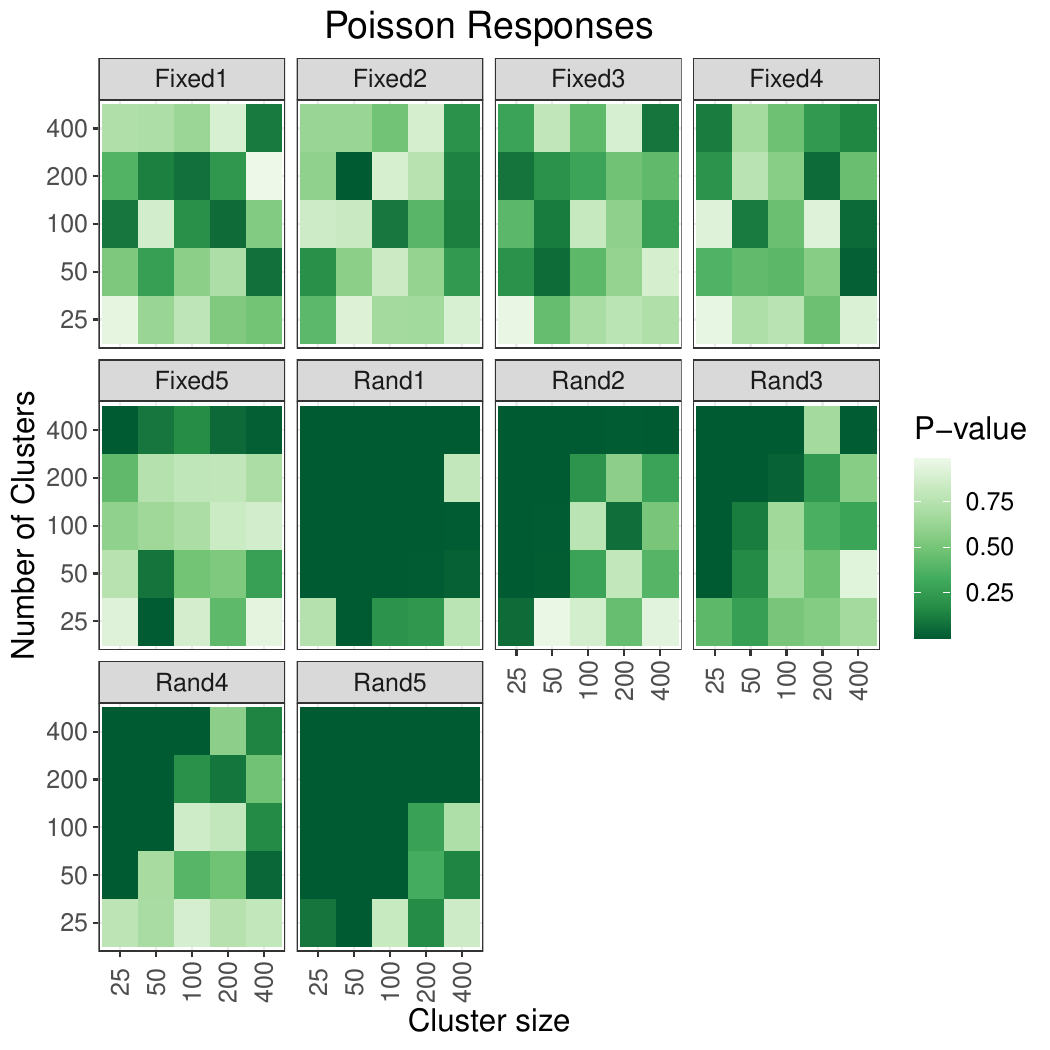}
\includegraphics[width=0.7\linewidth]{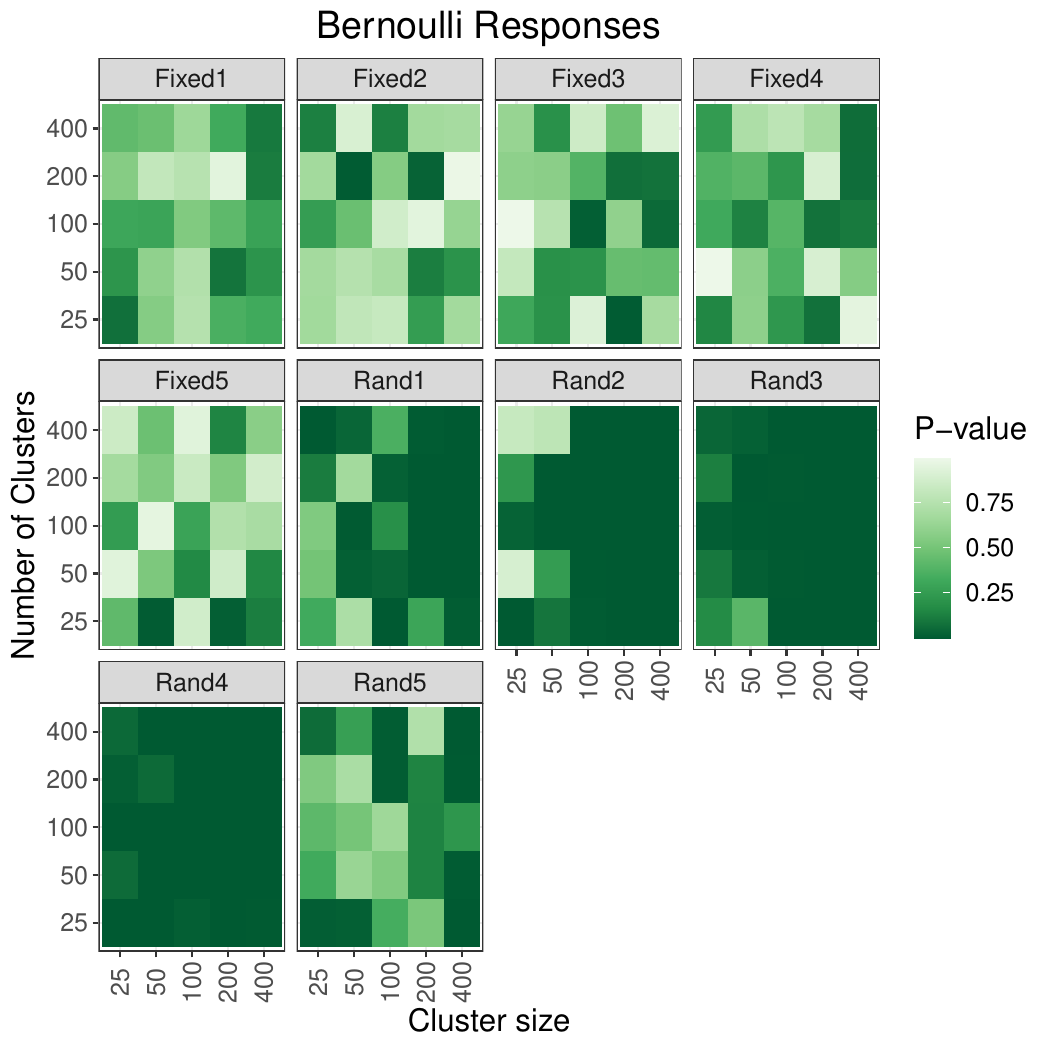}
\caption{$p$-values from Shapiro-Wilk tests applied to the fixed and random effects estimates obtained using maximum PQL estimation, under the unconditional regime.} 
\end{figure}

\begin{figure}[H]
\includegraphics[width=0.49\linewidth]{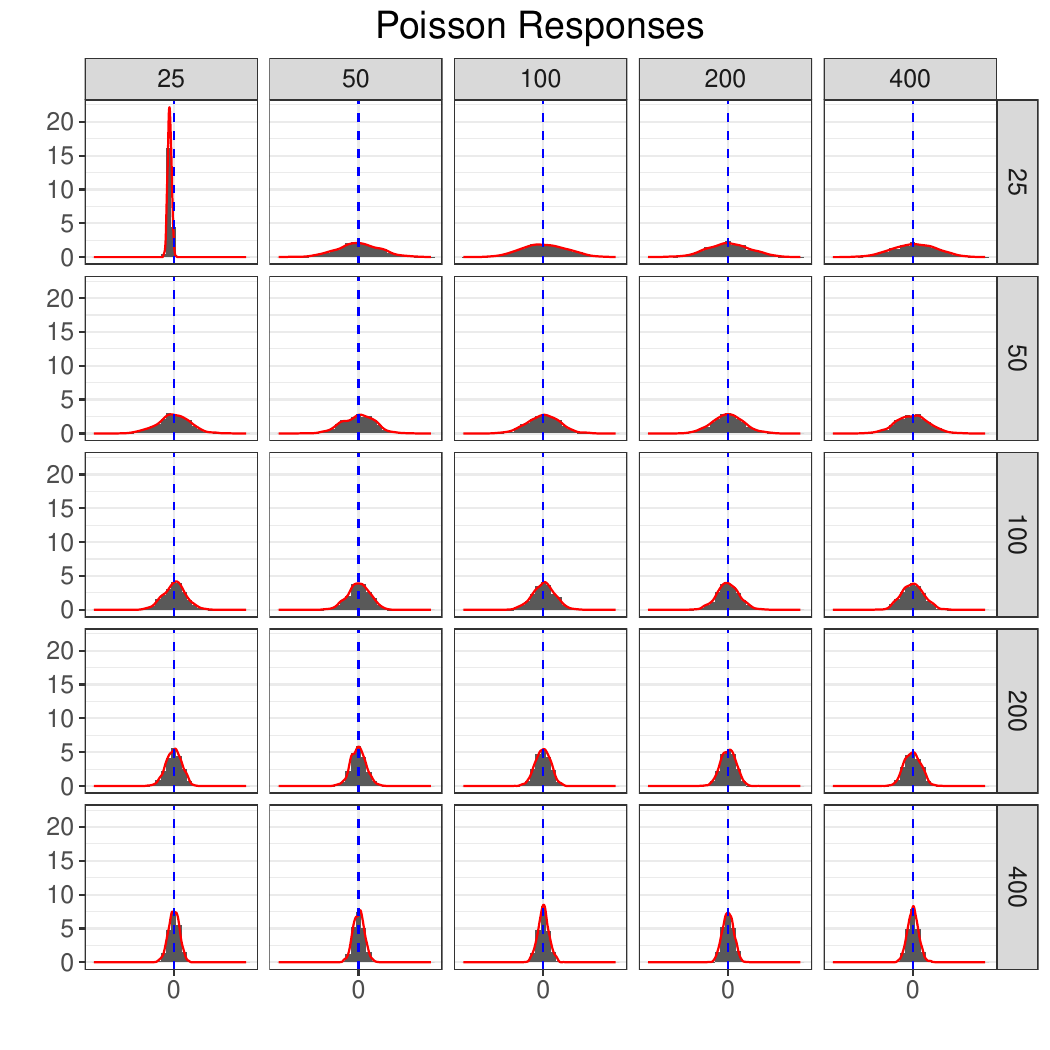}
\includegraphics[width=0.49\linewidth]{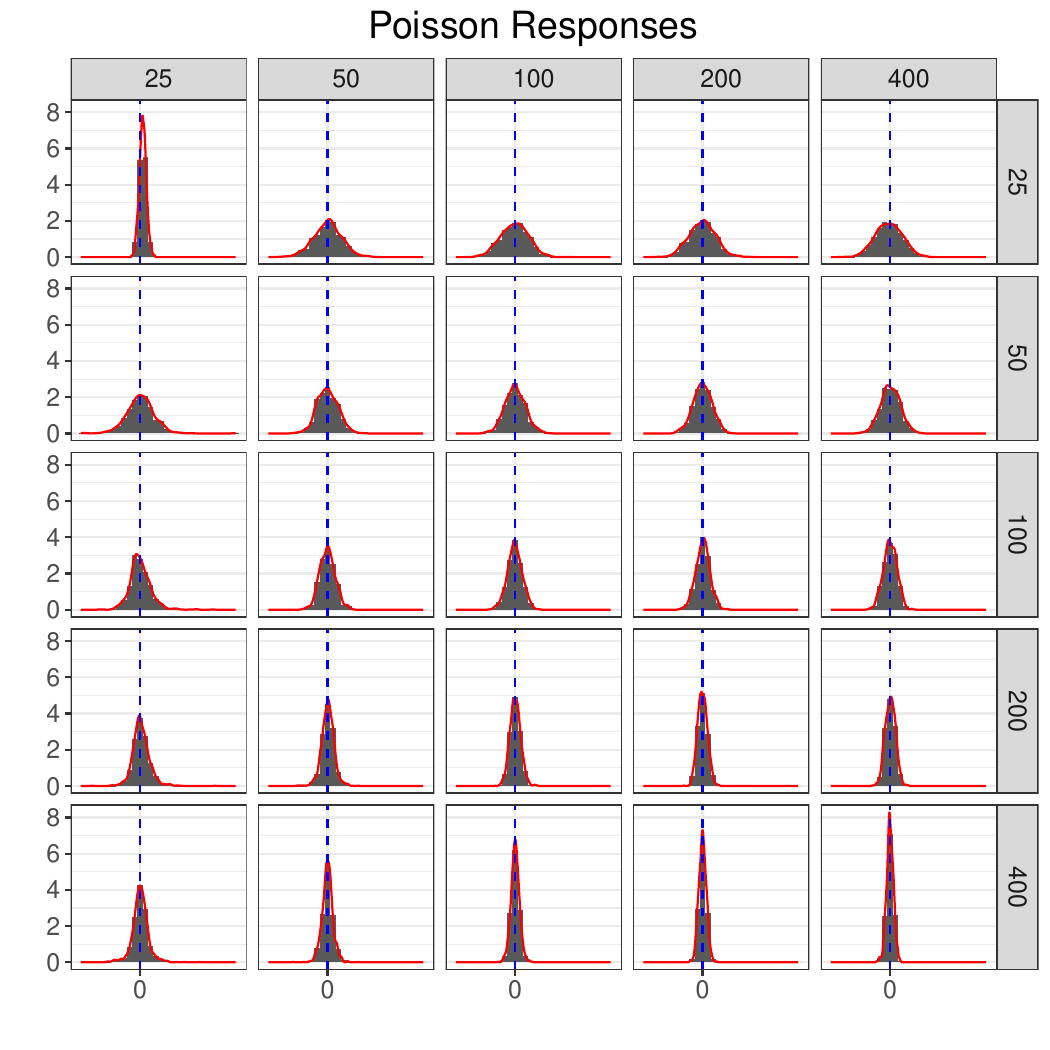}
\includegraphics[width=0.49\linewidth]{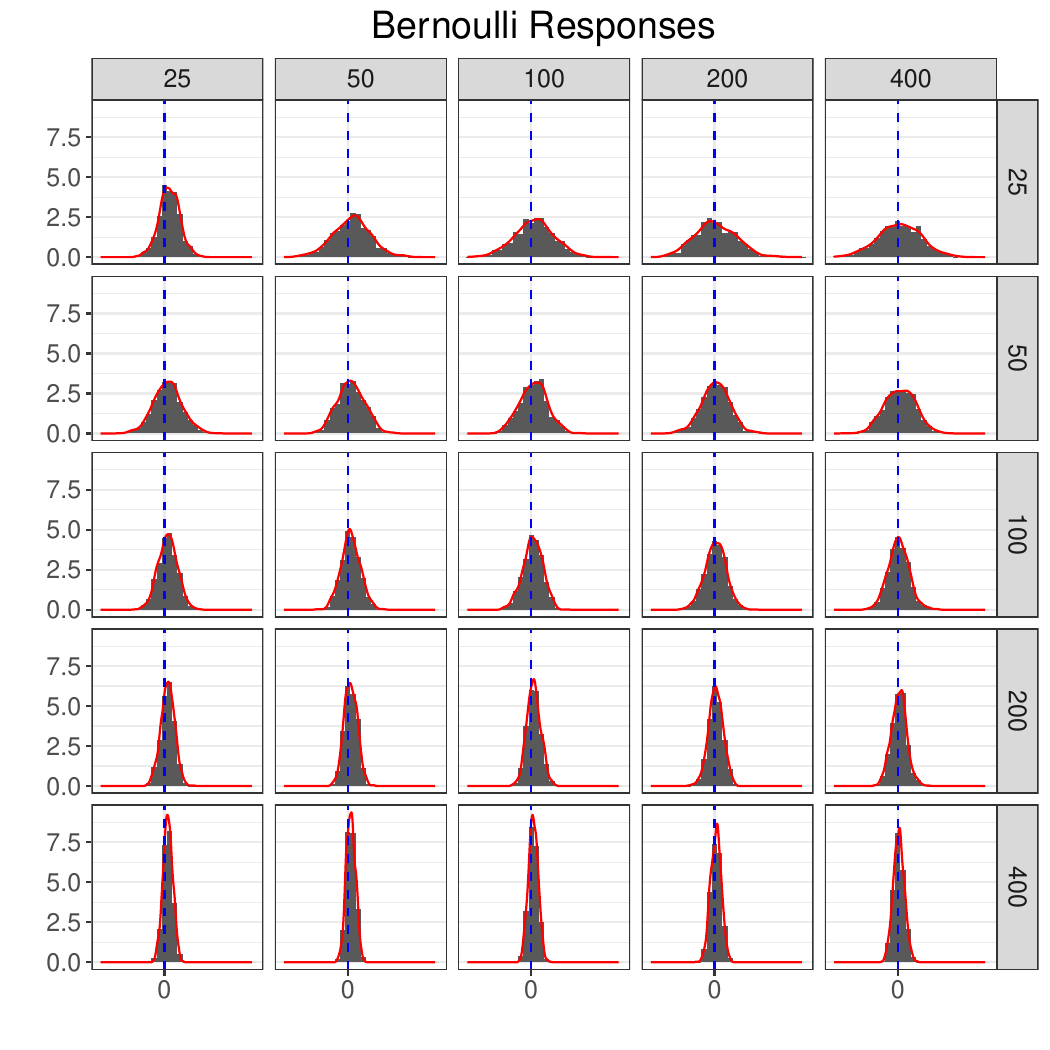}
\includegraphics[width=0.49\linewidth]{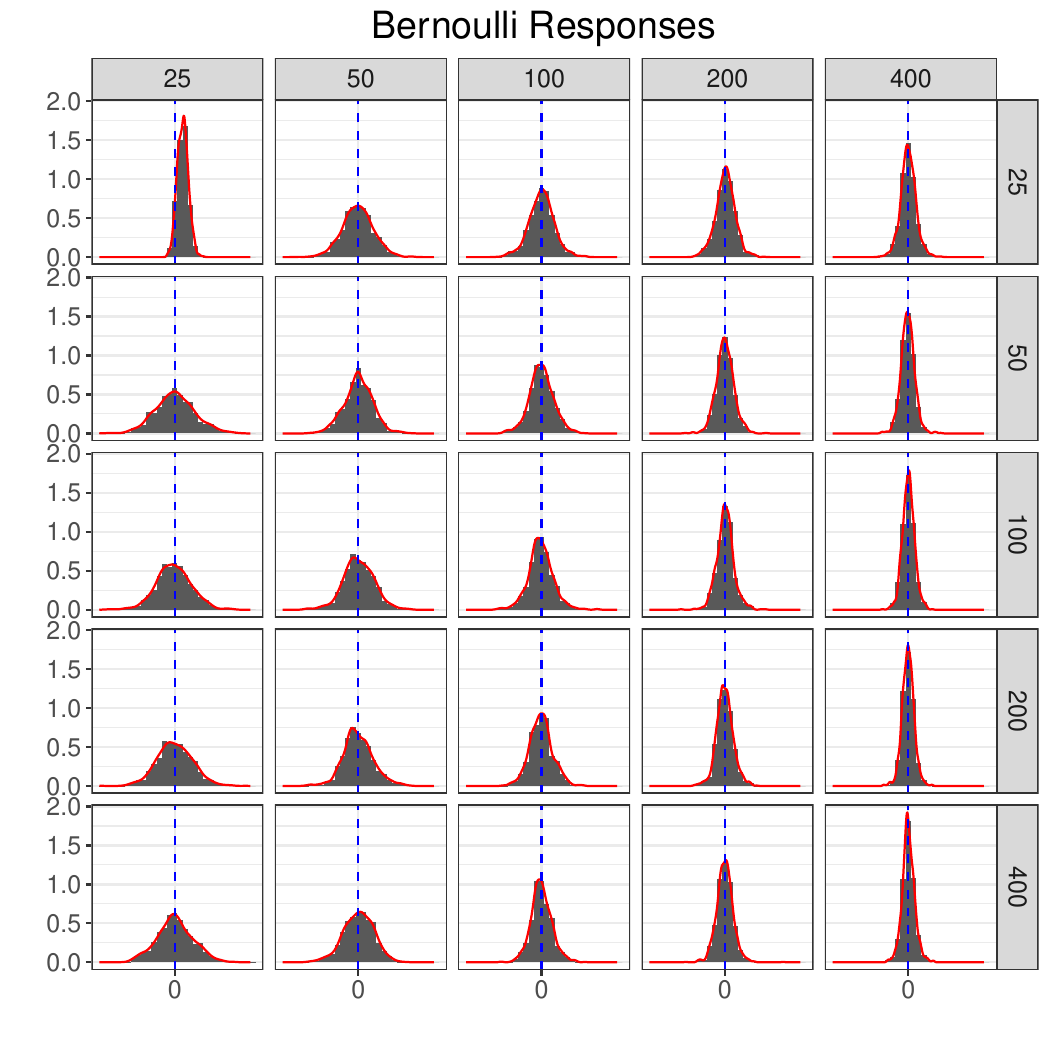}
\caption{Histograms for the third components of $\hat{\bmbeta} - \dot{\bmbeta}$ (left panels) and $\hat{\bm{b}}_1 - \dot{\bm{b}}_1$ (right panels), under the unconditional regime. Vertical facets represent the cluster sizes, while horizontal facets represent the number of clusters. The dotted blue line indicates zero, and the red curve is a kernel density smoother.} 
\end{figure}

\begin{figure}[H]
\centering
\includegraphics[width=0.95\linewidth]{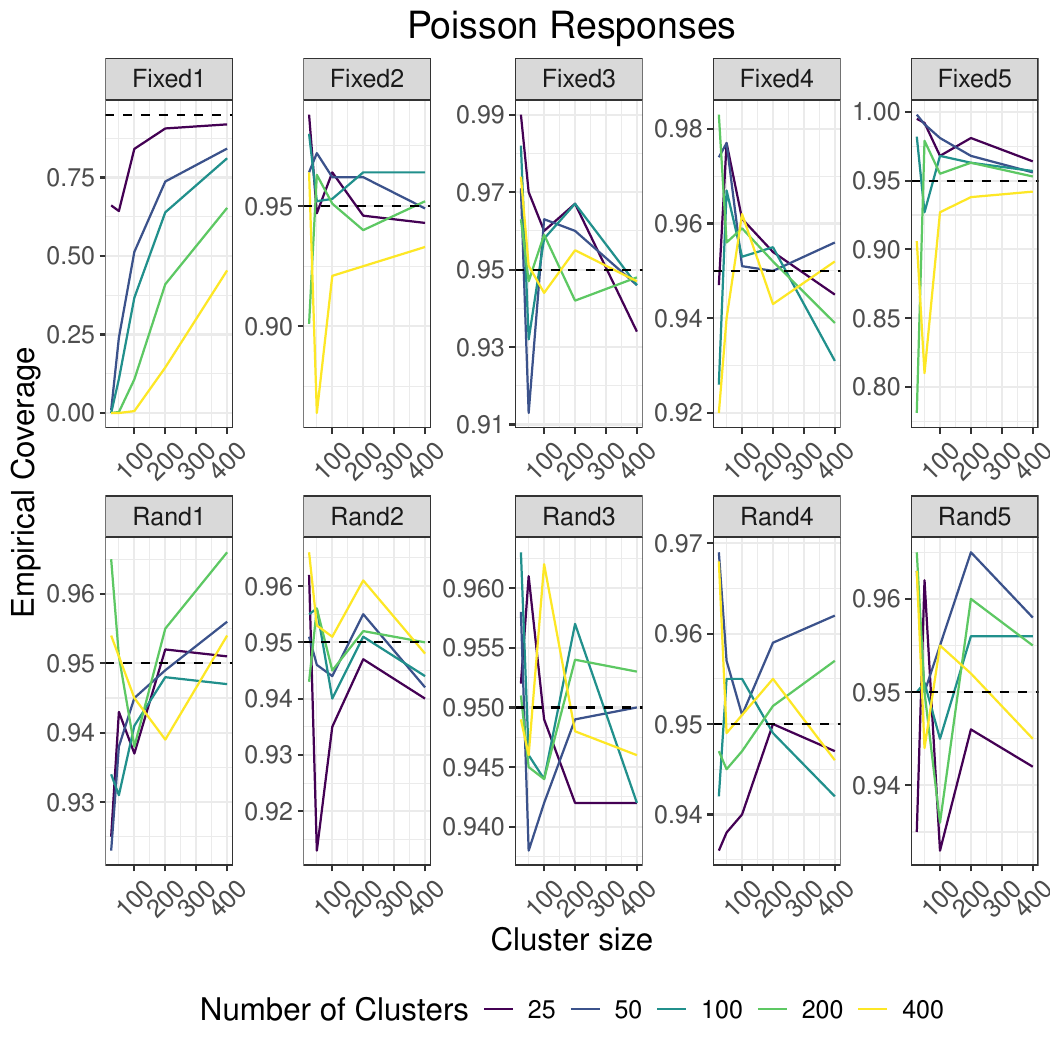}
\caption{Empirical coverage probability of 95\% coverage intervals for the five fixed and random effects estimates, obtained under the conditional regime with Poisson responses.} 
\end{figure}

\begin{figure}[H]
\centering
\includegraphics[width=0.95\linewidth]{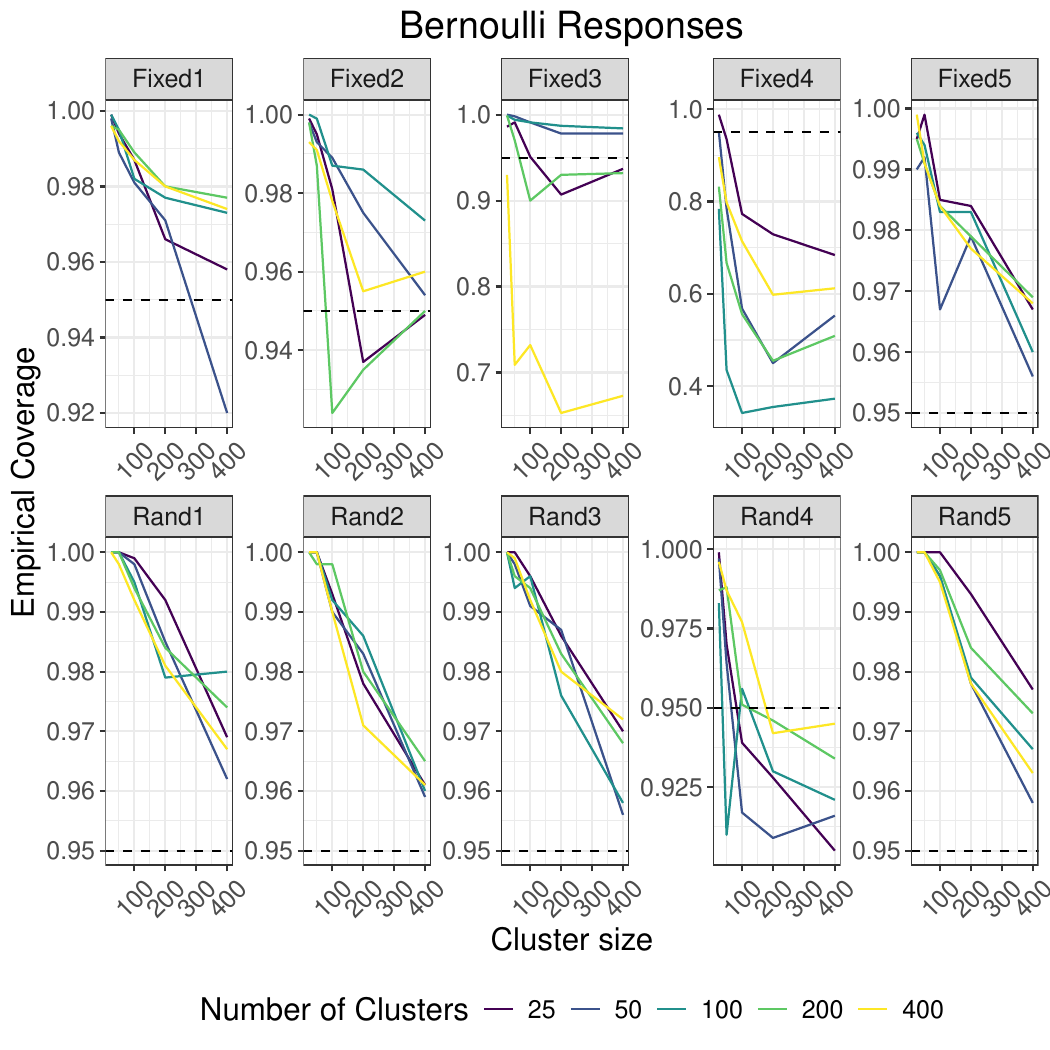}
\caption{Empirical coverage probability of 95\% coverage intervals for the five fixed and random effects estimates, obtained under the conditional regime with Bernoulli responses.} 
\end{figure}

\begin{figure}[H]
\includegraphics[width=0.7\linewidth]{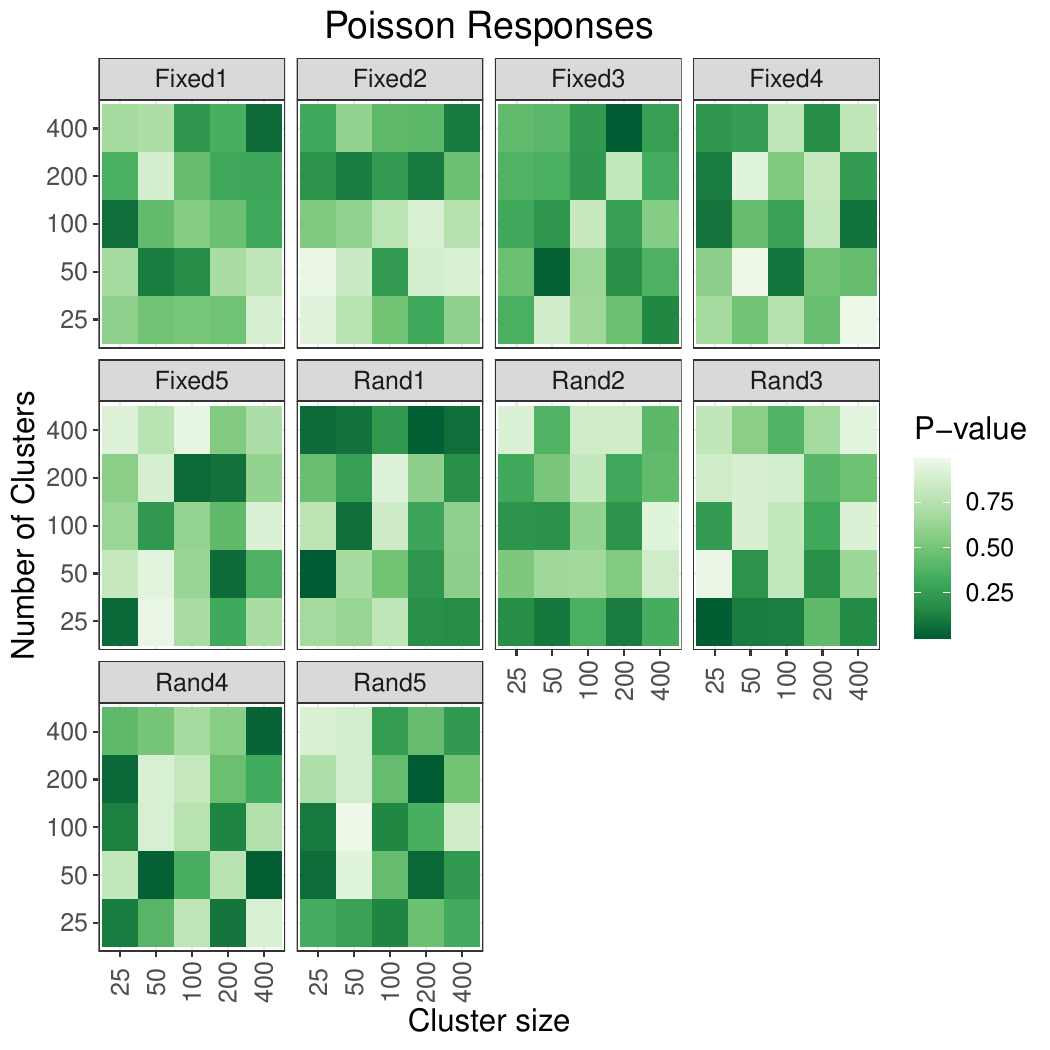}
\includegraphics[width=0.7\linewidth]{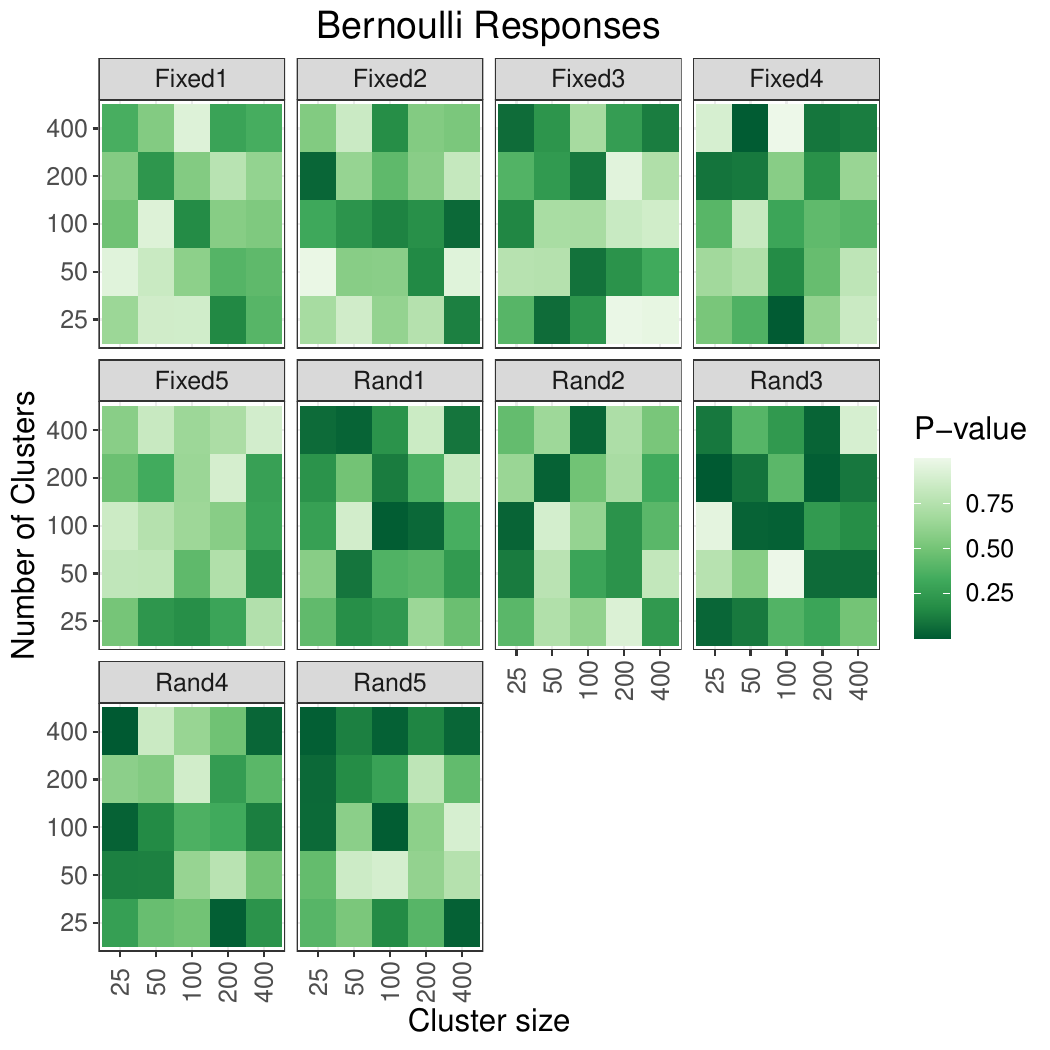}
\caption{$p$-values from Shapiro-Wilk tests applied to the fixed and random effects estimates obtained using maximum PQL estimation, under the conditional regime.} 
\end{figure}

\begin{figure}[H]
\includegraphics[width=0.49\linewidth]{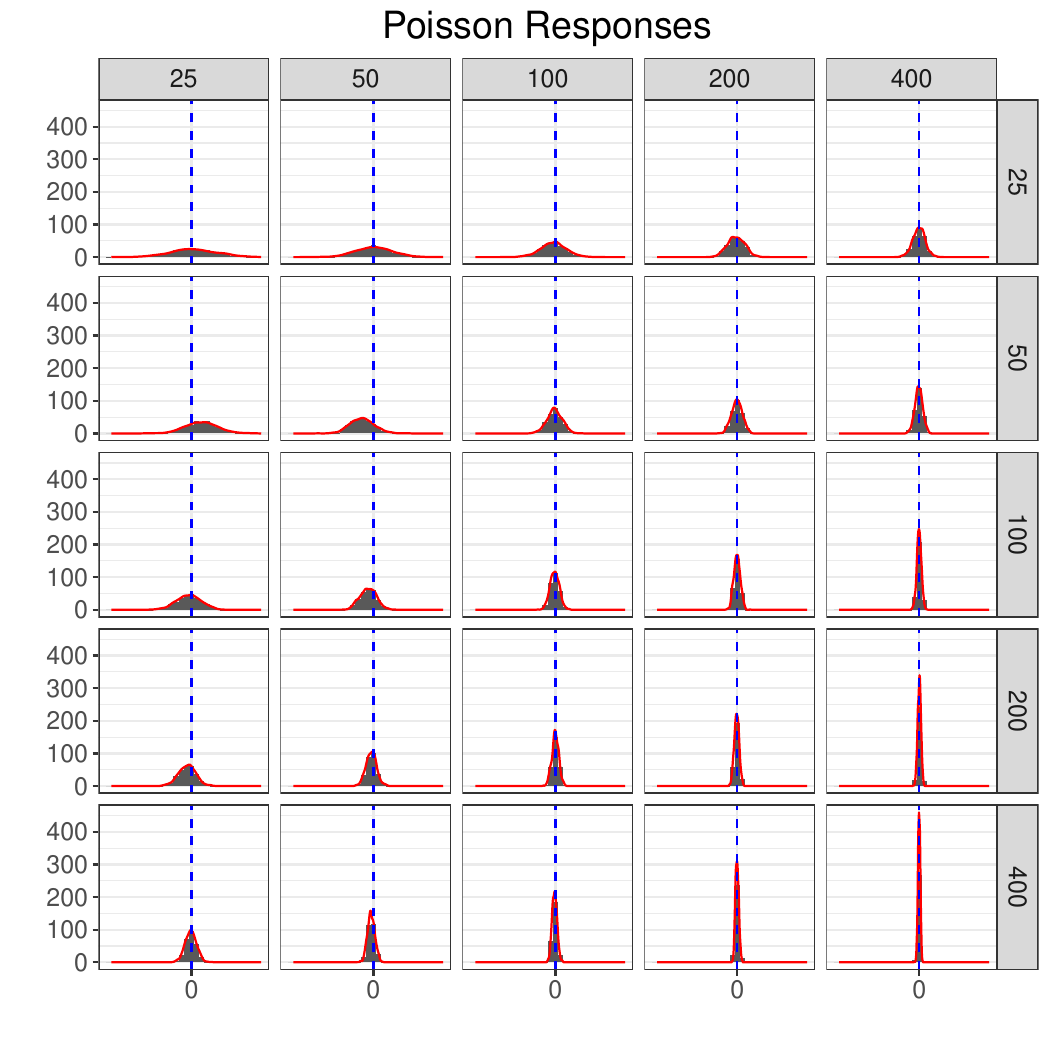}
\includegraphics[width=0.49\linewidth]{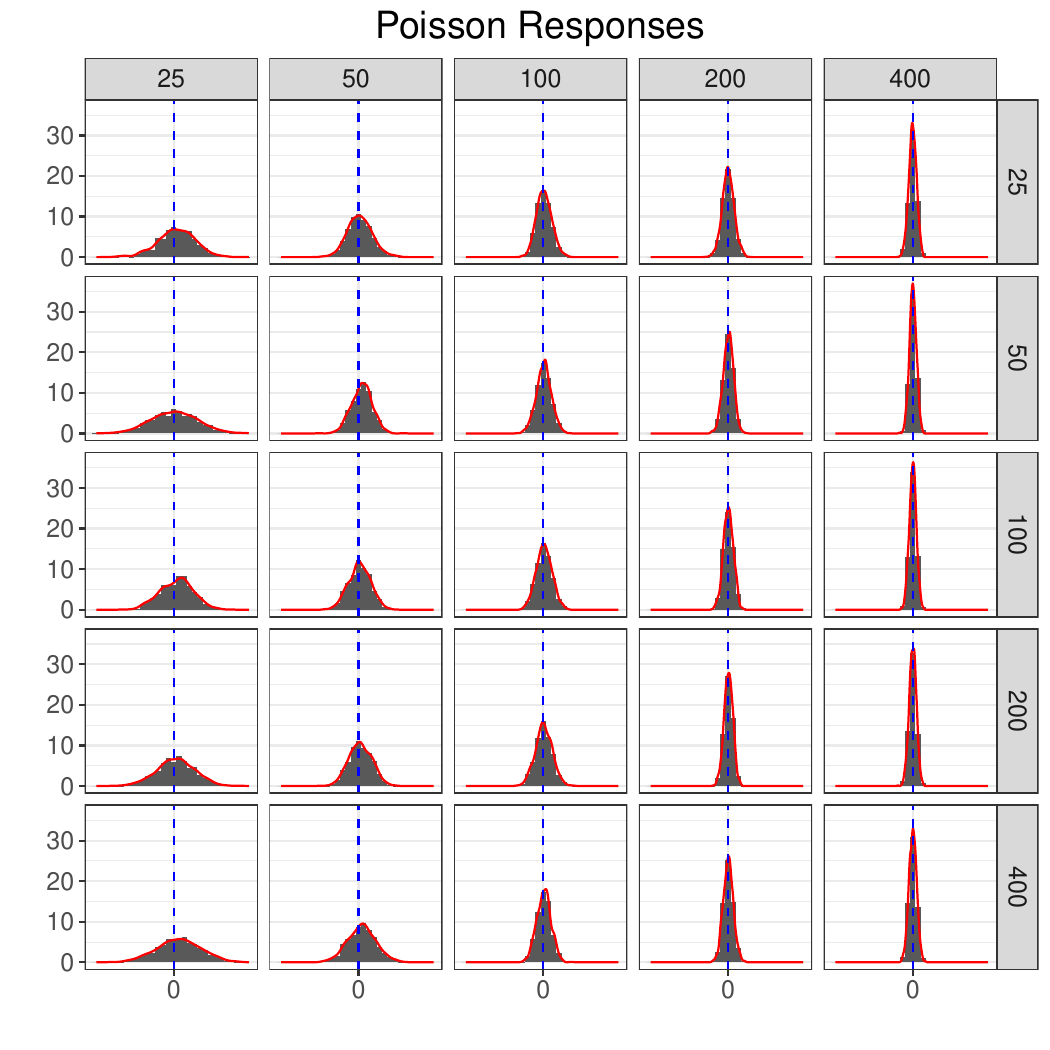}
\includegraphics[width=0.49\linewidth]{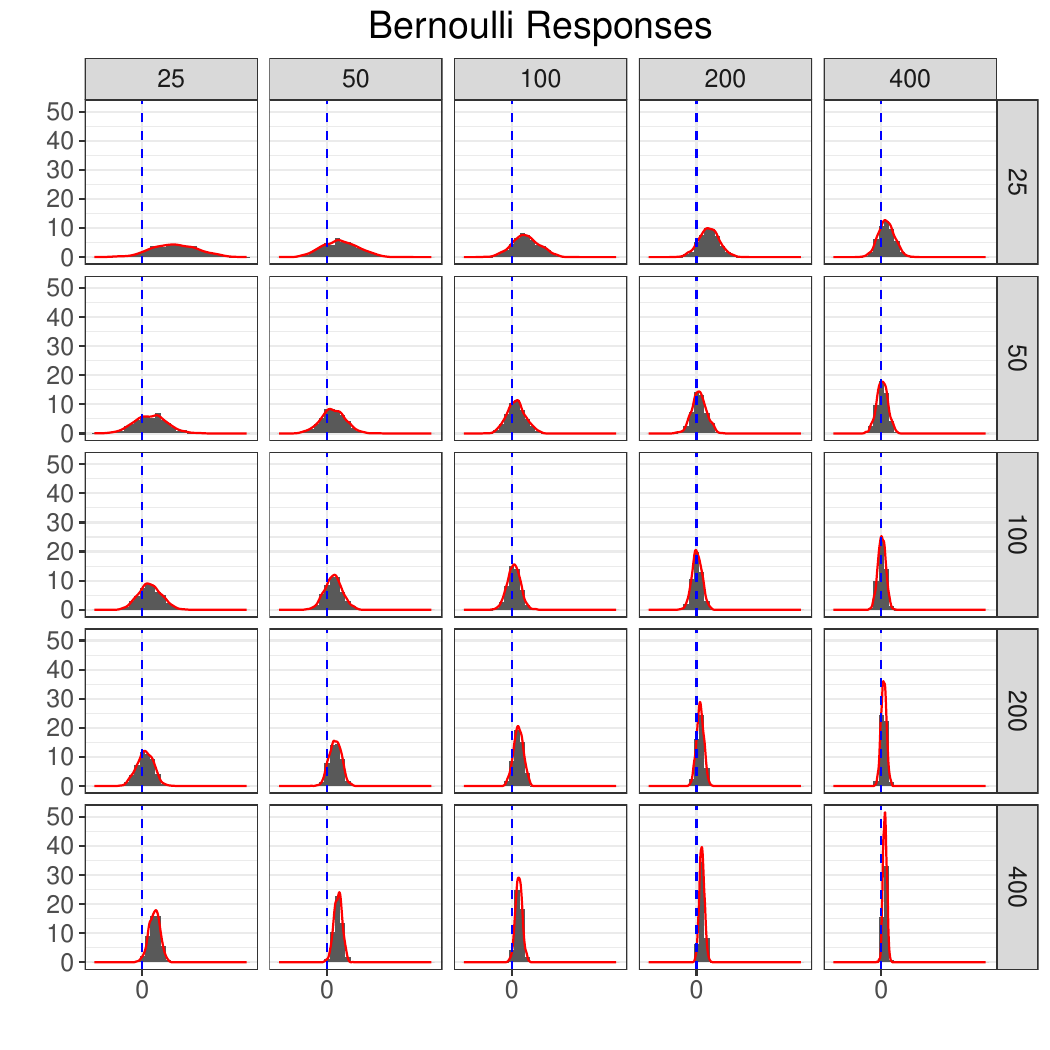}
\includegraphics[width=0.49\linewidth]{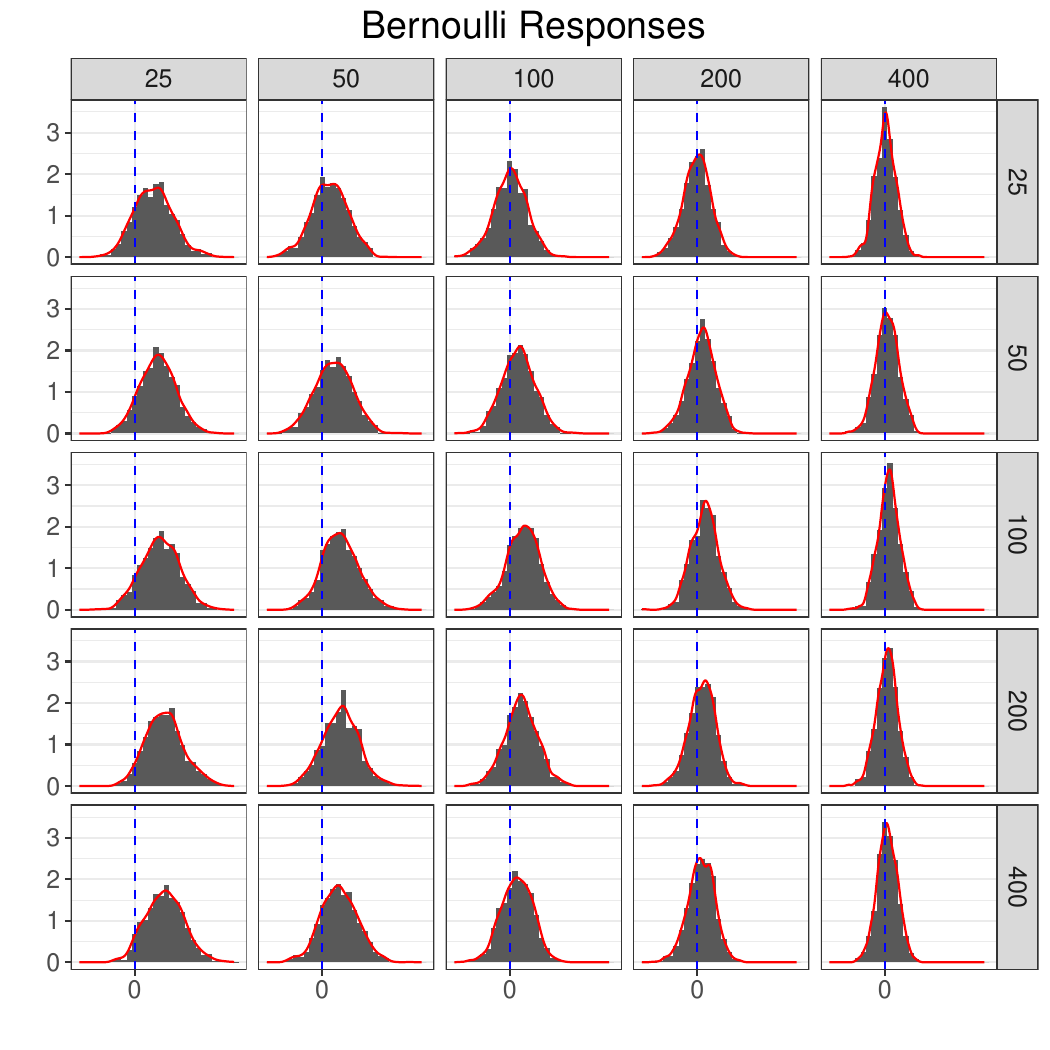}
\caption{Histograms for the third components of $\hat{\bmbeta} - \dot{\bmbeta}$ (left panels) and $\hat{\bm{b}}_1 - \dot{\bm{b}}_1$ (right panels), under the unconditional regime. Vertical facets represent the cluster sizes, while horizontal facets represent the number of clusters. The dotted blue line indicates zero, and the red curve is a kernel density smoother.} 
\end{figure}

\subsection{{\boldmath ${G}$} = 0.50  {\boldmath ${I}_2$}}

\begin{figure}[H]
\centering
\includegraphics[width=0.95\linewidth]{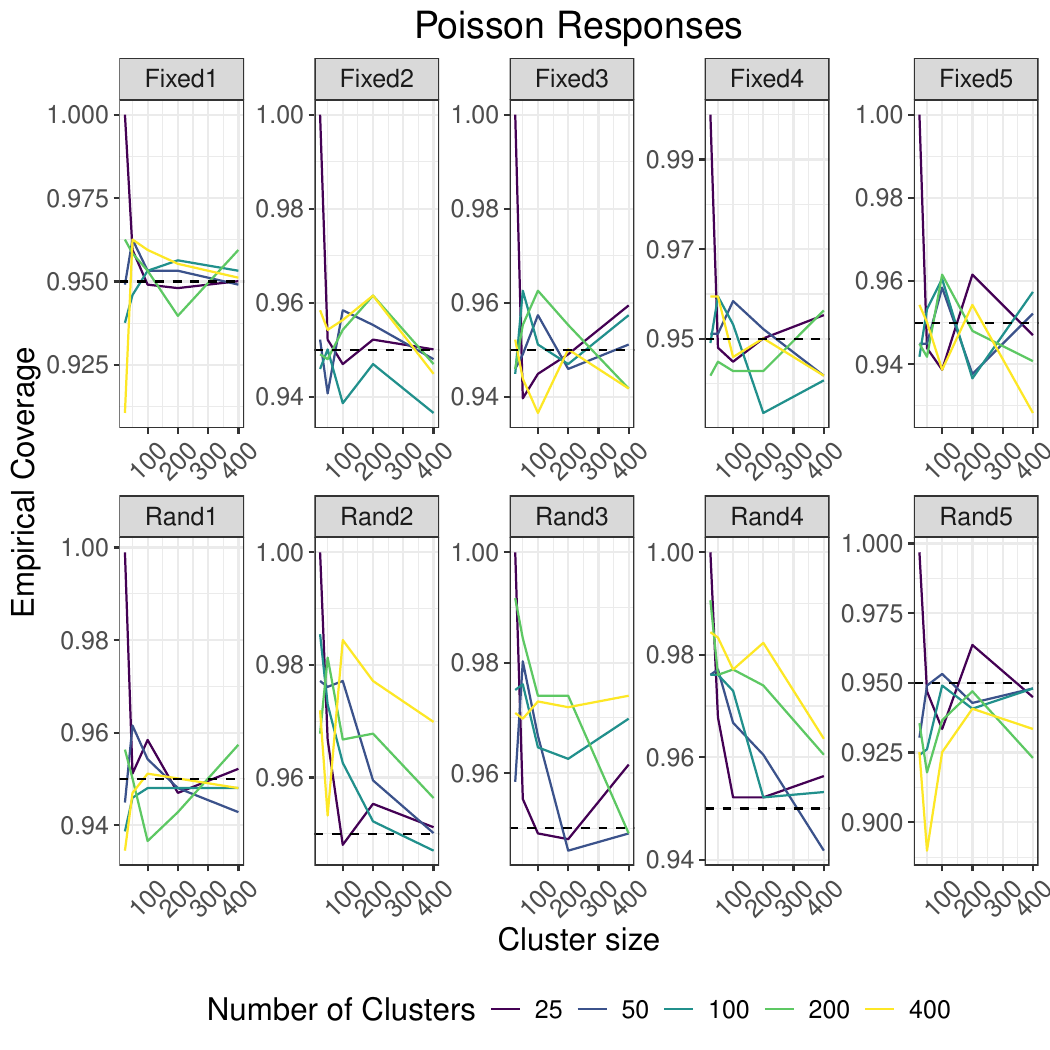}
\caption{Empirical coverage probability of 95\% coverage intervals for the five fixed and random effects estimates, obtained under the unconditional regime with Poisson responses.} 
\end{figure}

\begin{figure}[H]
\centering
\includegraphics[width=0.95\linewidth]{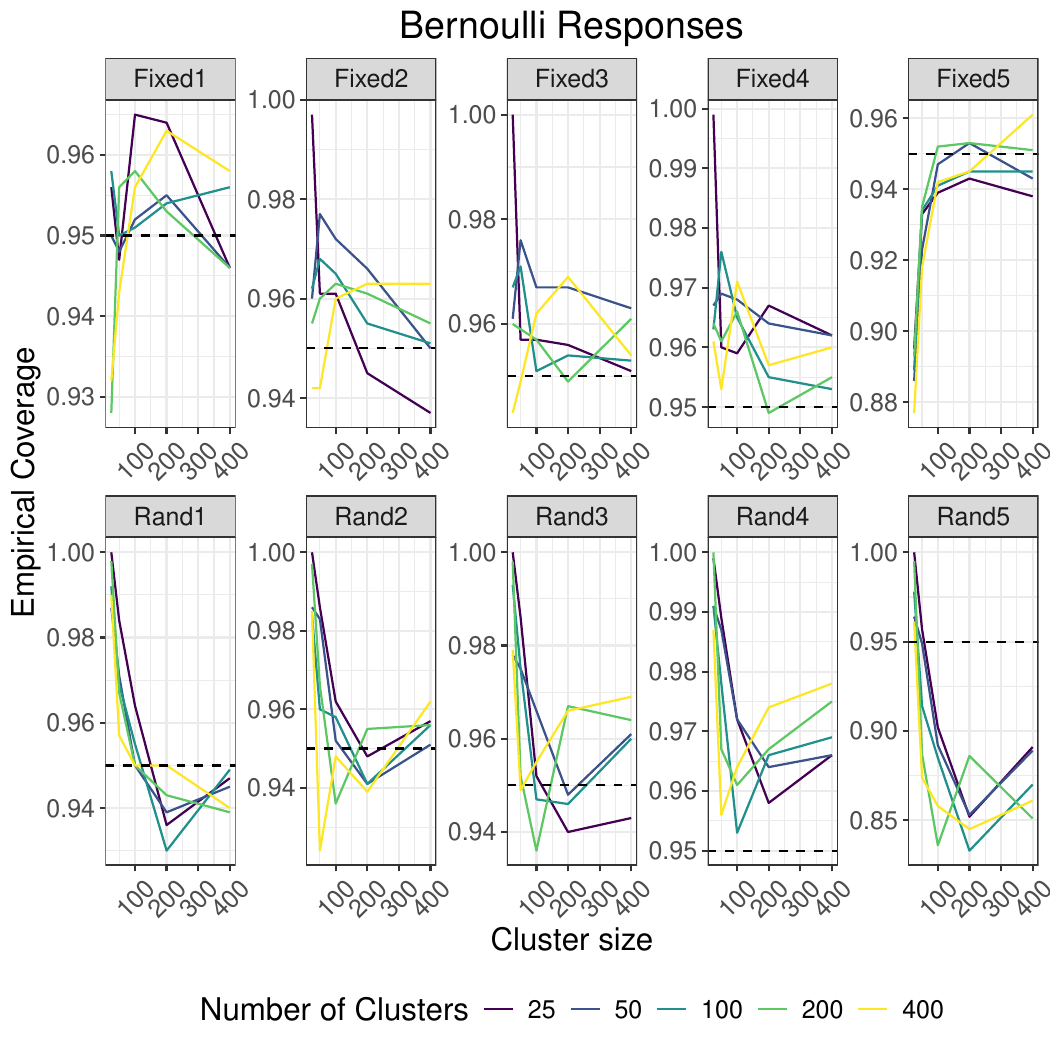}
\caption{Empirical coverage probability of 95\% coverage intervals for the five fixed and random effects estimates, obtained under the unconditional regime with Bernoulli responses.} 
\end{figure}

\begin{figure}[H]
\includegraphics[width=0.7\linewidth]{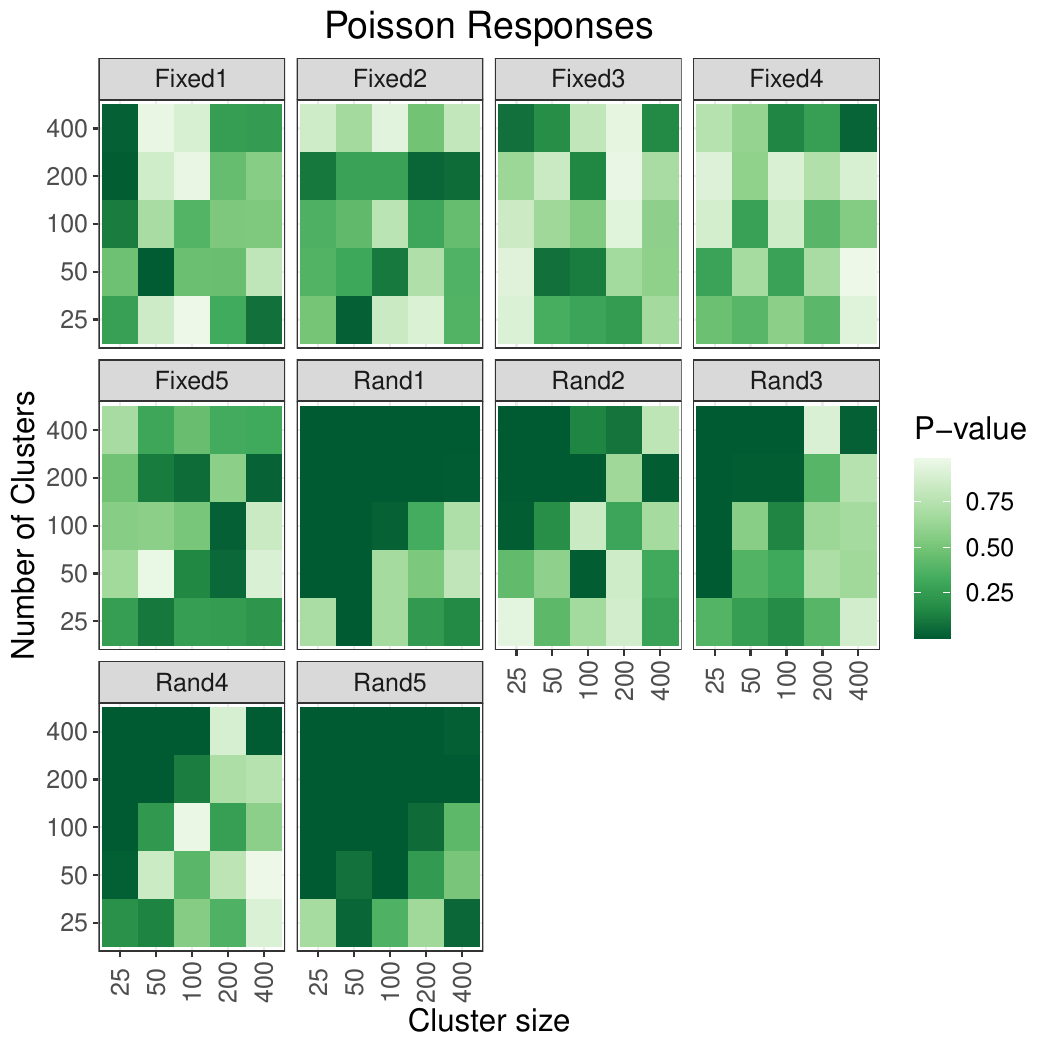}
\includegraphics[width=0.7\linewidth]{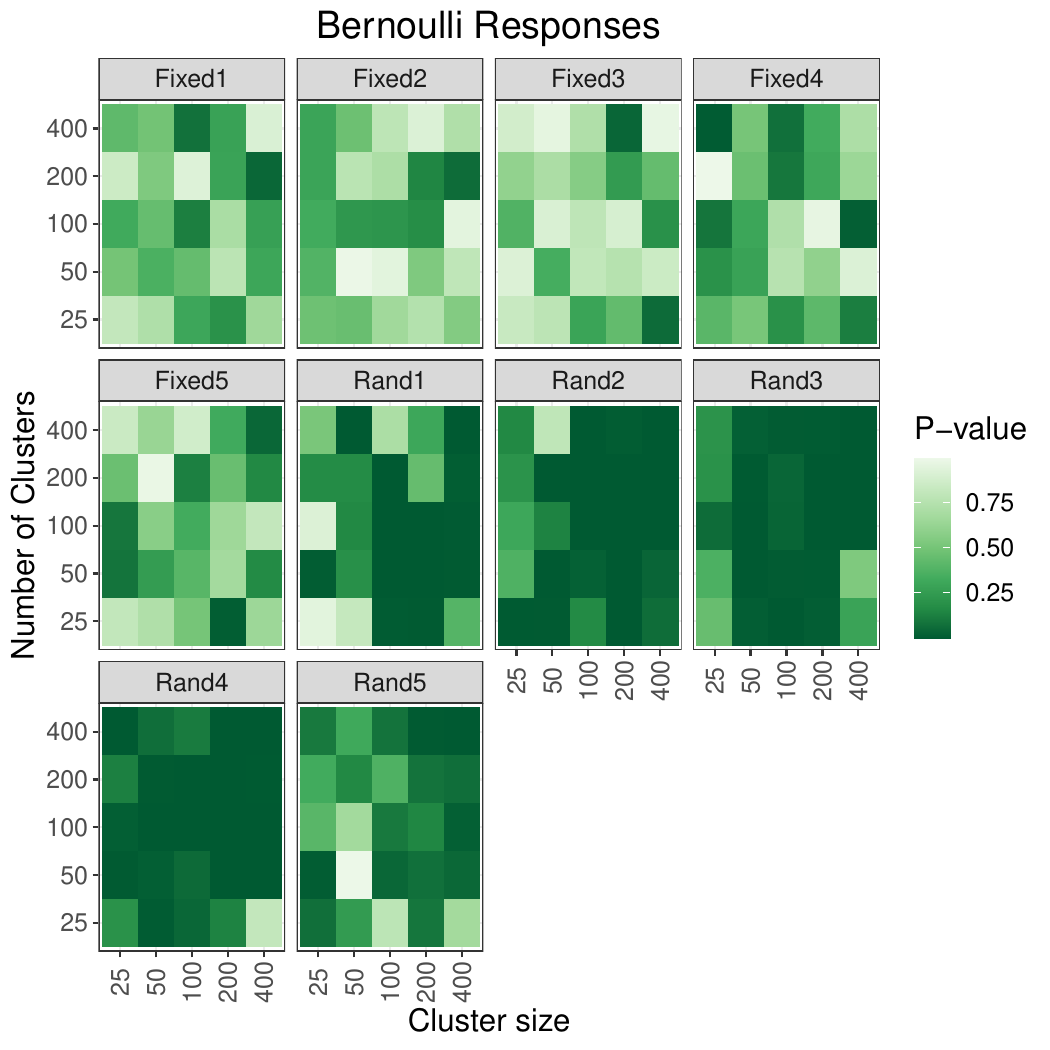}
\caption{$p$-values from Shapiro-Wilk tests applied to the fixed and random effects estimates obtained using maximum PQL estimation, under the unconditional regime.} 
\end{figure}

\begin{figure}[H]
\includegraphics[width=0.49\linewidth]{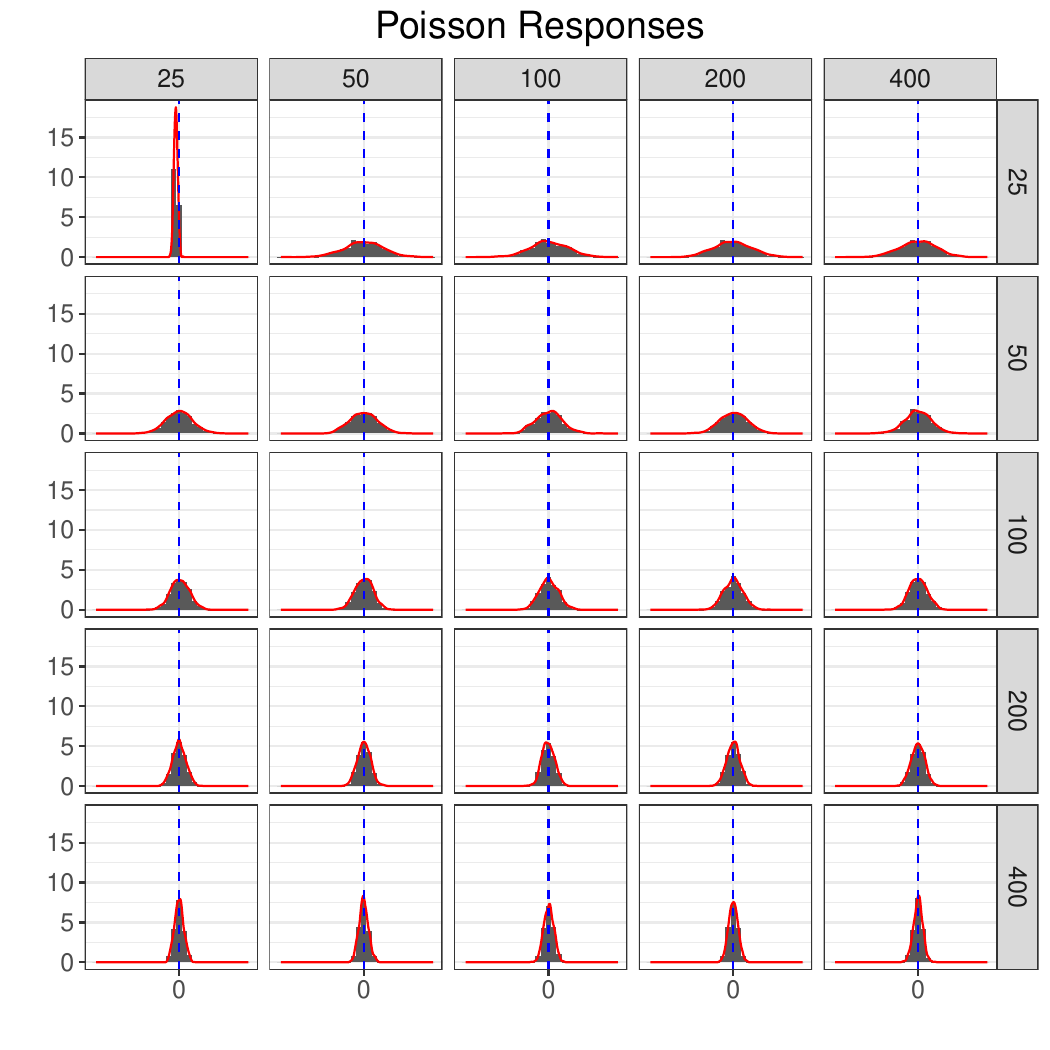}
\includegraphics[width=0.49\linewidth]{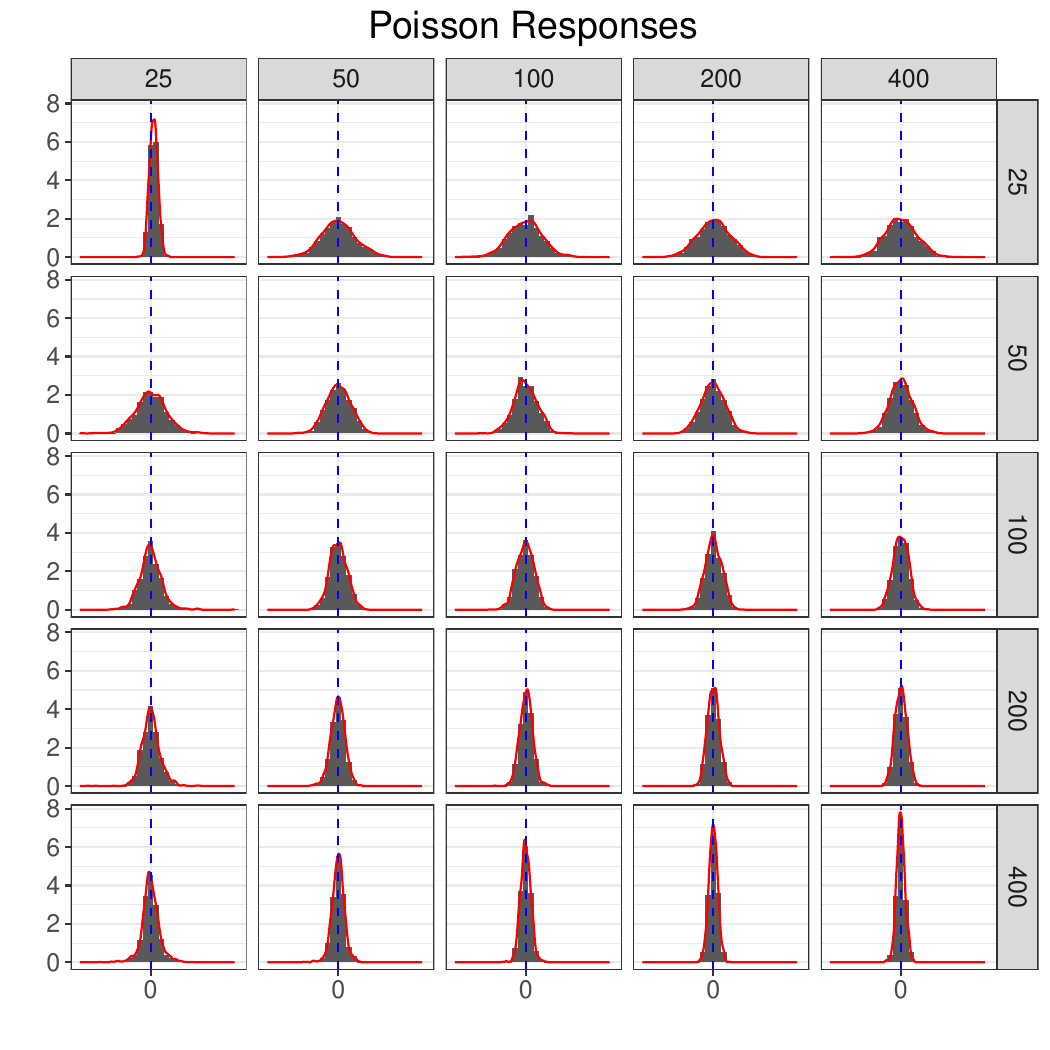}
\includegraphics[width=0.49\linewidth]{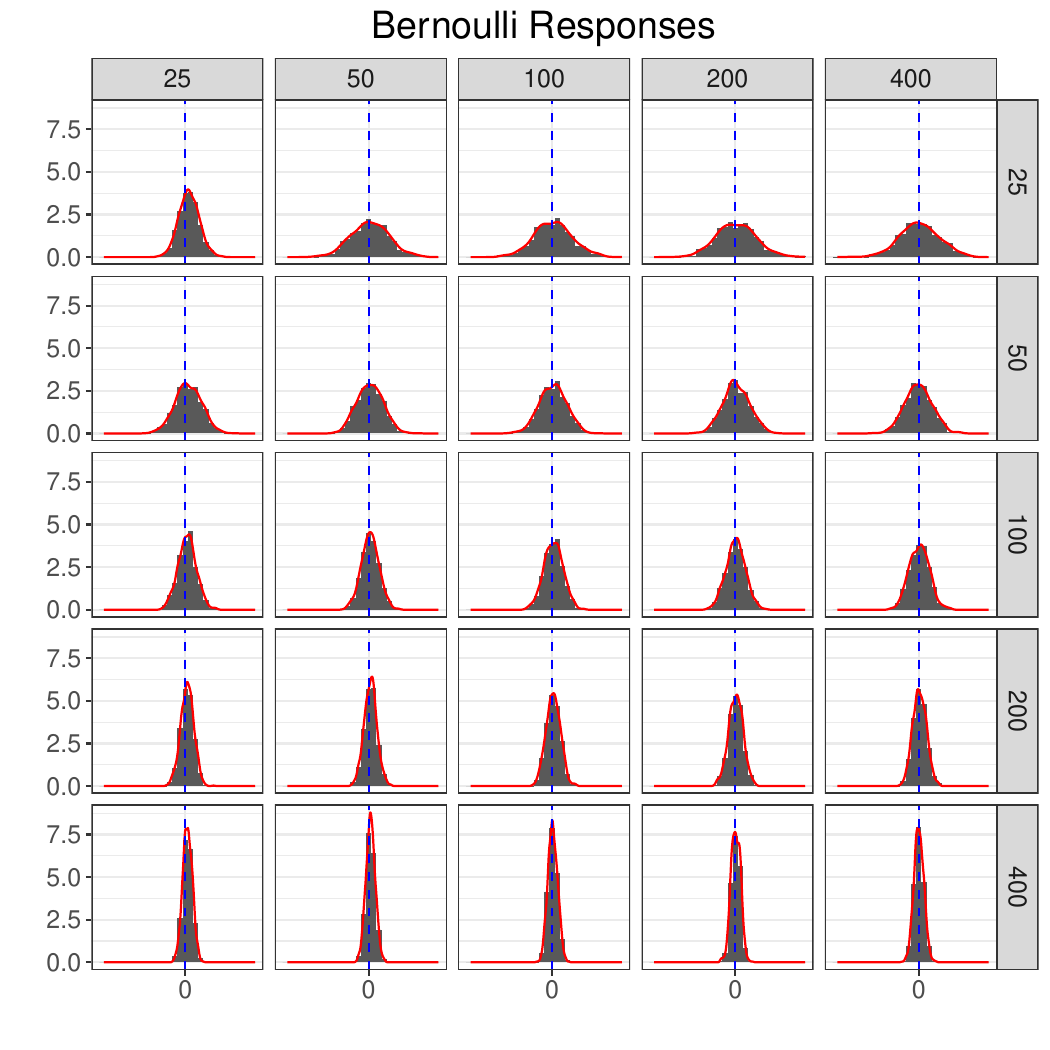}
\includegraphics[width=0.49\linewidth]{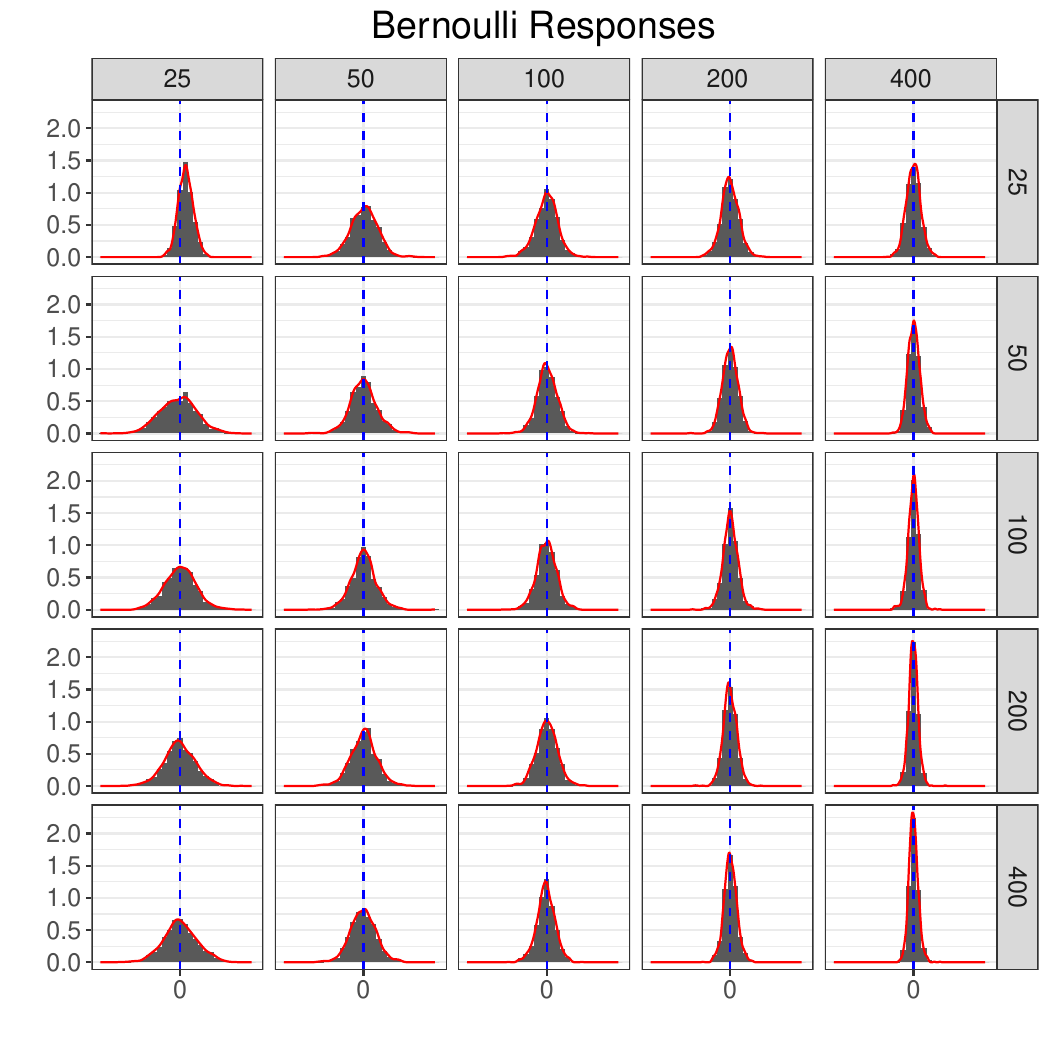}
\caption{Histograms for the third components of $\hat{\bmbeta} - \dot{\bmbeta}$ (left panels) and $\hat{\bm{b}}_1 - \dot{\bm{b}}_1$ (right panels), under the unconditional regime. Vertical facets represent the cluster sizes, while horizontal facets represent the number of clusters. The dotted blue line indicates zero, and the red curve is a kernel density smoother.} 
\end{figure}

\begin{figure}[H]
\centering
\includegraphics[width=0.95\linewidth]{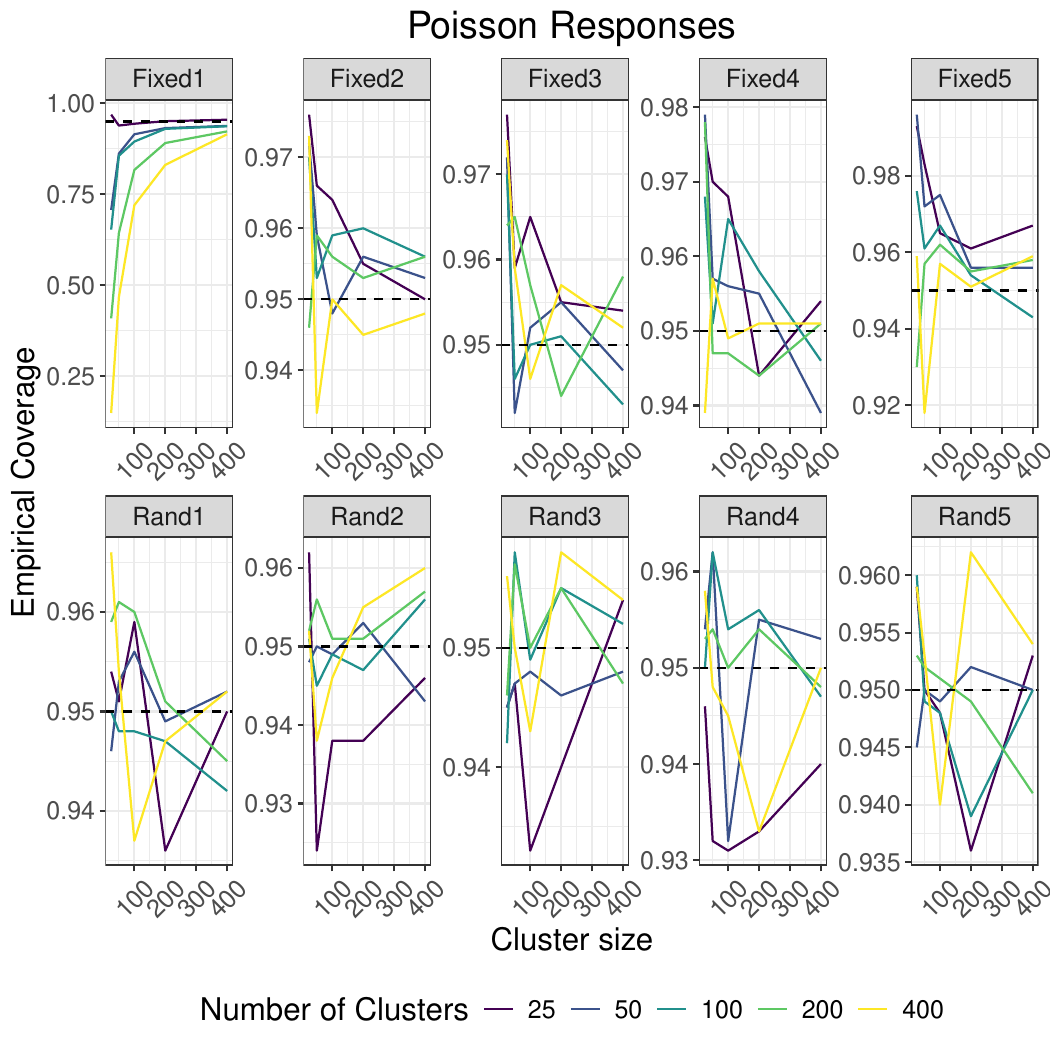}
\caption{Empirical coverage probability of 95\% coverage intervals for the five fixed and random effects estimates, obtained under the conditional regime with Poisson responses.} 
\end{figure}

\begin{figure}[H]
\centering
\includegraphics[width=0.95\linewidth]{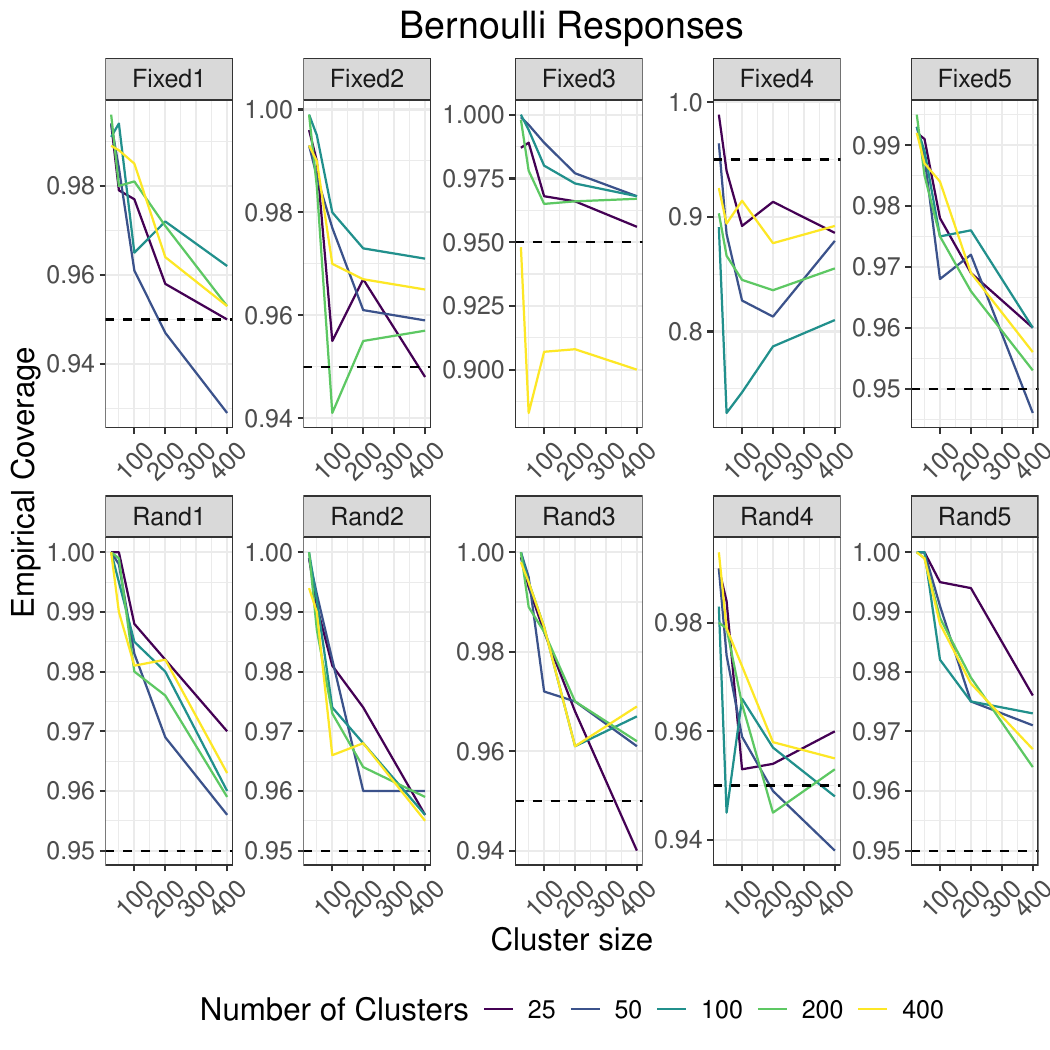}
\caption{Empirical coverage probability of 95\% coverage intervals for the five fixed and random effects estimates, obtained under the conditional regime with Bernoulli responses.} 
\end{figure}

\begin{figure}[H]
\includegraphics[width=0.7\linewidth]{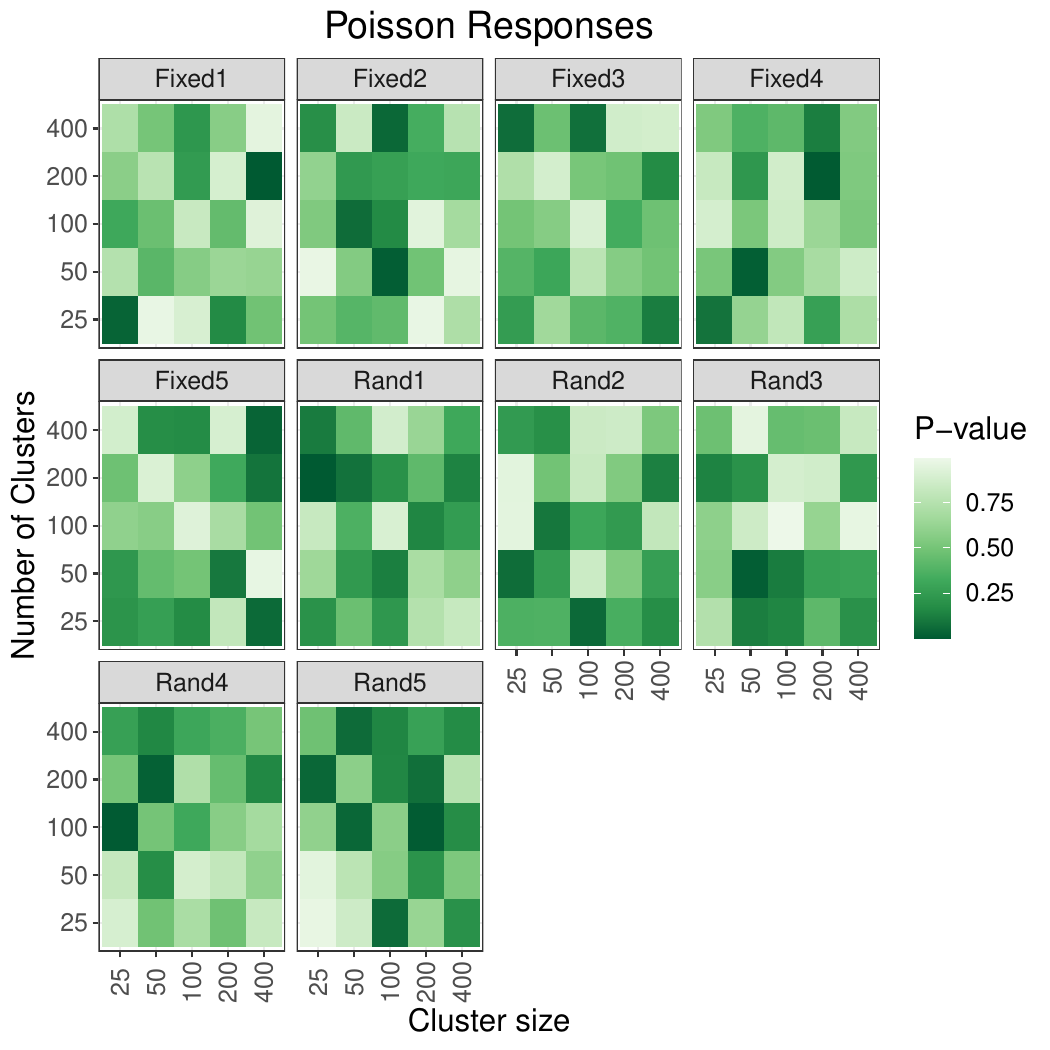}
\includegraphics[width=0.7\linewidth]{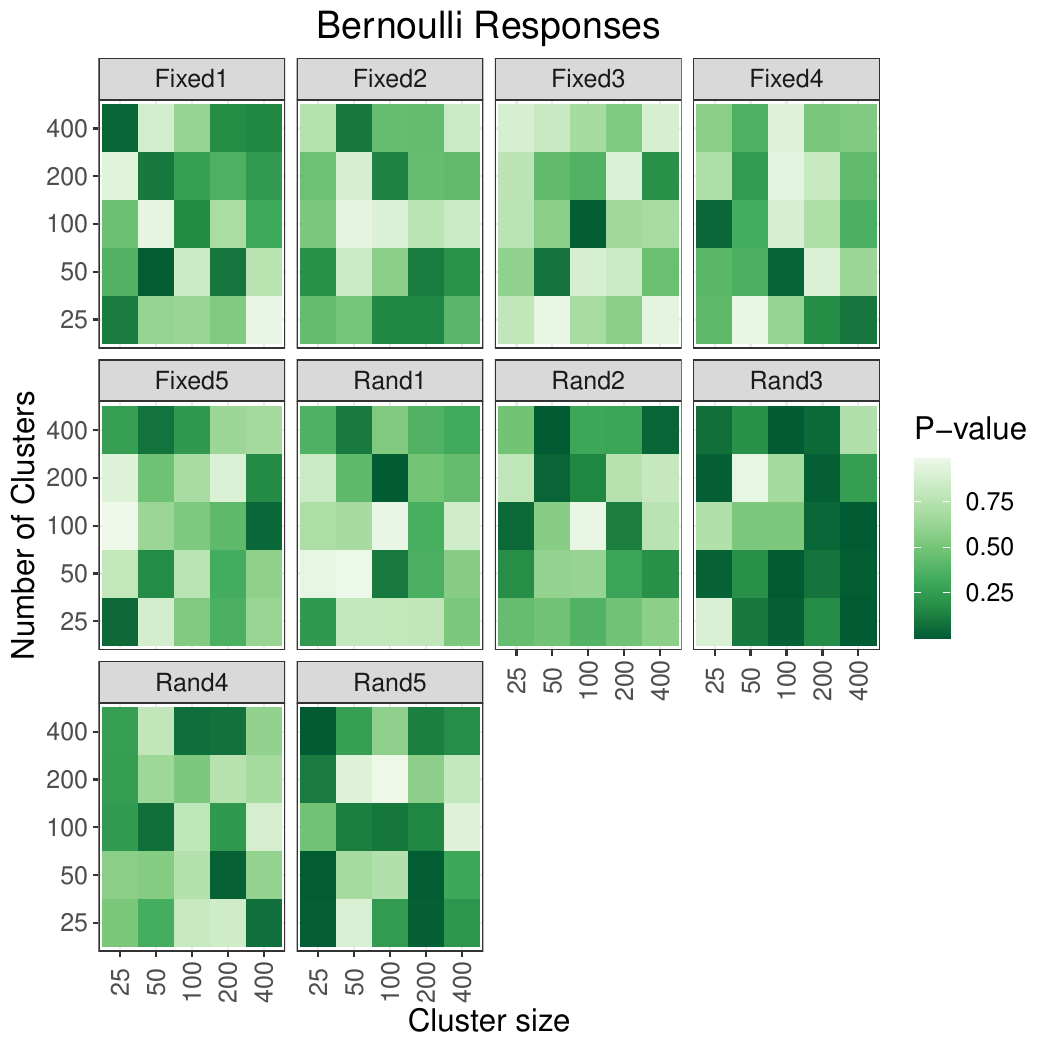}
\caption{$p$-values from Shapiro-Wilk tests applied to the fixed and random effects estimates obtained using maximum PQL estimation, under the conditional regime.} 
\end{figure}

\begin{figure}[H]
\includegraphics[width=0.49\linewidth]{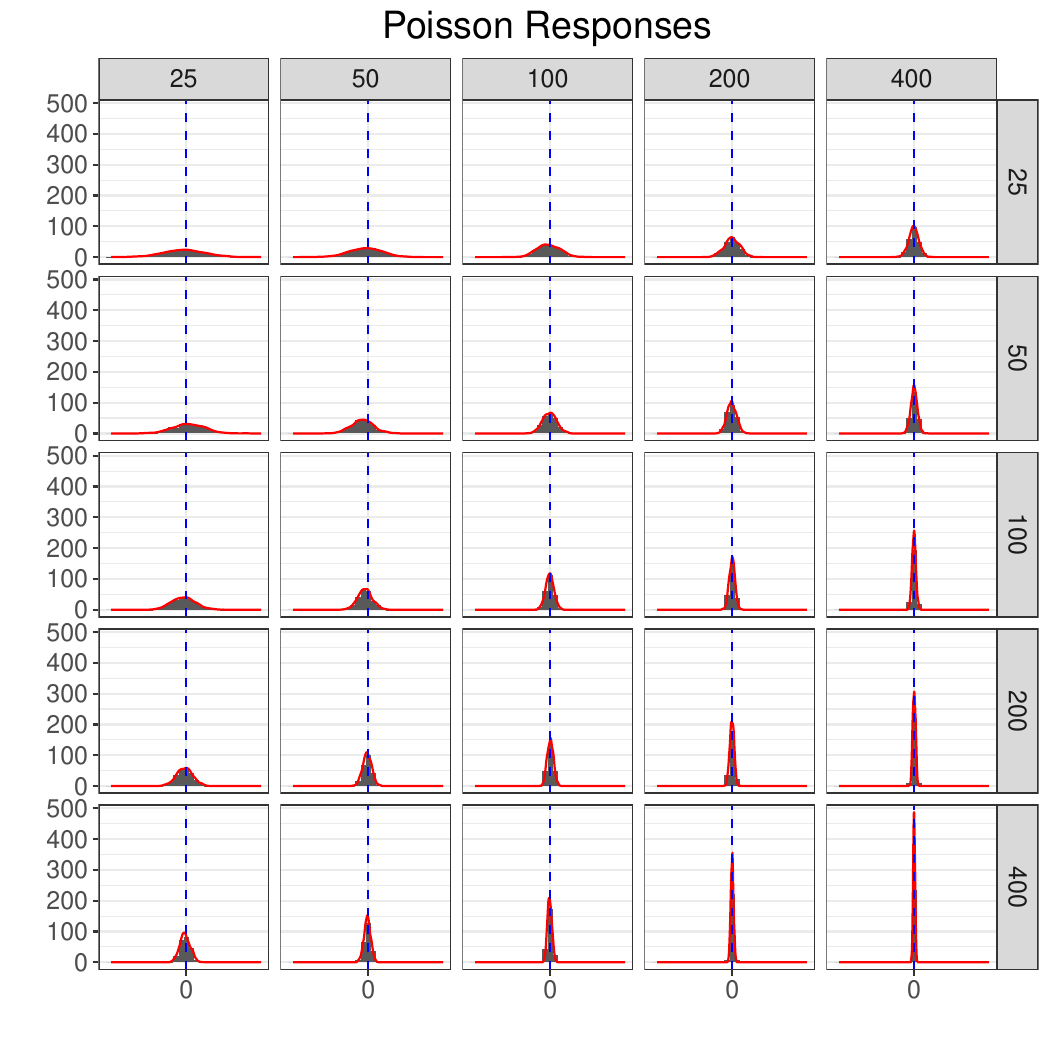}
\includegraphics[width=0.49\linewidth]{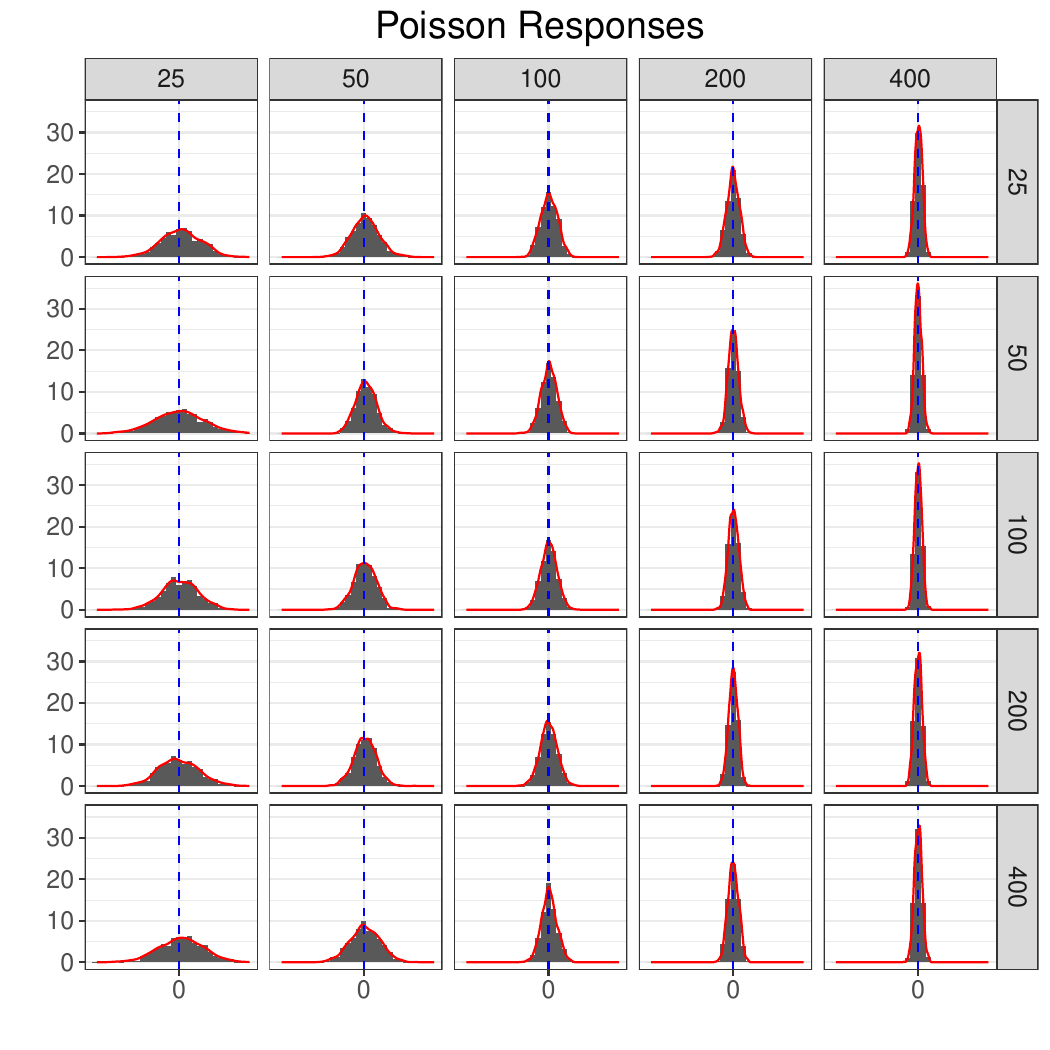}
\includegraphics[width=0.49\linewidth]{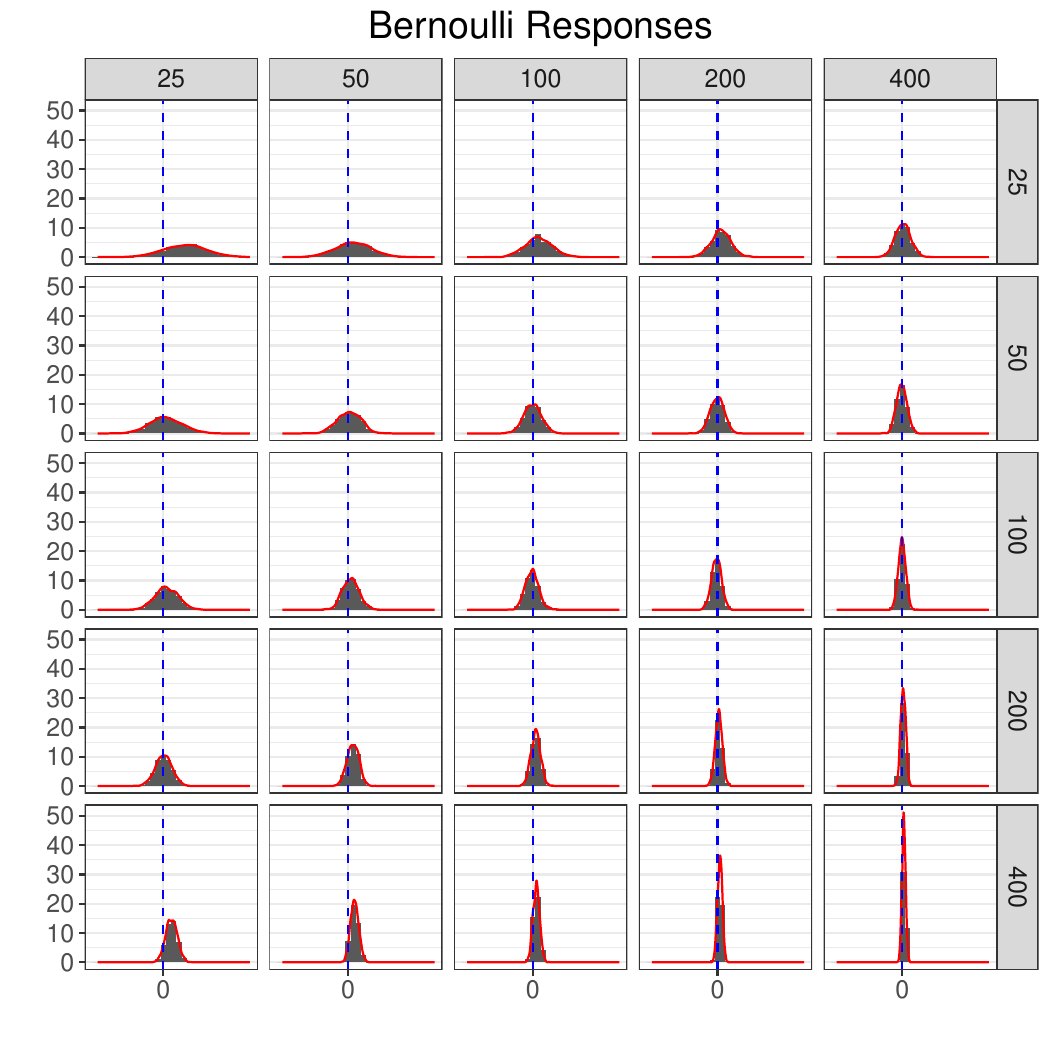}
\includegraphics[width=0.49\linewidth]{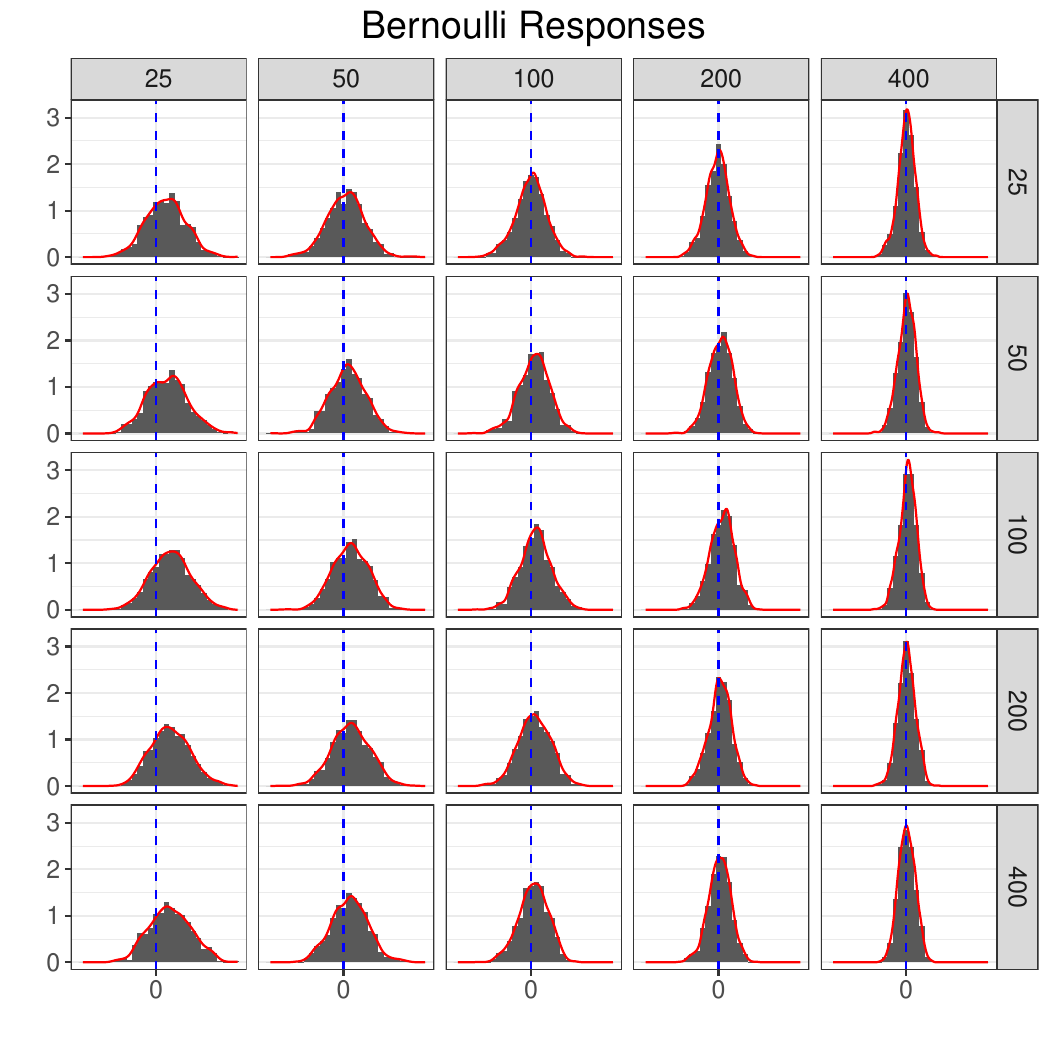}
\caption{Histograms for the third components of $\hat{\bmbeta} - \dot{\bmbeta}$ (left panels) and $\hat{\bm{b}}_1 - \dot{\bm{b}}_1$ (right panels), under the unconditional regime. Vertical facets represent the cluster sizes, while horizontal facets represent the number of clusters. The dotted blue line indicates zero, and the red curve is a kernel density smoother.} 
\end{figure}

\subsection{{\boldmath ${G}$} = {\boldmath ${I}_2$}}

\begin{figure}[H]
\centering
\includegraphics[width=0.95\linewidth]{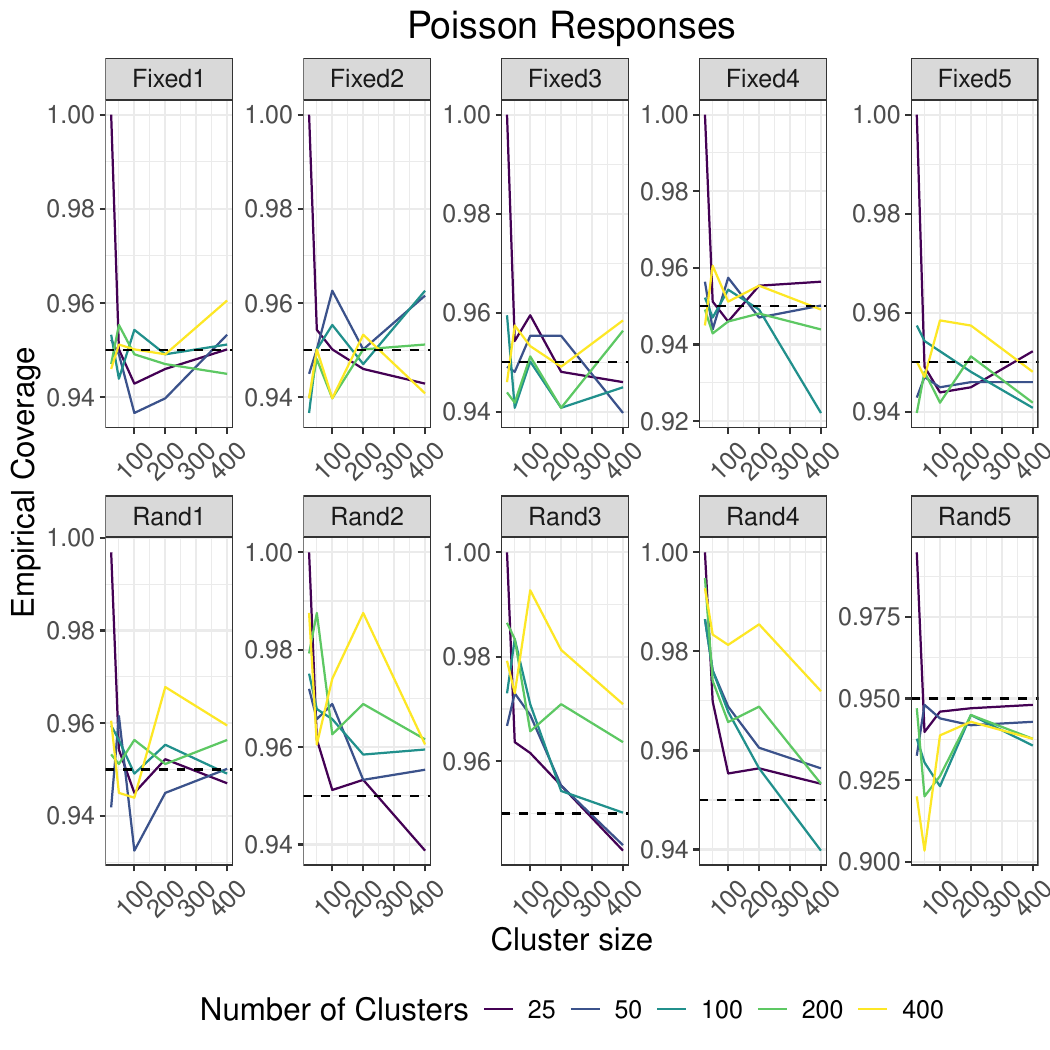}
\caption{Empirical coverage probability of 95\% coverage intervals for the five fixed and random effects estimates, obtained under the unconditional regime with Poisson responses.} 
\end{figure}

\begin{figure}[H]
\centering
\includegraphics[width=0.95\linewidth]{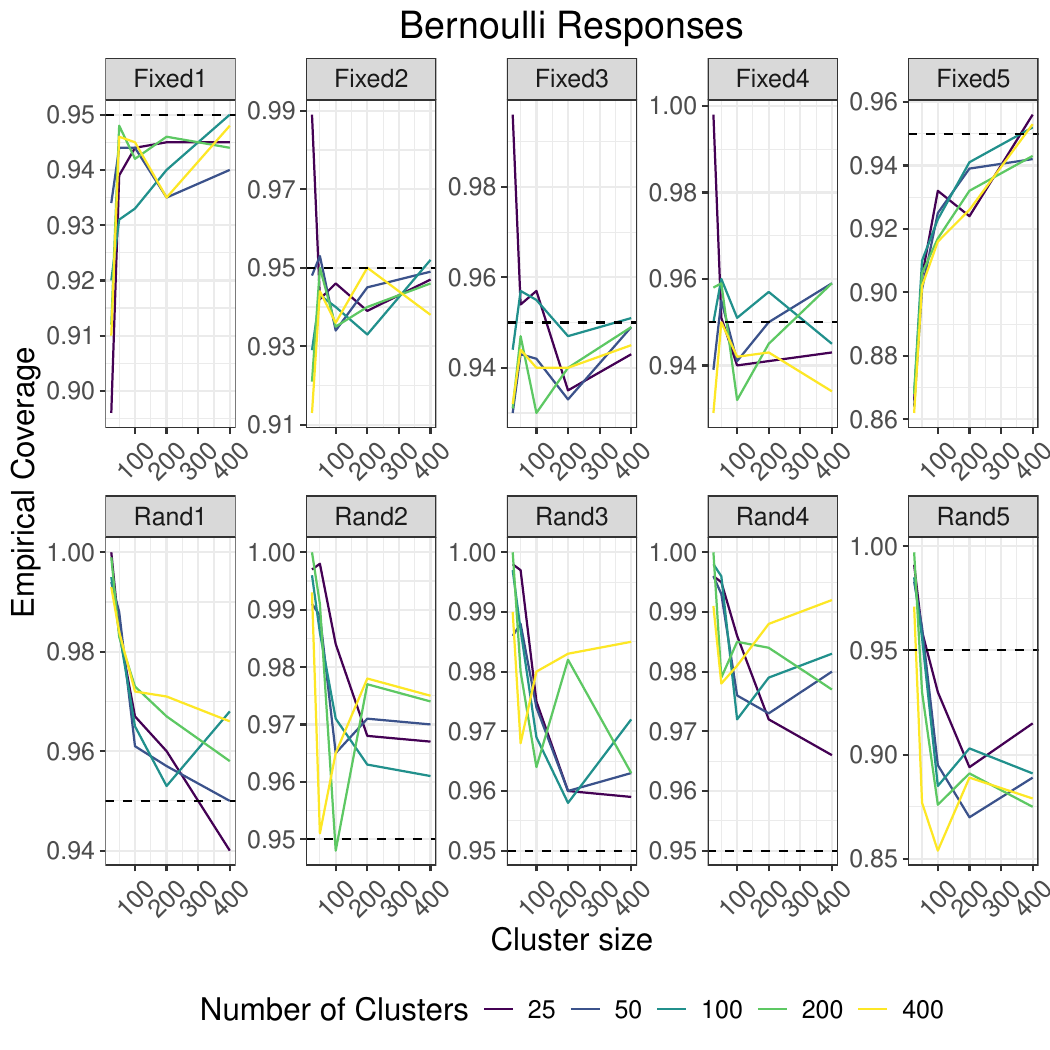}
\caption{Empirical coverage probability of 95\% coverage intervals for the five fixed and random effects estimates, obtained under the unconditional regime with Bernoulli responses.} 
\end{figure}

\begin{figure}[H]
\includegraphics[width=0.7\linewidth]{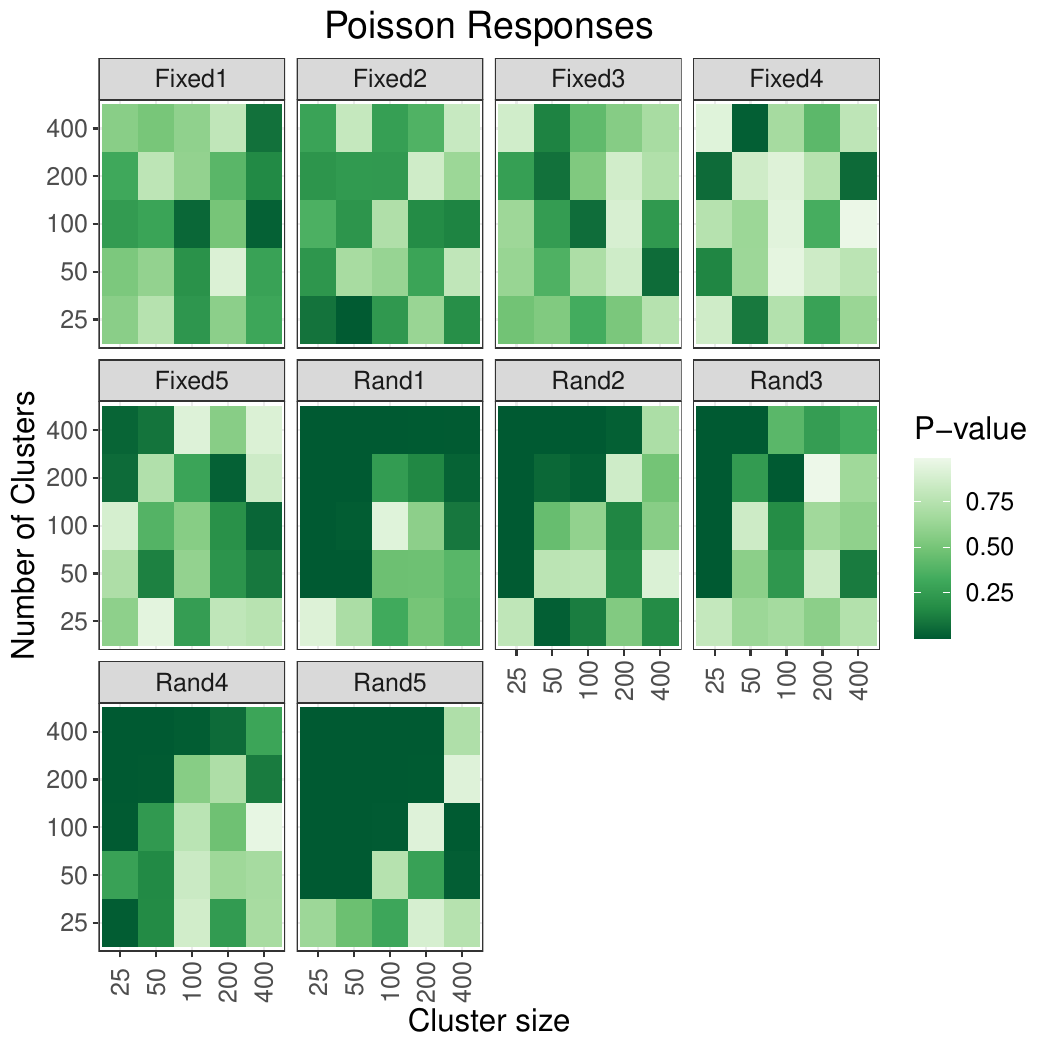}
\includegraphics[width=0.7\linewidth]{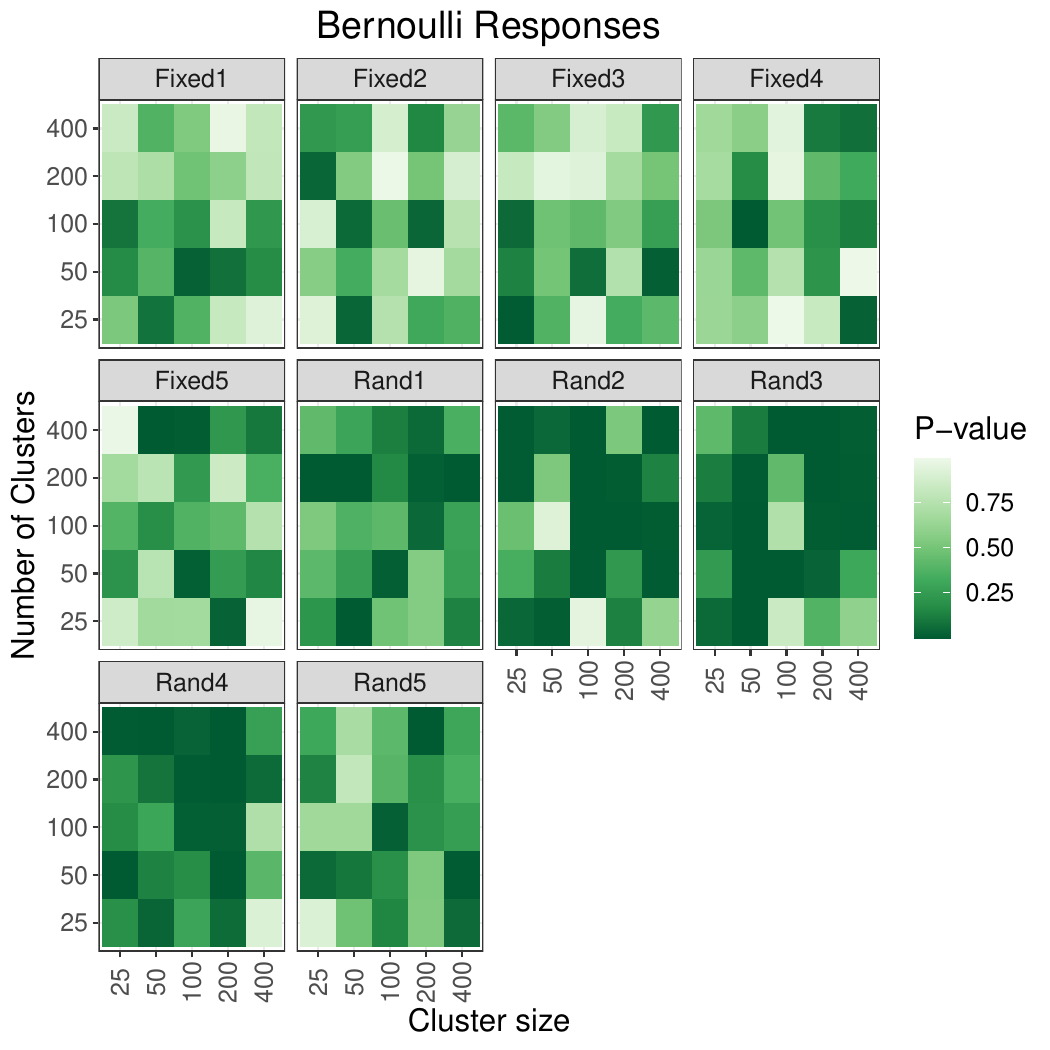}
\caption{$p$-values from Shapiro-Wilk tests applied to the fixed and random effects estimates obtained using maximum PQL estimation, under the unconditional regime.} 
\end{figure}

\begin{figure}[H]
\includegraphics[width=0.49\linewidth]{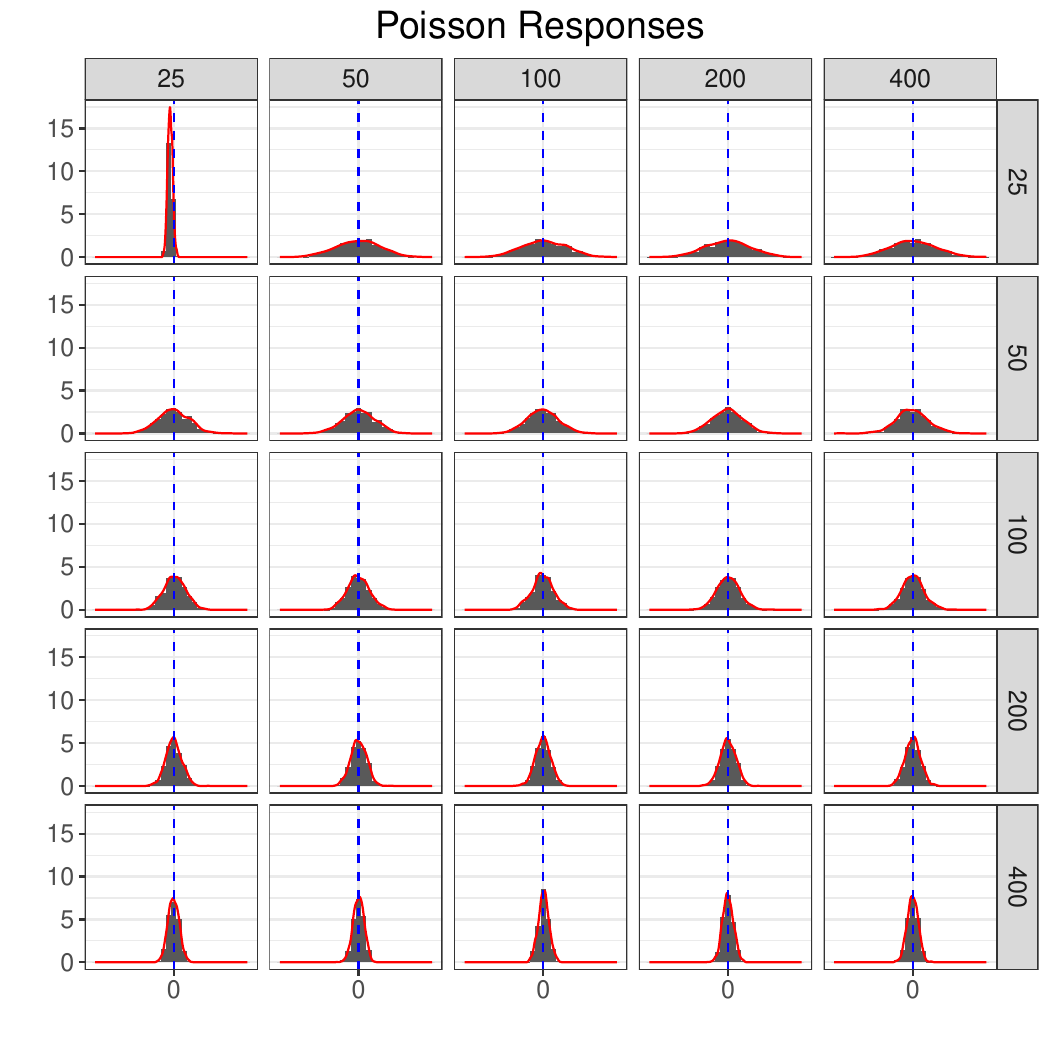}
\includegraphics[width=0.49\linewidth]{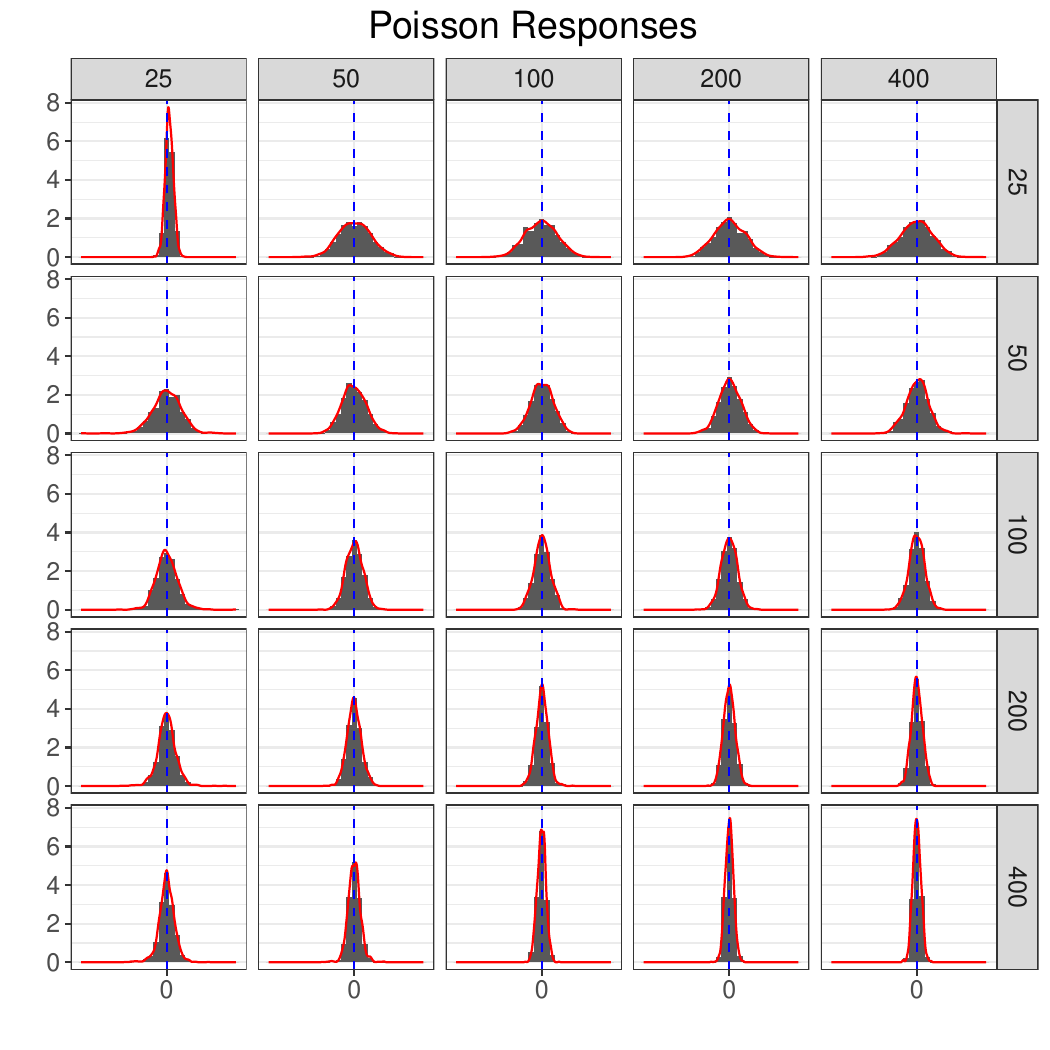}
\includegraphics[width=0.49\linewidth]{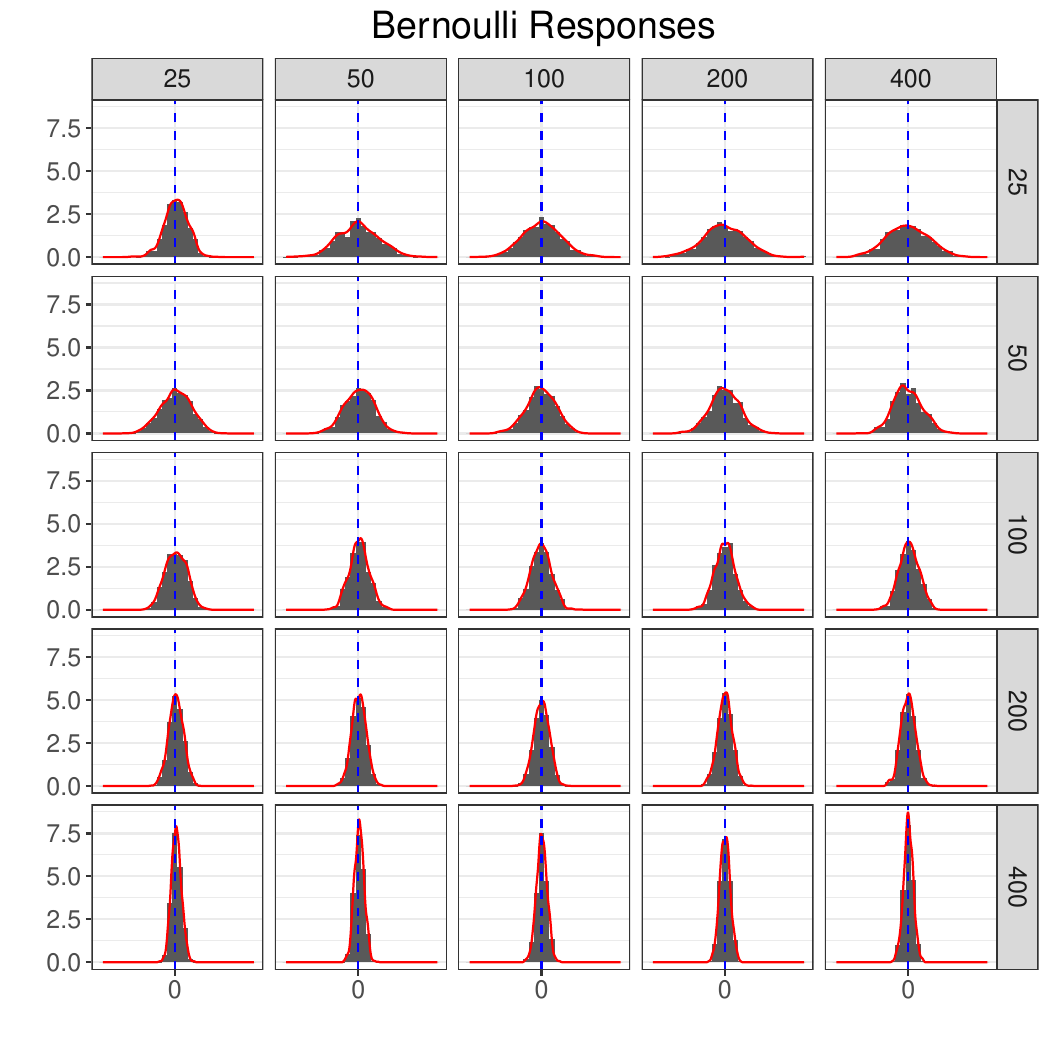}
\includegraphics[width=0.49\linewidth]{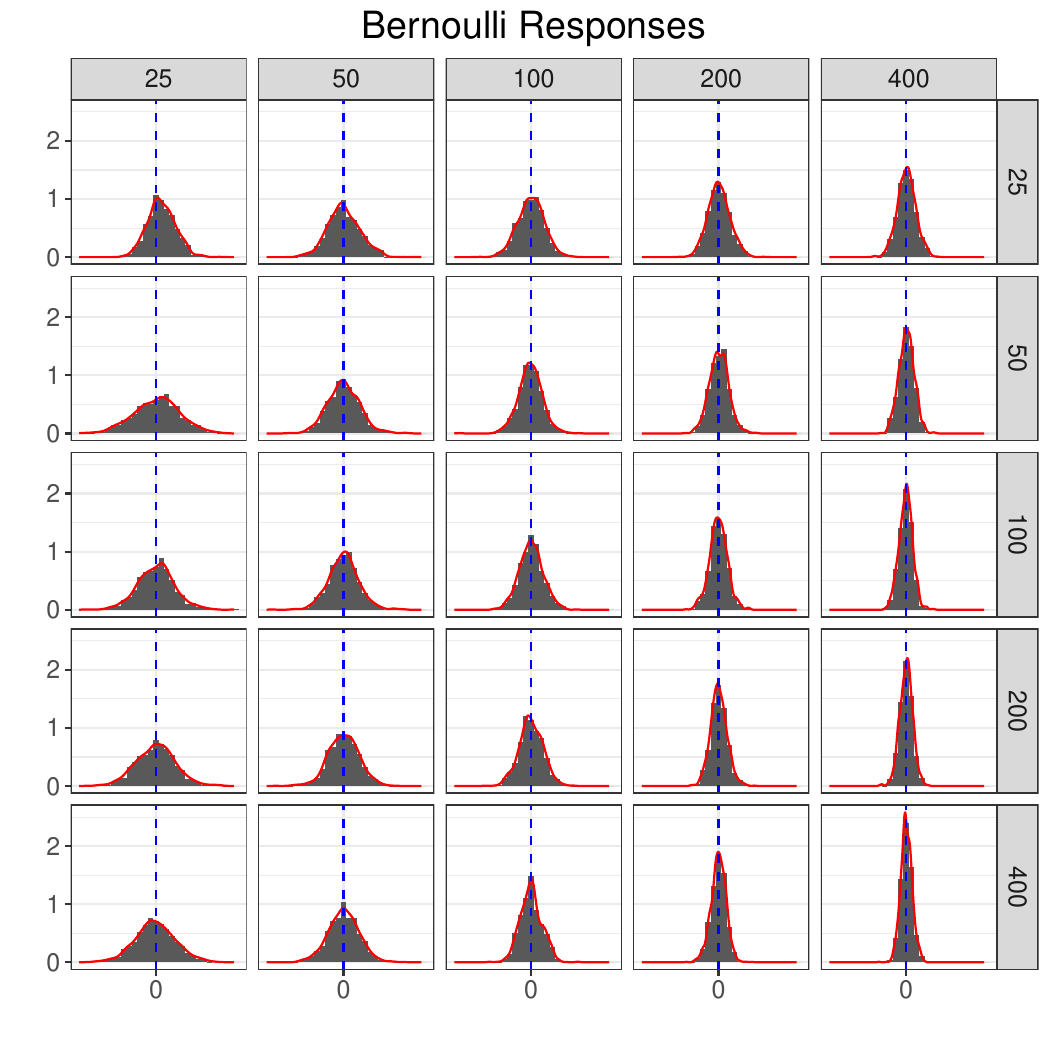}
\caption{Histograms for the third components of $\hat{\bmbeta} - \dot{\bmbeta}$ (left panels) and $\hat{\bm{b}}_1 - \dot{\bm{b}}_1$ (right panels), under the unconditional regime. Vertical facets represent the cluster sizes, while horizontal facets represent the number of clusters. The dotted blue line indicates zero, and the red curve is a kernel density smoother.} 
\end{figure}

\begin{figure}[H]
\centering
\includegraphics[width=0.95\linewidth]{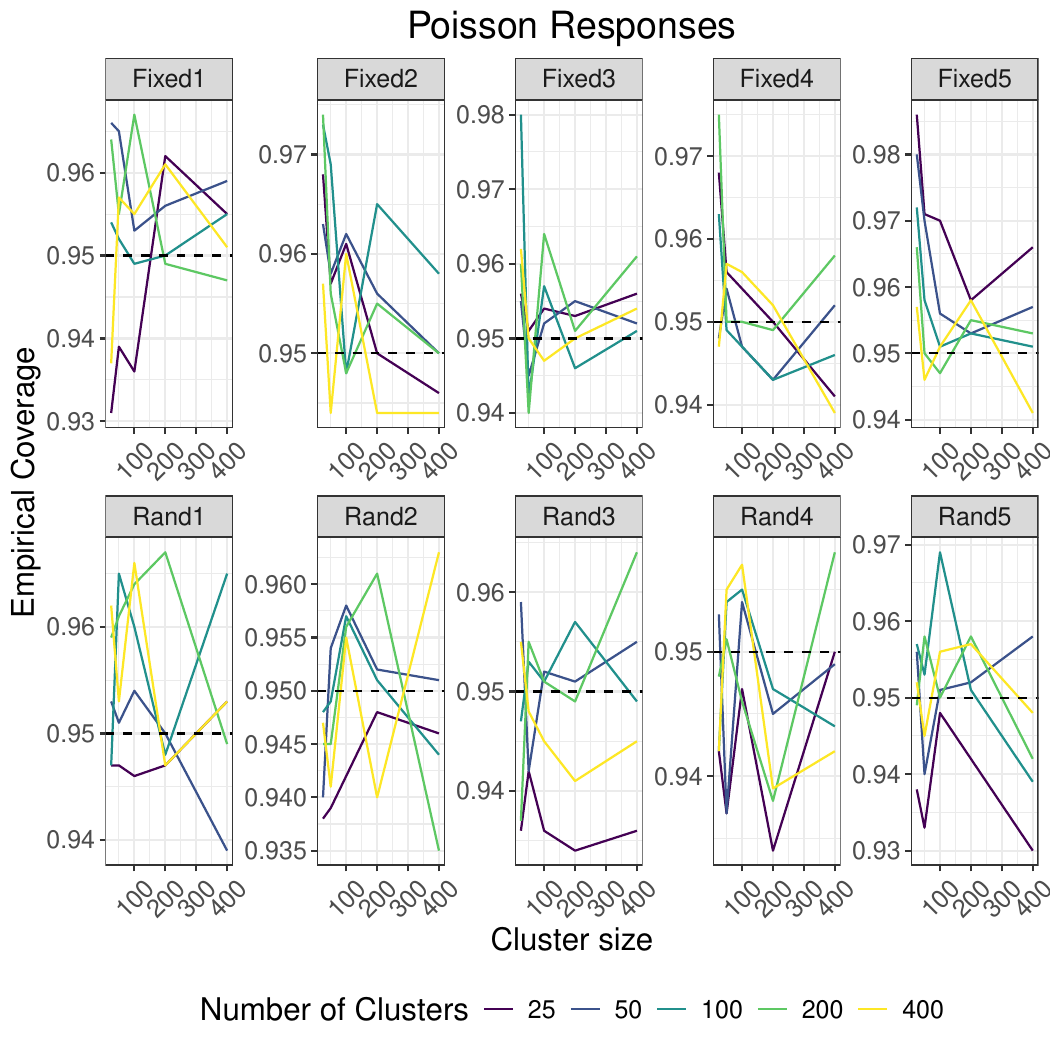}
\caption{Empirical coverage probability of 95\% coverage intervals for the five fixed and random effects estimates, obtained under the conditional regime with Poisson responses.} 
\end{figure}

\begin{figure}[H]
\centering
\includegraphics[width=0.95\linewidth]{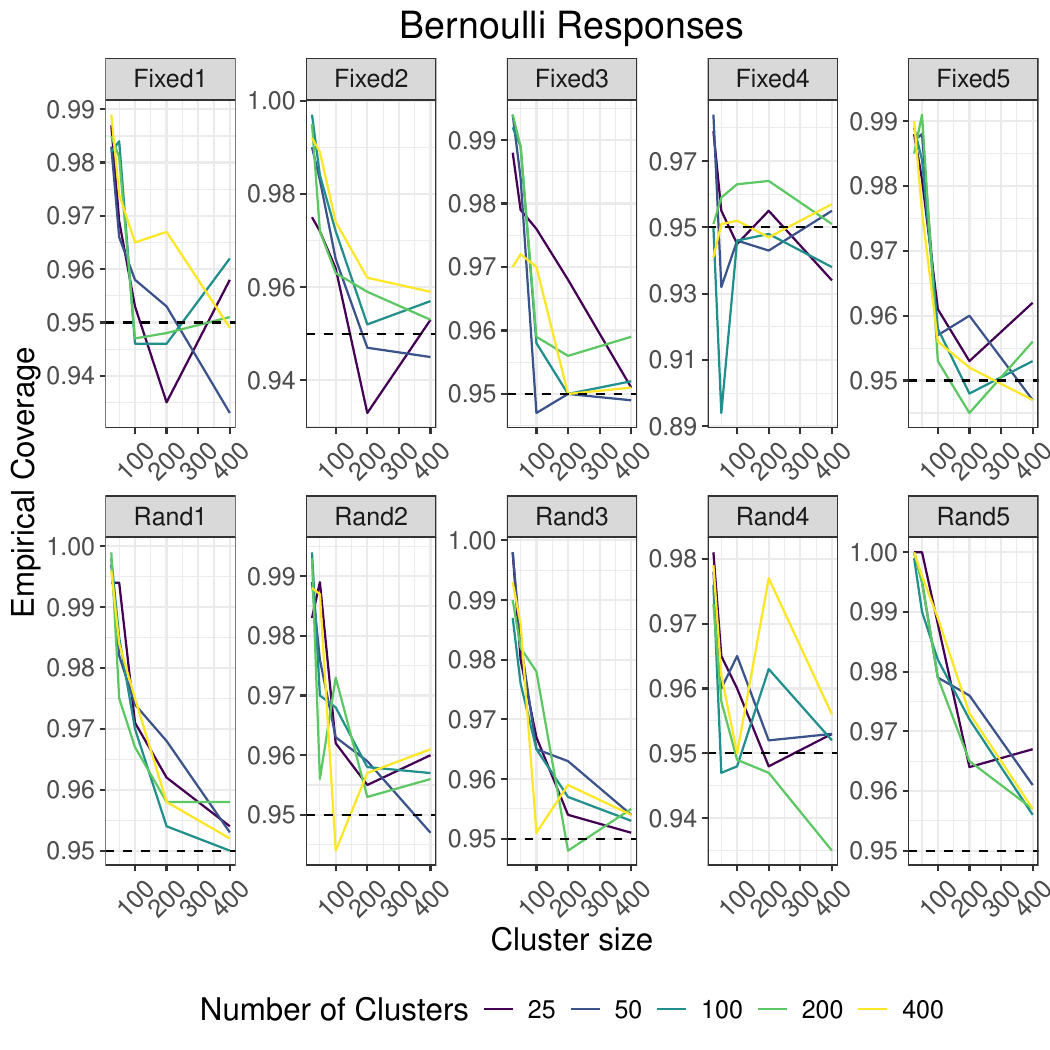}
\caption{Empirical coverage probability of 95\% coverage intervals for the five fixed and random effects estimates, obtained under the conditional regime with Bernoulli responses.} 
\end{figure}

\begin{figure}[H]
\includegraphics[width=0.7\linewidth]{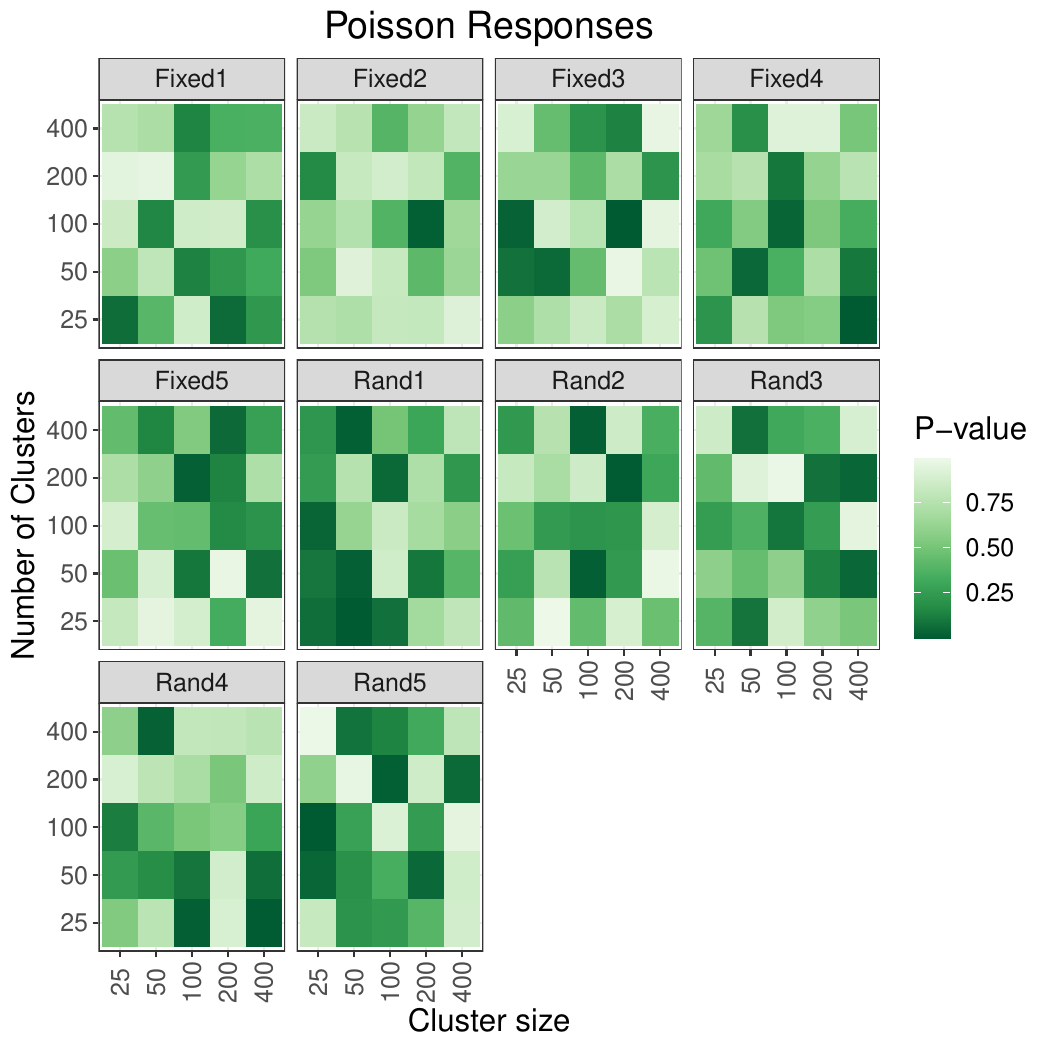}
\includegraphics[width=0.7\linewidth]{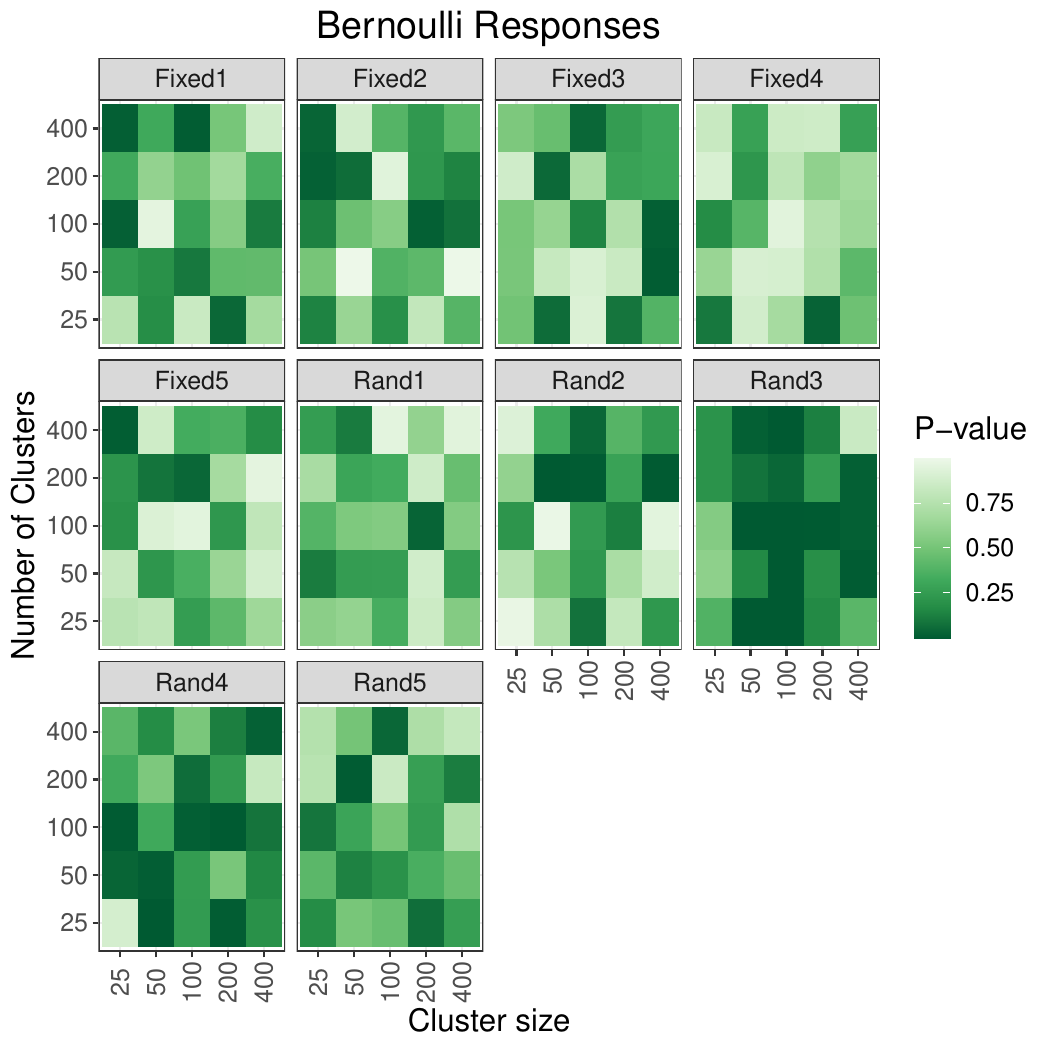}
\caption{$p$-values from Shapiro-Wilk tests applied to the fixed and random effects estimates obtained using maximum PQL estimation, under the conditional regime.} 
\end{figure}

\begin{figure}[H]
\includegraphics[width=0.49\linewidth]{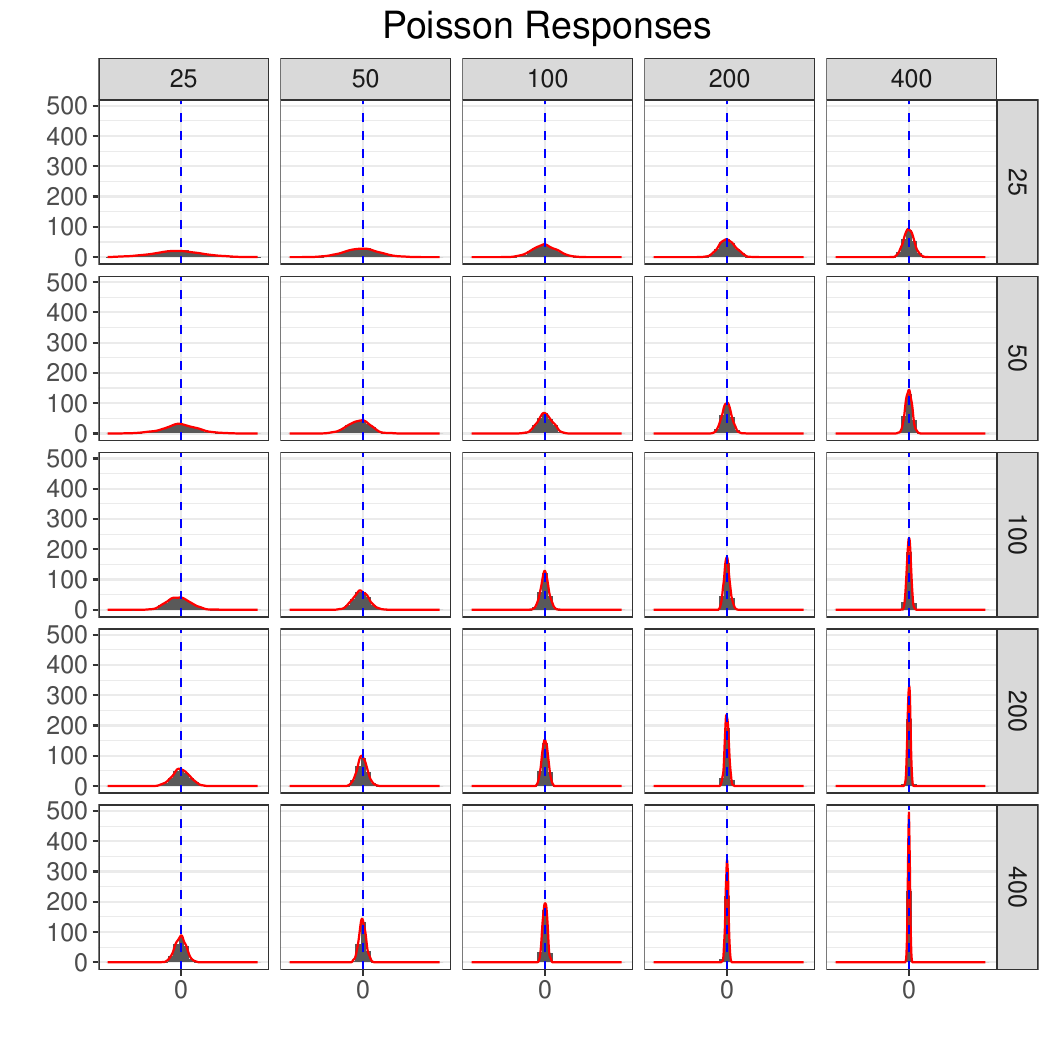}
\includegraphics[width=0.49\linewidth]{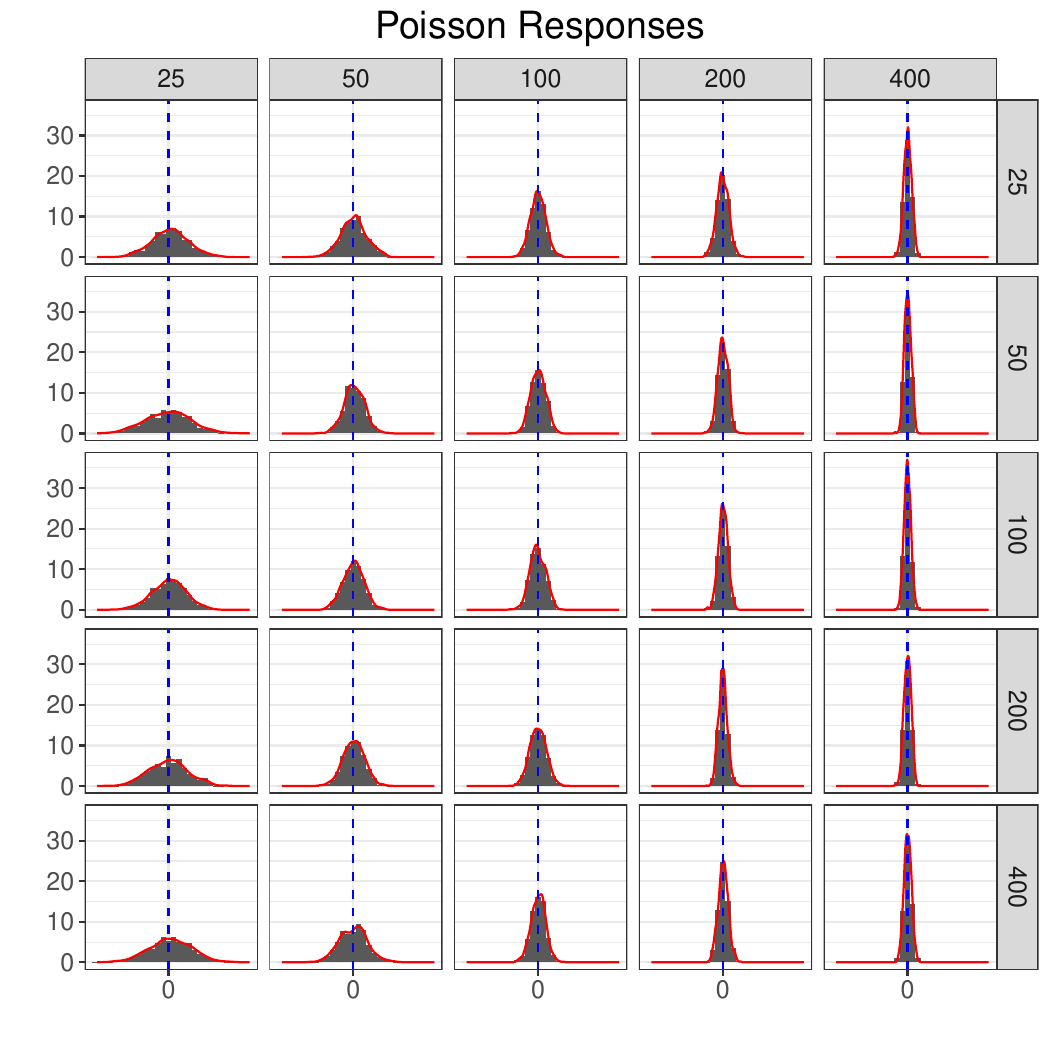}
\includegraphics[width=0.49\linewidth]{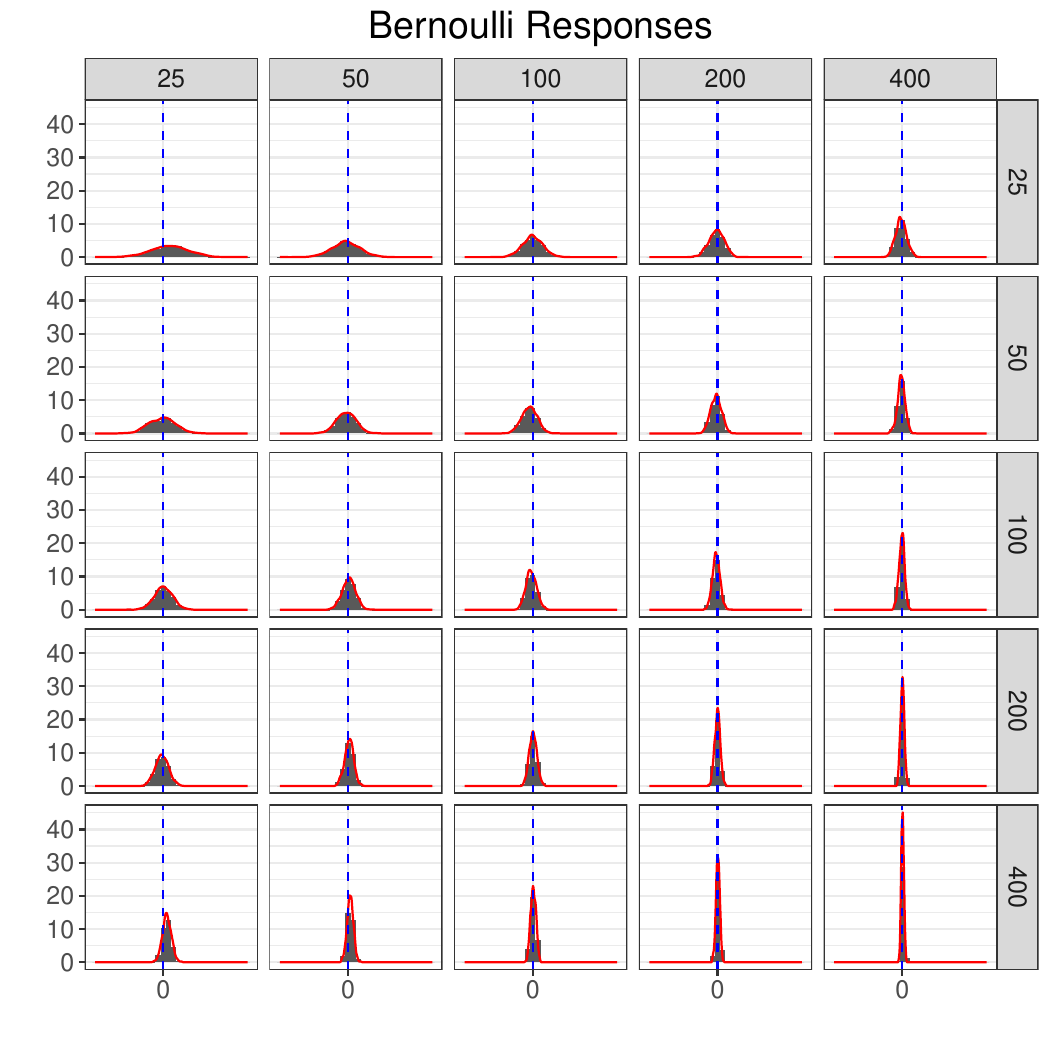}
\includegraphics[width=0.49\linewidth]{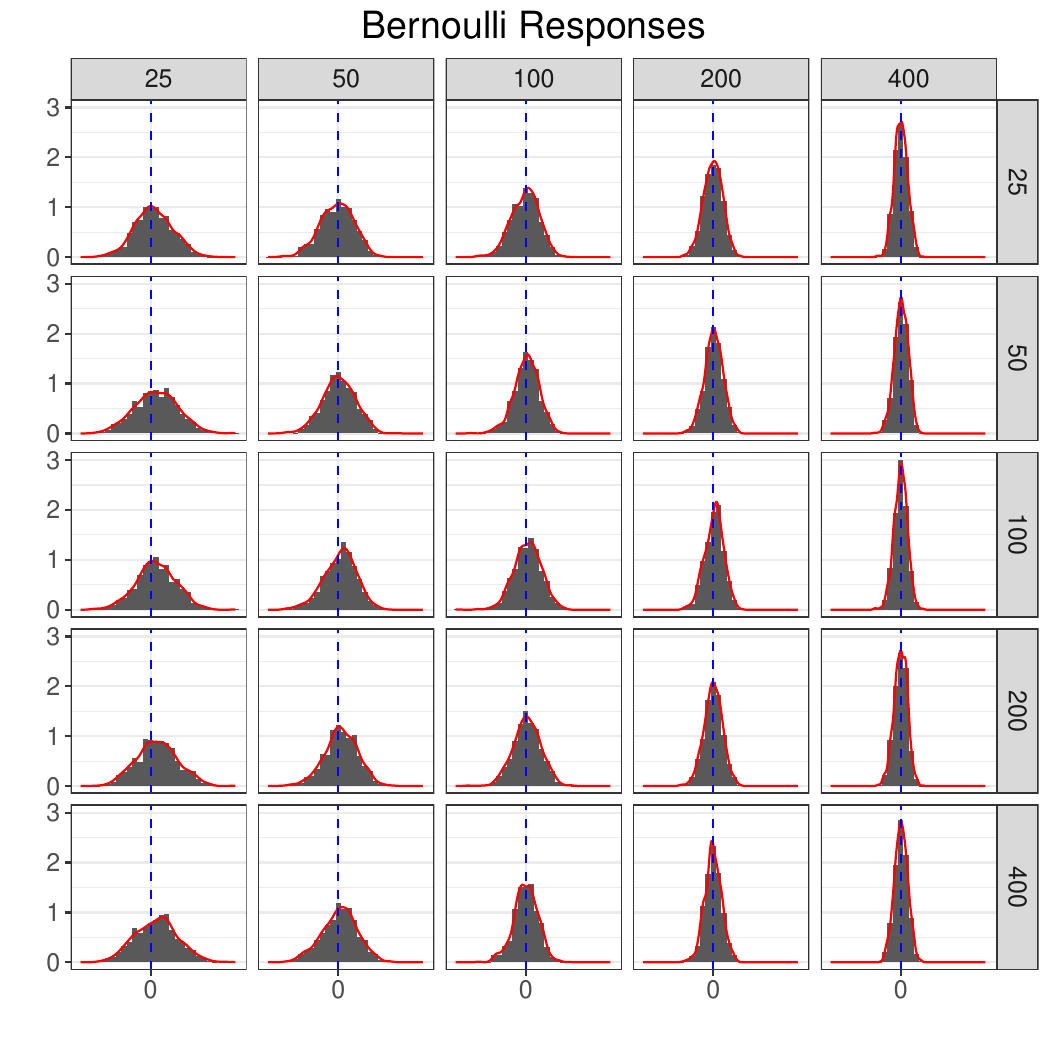}
\caption{Histograms for the third components of $\hat{\bmbeta} - \dot{\bmbeta}$ (left panels) and $\hat{\bm{b}}_1 - \dot{\bm{b}}_1$ (right panels), under the unconditional regime. Vertical facets represent the cluster sizes, while horizontal facets represent the number of clusters. The dotted blue line indicates zero, and the red curve is a kernel density smoother.} 
\end{figure}

\subsection{{\boldmath ${G}$} = 2  {\boldmath ${I}_2$}}

\begin{figure}[H]
\centering
\includegraphics[width=0.95\linewidth]{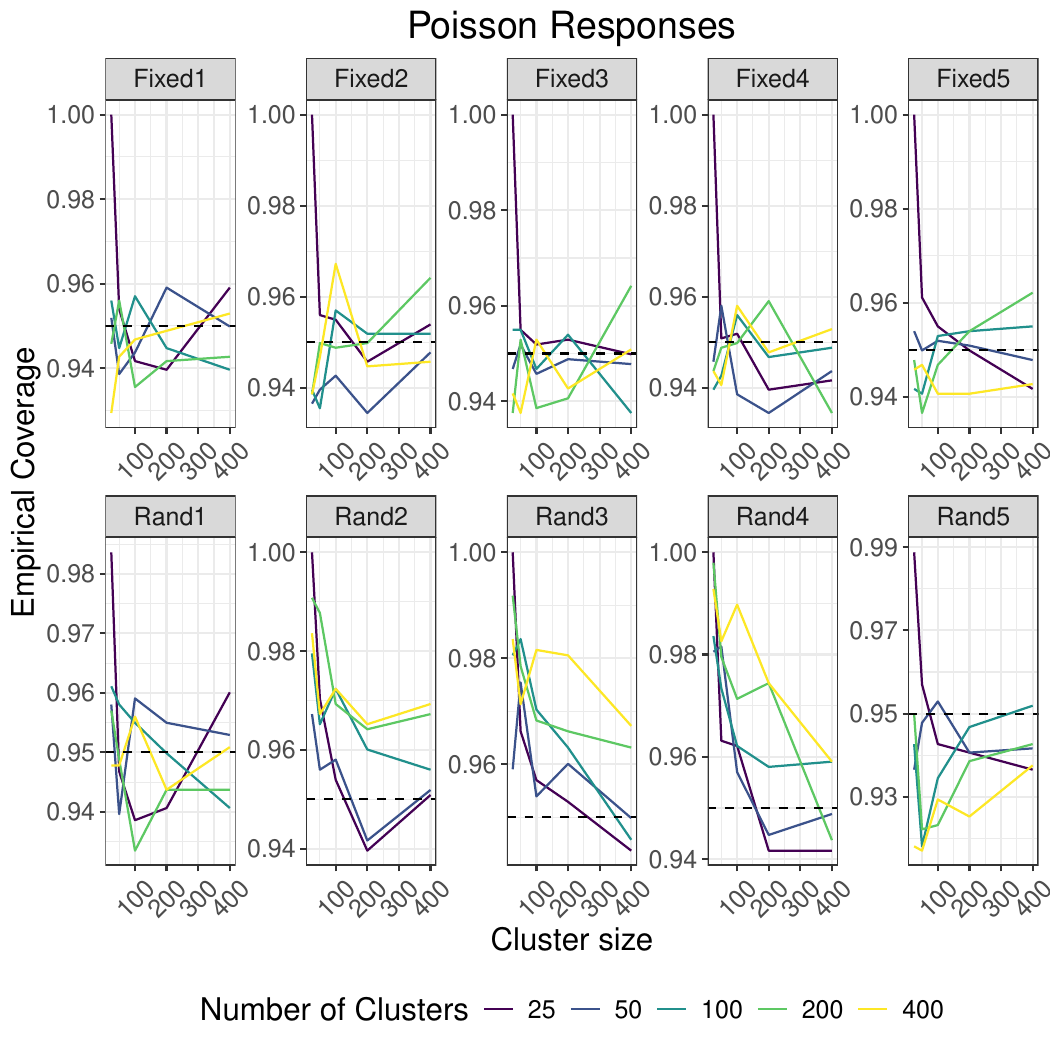}
\caption{Empirical coverage probability of 95\% coverage intervals for the five fixed and random effects estimates, obtained under the unconditional regime with Poisson responses.} 
\end{figure}

\begin{figure}[H]
\centering
\includegraphics[width=0.95\linewidth]{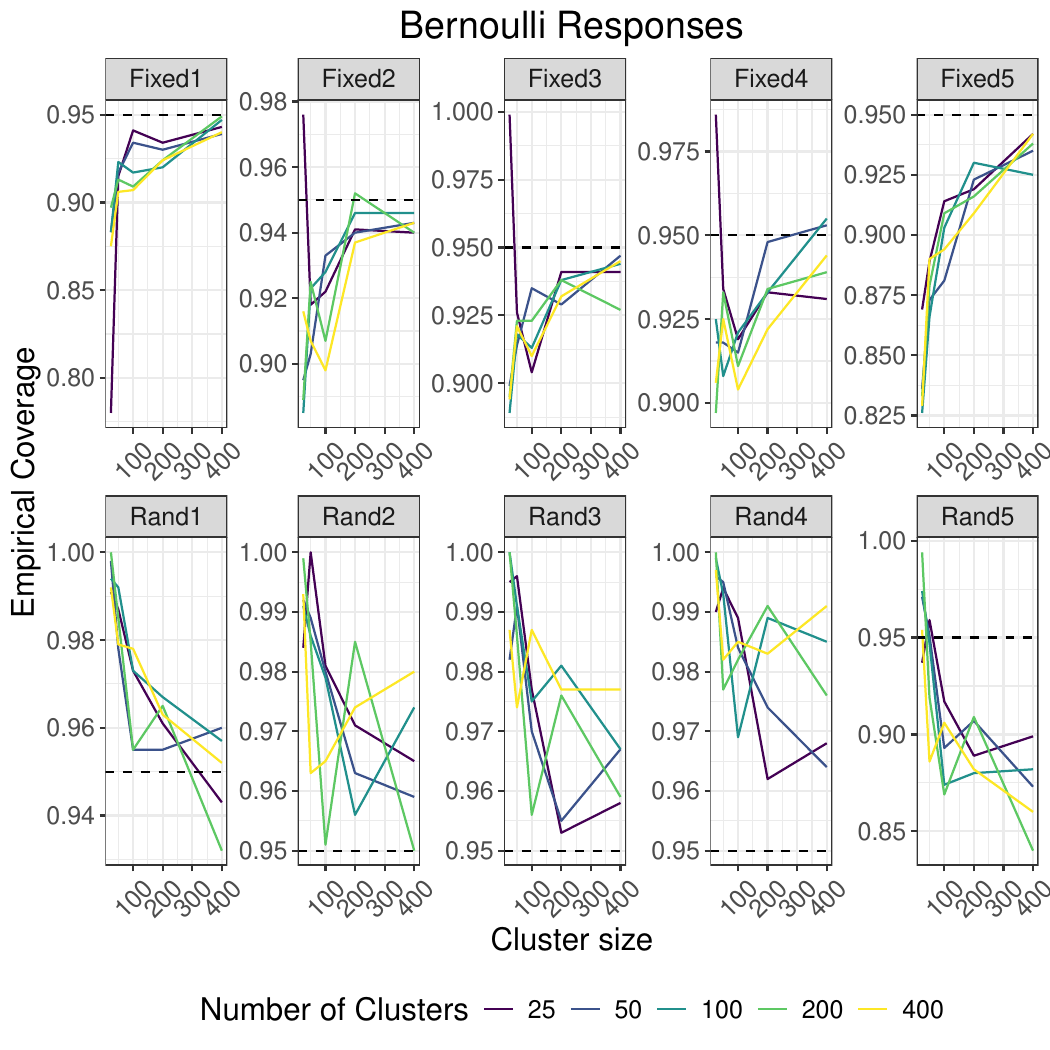}
\caption{Empirical coverage probability of 95\% coverage intervals for the five fixed and random effects estimates, obtained under the unconditional regime with Bernoulli responses.} 
\end{figure}

\begin{figure}[H]
\includegraphics[width=0.7\linewidth]{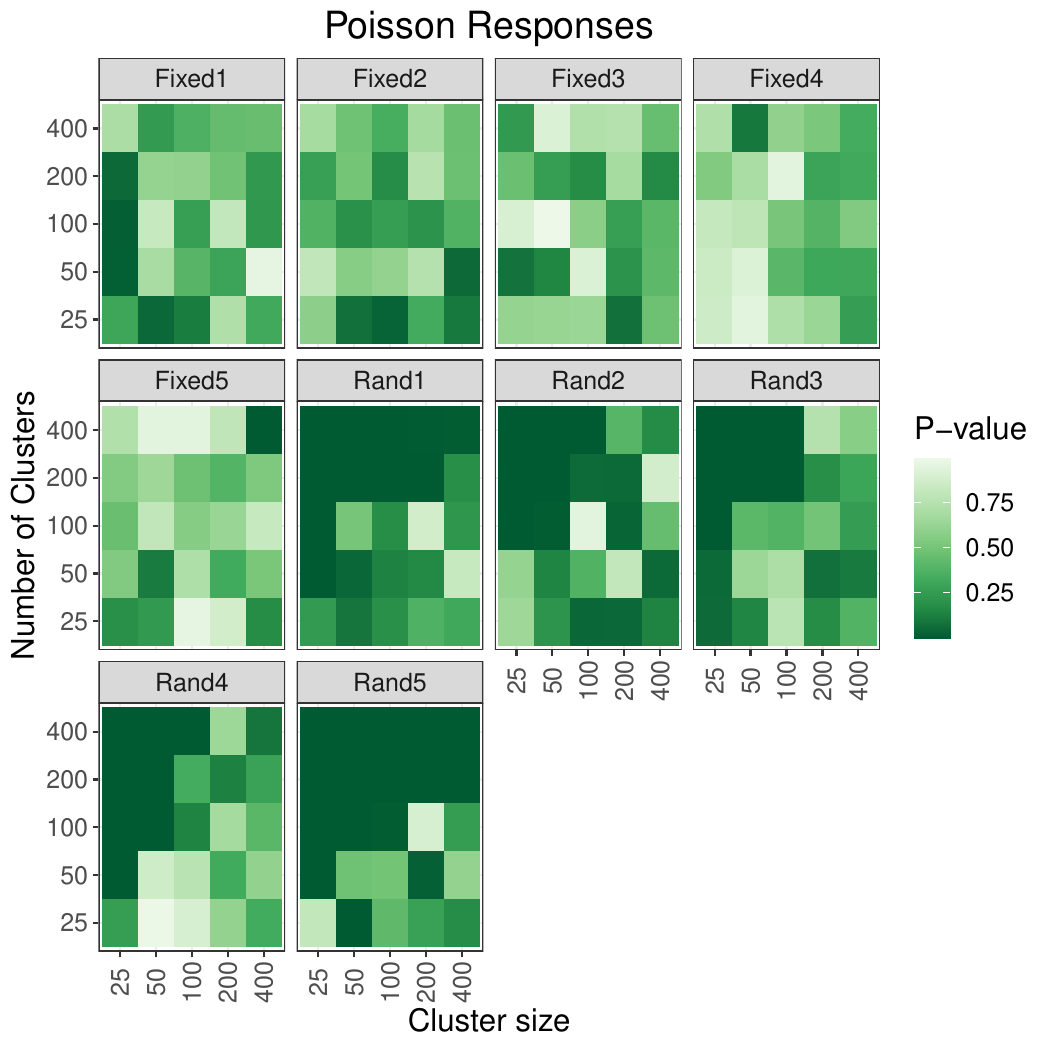}
\includegraphics[width=0.7\linewidth]{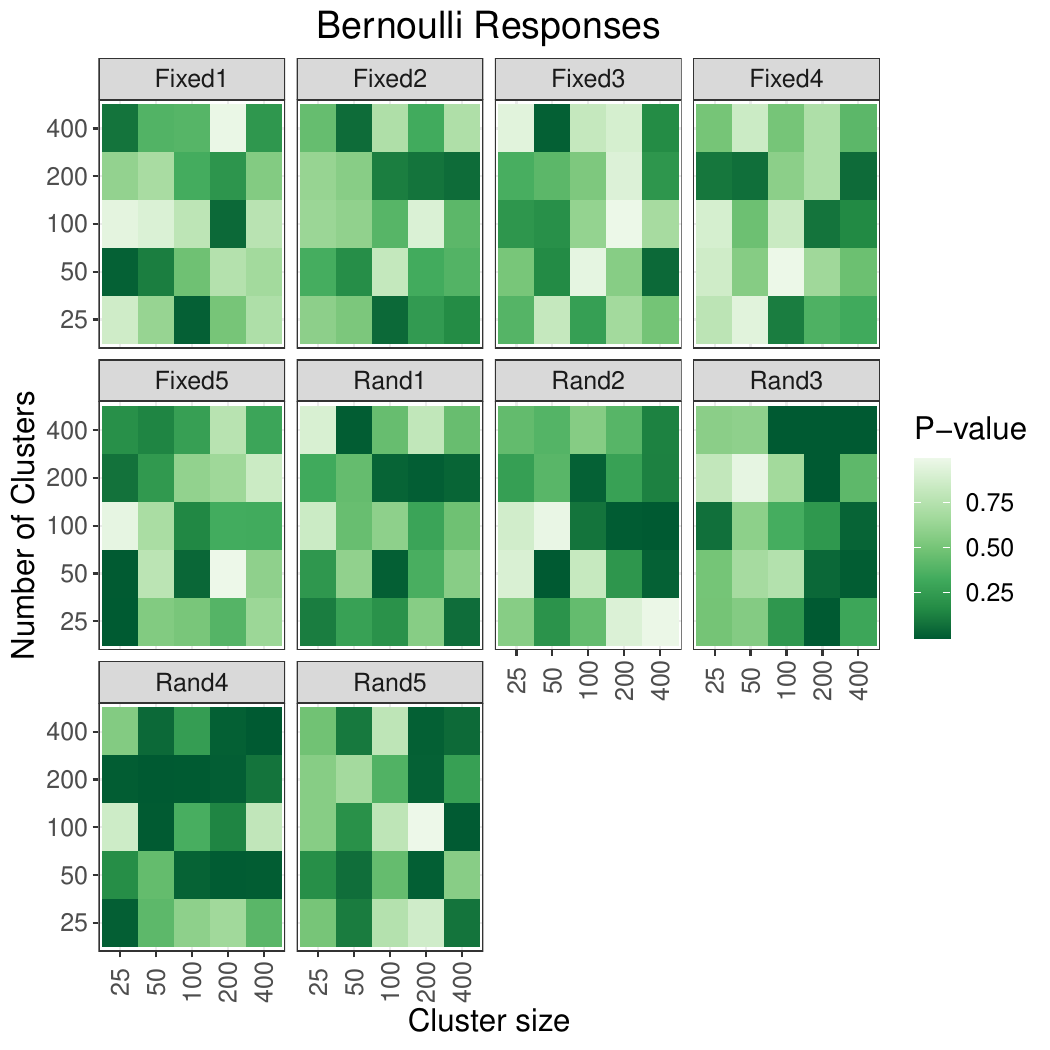}
\caption{$p$-values from Shapiro-Wilk tests applied to the fixed and random effects estimates obtained using maximum PQL estimation, under the unconditional regime.} 
\end{figure}

\begin{figure}[H]
\includegraphics[width=0.49\linewidth]{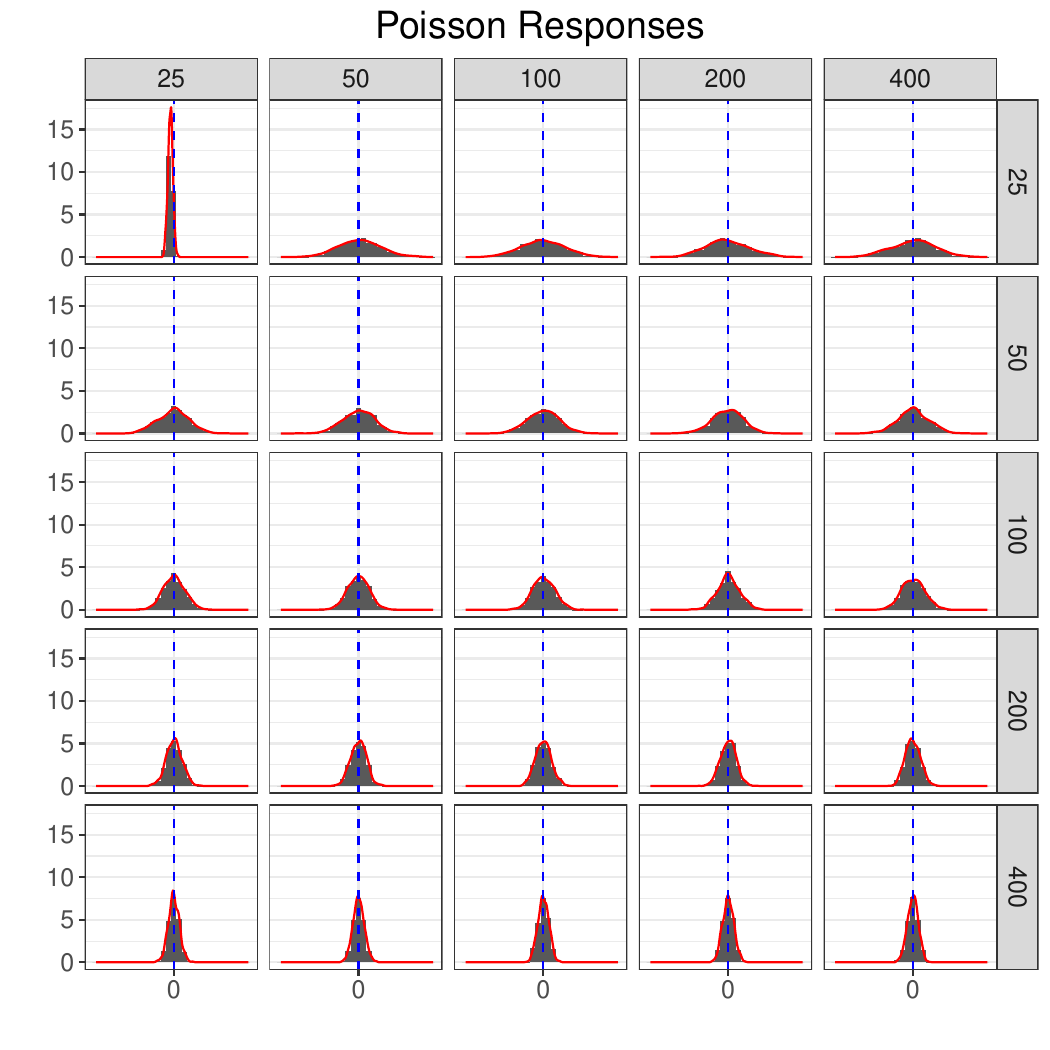}
\includegraphics[width=0.49\linewidth]{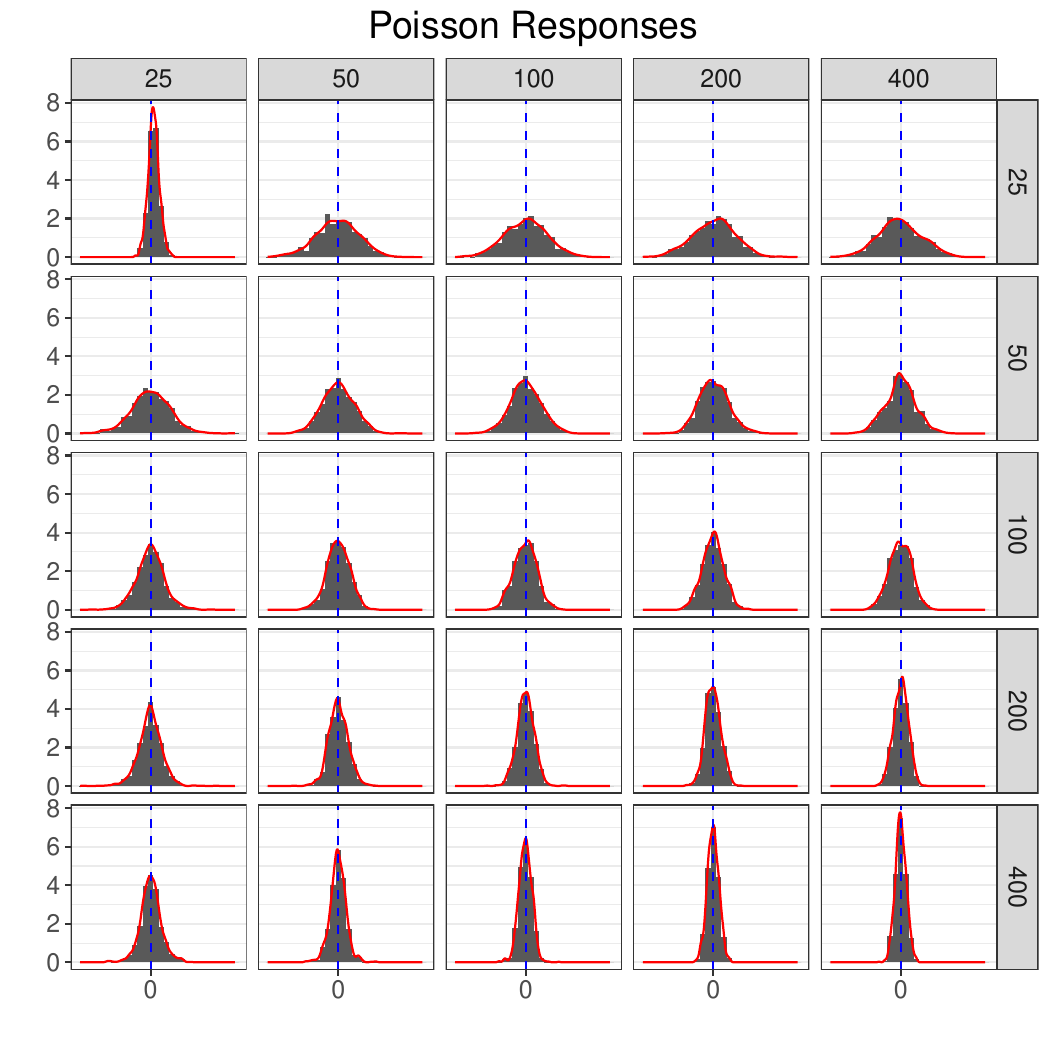}
\includegraphics[width=0.49\linewidth]{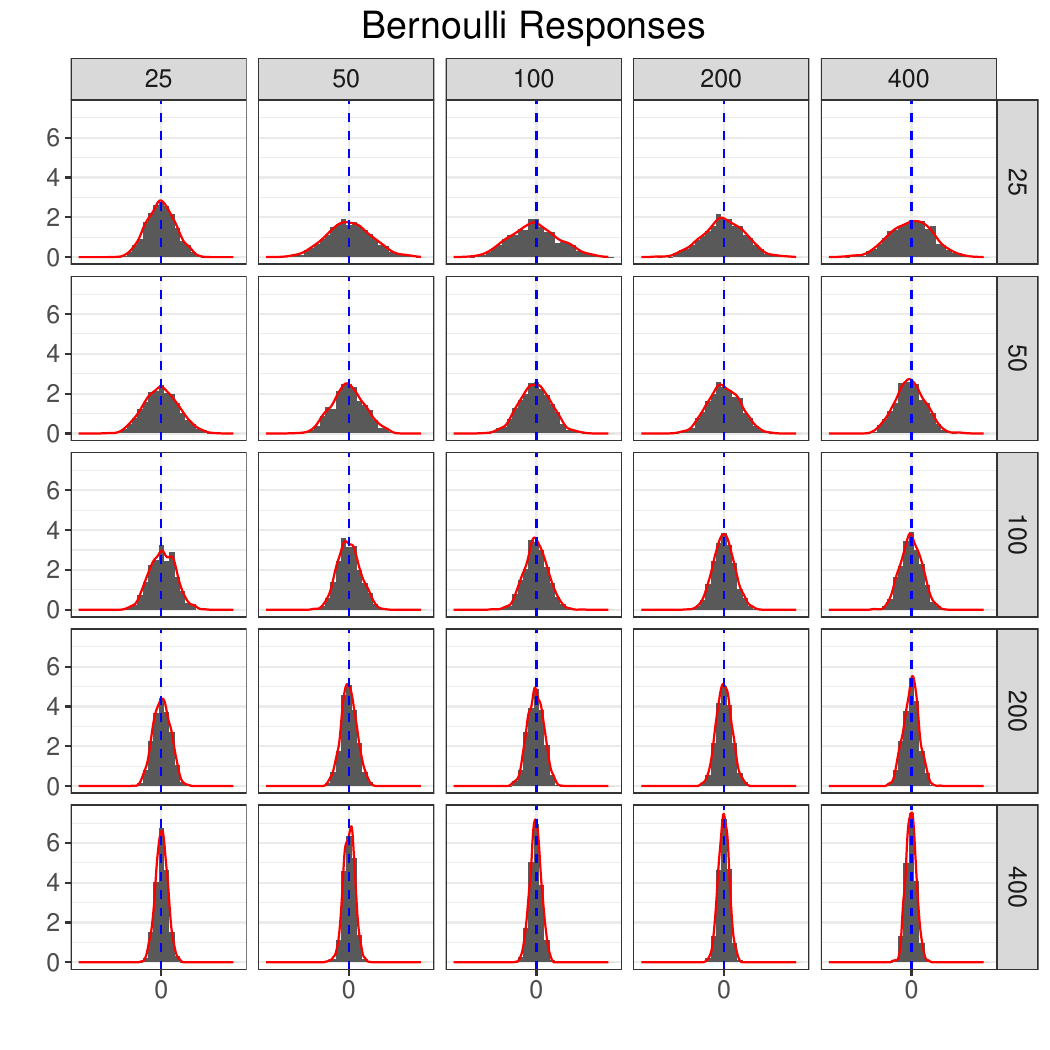}
\includegraphics[width=0.49\linewidth]{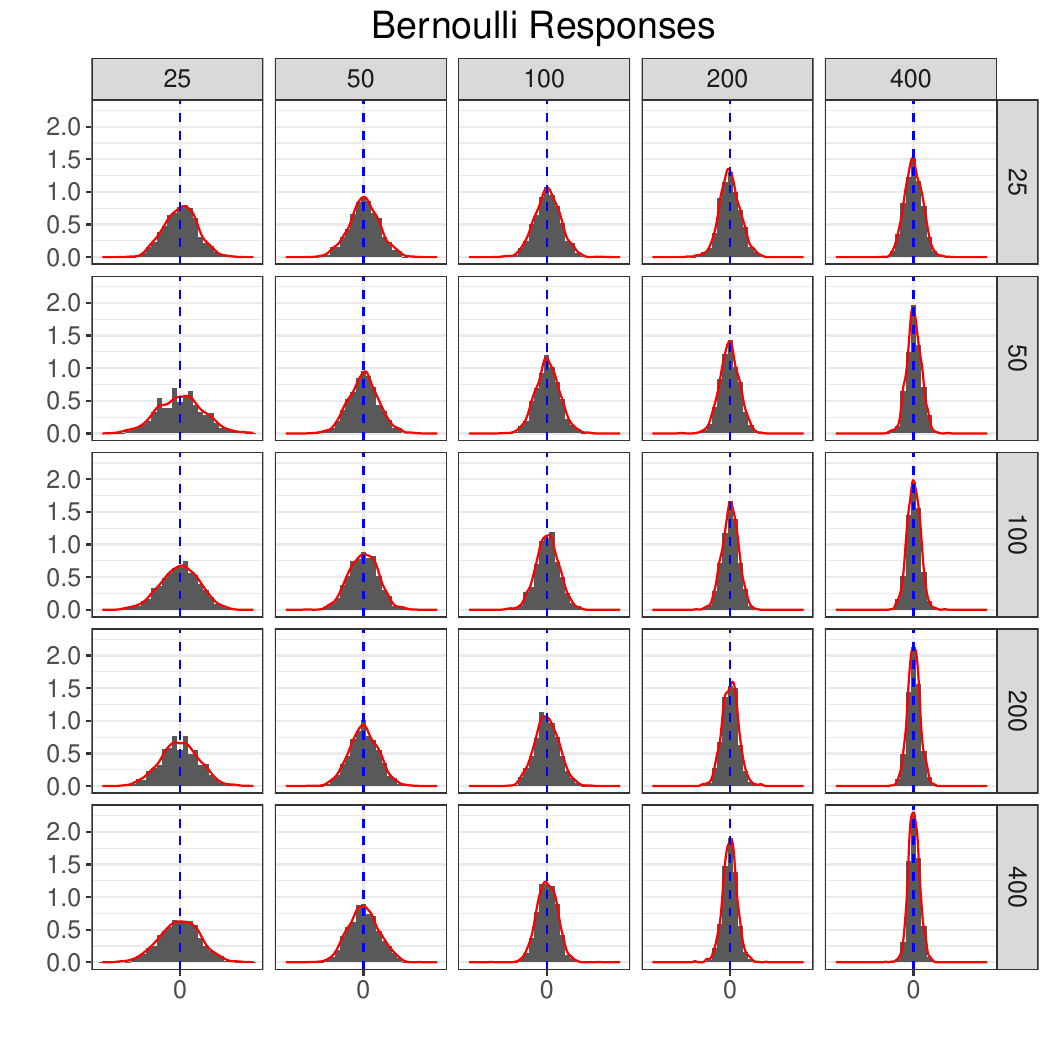}
\caption{Histograms for the third components of $\hat{\bmbeta} - \dot{\bmbeta}$ (left panels) and $\hat{\bm{b}}_1 - \dot{\bm{b}}_1$ (right panels), under the unconditional regime. Vertical facets represent the cluster sizes, while horizontal facets represent the number of clusters. The dotted blue line indicates zero, and the red curve is a kernel density smoother.} 
\end{figure}

\begin{figure}[H]
\centering
\includegraphics[width=0.95\linewidth]{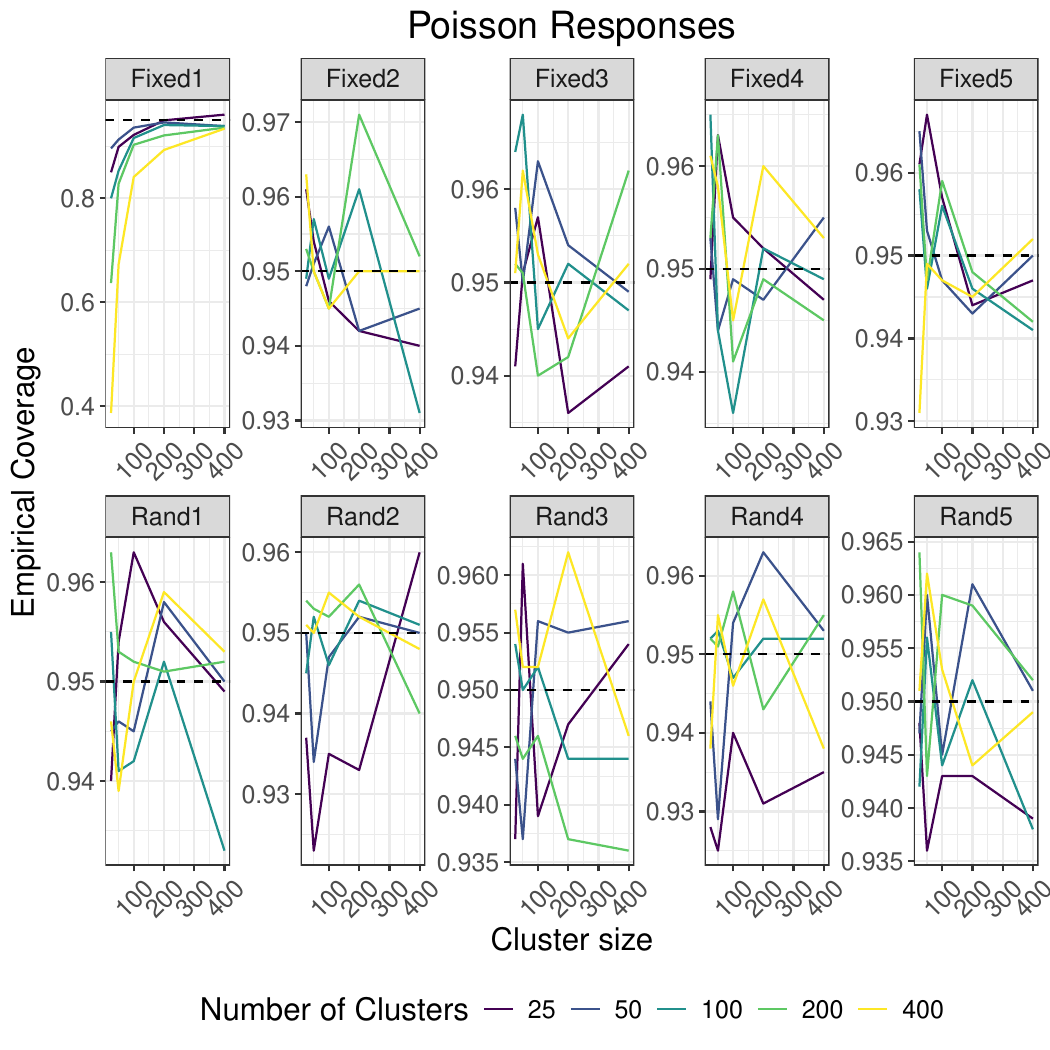}
\caption{Empirical coverage probability of 95\% coverage intervals for the five fixed and random effects estimates, obtained under the conditional regime with Poisson responses.} 
\end{figure}

\begin{figure}[H]
\centering
\includegraphics[width=0.95\linewidth]{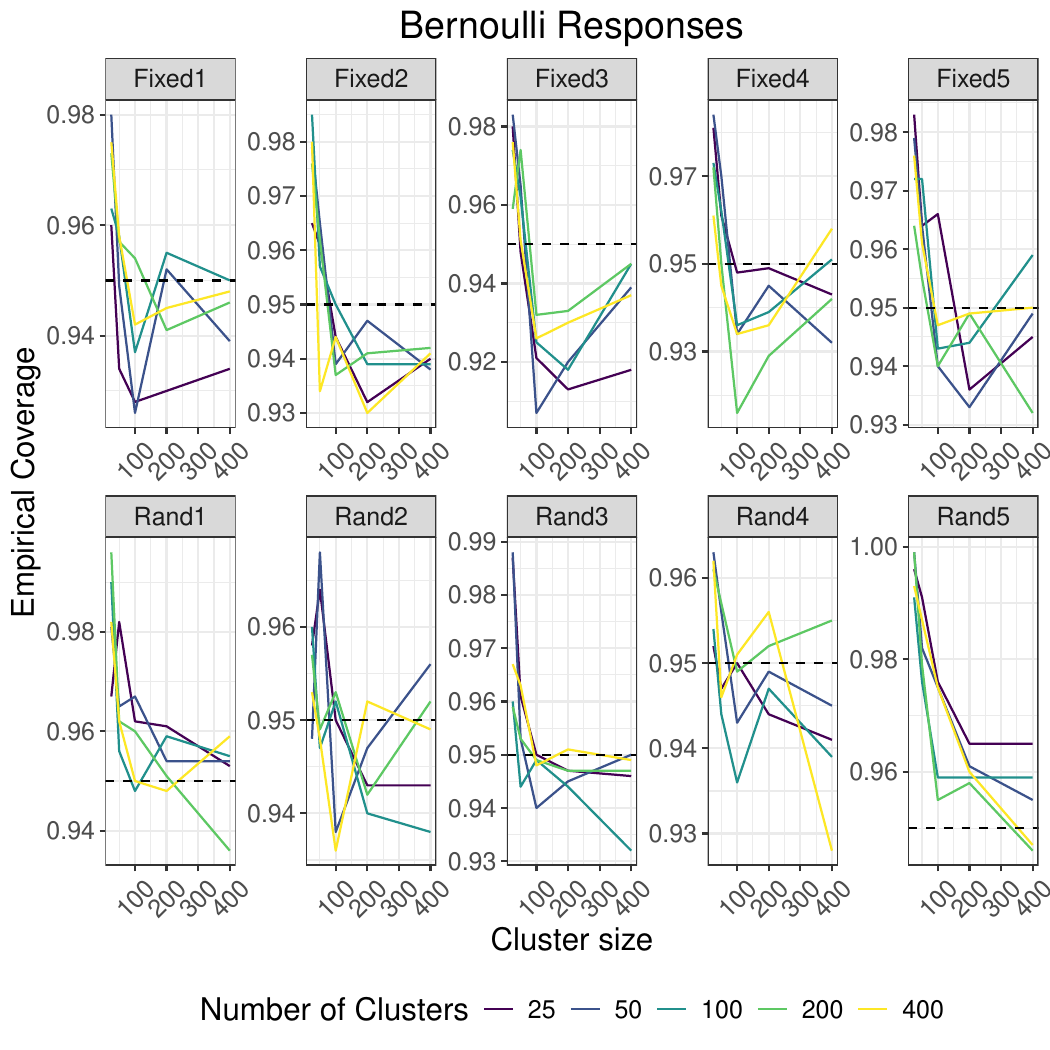}
\caption{Empirical coverage probability of 95\% coverage intervals for the five fixed and random effects estimates, obtained under the conditional regime with Bernoulli responses.} 
\end{figure}

\begin{figure}[H]
\includegraphics[width=0.7\linewidth]{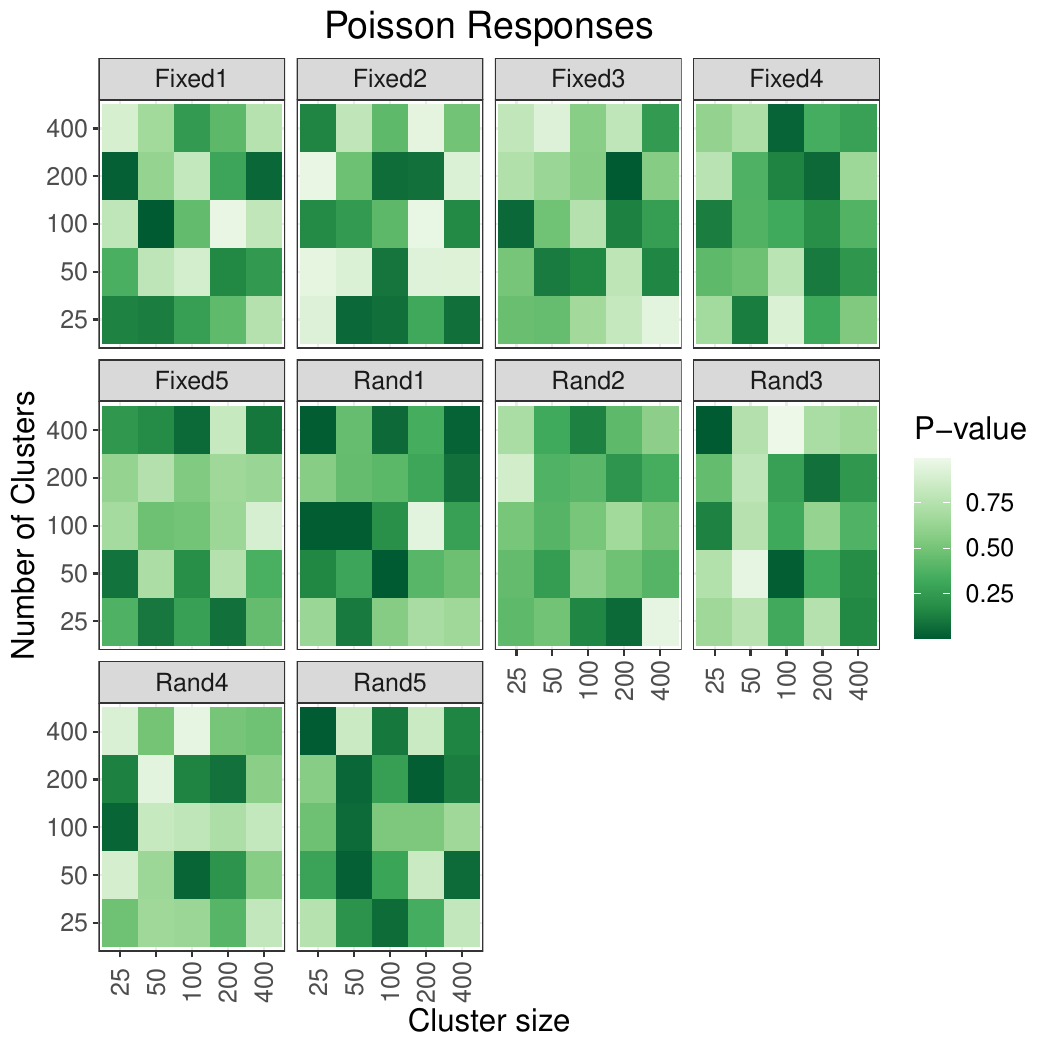}
\includegraphics[width=0.7\linewidth]{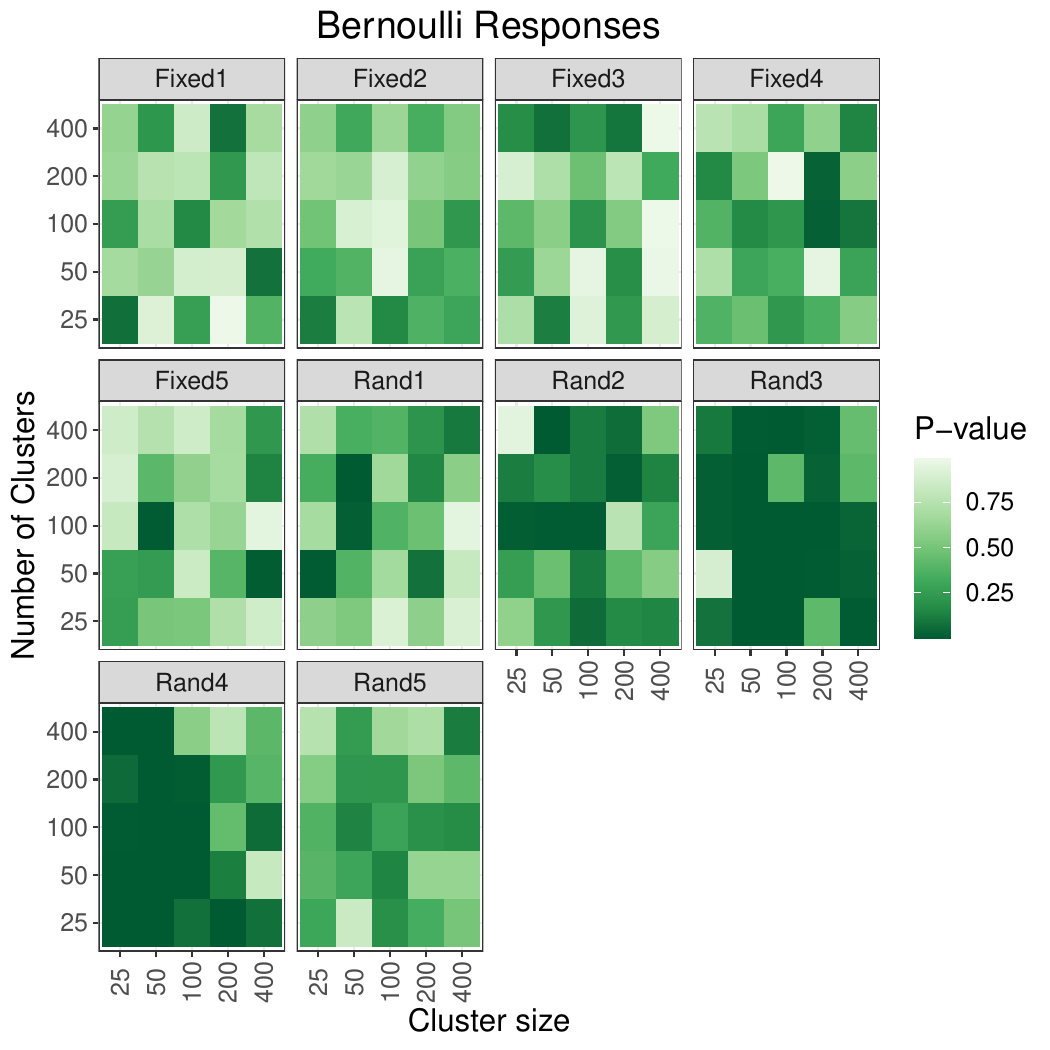}
\caption{$p$-values from Shapiro-Wilk tests applied to the fixed and random effects estimates obtained using maximum PQL estimation, under the conditional regime.} 
\end{figure}

\begin{figure}[H]
\includegraphics[width=0.49\linewidth]{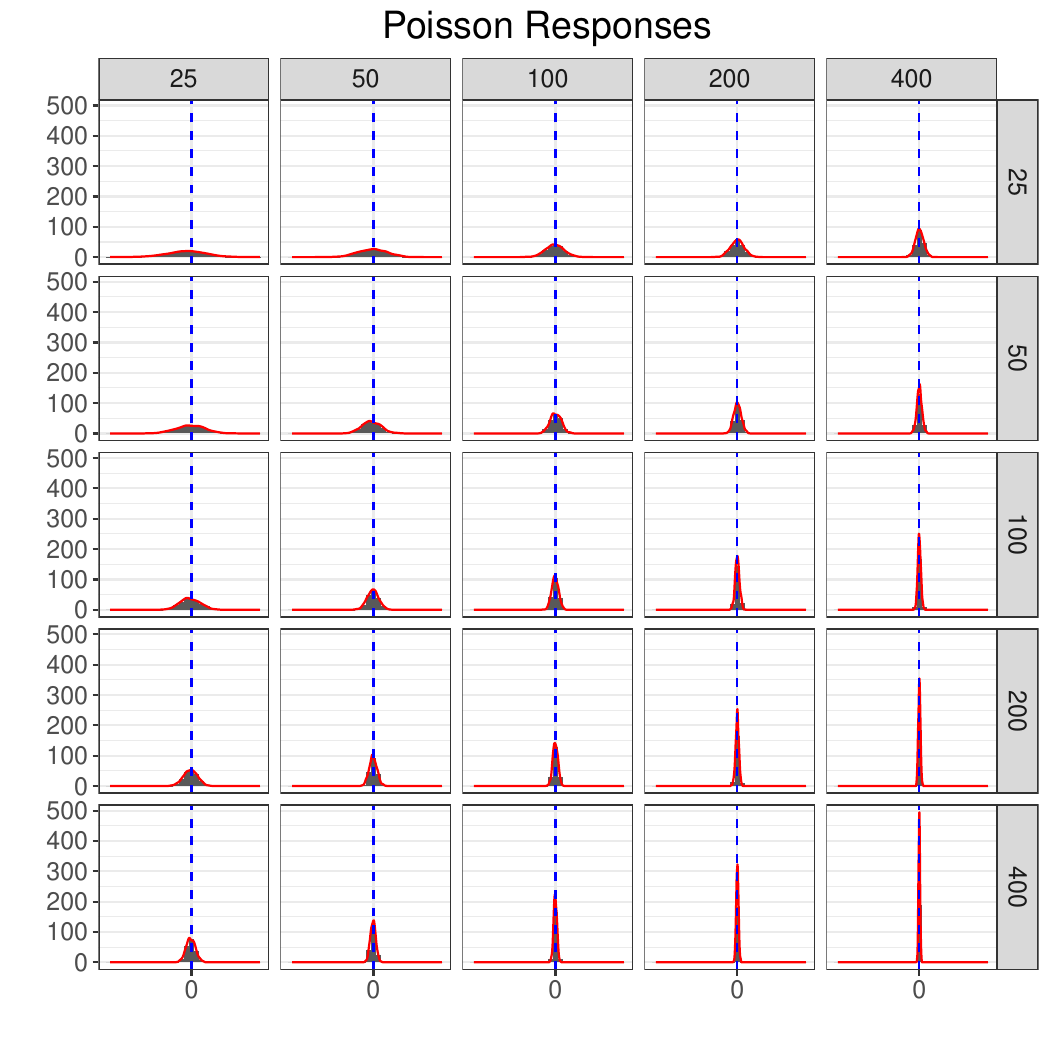}
\includegraphics[width=0.49\linewidth]{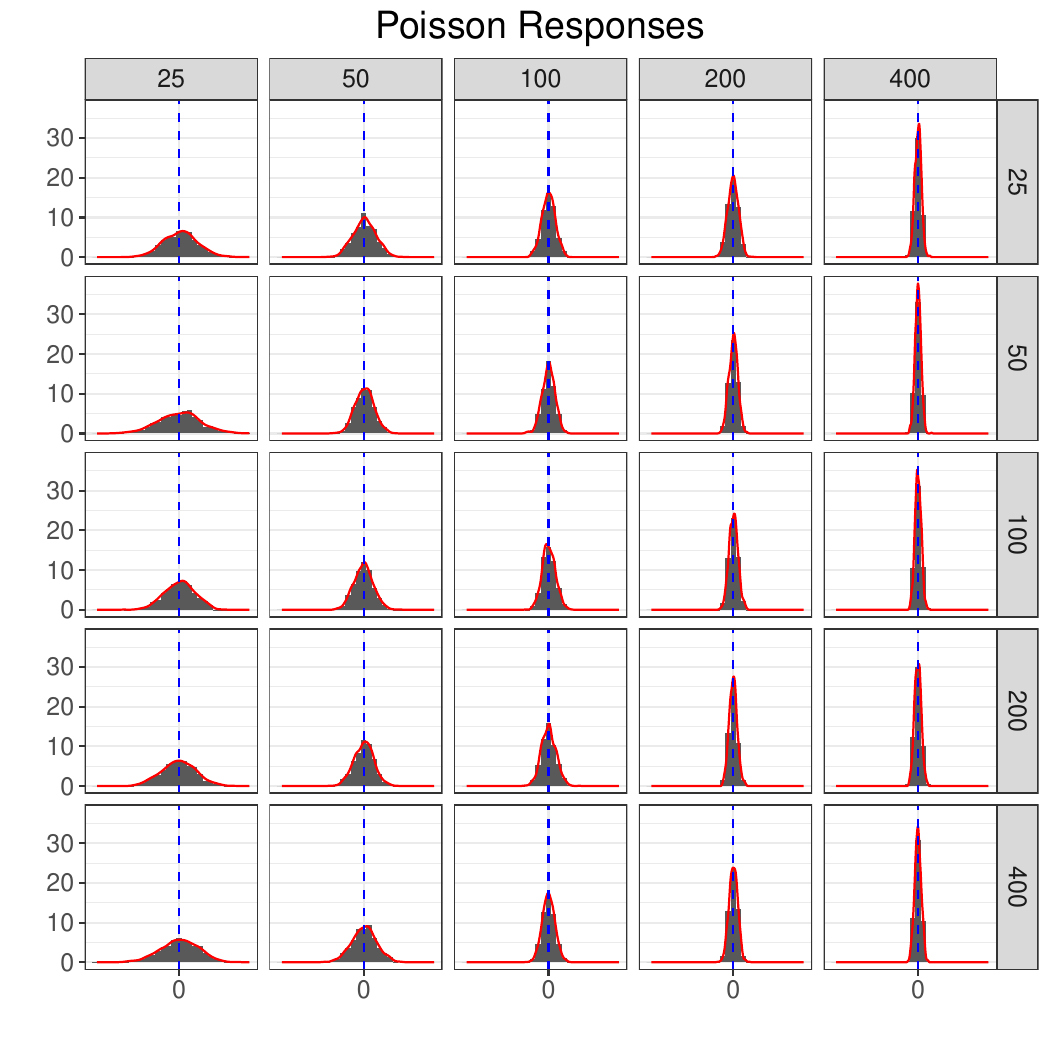}
\includegraphics[width=0.49\linewidth]{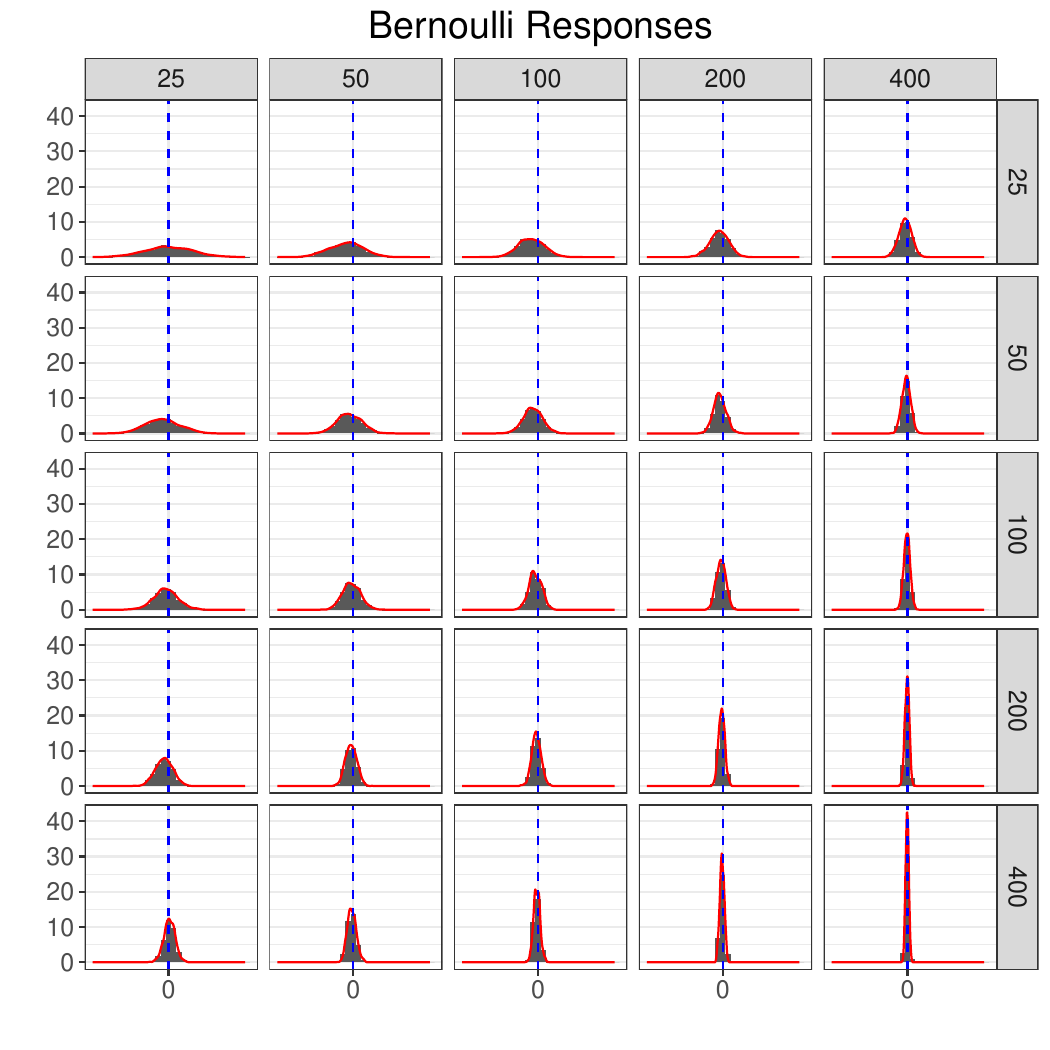}
\includegraphics[width=0.49\linewidth]{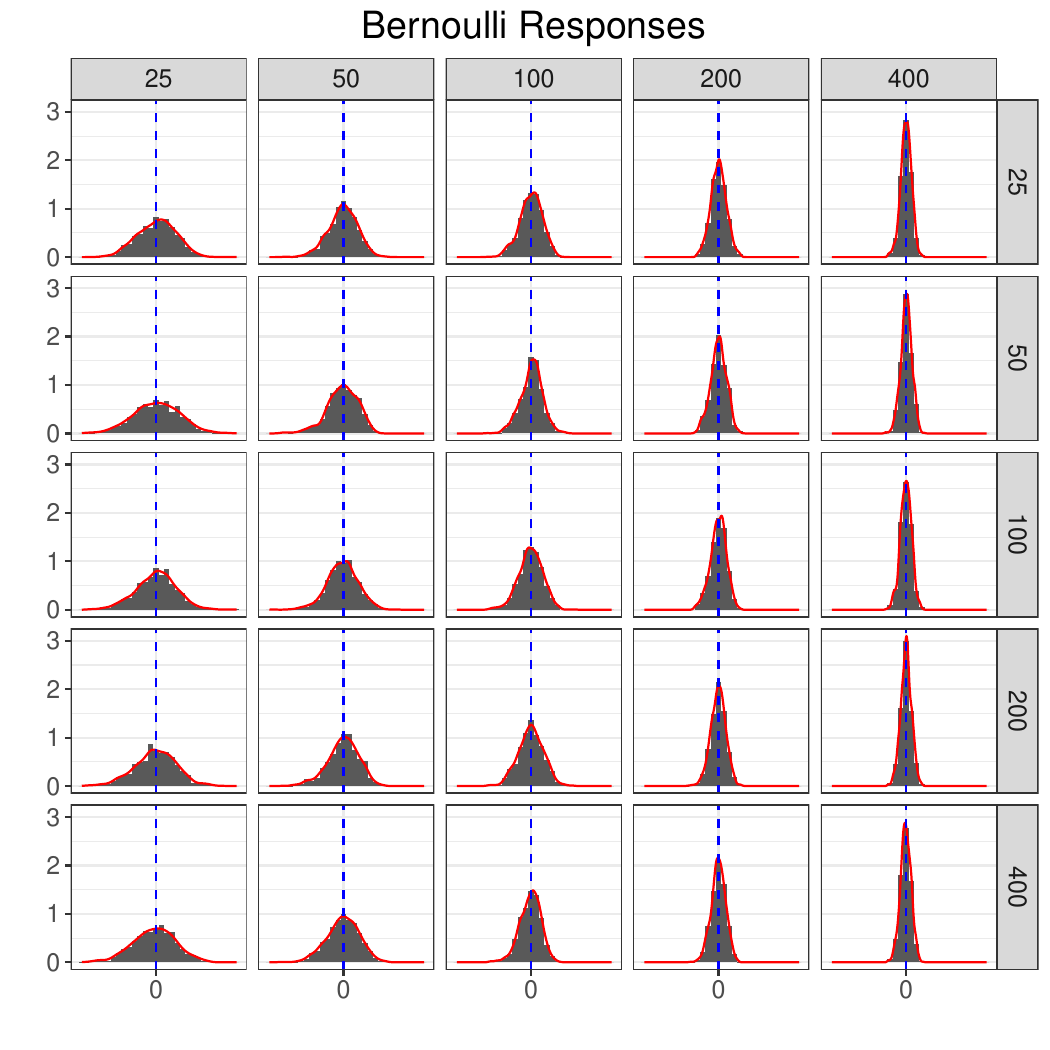}
\caption{Histograms for the third components of $\hat{\bmbeta} - \dot{\bmbeta}$ (left panels) and $\hat{\bm{b}}_1 - \dot{\bm{b}}_1$ (right panels), under the unconditional regime. Vertical facets represent the cluster sizes, while horizontal facets represent the number of clusters. The dotted blue line indicates zero, and the red curve is a kernel density smoother.} 
\end{figure}

\subsection{{\boldmath ${G}$} = 4  {\boldmath ${I}_2$}}

\begin{figure}[H]
\centering
\includegraphics[width=0.95\linewidth]{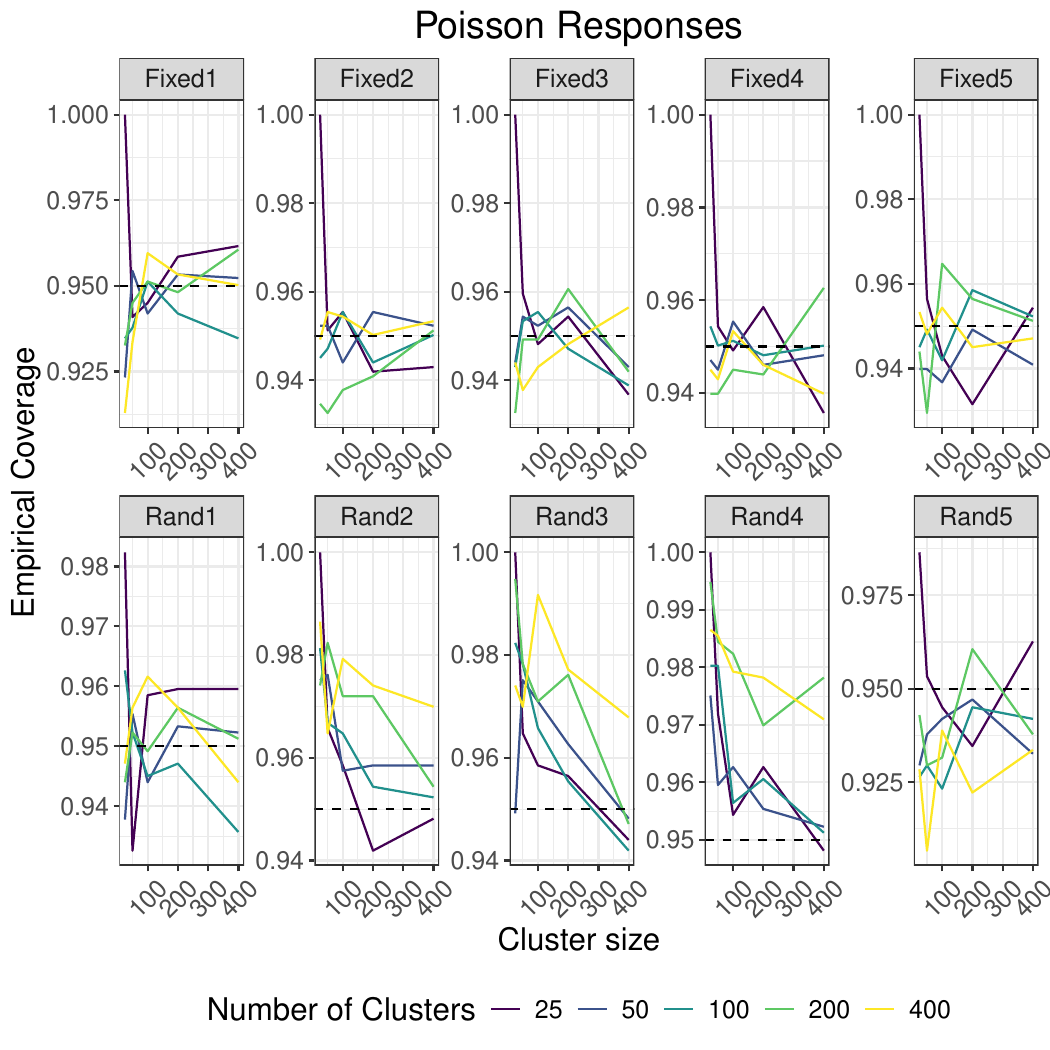}
\caption{Empirical coverage probability of 95\% coverage intervals for the five fixed and random effects estimates, obtained under the unconditional regime with Poisson responses.} 
\end{figure}

\begin{figure}[H]
\centering
\includegraphics[width=0.95\linewidth]{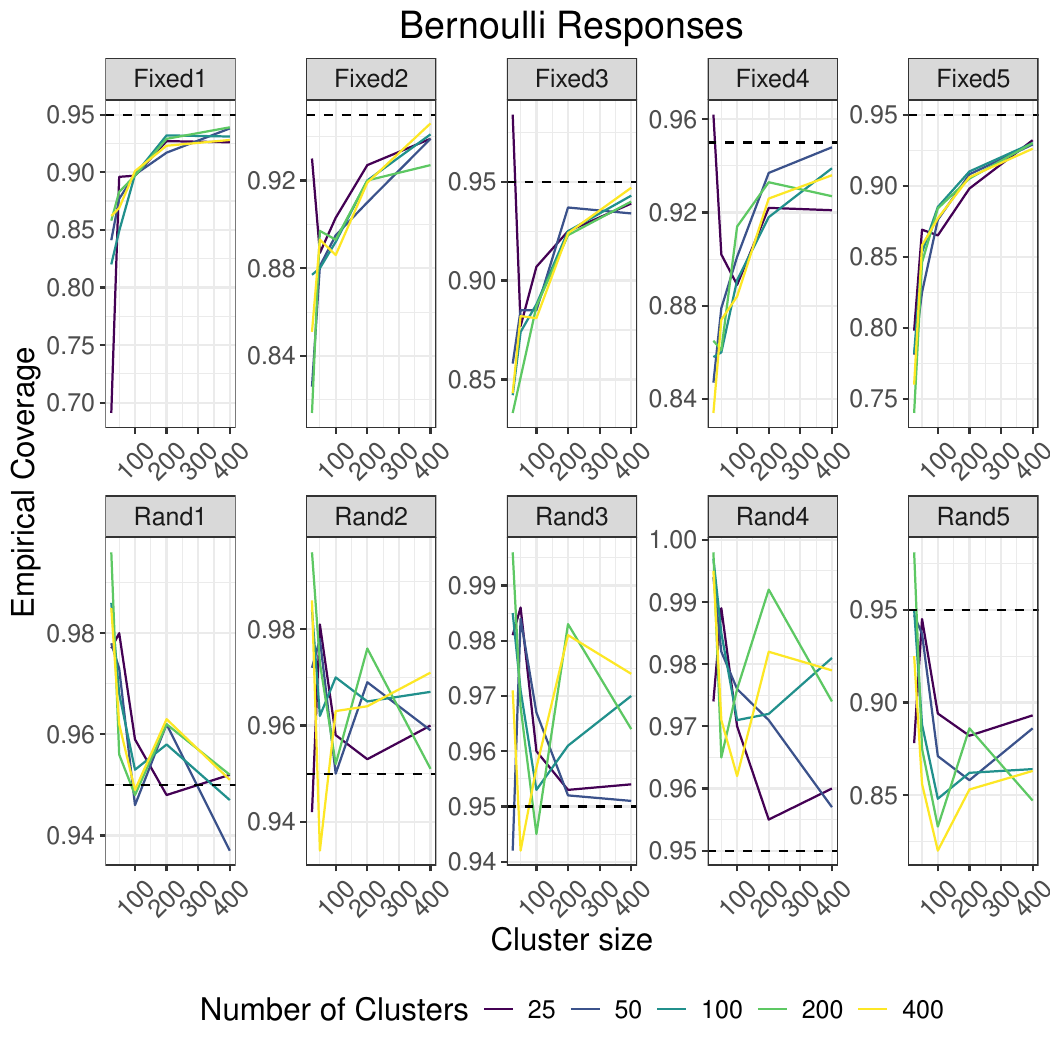}
\caption{Empirical coverage probability of 95\% coverage intervals for the five fixed and random effects estimates, obtained under the unconditional regime with Bernoulli responses.} 
\end{figure}

\begin{figure}[H]
\includegraphics[width=0.7\linewidth]{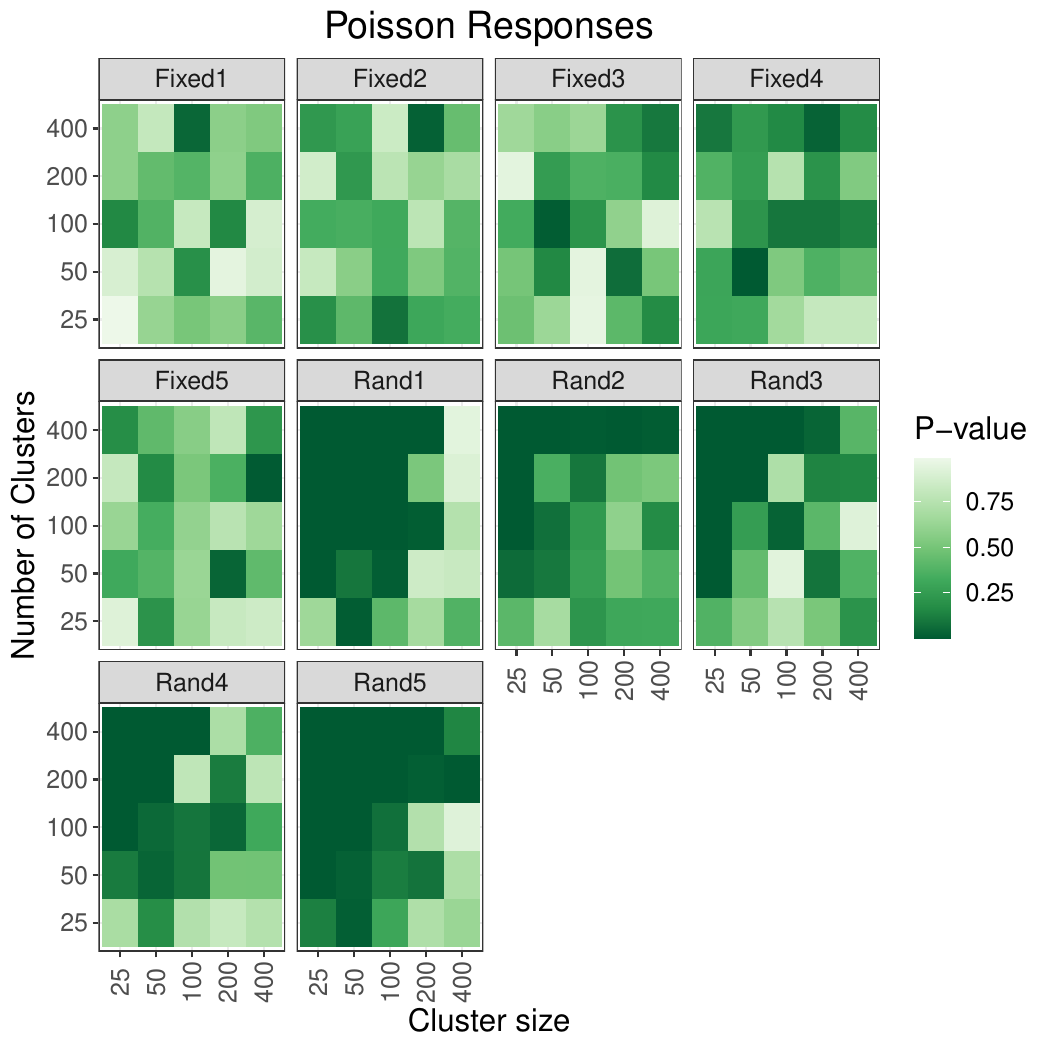}
\includegraphics[width=0.7\linewidth]{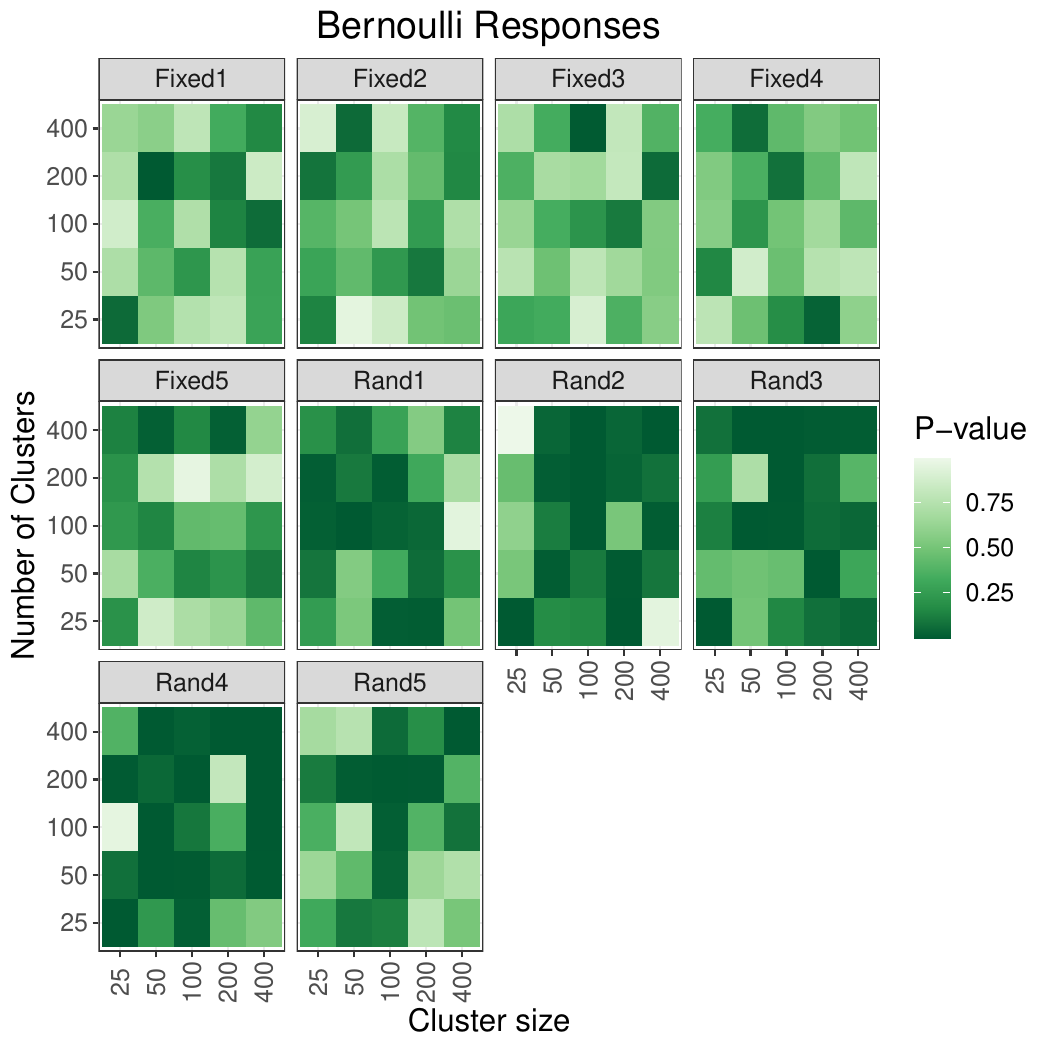}
\caption{$p$-values from Shapiro-Wilk tests applied to the fixed and random effects estimates obtained using maximum PQL estimation, under the unconditional regime.} 
\end{figure}

\begin{figure}[H]
\includegraphics[width=0.49\linewidth]{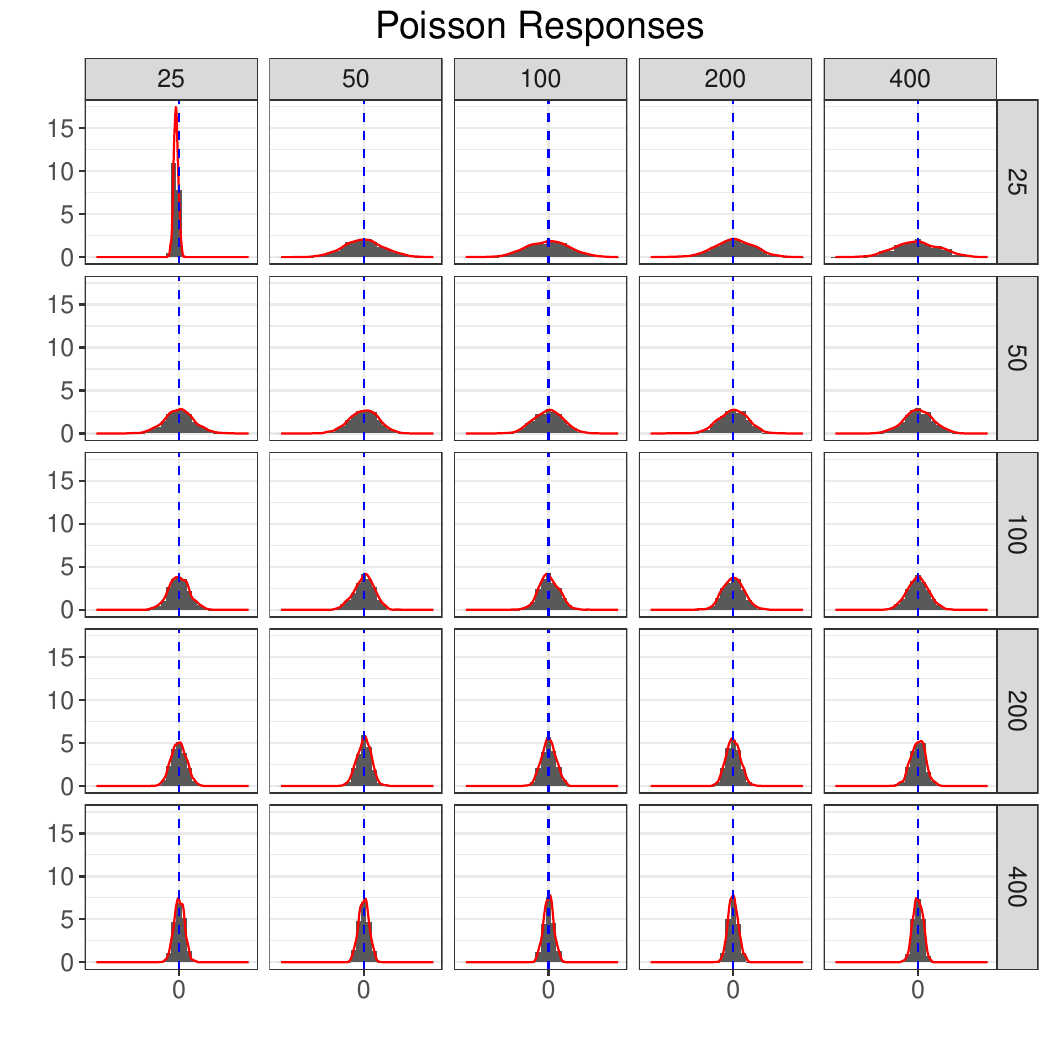}
\includegraphics[width=0.49\linewidth]{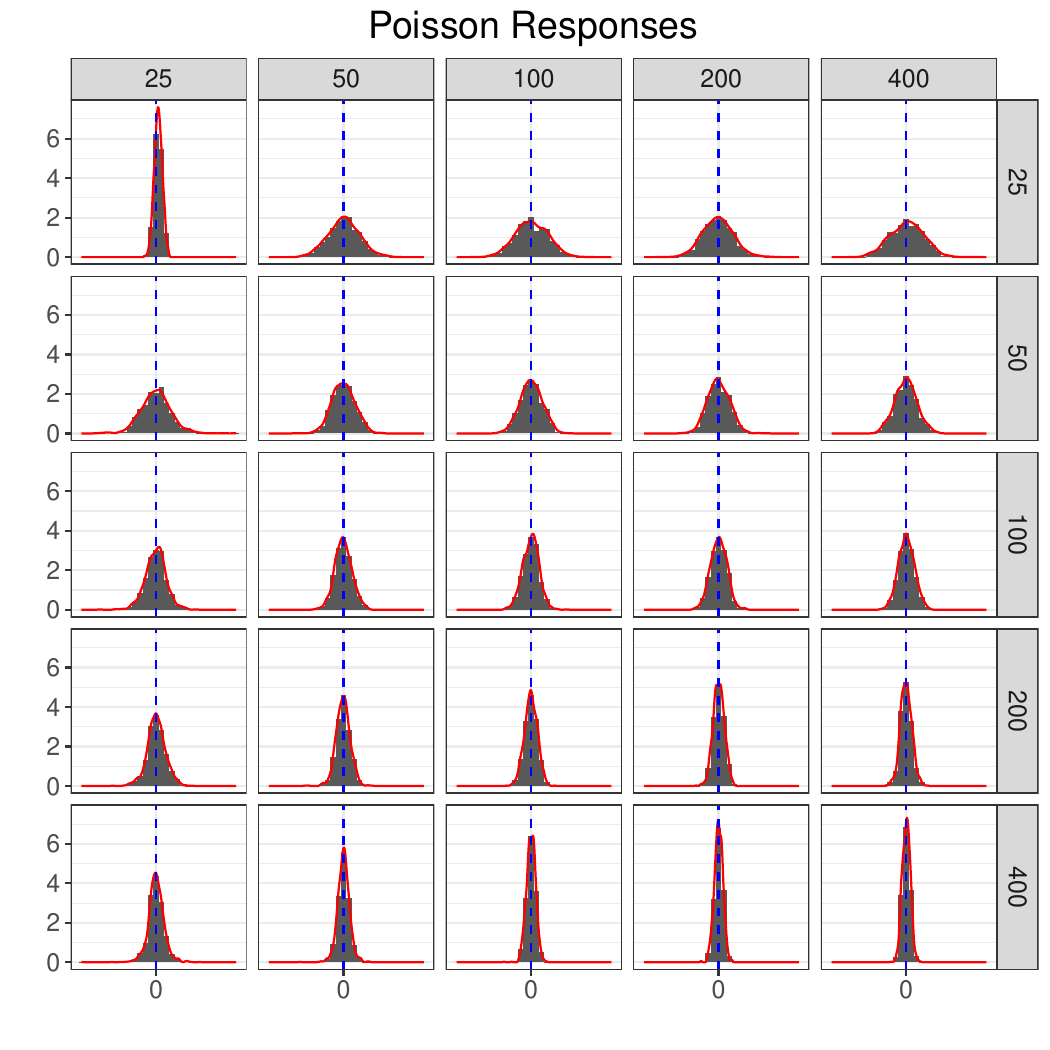}
\includegraphics[width=0.49\linewidth]{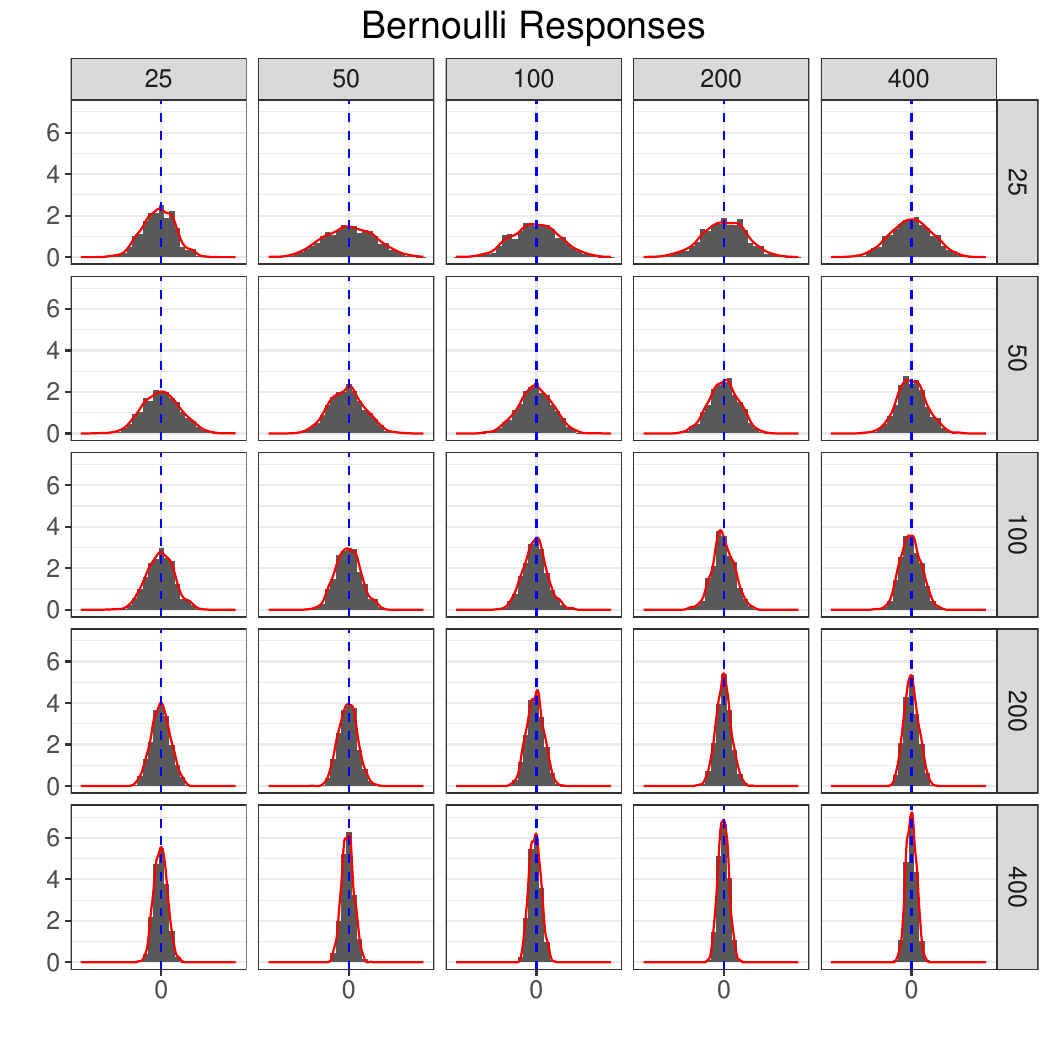}
\includegraphics[width=0.49\linewidth]{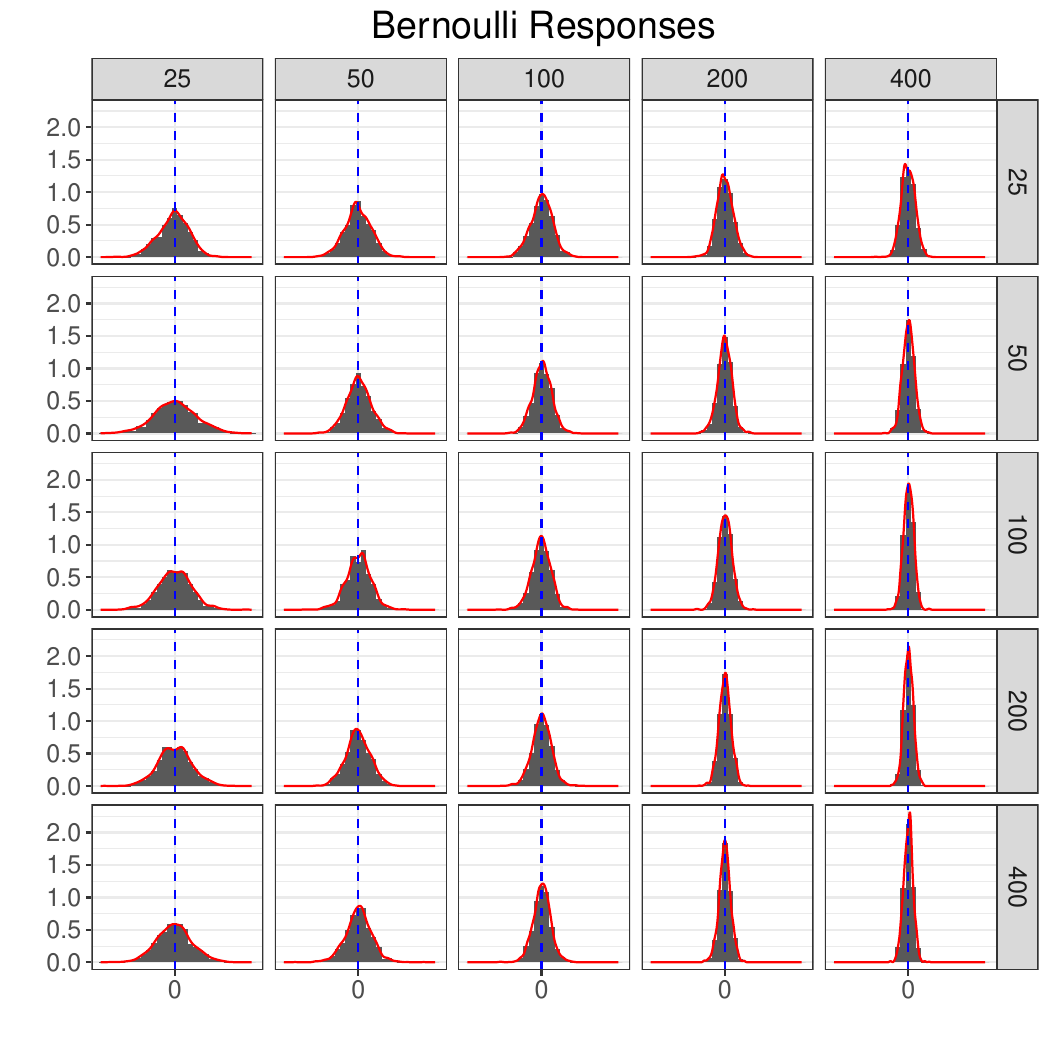}
\caption{Histograms for the third components of $\hat{\bmbeta} - \dot{\bmbeta}$ (left panels) and $\hat{\bm{b}}_1 - \dot{\bm{b}}_1$ (right panels), under the unconditional regime. Vertical facets represent the cluster sizes, while horizontal facets represent the number of clusters. The dotted blue line indicates zero, and the red curve is a kernel density smoother.} 
\end{figure}

\begin{figure}[H]
\centering
\includegraphics[width=0.95\linewidth]{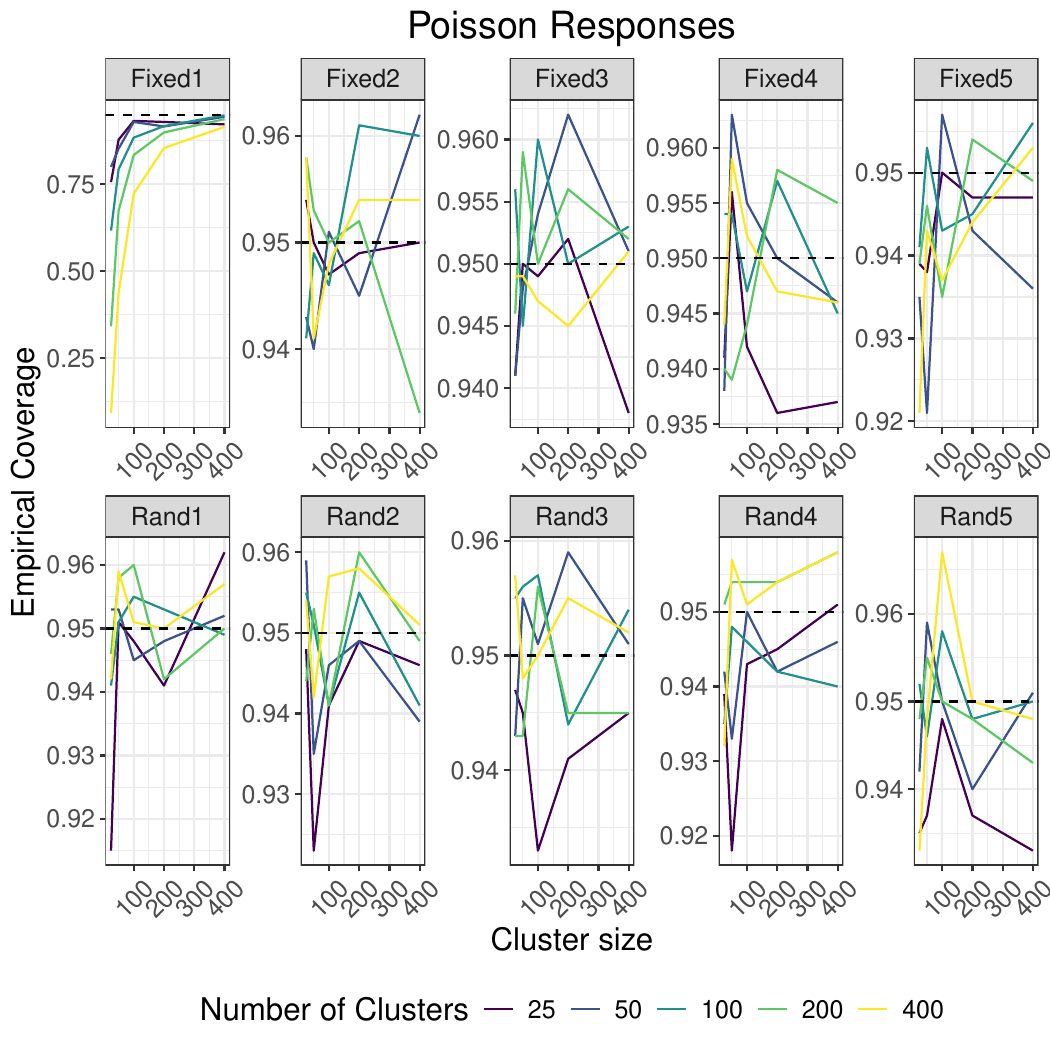}
\caption{Empirical coverage probability of 95\% coverage intervals for the five fixed and random effects estimates, obtained under the conditional regime with Poisson responses.} 
\end{figure}

\begin{figure}[H]
\centering
\includegraphics[width=0.95\linewidth]{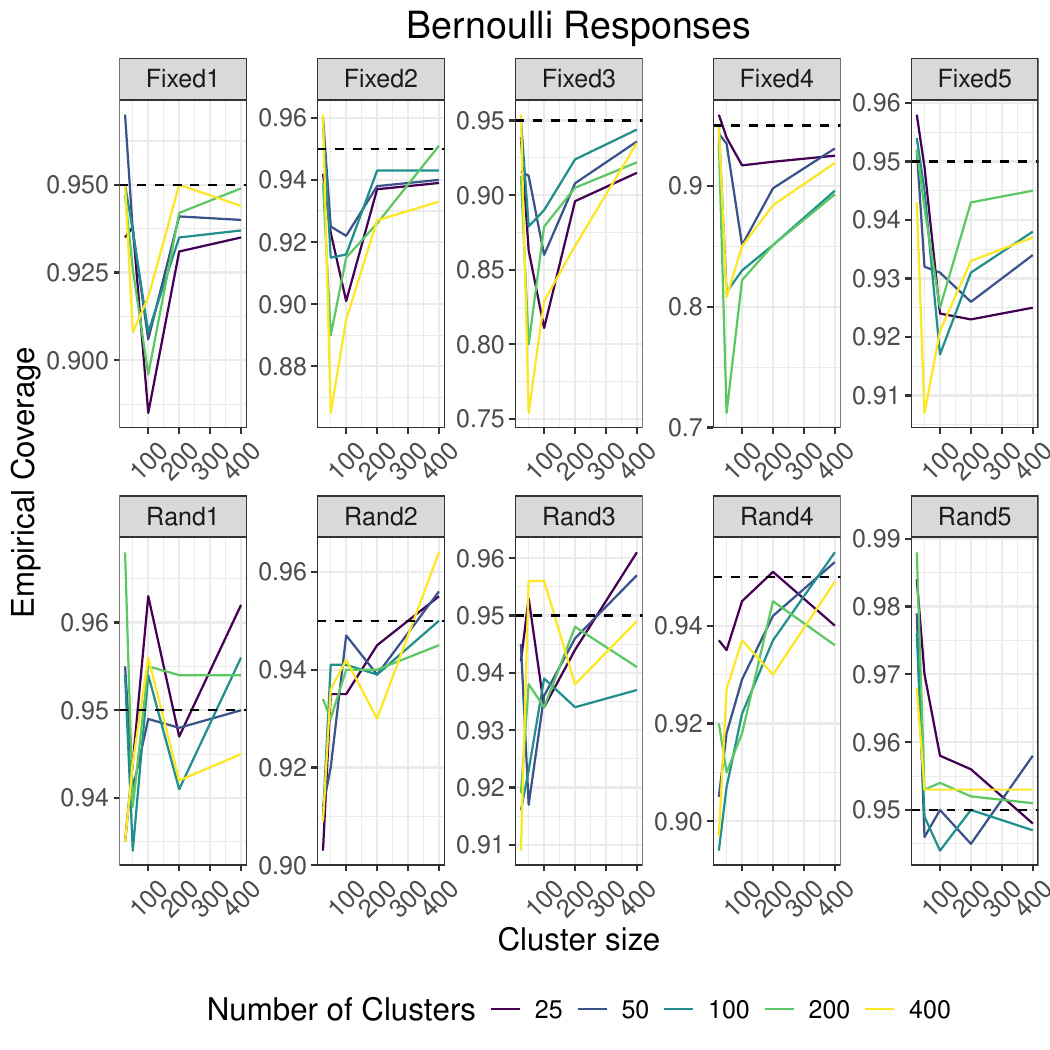}
\caption{Empirical coverage probability of 95\% coverage intervals for the five fixed and random effects estimates, obtained under the conditional regime with Bernoulli responses.} 
\end{figure}

\begin{figure}[H]
\includegraphics[width=0.7\linewidth]{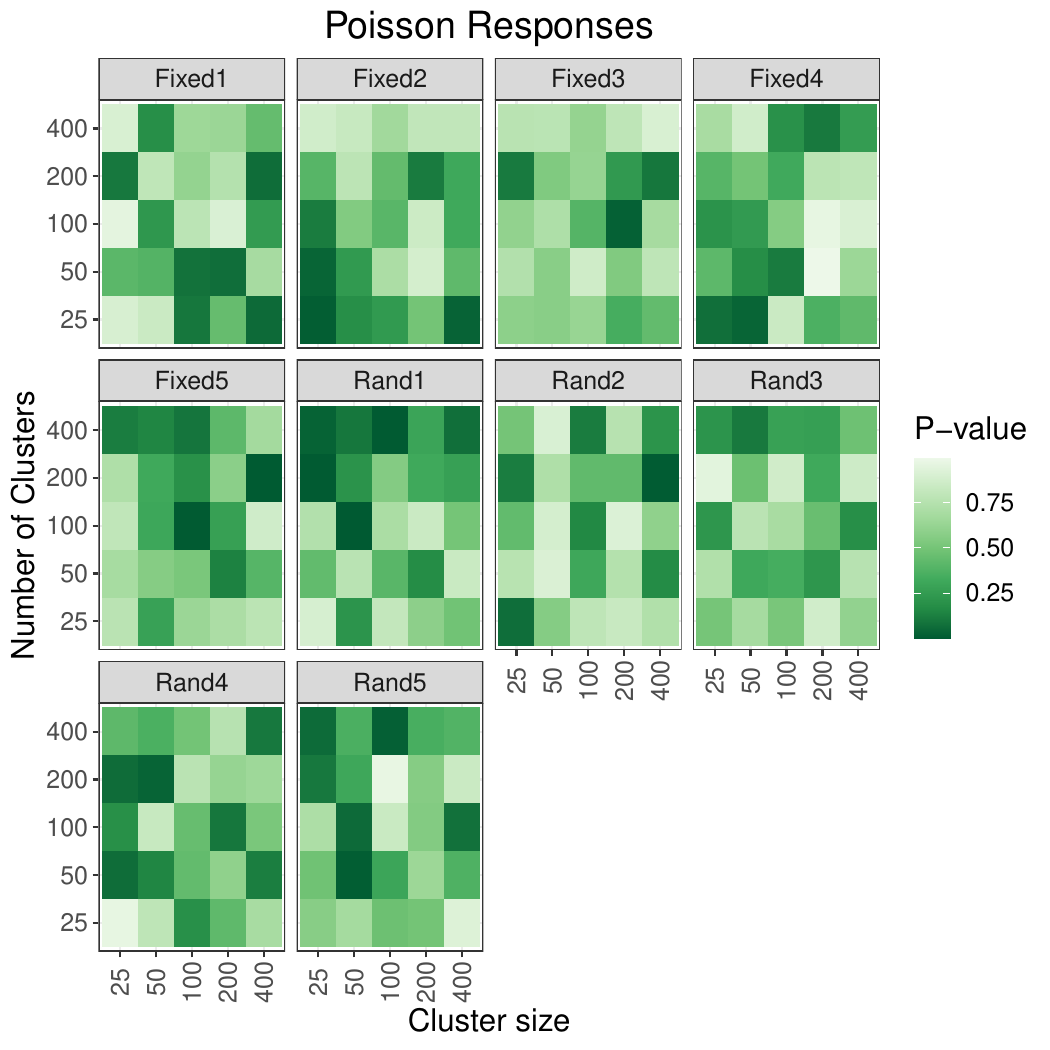}
\includegraphics[width=0.7\linewidth]{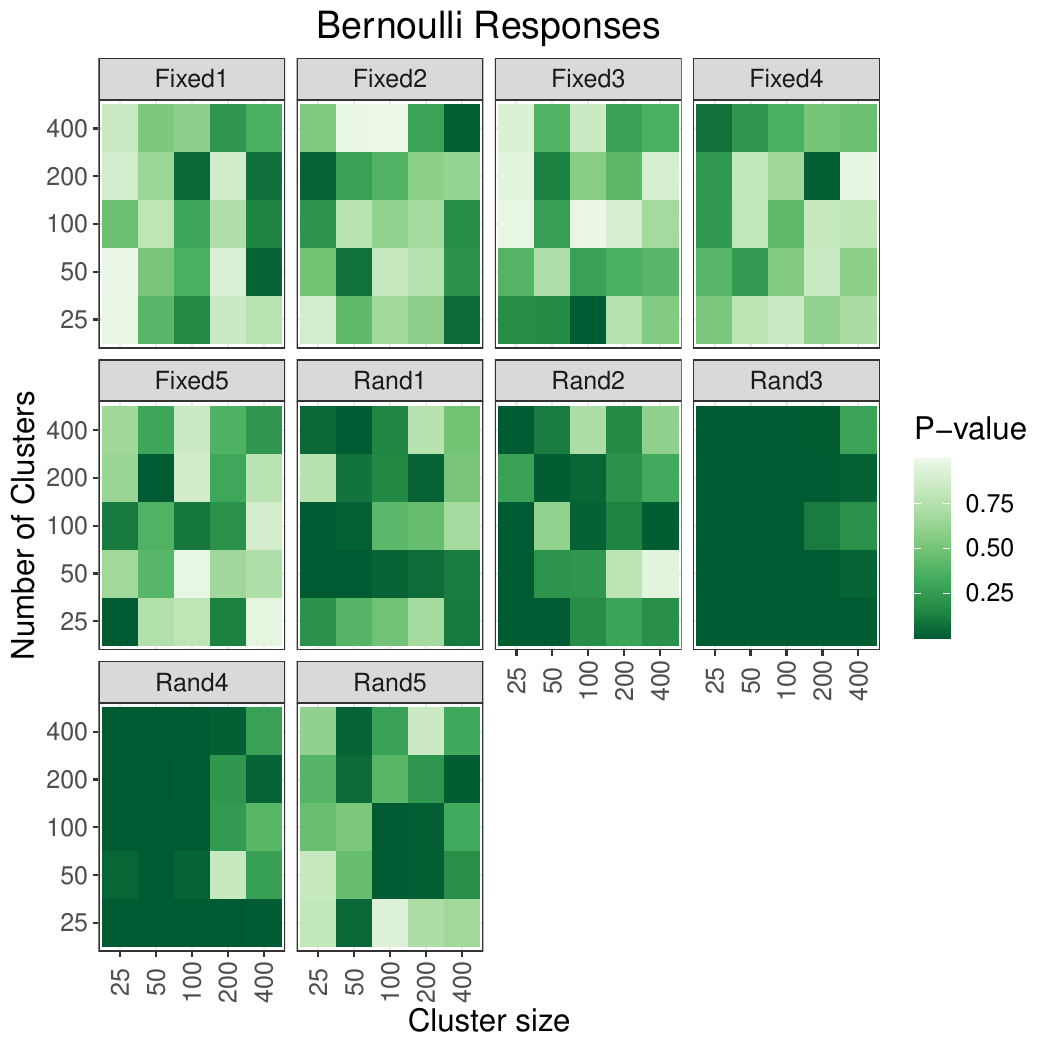}
\caption{$p$-values from Shapiro-Wilk tests applied to the fixed and random effects estimates obtained using maximum PQL estimation, under the conditional regime.} 
\end{figure}

\begin{figure}[H]
\includegraphics[width=0.49\linewidth]{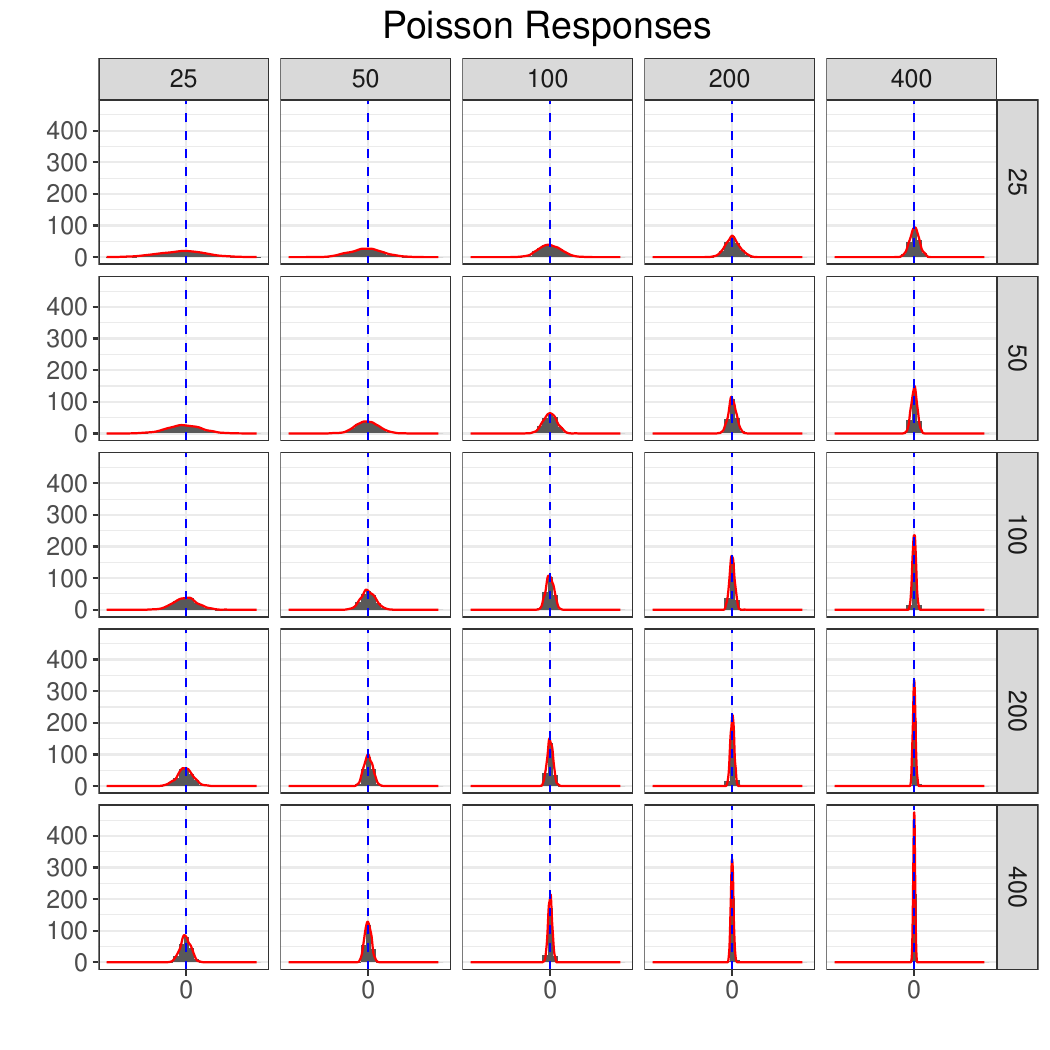}
\includegraphics[width=0.49\linewidth]{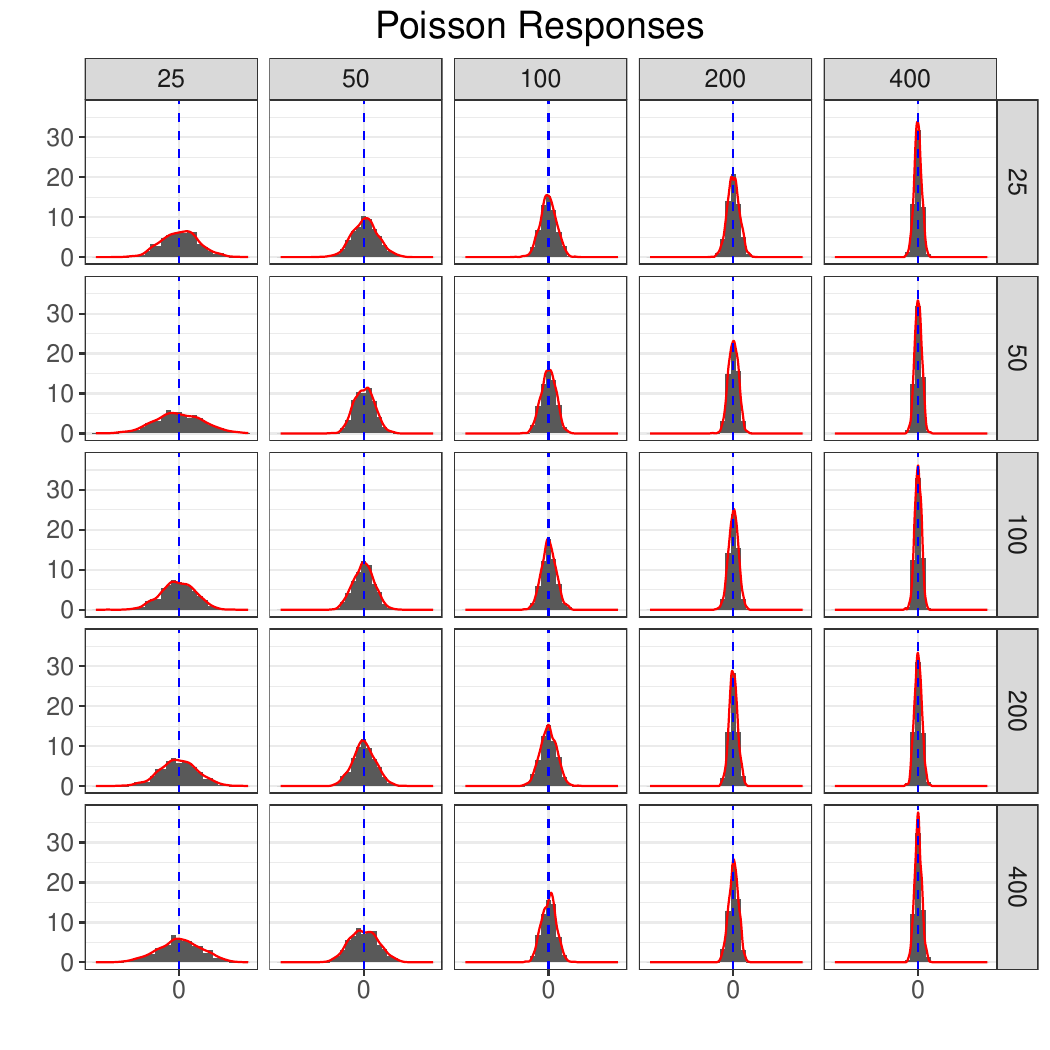}
\includegraphics[width=0.49\linewidth]{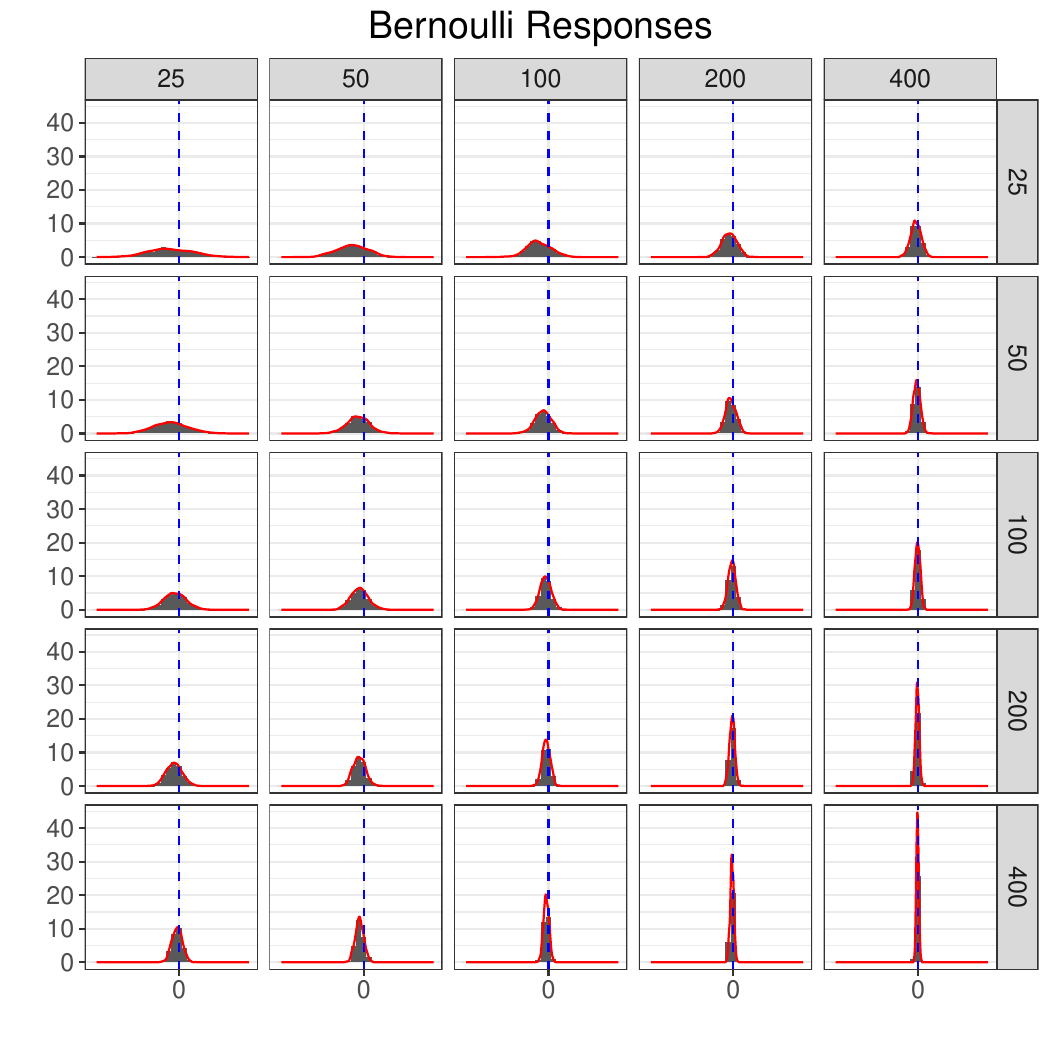}
\includegraphics[width=0.49\linewidth]{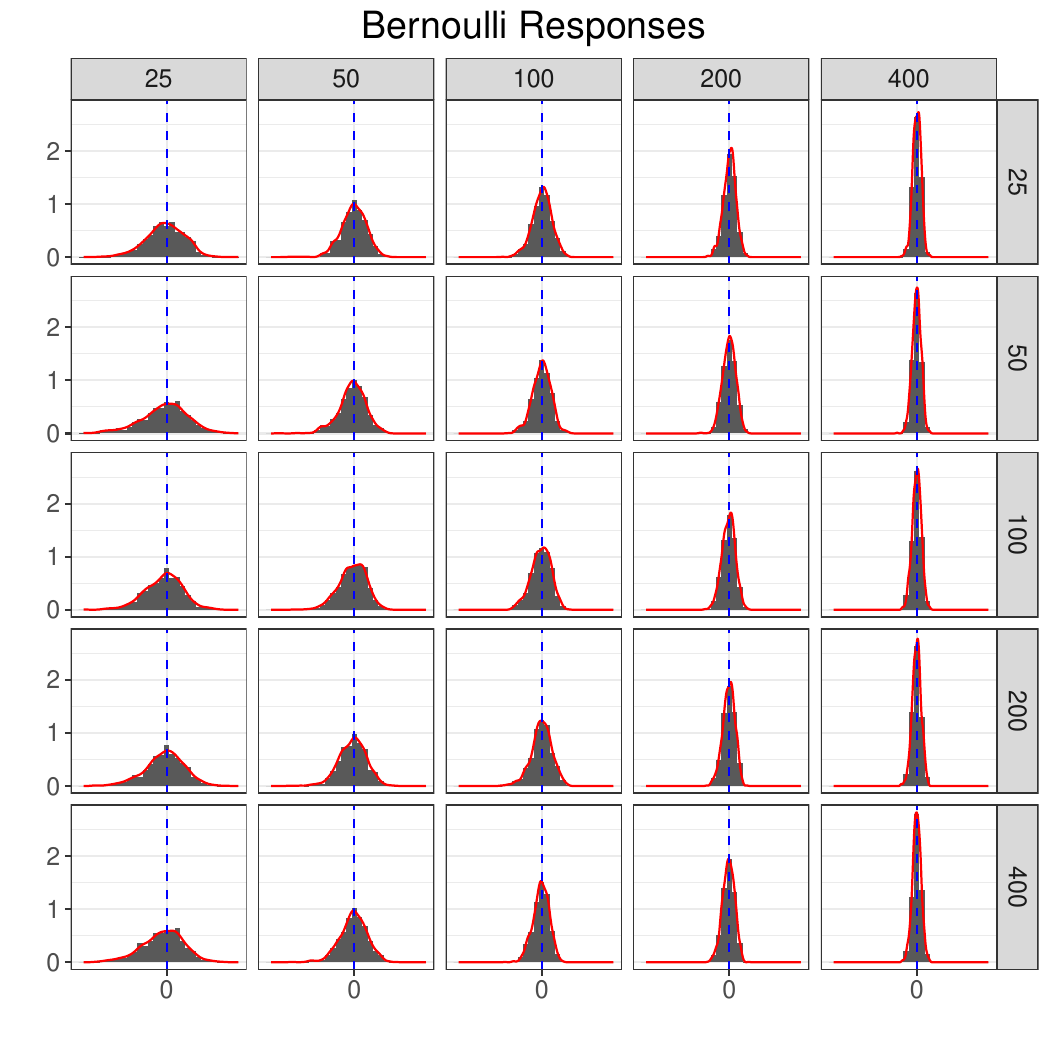}
\caption{Histograms for the third components of $\hat{\bmbeta} - \dot{\bmbeta}$ (left panels) and $\hat{\bm{b}}_1 - \dot{\bm{b}}_1$ (right panels), under the unconditional regime. Vertical facets represent the cluster sizes, while horizontal facets represent the number of clusters. The dotted blue line indicates zero, and the red curve is a kernel density smoother.} 
\end{figure}

\end{document}